\documentclass[11pt]{article}
\usepackage[width=16cm,top = 3cm,bottom=3cm]{geometry}
\usepackage{amsthm,amsfonts,amssymb,amsmath,amscd, physics,mathtools,thmtools, MnSymbol,stmaryrd, comment} 
\usepackage{enumitem}
\usepackage{xcolor}
\pagecolor{white}
\usepackage{float}
\usepackage{adjustbox}
\usepackage{lipsum}
\usepackage{wrapfig}
\usepackage{pgfplots}
\pgfplotsset{compat=1.17}
\usepackage{tikz}
\usetikzlibrary{decorations.pathreplacing,decorations.markings}
\usetikzlibrary{arrows.meta}
\usepackage{tikz-cd}
\tikzcdset{scale cd/.style={every label/.append style={scale=#1},
    cells={nodes={scale=#1}}}}
\usepackage[all,cmtip,pdf]{xy}
\usetikzlibrary{positioning}
\usetikzlibrary{decorations.pathmorphing,calligraphy}
\tikzset{
  on each segment/.style={
    decorate,
    decoration={
      show path construction,
      moveto code={},
      lineto code={
        \path [#1]
        (\tikzinputsegmentfirst) -- (\tikzinputsegmentlast);
      },
      curveto code={
        \path [#1] (\tikzinputsegmentfirst)
        .. controls
        (\tikzinputsegmentsupporta) and (\tikzinputsegmentsupportb)
        ..
        (\tikzinputsegmentlast);
      },
      closepath code={
        \path [#1]
        (\tikzinputsegmentfirst) -- (\tikzinputsegmentlast);
      },
    },
  },
  mid arrow/.style={postaction={decorate,decoration={
        markings,
        mark=at position .5 with {\arrow[#1]{Stealth[scale width=1.5]}}
      }}},
}
\usetikzlibrary{calc,intersections}
\usepackage{color}
\usepackage{hyperref}
\input{insbox.tex}
\definecolor{tocolor}{rgb}{.1,.1,.1}
\definecolor{urlcolor}{rgb}{.2,.2,.6}
\definecolor{linkcolor}{rgb}{.1,.1,.5}
\definecolor{citecolor}{rgb}{.4,.2,.1}
\hypersetup{colorlinks=true, urlcolor=urlcolor, linkcolor=linkcolor, citecolor=citecolor}

\usepackage[maxbibnames=5,minbibnames=4,backend=bibtex,isbn=false]{biblatex}
\addbibresource{references}

\long\def\authornote#1{%
	\leavevmode\unskip\raisebox{-3.5pt}{\rlap{$\scriptstyle\diamond$}}%
	\marginpar{\raggedright\hbadness=10000
		\def\baselinestretch{0.8}\tiny
		\it #1\par}}
\newcommand{\marco}[1]{\authornote{MG: #1}}
\newcommand{\yucong}[1]{\authornote{YJ: #1}}
\newcommand{\daniel}[1]{\authornote{DA: #1}}


\newtheorem{theorem}{Theorem}[section]
\newtheorem{lemma}[theorem]{Lemma}
\newtheorem{proposition}[theorem]{Proposition}
\newtheorem{corollary}[theorem]{Corollary}
\newtheorem{definition}[theorem]{Definition}
\theoremstyle{definition}
\newtheorem{remark}[theorem]{Remark}
\newtheorem{example}[theorem]{Example}
\AtBeginEnvironment{example}{%
  \pushQED{\qed}%
}
\AtEndEnvironment{example}{\popQED\endexample}

\newcommand{\R}{\mathbb{R}}
\newcommand{\C}{\mathbb{C}}
\newcommand{\Z}{\mathbb{Z}}

\renewcommand{\a}{\alpha}
\renewcommand{\b}{\beta}
\newcommand{\sss}{\mathtt{s}}
\newcommand{\ttt}{\mathtt{t}}
\newcommand{\mm}{\mathtt{m}}
\newcommand{\ii}{\mathtt{i}}
\newcommand{\uu}{\mathtt{u}}
\newcommand{\g}{\mathfrak{g}}

\renewcommand{\d}{\partial}
\newcommand{\dbar}{\bar{\partial}}

\renewcommand{\ker}{\mathrm{ker}}

\renewcommand{\L}{\mathcal{L}}

\newcommand{\Gr}{\mathrm{Gr}}
\newcommand{\E}{\mathcal{E}}
\newcommand{\J}{\mathbb{J}}
\newcommand{\G}{\mathbb{G}}
\newcommand{\A}{\mathcal{A}}
\newcommand{\B}{\mathcal{B}}
\newcommand{\F}{\mathcal{F}}
\renewcommand{\H}{\mathcal{H}}
\newcommand{\DT}{\mathbb{T}}
\renewcommand{\l}{\ell}

\newcommand{\id}{\mathrm{id}}

\renewcommand{\Re}{\mathrm{Re}}
\renewcommand{\Im}{\mathrm{Im}}
\newcommand{\IP}[1]{\langle #1 \rangle}
\newcommand{\cl}{\mathrm{cl}}
\newcommand{\delbar}{\bar\partial}
\newcommand{\del}{\partial}
\renewcommand{\hom}{\mathrm{Hom}}

\setcounter{secnumdepth}{3}
\setcounter{tocdepth}{2}

\title{Symplectic double groupoids and the generalized Kähler potential} 
\author{Daniel Álvarez\thanks{Department of Mathematics, University of Toronto; \url{dalv@math.toronto.edu}} \quad Marco Gualtieri\thanks{Department of Mathematics, University of Toronto; \url{mgualt@math.toronto.edu}} \quad Yucong Jiang\thanks{{Department of Mathematics, University of Illinois Urbana-Champaign; \url{yucongj2@illinois.edu}}}}
\date{} 

\begin{document}
	
	\maketitle
 \vspace{2em}
	\begin{abstract}
            A description of the fundamental degrees of freedom underlying generalized K\"ahler geometry, which separates its holomorphic moduli from its compatible Riemannian metric in a similar way to the K\"ahler case, has been sought since its discovery in 1984.  In this paper, we describe a full solution to this problem for arbitrary generalized K\"ahler manifolds.  We discover that the holomorphic structure underlying a generalized K\"ahler manifold is a holomorphic symplectic Morita double bimodule between double symplectic groupoids, and that each compatible Riemannian metric is given by a Lagrangian submanifold forming a bisection of the real symplectic core of this double bimodule. In other words, a generalized K\"ahler manifold has an associated holomorphic symplectic manifold of quadruple dimension and equipped with an anti-holomorphic involution; the generalized K\"ahler metric is then determined by the choice of a Lagrangian submanifold of the fixed point locus of this involution.  This resolves affirmatively a long-standing conjecture by physicists concerning the existence of a generalized K\"ahler potential. 

            We demonstrate the theory by constructing explicitly the above Morita double bimodule and Lagrangian bisection for the well-known generalized K\"ahler structures on compact even-dimensional semisimple Lie groups, which have until now escaped such analysis. We construct the required holomorphic symplectic manifolds by expressing them as moduli spaces of flat connections on surfaces with Lagrangian boundary conditions, through a quasi-Hamiltonian reduction.  
            
\end{abstract}

        \setcounter{tocdepth}{1}
	\tableofcontents
\vspace{2em}
\pagebreak 
\section*{Introduction} In 1984, physicists Gates, Hull and Ro\v{c}ek discovered what was later called \emph{generalized K\"ahler geometry}, a structure on a Riemannian manifold which, when used as the target space of a 2-dimensional sigma model, endows it with $N=(2,2)$ supersymmetry~\cite{MR776369}. This extended earlier work of Zumino, who in 1979 observed that a K\"ahler structure has this same effect, precipitating an explosion of interest in K\"ahler manifolds by physicists, leading to a series of breakthroughs which continues to this day, mostly under the umbrella of Mirror Symmetry. 

A generalized K\"ahler structure on a Riemannian manifold $(M,g)$ is a pair of complex structures $(I_+,I_-)$, each orthogonal with respect to the metric $g$, whose corresponding Hermitian forms $\omega_\pm = gI_\pm$ are pluriclosed, i.e. $dd^c_+ \omega_+ = dd^c_-\omega_-= 0$, and which satisfy 
\begin{equation}\label{comppm}
d^c_+\omega_+ + d^c_-\omega_- = 0.
\end{equation}
Solutions of this system may be found by deforming K\"ahler structures, but there are also solutions in which the closed 3-form $H=d^c_+\omega_+$ has nontrivial de Rham class, contradicting the $\partial\bar\partial-$lemma and implying that the complex manifolds $X_\pm=(M,I_\pm)$ admit no K\"ahler structures.  For this reason, generalized K\"ahler geometry is of particular interest in the study of nonalgebraic complex manifolds such as the Calabi-Eckmann structure on $S^3\times S^3$.

What was not understood at the time, but became clear with the advent of Hitchin and Weinstein's theories of generalized complex and Dirac geometry, is that a generalized K\"ahler structure has the character of an infinitesimal object, analogous to that of a Lie algebra in comparison to its corresponding Lie group.    The main problem we solve in this paper is to determine the structure of the \emph{integration}, or group-like object, underlying generalized K\"ahler geometry.  We also provide an explicit moduli-theoretic construction of it in the important case of the generalized K\"ahler structures on even-dimensional compact semisimple Lie groups.  As a corollary, we resolve one of the main conjectures in generalized K\"ahler geometry, first stated in the original 1984 paper, about the existence of a generalized K\"ahler potential function.  

\subsection{Summary of the main Theorem}
Our solution is formulated in terms of symplectic double groupoids, introduced by Weinstein and developed by Lu and Mackenzie.  We show that each of the complex manifolds $X_\pm$ described above is the base of a holomorphic symplectic double Lie groupoid $D_\pm$, and that these double groupoids, together with their complex conjugates, are related by a loop of 1-morphisms (multiplicative Morita equivalences) as shown in Fig.~\ref{mor2}.
In addition, there is a 2-morphism $\mathcal{C}$ (Morita 2-equivalence) filling the square.  
\begin{figure}[H]
  \centering
\begin{tikzpicture}[scale=.6]
\pgfdeclarelayer{background}
\pgfdeclarelayer{foreground}
\pgfsetlayers{background,main,foreground}
 \coordinate (a) at (-1,-1);
 \coordinate (b) at (1,-1);
 \coordinate (c) at (1,1);
 \coordinate (d) at (-1,1);
 \coordinate (e) at (0,0);
\coordinate (f) at (0,1);
\coordinate (g) at (0,-1);
\coordinate (h) at (1,0);
\coordinate (i) at (-1,0);
 \begin{pgfonlayer}{background}
      \draw[fill=black!10,postaction={on each segment={mid arrow=black}}] (b)--(c)--(d);
      \draw[fill=black!10,postaction={on each segment={mid arrow=black}}] (b)--(a)--(d);
            \end{pgfonlayer}
 \node [below left] at (a)
                {$\overline{D}_+$};
\node [circle, fill=black, inner sep=1pt] at (a){};
\node [below right] at (b)
                {$D_-$};
\node [circle, fill=black, inner sep=1pt] at (b){};                
\node [above right] at (c)
                {$D_+$};
\node [circle, fill=black, inner sep=1pt] at (c){};
\node [above left] at (d)
                {$\overline{D}_-$};
\node [circle, fill=black, inner sep=1pt] at (d){};
\node  at (e)
                {$\mathcal{C}$};
\node [above] at (f)
                {$\mathcal{W}$};
\node[below] at (g)
                {$\overline{\mathcal{W}}$};
\node[right] at (h)
                {$\mathcal{Z}$};
\node[left] at (i)
                {$\overline{\mathcal{Z}}$};
\end{tikzpicture} \hspace{2mm}
\caption{A holomorphic symplectic Morita 2-equivalence}\label{mor2}
\end{figure}
The structures $(D_\pm,\mathcal{Z},\mathcal{W},\mathcal{C})$ described so far are all holomorphic symplectic manifolds of quadruple the dimension of $X_\pm$, and  have as many groupoid structures as their codimension in the diagram ($D_\pm$ each have two groupoid structures, $\mathcal{Z}$ and $\mathcal{W}$ have one, and $\mathcal{C}$ has none), and act on each other if they are related by face inclusion in the diagram (e.g., $D_\pm$ act on $\mathcal{Z}$ and $\mathcal{W}$, both of which act on $\mathcal{C}$).   They capture information analogous to the complex structure and K\"ahler class of a K\"ahler manifold.  Furthermore,  in analogy with the fact that K\"ahler classes are real, the above diagram is equipped with a real structure, which restricts to an anti-holomorphic symplectic involution $\tau:\mathcal{C}\to \overline{\mathcal{C}}$.  
The isomorphism class of $(\mathcal{C},\tau)$ is what we call the \emph{generalized K\"ahler class}, a nonlinear generalization of the usual K\"ahler class in $H^{1,1}(M,\R)$.  

Finally, a generalized K\"ahler metric within this class is given by the choice of a trivialization of $(\mathcal{C},\tau)$,  which consists of a Lagrangian bisection $\mathcal{L}$ in the \emph{real symplectic core} $\mathcal{S}\subset\mathcal{C}$, defined as the fixed point locus of $\tau$. 
In fact, the real symplectic core 
defines a symplectic Morita equivalence between the real parts $(X_\pm^\R,\sigma_\pm^\R)$ of the holomorphic Poisson structures $\sigma_\pm$ on $X_\pm$ described by Hitchin~\cite{MR2217300}, as shown in Fig.~\ref{sympme}.
Such a Lagrangian bisection $\mathcal{L}$ induces a diffeomorphism between the underlying real manifolds of $X_\pm$, and identifies the real parts of the Hitchin Poisson structures, i.e. $\sigma^\R_+ = \sigma^\R_- = -\tfrac{1}{4}[I_+,I_-]g^{-1}$.

\begin{figure}
    \centering
\[\begin{tikzcd}        
& (\mathcal{S},\omega_\mathcal{S})\arrow{dl}\arrow{dr} & \\
(X_+^\R,\sigma_+^\R)&    \mathcal{L} \arrow[hook]{u} & (X_-^\R,\sigma_-^\R)
\end{tikzcd}\]
    \caption{Lagrangian bisections $\mathcal{L}$ of $\mathcal{S}$ correspond to generalized K\"ahler metrics}
    \label{sympme}
\end{figure}
Our main result, which establishes the above in precise detail, is \autoref{thm: reconstruction}, in which we show how a Morita 2-equivalence $\mathcal{C}$ of holomorphic symplectic double groupoids, with real structure $\tau$ and equipped with a Lagrangian bisection $\mathcal{L}$ of its symplectic core, determines a generalized K\"ahler structure.   We also show a partial converse to this result, \autoref{thm: the (1,1) morphism of a GK}, in which we promote a generalized K\"ahler structure to a Morita 2-equivalence with the required structure, provided that its underlying Manin triple consists of integrable Lie algebroids.

\subsection{Consequences of the main Theorem}
One of the key consequences of the double groupoid approach described above is that it allows us, in Section~\ref{sec:gk potential}, to resolve affirmatively a long-standing conjecture by physicists that generalized K\"ahler metrics are locally determined by a single real-valued scalar function of $\dim M$ variables, just as in the K\"ahler case.  Since we show that the metric is determined by a real Lagrangian submanifold $\mathcal{L}$ of a symplectic manifold $\mathcal{S}$ of dimension $2\dim M$, one may describe $\mathcal{L}$ locally using a generating function of $\dim M$ variables, as hoped.    

In the special case of a usual K\"ahler structure (i.e. $I_+=I_-=I$ is the complex structure and $\omega_+=\omega_- = \omega$ is the K\"ahler form), the situation simplifies dramatically: the double groupoids $D_\pm$ coincide with the pair groupoid of the holomorphic cotangent bundle of the underlying complex manifold, and the remaining morphisms $\mathcal{Z},\mathcal{W},\mathcal{C}$ are constructed from the $\omega$-twisted cotangent bundle $T^*_\omega X$, whose isomorphism class as a symplectic affine bundle captures the K\"ahler class.  The K\"ahler metrics within this class are then seen as smooth sections $\mathcal{L}$ of $T^*_\omega X$ which are Lagrangian and symplectic, respectively, for the real and imaginary parts of the holomorphic symplectic form.  This observation, that K\"ahler metrics are Lagrangian submanifolds, was first made by Donaldson~\cite{MR1959581};  we show how our picture specializes to his in Section~\ref{subsec:examples}.  

Another special case which informed our work is that of generalized K\"ahler structures of \emph{symplectic type}, in which one of the two real Poisson structures $\pi_A = \tfrac{1}{2}(I_+- I_-)g^{-1}, \pi_B=\tfrac{1}{2}(I_++I_-)g^{-1}$ discovered by Lyakhovich and Zabzine~\cite{MR1948542} is symplectic. In this case, the use of double groupoids is unnecessary, and one obtains a simplified picture of generalized K\"ahler metrics as real Lagrangian bisections of a Morita equivalence between usual symplectic groupoids~\cite{MR4466669}.  The main difference between this and the K\"ahler case is that the symplectic groupoids may be nonabelian, in contrast with the addition law on the cotangent bundle in Donaldson's picture.  This has recently been used~\cite{MR4397176} to construct noncommutative graded algebras out of generalized K\"ahler structures of symplectic type, in contrast to the usual commutative coordinate rings in K\"ahler geometry.  The algebraic ramifications of the more general case treated in this paper remain to be seen. 

\subsection{Generalized K\"ahler structures on Lie groups}
Some of the most interesting known examples of generalized K\"ahler metrics which are not deformations of K\"ahler structures can be found on even-dimensional compact semisimple Lie groups, where the Riemannian metric is bi-invariant, but the complex structures $(I_+,I_-)$ are right and left-invariant respectively; the full generalized K\"ahler structure is only invariant under the action of the maximal torus.  The Hermitian forms are not closed, and $d^c_+\omega_+ = -d^c_-\omega_- = H$ coincides with the Cartan 3-form, whose nontrivial cohomology class contradicts the $\partial\bar\partial$-Lemma as mentioned above.  These generalized K\"ahler structures, relevant to the Wess--Zumino--Witten 2-dimensional sigma model,  are not of symplectic type and therefore cannot be treated by the  special cases mentioned above.  In Theorem~\ref{pro:doumorequ}, we explicitly construct the Morita 2-equivalences integrating this family of examples, expressing the symplectic manifolds $(D_\pm,\mathcal{Z},\mathcal{W},\mathcal{C})$ as moduli spaces of flat connections on  2-dimensional surfaces with Lagrangian boundary conditions.  Our method is inspired by Boalch's construction~\cite{MR2352135} of the Lu-Weinstein symplectic double groupoid associated to a Lie bialgebra, and uses \v{S}evera's recent generalization~\cite{MR3330247,MR3001637} of the work of Alekseev, Malkin and Meinrenken~\cite{MR1638045}, which itself is based upon the result of Atiyah and Bott on the symplectic structure on the moduli space of flat connections on a closed oriented 2-manifold~\cite{atibot}.  The existence of groupoid and double groupoid structures on these moduli spaces is interpreted in terms of cutting and gluing of surfaces.  We then use this presentation in \autoref{ex:hopf surf} to obtain explicit formulae for the generalized K\"ahler potential function on the Hopf surface.

\subsection{Outline of the paper} 
In Section \ref{section: The holomorphic Manin triples of GK structures}, we review the required tools from Dirac geometry and generalized complex geometry, and recast generalized Kähler structures purely in terms of pairs of transverse holomorphic Dirac structures known as Manin triples.
This new characterization unravels an intimate connection between generalized K\"ahler geometry and double groupoids at the infinitesimal level, setting the stage for the integration problem.

In Section \ref{section: Symplectic double groupoids}, we review the way in which symplectic double groupoids are related to Manin triples, and refine the existing theory of double Lie groupoids to include both vertical and horizontal generalized 1-morphisms as well as a new notion of Morita 2-equivalence between such morphisms, which have the shape of squares as in Fig.~\ref{mor2}. 

In Section \ref{section: integration}, we explain the passage from a generalized K\"ahler structure to its integrating object.  \autoref{thm: the (1,1) morphism of a GK} shows that the holomorphic symplectic double groupoids $D_\pm$ and their conjugates are related by a holomorphic symplectic Morita 2-equivalence equipped with a real structure. Furthermore, \autoref{thm: key Lagrangian in the central brane} shows that a single Lagrangian bisection as in Fig.~\ref{sympme} determines a trivialization of the underlying real Morita 2-equivalence in Fig.~\ref{mor2}. 

In Section \ref{sec:gk potential}, we prove the converse, \autoref{thm: reconstruction}, showing that the Morita 2-equivalence and Lagrangian bisection suffice to recover the generalized K\"ahler structure. As an application, we show how the group of Hamiltonian flows acts on the space of Lagrangian bisections to obtain a method of deforming generalized K\"ahler metrics. Our deformation method recovers the existing constructions in special cases \cite{MR1702248, MR2287917, MR2681704, MR2217300}. 

In Section \ref{sec: lie groups}, we provide an application of our main results to the known examples of generalized K\"ahler structures on compact even-dimensional semisimple Lie groups.  We construct the integrating Morita 2-equivalence explicitly as a moduli space of flat connections on an annulus with Lagrangian boundary conditions, and exhibit a canonical Lagrangian bisection of its symplectic core.  

\vspace{1ex}
\noindent \textit{\textbf{Acknowledgements}}

We thank Henrique Bursztyn, Ezra Getzler, Nigel Hitchin, Martin Ro\v{c}ek, Pavol \v{S}evera, Maxim Zabzine,  and Alan Weinstein for teaching us many of the tools needed in this work.  This project was supported by NSERC Discovery grant ``Higher structures in generalized geometry and mathematical physics'' RGPIN-2018-04349. 

\section{Generalized Kähler geometry and holomorphic Manin triples}\label{section: The holomorphic Manin triples of GK structures}


Our starting point is the formulation of generalized K\"ahler geometry in terms of generalized complex geometry~\cite{marcophd,MR2811595, MR2013140}.  To fix notation, let $M$ be a smooth manifold and $H$ a closed 3-form on it.  Recall that a generalized complex structure $\J$ on $(M,H)$ is a complex structure on the vector bundle $\DT M= TM\oplus T^*M$ whose $+i$ eigenbundle is maximal isotropic for the natural split-signature bilinear form 
\[
\IP{X+\a,Y+\b}=\iota_X\b + \iota_Y\a,
\] 
and involutive with respect to the $H$--twisted Courant bracket  
\begin{equation}\label{stdcrt}
[X+\a, Y+\b]_H = [X,Y] + \L_X \b - \iota_Y d\a +\iota_X\iota_YH.
\end{equation}
A compatible pair of such structures then defines a generalized K\"ahler structure, as follows.  

\begin{definition}
	A {\em generalized Kähler structure} on $(M,H)$ is a pair of commuting generalized complex structures $\J_A, \J_B$ such that the bilinear form 
 \[ \IP{ \J_A \cdot,\J_B \cdot} \] 
defines a positive definite bundle metric on $\DT M$ known as a \emph{generalized Riemannian metric}. 
\end{definition}

A key result of \cite{MR3232003} is that the bi-Hermitian geometry discovered by Gates, Hull and Ro\v{c}ek, namely a pair $(I_+,I_-)$ of complex structures on $M$, each compatible with the Riemannian metric $g$ and whose Hermitian forms $\omega_\pm  = gI_\pm$ satisfy 
\begin{equation}\label{dcpm}
d^c_+\omega_+ = -d^c_-\omega_- = H,    
\end{equation}
is equivalent to a generalized K\"ahler structure in the sense above via the following formula. 
\begin{align}\label{Gualtieri's formula}
\J_A = \frac{1}{2}  \begin{pmatrix}
	I_{+}+ I_{-} & (I_+- I_-)g^{-1} \\
	g(I_+ - I_-)& -(I_+^*+ I_-^*) \end{pmatrix}
 \qquad
\J_B= \frac{1}{2}  \begin{pmatrix}
	I_{+}- I_{-} & (I_+ + I_-)g^{-1} \\
	g(I_+ + I_-)& -(I_+^*- I_-^*)\end{pmatrix}
\end{align}
In the remainder of this section, we use the above to provide a further reformulation of generalized K\"ahler geometry in terms of Dirac structures, which makes apparent its infinitesimal Lie-theoretic character.

\subsection{Holomorphic Courant algebroids and matched pairs}\label{subsec: hol courant} 

In this and the next subsection, we show that each of the complex manifolds $X_\pm = (M,I_\pm)$ involved in a generalized K\"ahler structure is endowed with a holomorphic exact Courant algebroid $\E_\pm$ which splits as a sum of holomorphic Dirac structures: $\E_\pm = \A_\pm \oplus\B_\pm$.  Such a structure is known as a (holomorphic) Manin triple~\cite{MR1472888}. 

We begin by reviewing a result from~\cite{MR3232003}: holomorphic exact Courant algebroids over the complex manifold $X$ are classified by the first cohomology with coefficients in the sheaf of holomorphic closed 2-forms
\[
H^1(X,\Omega^{2,\cl}_X) = \mathbb{H}^1(X,\Omega^2_X\to \Omega^3_X\to \Omega^4_X),
\]
in the following way.  Given a closed 3-form in the Dolbeault resolution of the above complex,
\[
\H = \H^{2,1}+\H^{3,0}\in \Omega^{2,1}(X)\oplus\Omega^{3,0}(X),
\]
we define a holomorphic structure on $\DT_{1,0} X = T_{1,0}X\oplus T^*_{1,0}X$ via the Dolbeault operator
\begin{equation}\label{dolb1}
\overline{D} = \begin{pmatrix}
    \delbar & 0 \\
    \H^{2,1} & \delbar
\end{pmatrix},
\end{equation}
and define a bracket on the smooth sections of $\DT_{1,0}X$ via
    \begin{equation}\label{crntmp}
[X+\a,Y+\b]_{1,0} = [X,Y] + (\del \iota_X + \iota_X \del)\b - \iota_Y\del\a + \iota_X\iota_Y\H^{3,0},
\end{equation}
which descends to a Courant bracket on the $\overline{D}$-holomorphic sections.  Two such Courant algebroids defined by $\H$ and $\H'$ over the same complex manifold are isomorphic if their Dolbeault operators and Courant brackets are intertwined by the gauge transformation of $\DT_{1,0}X$ given by
\begin{equation}\label{holgauge}
e^\B=\begin{pmatrix}
    1 & 0\\
    \B & 1
\end{pmatrix}, \qquad \B\in\Omega^{2,0}(X),
\end{equation}
which occurs if and only if $\H'-\H = d\B$. 

There is a convenient way of repackaging the above structure into a single smooth complex Courant algebroid known as a \emph{matched pair}~\cite{MR3264784}: on $\DT_\C X = \DT_{1,0}X \oplus \DT_{0,1}X$, where $\DT_{0,1}X = T_{0,1}X\oplus T^*_{0,1}X$, we consider the Courant algebroid structure with anchor given by the canonical projection to $T_\C X$, the canonical symmetric pairing, and the standard Courant bracket~\eqref{stdcrt} twisted by $\H=\H^{2,1}+\H^{3,0}$.
The Dolbeault operator~\eqref{dolb1} and bracket~\eqref{crntmp} can then be recovered by projection of the bracket onto $\DT_{1,0}X$:
\begin{equation}\label{projbrak}
\overline{D}_X(Y+\beta) = \pi_{1,0}([X, Y+\beta]),\qquad [X+\a,Y+\b]_{1,0}=\pi_{1,0}([X+\a,Y + \b]).
\end{equation}
The notion of a matched pair extends to holomorphic Dirac structures: if $\A\subset\DT_{1,0}X$ is a holomorphic Dirac structure (namely, a maximal isotropic holomorphic subbundle closed under the bracket), then its matched pair is the smooth Dirac structure $\A\oplus T_{0,1}X\subset \DT_\C X$.  This coincides with the notion of matched pair of Lie algebroids~\cite{MR1460632}.
\begin{figure}[H]
\[
\begin{tikzcd}        
    (\DT_{1,0}X, \overline{D}, [\cdot,\cdot]_{1,0}) \arrow[|->]{r} & (\DT_{1,0}X\oplus \DT_{0,1}X, [\cdot,\cdot]_\H)\\
\A \arrow[hook]{u}& \A \oplus T_{0,1} X\arrow[hook]{u}
\end{tikzcd}
\]
\caption{Matched pair functor from holomorphic to smooth Courant algebroids}
\end{figure}
Importantly for what follows, the above embedding of holomorphic Courant algebroids into smooth Courant algebroids (the matched pair functor) is not full: two non-isomorphic holomorphic Courant algebroids may have gauge-equivalent matched pairs.

\begin{example}\label{pluri}
    Let $X$ be a complex manifold and $\omega\in\Omega^{1,1}_\R$ a pluriclosed real $(1,1)$--form (i.e. $dd^c\omega=0$).  The real closed 3-form 
    \[
    H = d^c\omega = i(\delbar-\del)\omega
    \]
    is the real part of the $(2,1)$-form $\H = -2i\del\omega$, which is itself closed, defining a holomorphic Courant algebroid $\E = (\DT_{1,0}X, \overline{D}, [\cdot,\cdot]_{1,0})$ via~\eqref{dolb1} and~\eqref{crntmp}. 


    In the same way,  the closed form $\overline\H = 2i\delbar\omega$ defines a holomorphic Courant algebroid $\overline{\E}$ on the conjugate complex manifold $\overline{X}$.  Observe that while $\E$ and $\overline{\E}$ may not be isomorphic (indeed, $X$ and $\overline{X}$ need not be isomorphic as complex manifolds), their matched pairs are gauge equivalent: the identity
    \[
    \H - \overline\H= -2id\omega 
    \]
    implies that the matched pairs of $\E$ and $\overline{\E}$ are equivalent as smooth Courant algebroids, via the imaginary gauge transformation defined by $2i\omega$.  
    
    Since the matched pair of $\E$ is a smooth complex Courant algebroid, we may extract its real and imaginary parts $\E_R = (\DT M, H)$ and $\E_I = (\DT M, -d\omega)$ respectively, each of which is a smooth real Courant algebroid.  In particular, the pluriclosed form $\omega$ may be interpreted as a trivialization of the imaginary part of $\E$.
    \end{example}

Example~\ref{pluri} has the following converse, which describes the close relationship between pluriclosed real $(1,1)$-forms and holomorphic Courant algebroids. 

	\begin{proposition}\label{Real structures of CAlg}
        Let $\E$ be a holomorphic exact Courant algebroid. An involutive isotropic splitting of its imaginary part $\E_I$ induces an isomorphism between its real part $\E_R$ and $(\DT M, H)$, where $H=d^c\omega$, for a unique pluriclosed real $(1,1)$-form $\omega$.  
        \end{proposition}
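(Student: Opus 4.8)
The plan is to fix a representative of the gauge class of $\E$ and to convert the abstract datum of an involutive isotropic splitting into a primitive of $\Im\H$, after which the statement reduces to a bidegree bookkeeping together with the Bianchi identity $d\H=0$.

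First I would choose a closed representative $\H=\H^{2,1}+\H^{3,0}$ of the class classifying $\E$. Unwinding the definitions of the real and imaginary parts as in Example~\ref{pluri}, this presents $\E_R=(\DT M,\Re\H)$ and $\E_I=(\DT M,\Im\H)$ with $\Re\H=\tfrac12(\H+\bar\H)$ and $\Im\H=\tfrac1{2i}(\H-\bar\H)$. Relative to the splitting determined by this representative, every isotropic splitting of $\E_I$ is the graph $\{X+\iota_X b\}$ of a real $2$-form $b$, and a direct computation with the Courant bracket~\eqref{stdcrt} shows that the graph is involutive if and only if $db=\Im\H$. (In particular the hypothesis forces $[\Im\H]=0$.) Thus the involutive splitting is encoded by a real $2$-form $b$ with $db=\Im\H$.

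Next I would decompose $b$ by type, writing $b=\beta-\omega+\bar\beta$ with $\beta=b^{2,0}\in\Omega^{2,0}$ and $\omega=-b^{1,1}\in\Omega^{1,1}_\R$ real, and extract the components of $db=\Im\H$ by bidegree. The $(3,0)$- and $(2,1)$-parts read $\del\beta=\tfrac1{2i}\H^{3,0}$ and $\delbar\beta-\del\omega=\tfrac1{2i}\H^{2,1}$ (the remaining parts being their conjugates). Feeding these back into $\Re\H=\tfrac12(\H+\bar\H)$, the $\beta$-dependence assembles into an exact term and the computation collapses to
\[
\Re\H=d^c\omega+db',\qquad b'=i(\beta-\bar\beta),
\]
where $b'$ is a real $2$-form built solely from the $(2,0)$-part of the splitting. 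Hence the real $B$-field transform $e^{-b'}$ provides the asserted isomorphism $\E_R=(\DT M,\Re\H)\cong(\DT M,d^c\omega)$, so that $H=d^c\omega$.

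The crux is to check that $\omega$ is pluriclosed, and here I would invoke the closedness $d\H=0$: its $(2,2)$-component is $\delbar\H^{2,1}=0$, and substituting $\H^{2,1}=2i(\delbar\beta-\del\omega)$ yields $\del\delbar\omega=0$, i.e. $dd^c\omega=2i\del\delbar\omega=0$. I expect this to be the main obstacle, in the sense that the whole statement hinges on recognizing that the pluriclosed condition is precisely the $(2,2)$-part of the Bianchi-type identity $d\H=0$ once the splitting equation is inserted; the rest is careful bidegree and sign bookkeeping. Finally, for uniqueness I would note that the construction reads off $\omega=-b^{1,1}$ canonically from the splitting, and that this is independent of the chosen representative: replacing $\H$ by $\H+d\B$ with $\B\in\Omega^{2,0}$ replaces $b$ by $b+\Im\B$, and since $\Im\B$ is of pure type $(2,0)+(0,2)$ its $(1,1)$-part vanishes, leaving $\omega$ unchanged. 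Thus the involutive splitting determines one and only one such pluriclosed real $(1,1)$-form.
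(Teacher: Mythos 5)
Your proof is correct and follows essentially the same route as the paper's: encode the involutive isotropic splitting as a real $2$-form $B$ with $dB=\Im(\H)$, set $\omega=-B^{1,1}$, show $\Re(\H)-d^c\omega = d\bigl(i(B^{2,0}-B^{0,2})\bigr)$ is exact so a real $B$-field transform gives the isomorphism, and use gauge-invariance of the $(1,1)$-component for uniqueness. The only addition is your explicit verification of pluriclosedness from the $(2,2)$-component of $d\H=0$, which the paper leaves implicit (it also follows immediately from closedness of $\Re(\H)$).
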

	\begin{proof}
		Let $\H = \H^{3,0} + \H^{2,1}$ be a closed form representing $\E$, and let $B$ be the real 2-form representing the involutive isotropic splitting of $\E_I$, so that  $dB = \Im (\H)$, or in components,
		\begin{equation}\label{imcompo}
		\H^{(3,0)} = 2i \d B^{(2,0)}, \qquad \H^{(2,1)} = 2i(\dbar B^{(2,0)} + \d B^{(1,1)}). 
		\end{equation} 
        Note that a gauge transformation of the form~\eqref{holgauge} takes $(\H,B)$ to $(\H+ d\B, B + \Im(\B))$,  for $\B$ of type $(2,0)$, so that the $(1,1)$ component 
        $\omega=-B^{1,1}$ is gauge-invariant.  
        As a result of~\eqref{imcompo}, 
		\[
		\Re(\H) =d(-2 \Im (B^{(2,0)})) + d^c \omega,
		\]
		showing that $2\Im(B^{(2,0)}) = - \tfrac{1}{2}(BI+I^*B)$ defines an isomorphism between $\E_R$ and $(\DT M, d^c\omega)$, as required. 
	\end{proof}

On a generalized K\"ahler manifold, we may apply Example~\ref{pluri} to each of the complex manifolds $X_\pm$:  from Equation~\ref{dcpm}, we see that $\H_+ = -2i\del_+\omega_+$ and $\H_- = 2i\del_-\omega_-$ define holomorphic Courant algebroids $\E_\pm$ on $X_\pm$, each equipped with a trivialization of its imaginary part, together with an isomorphism between the real parts of $\E_+$ and $\E_-$.  This implies that the matched pairs of $\E_\pm$ and their complex conjugates are related by gauge transformations according to the following diagram of closed 3-forms:
\begin{figure}[H]
    \centering
    \begin{tikzcd}
\overline{\mathcal{H}}_- &                     & \mathcal{H}_+ \\
   & H \arrow[lu, "-i\omega_-" description] \arrow[ru, "-i\omega_+" description] \arrow[ld, "i\omega_+" description] \arrow[rd, "i\omega_-" description] &               \\
\overline{\mathcal{H}}_+ &              & \mathcal{H}_-
\end{tikzcd}
   \hspace{5em}
   \begin{tikzcd}
\overline{\E}_- &                     & \E_+ \ar[ll,"e^{i(\omega_+-\omega_-)}"', "\cong"]\\
   & &               \\
\overline{\E}_+\ar[uu, "\cong"', "e^{-i(\omega_++\omega_-)}"] &              & \E_-\ar[ll,"e^{i(\omega_+ - \omega_-)}", "\cong"']\ar[uu, "e^{-i(\omega_++\omega_-)}"', "\cong"]
\end{tikzcd}
    \caption{Gauge equivalences between matched pairs of $\E_\pm$}
    \label{gaucyc}
\end{figure}

\begin{proposition}\label{gaugcyc}
    Let $\E_\pm$ be the holomorphic Courant algebroids determined by $\H_\pm =\mp 2i\del_\pm\omega_\pm$ on a generalized K\"ahler manifold.  The pluriclosed forms $\mp \omega_\pm$ define trivializations of the imaginary parts of $\E_\pm$ and identify the real parts of $\E_\pm$ with $(\DT M,H)$, where $H=\pm d^c_\pm \omega_\pm$.   
    
    As a result, the matched pairs of $\E_\pm$ and their complex conjugates are equivalent as smooth Courant algebroids by the pure imaginary gauge transformations in Fig.~\ref{gaucyc}.  
\end{proposition}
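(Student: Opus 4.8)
The plan is to obtain the proposition as a two-fold application of Example~\ref{pluri} and Proposition~\ref{Real structures of CAlg}, with the generalized K\"ahler compatibility \eqref{dcpm} supplying the single piece of information that ties the two complex structures together. Everything else is sign bookkeeping.

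First I would record that each Hermitian form $\omega_\pm = gI_\pm$ is a pluriclosed real $(1,1)$-form on $X_\pm$, since $dd^c_\pm\omega_\pm = 0$ is part of the generalized K\"ahler data. The closed forms $\H_\pm = \mp 2i\del_\pm\omega_\pm$ are then exactly of the type appearing in Example~\ref{pluri}, so via \eqref{dolb1} and \eqref{crntmp} they define holomorphic Courant algebroids $\E_\pm$ on $X_\pm$. A direct computation of real and imaginary parts (using $\overline{\del_\pm\omega_\pm} = \delbar_\pm\omega_\pm$ for the real forms $\omega_\pm$) gives $\Im(\H_\pm) = \mp d\omega_\pm$ and $\Re(\H_\pm) = \pm d^c_\pm\omega_\pm$; thus the real 2-forms $\mp\omega_\pm$ satisfy $d(\mp\omega_\pm) = \Im(\H_\pm)$ and so define involutive isotropic splittings of the imaginary parts $\E_{\pm,I}$, which are the claimed trivializations. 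As each splitting form is of pure type $(1,1)$, Proposition~\ref{Real structures of CAlg} identifies the real part of $\E_\pm$ with $(\DT M, \pm d^c_\pm\omega_\pm)$.

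The essential step is then to invoke \eqref{dcpm}: it gives $\Re(\H_+) = d^c_+\omega_+ = -d^c_-\omega_- = \Re(\H_-) = H$, so that both real parts — and, since complex conjugation preserves the real part, those of $\overline{\E}_\pm$ as well — coincide with the single smooth real Courant algebroid $(\DT M, H)$. This common $(\DT M, H)$ is what serves as the hub in Fig.~\ref{gaucyc}. Without the generalized K\"ahler condition the two complex structures would yield algebroids with unrelated real parts and no such hub would exist; recognizing that \eqref{dcpm} is exactly the gluing hypothesis is the main conceptual content, the remainder being a sign computation.

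Finally I would exhibit the four spokes and close them into the cycle. Each matched pair is the smooth complex Courant algebroid $(\DT_\C M, [\cdot,\cdot]_{\H})$, and since every one of the four 3-forms differs from $H$ by an exact imaginary form — explicitly $\H_\pm - H = \mp i\, d\omega_\pm = d(\mp i\omega_\pm)$ and $\overline{\H}_\pm - H = \pm i\, d\omega_\pm = d(\pm i \omega_\pm)$ — the gauge transformation \eqref{holgauge} by the corresponding pure imaginary 2-forms intertwines $(\DT_\C M, H)$ with each matched pair, producing the four spokes of Fig.~\ref{gaucyc}. Composing two spokes through the hub yields each edge of the square: for instance $\E_- \to \E_+$ requires $d\B = \H_+ - \H_- = d(-i(\omega_++\omega_-))$, i.e. the gauge $e^{-i(\omega_++\omega_-)}$, while $\E_+ \to \overline{\E}_-$ requires $\overline{\H}_- - \H_+ = d(i(\omega_+-\omega_-))$, i.e. $e^{i(\omega_+-\omega_-)}$, matching the diagram. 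The only points demanding care are the orientation and sign conventions in \eqref{holgauge}, which fix the direction of each arrow; once these are pinned down the square commutes automatically, since all four edges factor through the common hub $H$.
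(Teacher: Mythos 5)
Your proof is correct and follows essentially the same route as the paper, which treats this proposition as an immediate consequence of Example~\ref{pluri} and Proposition~\ref{Real structures of CAlg} applied to the pluriclosed forms $\mp\omega_\pm$, with the gauge equivalences then read off from the differences of the closed 3-forms exactly as you do (the paper leaves this as the discussion preceding the proposition rather than a displayed proof). The sign bookkeeping you make explicit — $\Re(\H_\pm)=\pm d^c_\pm\omega_\pm=H$, $\Im(\H_\pm)=\mp d\omega_\pm$, and the spoke identities $\H_\pm-H=d(\mp i\omega_\pm)$, $\overline{\H}_\pm-H=d(\pm i\omega_\pm)$ — matches the paper's Fig.~\ref{gaucyc} and its conventions in \eqref{holgauge} and \eqref{gaughpm}.
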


\subsection{Manin triples from generalized K\"ahler structures}

A Manin triple $(\E, \A, \B)$ consists of a Courant algebroid $\E$ and Dirac structures $\A,\B$ in $\E$ such that $\E = \A \oplus \B$. In this section, we show that each of the holomorphic Courant algebroids $\E_\pm$ determined by a generalized K\"ahler structure is equipped with a pair of transverse holomorphic Dirac structures, forming a holomorphic Manin triple $(\E_\pm,\A_\pm,\B_\pm)$ over the complex manifold $X_\pm$. Furthermore, we find that these Manin triples are related by gauge transformations determined by the generalized K\"ahler structure.  We conclude with a reformulation of generalized K\"ahler geometry in terms of gauge equivalences between holomorphic Manin triples. 

Since a generalized K\"ahler structure consists of the pair of commuting complex structures~\eqref{Gualtieri's formula}, we obtain a decomposition of $\DT_\C M$ into four simultaneous eigenspaces of $(\J_A,\J_B)$:
\begin{equation}
\DT_\C M = \l_+\oplus \l_- \oplus\overline{\l_+}\oplus\overline{\l_-},
\end{equation}
where $L_A = \l_+\oplus \l_-$ and $L_B=\l_+\oplus\overline{\l_-}$ are the  $+i$ eigenbundles of $\J_A$ and $\J_B$, respectively. 

\begin{lemma}\label{lilell}
    The simultaneous eigenbundles of $(\J_A,\J_B)$ may be expressed in terms of the bihermitian data as follows:
    \[
    \l_\pm = e^{\mp i\omega_\pm}(T_{1,0}X_\pm) = \{ Y \mp i\omega_\pm(Y)\ :\ Y\in T_{1,0} X_\pm\}.
    \]
\end{lemma}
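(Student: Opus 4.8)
The statement is fibrewise linear algebra on $\DT_\C M$, with no integrability involved, so the plan is to unwind the $B$-field transform and recognize the resulting subspaces as the complexified eigenbundles of the generalized Riemannian metric $G=\IP{\J_A\,\cdot\,,\J_B\,\cdot\,}$.

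First I would compute $e^{\mp i\omega_\pm}(Y)$ explicitly. For $Y\in T_{1,0}X_\pm$ one has $I_\pm Y=iY$, and since $\omega_\pm=gI_\pm$ as a map $TM\to T^*M$, this gives $\iota_Y\omega_\pm=g(I_\pm Y)=i\,gY$. Hence $e^{\mp i\omega_\pm}(Y)=Y+\iota_Y(\mp i\omega_\pm)=Y\pm gY$, so the two subspaces in the statement are the graphs of $\pm g$ restricted to the holomorphic tangent bundles $T_{1,0}X_\pm$.

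Next I would locate these graphs inside the orthogonal splitting $\DT M=C_+\oplus C_-$ into the positive- and negative-definite subbundles of the generalized metric, $C_\pm=\{X\pm gX : X\in TM\}$. The commuting pair $(\J_A,\J_B)$ preserves $C_\pm$, and I would read off their restrictions directly from Gualtieri's formula~\eqref{Gualtieri's formula}, applying each matrix to a column $\begin{pmatrix}X\\ \pm gX\end{pmatrix}$ and using the orthogonality relation $I_\pm^*g=-gI_\pm$. The outcome is that on $C_+$ both $\J_A$ and $\J_B$ act as $I_+$ under the projection $C_+\xrightarrow{\sim}TM$, while on $C_-$ the structure $\J_A$ acts as $I_-$ and $\J_B$ as $-I_-$.

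Finally I would complexify and extract eigenvalues. The graph of $g$ over $T_{1,0}X_+$ sits in $C_+\otimes\C$ as the $+i$-eigenspace of $I_+$, hence is a $+i$-eigenspace of both $\J_A$ and $\J_B$: this is $\l_+$. The graph of $-g$ over $T_{1,0}X_-$ sits in $C_-\otimes\C$ as the $+i$-eigenspace of $I_-$, hence is the $+i$-eigenspace of $\J_A$ and the $-i$-eigenspace of $\J_B$: this is $\l_-$. Since $Y\mapsto e^{\mp i\omega_\pm}(Y)$ is injective of rank $\tfrac12\dim M$, matching that of $\l_\pm$, its image exhausts the eigenbundle and the two descriptions coincide. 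The only genuine difficulty is bookkeeping of conventions — the sign of $\iota_Y B$ in the gauge action, the orthogonality identity for $I_\pm$, and matching the $\mp i$ eigenvalue of $\J_B$ to the labels $\l_\pm$; there is no conceptual obstacle, as everything reduces to this fibrewise computation.
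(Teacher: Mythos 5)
Your proposal is correct, but it takes a genuinely different route from the paper's proof. The paper argues directly: writing $Y+\beta\in\l_+$ and applying formula~\eqref{Gualtieri's formula}, it takes the sum and difference of the tangent parts of the two eigenvalue equations; the difference forces $\beta=gY$ and the sum then forces $Y\in T_{1,0}X_+$, which is exactly the graph description $Y-i\omega_+(Y)=Y+gY$. You instead pass through the generalized metric: you first identify $e^{\mp i\omega_\pm}(T_{1,0}X_\pm)$ with the graphs of $\pm g$ over $T_{1,0}X_\pm$, then block-diagonalize $\J_A,\J_B$ with respect to the splitting $C_\pm=\{X\pm gX\}$ (both acting as $I_+$ on $C_+$, and as $I_-$ and $-I_-$ respectively on $C_-$), and finally read off all four simultaneous eigenbundles at once. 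Both routes rest on the same inputs, namely formula~\eqref{Gualtieri's formula} and the orthogonality $I_\pm^*g=-gI_\pm$, but yours makes visible the conceptual origin of that formula (the $\pm1$-eigenbundles $C_\pm$ of the generalized metric) and produces $\overline{\l_\pm}$ simultaneously, while the paper's is a shorter direct solution of the eigenvalue equations. One small step you should make explicit: your final dimension count uses that $\l_\pm$ has rank $\tfrac12\dim M$, which does not follow from the decomposition $\DT_\C M=\l_+\oplus\l_-\oplus\overline{\l_+}\oplus\overline{\l_-}$ alone (that only gives the sum of the ranks of $\l_+$ and $\l_-$). It does, however, follow immediately from your own analysis, since $C_\pm\otimes\C=\l_\pm\oplus\overline{\l_\pm}$ with $C_\pm\cong TM$; equivalently, the four graphs you exhibit already span all of $\DT_\C M$, so each containment of a graph in an eigenbundle must be an equality.
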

\begin{proof}
    From \eqref{Gualtieri's formula}, we see that $Y+\b\in \l_+$ if and only if
    \[
    \tfrac{1}{2}((I_+\pm I_-)Y + (I_+\mp I_-)g^{-1}\b) = i(Y+\b).
    \]
    The sum and difference of these equations yields the result.  Proceed similarly for $\l_-$. 
\end{proof}

As a result, the gauge transform $e^{-i\omega_+}$ which identifies $(\DT_\C M, H)$ with the matched pair of $\E_+$ takes $\overline{\l_+}$ to $T_{0,1}X_+$.  We now show that the Dirac structures $\overline{L}_A$ and $\overline{L}_B$, each of which contain $\overline{\l_+}$ as a summand, are sent to transverse holomorphic Dirac structures $\A_+, \B_+$ in $\E_+$, hence defining a holomorphic Manin triple $(\E_+,\A_+,\B_+)$.  Similarly, the gauge transform $e^{i\omega_-}$ identifying $(\DT_\C M, H)$ with the matched pair of $\E_-$ takes $\overline{\l_-}$ to $T_{0,1}X_-$, and the Dirac structures $\overline{L}_A$ and ${L}_B$, each of which contain $\overline{\l_-}$ as a summand, are sent to transverse holomorphic Dirac structures $\A_-, \B_-$ in $\E_-$, hence defining a holomorphic Manin triple $(\E_-,\A_-,\B_-)$. In~\cite{MR3232003} this is shown using Courant reduction, but we need an alternative, more direct proof in order to obtain our Theorem~\ref{Inf descprtion of GK}.

\begin{theorem}\label{manintriples}
    On a generalized K\"ahler manifold, the Dirac structures $L_A, L_B$ defined by the $+i$ eigenbundles of $\J_A,\J_B$ determine holomorphic Manin triples $\E_\pm=\A_\pm\oplus\B_\pm$ on $X_\pm$ as follows:
\begin{align*}
    \A_+ &=\DT_{1,0} X_+\cap ( e^{-i\omega_+}(\overline{L}_A)) & \A_-=&\DT_{1,0} X_-\cap (e^{i\omega_-}(\overline{L}_A)) \\
    \B_+ &=\DT_{1,0} X_+\cap ( e^{-i\omega_+}(\overline{L}_B)) & \B_-=&\DT_{1,0} X_-\cap (e^{i\omega_-}(L_B)) 
\end{align*}
\end{theorem}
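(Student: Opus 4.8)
The plan is to verify three things for each of the four subbundles: that they are holomorphic (i.e. closed under the relevant Dolbeault operator $\overline{D}$), that they are Dirac structures (maximal isotropic and bracket-closed in the holomorphic Courant algebroid), and that the two in each pair $\A_\pm,\B_\pm$ are transverse, so that $\E_\pm=\A_\pm\oplus\B_\pm$. Throughout I would work entirely inside the smooth complex Courant algebroid $(\DT_\C M, H)$ and exploit the matched-pair description from the previous subsection, where the gauge transforms $e^{\mp i\omega_\pm}$ identify $(\DT_\C M,H)$ with the matched pairs of $\E_\pm$, sending $\overline{\l_\pm}$ to $T_{0,1}X_\pm$ (as recorded just before the statement).

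First I would establish the isotropy and maximal isotropy. The bundles $L_A,L_B$ and their conjugates are maximal isotropic by construction (they are $+i$-eigenbundles of generalized complex structures), and gauge transformations $e^{-i\omega_+}$ preserve the symmetric pairing; intersecting with the isotropic $\DT_{1,0}X_+$ then yields isotropic subbundles. For the rank count, I would use that $e^{-i\omega_+}(\overline{L}_A)$ and $\DT_{1,0}X_+$ are each maximal isotropic of complex rank $2n$ inside the rank-$4n$ bundle $\DT_\C M$, and that their sum is everything: indeed $e^{-i\omega_+}(\overline{L}_A)$ contains $e^{-i\omega_+}(\overline{\l_+})=T_{0,1}X_+$, which together with $\DT_{1,0}X_+$ spans a complement, forcing the intersection $\A_+$ to have rank $n$ — the correct rank for a Dirac structure in $\DT_{1,0}X_+$. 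Transversality $\A_+\cap\B_+=0$ would follow from $\overline{L}_A\cap\overline{L}_B=\overline{\l_+}$ (since $L_A\cap L_B=\l_+$), whose image $T_{0,1}X_+$ meets $\DT_{1,0}X_+$ trivially; combined with the rank count this gives $\A_+\oplus\B_+=\DT_{1,0}X_+=\E_+$.

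The substantive part is involutivity, i.e. that $\A_+$ is closed under the holomorphic Courant bracket $[\cdot,\cdot]_{1,0}$ of $\E_+$ and is $\overline{D}$-holomorphic, so that it is a genuine \emph{holomorphic} Dirac structure rather than merely a smooth one. Here I would use the projection formulas~\eqref{projbrak}: for sections of $\A_+=\DT_{1,0}X_+\cap e^{-i\omega_+}(\overline{L}_A)$, the bracket $[\cdot,\cdot]_{1,0}$ is the $\pi_{1,0}$-projection of the smooth $H$-twisted Courant bracket. Since $e^{-i\omega_+}(\overline{L}_A)$ is the image under a gauge transformation of the involutive $\overline{L}_A$, it is itself involutive for the $H$-twisted bracket; and $\DT_{1,0}X_+$ is closed under $\pi_{1,0}$ by definition. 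The closure of $\A_+$ under $[\cdot,\cdot]_{1,0}$ and under $\overline{D}$ should then drop out of compatibility of the matched-pair bracket with the splitting $\DT_\C M=\DT_{1,0}X_+\oplus\DT_{0,1}X_+$, exactly as $\overline{D}$ and $[\cdot,\cdot]_{1,0}$ were extracted by projection in~\eqref{projbrak}.

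I expect the main obstacle to be the bookkeeping in the $\A_-,\B_-$ column, where the asymmetry between the formulas is essential: $\B_-$ is built from $L_B$ rather than $\overline{L}_B$, reflecting that it is $\overline{\l_-}\subset L_B$ (not $L_B$'s conjugate) that is carried to $T_{0,1}X_-$ by $e^{i\omega_-}$. I would therefore track carefully which eigenbundle contains $\overline{\l_+}$ versus $\overline{\l_-}$ as a summand — using that $L_A=\l_+\oplus\l_-$ and $L_B=\l_+\oplus\overline{\l_-}$, hence $\overline{L}_A=\overline{\l_+}\oplus\overline{\l_-}$ and $L_B=\l_+\oplus\overline{\l_-}$ both contain $\overline{\l_-}$ — to confirm that in each case the relevant gauge transform lands the shared summand in $T_{0,1}X_\pm$, which is what makes the intersection with $\DT_{1,0}X_\pm$ have the right rank and holomorphicity. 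Once the four cases are matched to Lemma~\ref{lilell} and the eigenbundle decomposition, the involutivity and transversality arguments are structurally identical, so the work reduces to getting these sign and conjugation conventions right.
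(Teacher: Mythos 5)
Your treatment of involutivity and holomorphicity (gauge equivalences from Proposition~\ref{gaugcyc} together with the projection formulas~\eqref{projbrak}) is essentially the paper's first step, and your transversality argument via $\overline{L}_A\cap\overline{L}_B=\overline{\l_+}$ is a valid variant of the paper's. But the rank count, which is the heart of the theorem, contains a genuine gap. You claim that $e^{-i\omega_+}(\overline{L}_A)+\DT_{1,0}X_+=\DT_\C M$ and that this forces $\A_+$ to have rank $n$. Both halves fail. Arithmetically: with $\dim_\R M = 2n$, these are two rank-$2n$ maximal isotropics inside the rank-$4n$ bundle $\DT_\C M$, so if their sum were everything the intersection would have rank $2n+2n-4n=0$, not $n$; conversely $\mathrm{rank}\,\A_+=n$ is equivalent to the sum having rank $3n$. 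Moreover, the inclusion you exhibit, $T_{0,1}X_+\oplus\DT_{1,0}X_+\subseteq e^{-i\omega_+}(\overline{L}_A)+\DT_{1,0}X_+$, only bounds the sum \emph{below} by $3n$; nothing in your proposal bounds it above, and no purely formal argument can, because the desired rank statement is false for an arbitrary pair of $(1,1)$-forms $(\omega_+,\omega_-)$.

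The missing ingredient is the metric. By Lemma~\ref{lilell} one has \eqref{gaugla}, $e^{-i\omega_+}(\overline{L}_A)=T_{0,1}^+\oplus e^{-i(\omega_-+\omega_+)}(T_{0,1}^-)$, and an element $Y+X-i(\omega_-+\omega_+)X$ with $Y\in T_{0,1}^+$, $X\in T_{0,1}^-$ lies in $\DT_{1,0}X_+$ if and only if $Y=-P_{0,1}^+X$ (always solvable) \emph{and} $(\omega_-+\omega_+)X$ annihilates $T_{0,1}^+$, i.e.\ is of type $(1,0)_+$. This last condition is where the paper invokes the identity \eqref{omegasproj}: $i(\omega_-+\omega_+)X = ig(I_-+I_+)X = -2gP_{0,1}^+X$, which is a $(1,0)_+$-form precisely because both Hermitian forms are built from the \emph{same} metric $g$ and $g$ is $I_+$-Hermitian. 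This yields the explicit parametrization \eqref{aplus}, $\A_+=\{P_{1,0}^+X-2gP_{0,1}^+X:\ X\in T_{0,1}^-\}$, whose injectivity (nondegeneracy of $g$) gives rank $n$; without the identity, the type condition on $(\omega_-+\omega_+)X$ could cut the rank down to anything between $0$ and $n$. Your proposal never uses any property of $g$ beyond the eigenbundle lemma, so it cannot close this gap. Note also that these explicit formulas are not optional refinements: the paper needs \eqref{aplus} and \eqref{bplus} later in the proof of Theorem~\ref{Inf descprtion of GK}, which is its stated reason for giving this direct argument rather than citing Courant reduction.
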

\begin{proof}
Since $L_A, L_B$ are involutive for the Courant bracket defined by $H$, and since the above gauge transformations coincide with those from Proposition~\ref{gaugcyc},  it follows that $\A_\pm, \B_\pm$ are isotropic, involutive and holomorphic for the bracket and Dolbeault operators~\eqref{projbrak}.  What remains to show is that $\A_\pm, \B_\pm$ are maximal isotropic and transverse. 

From Lemma~\ref{lilell} and the decomposition $\overline{L}_A = \overline{\l_+}\oplus\overline{\l_-}$, we see that 
\begin{equation}\label{gaugla}
e^{-i\omega_+}(\overline{L}_A) = T_{0,1}^+\oplus e^{-i(\omega_-+\omega_+)}(T_{0,1}^-).
\end{equation}
We then use the following identity, for $X\in T_{0,1}^-$: 
\begin{equation}\label{omegasproj}
i(\omega_- + \omega_+)X = ig(I_- + I_+)X = -g(1+iI_+)X
=-2g P_{0,1}^+X,
\end{equation}
where $P_{0,1}^+=\tfrac{1}{2}(1 + iI_+)$ is the projection to the $(0,1)$ component for the complex structure $I_+$, to conclude that $\omega_-+\omega_+$ maps $T_{0,1}^-$ into $(T_{1,0}^+)^*$.  Combining the above, we obtain the following explicit description of $\A_+$:
\begin{equation}\label{aplus}
\A_+=\DT_{1,0} X_+\cap ( e^{-i\omega_+}(\overline{L}_A)) =\{P_{1,0}^+X - 2g P_{0,1}^+ X\ :\ X\in T_{0,1}^-\}.
\end{equation}
Here $P_{1,0}^+=\tfrac{1}{2}(1 - iI_+)$ is the projection to the $(1,0)$ component in the complex structure $I_+$.  Since $g$ is nondegenerate, we see that $P_{1,0}^+ - 2g P_{0,1}^+$ is an isomorphism from $T_\C M$ to $\DT_{1,0} X_+$, implying that $\A_+$ has the same dimension as $T_{0,1}^-$ and is hence maximal isotropic, as required. In the same way, we have an explicit description of $\B_+$: 
\begin{equation}\label{bplus}
\B_+ = \{P^+_{1,0}X - 2g P^+_{0,1} X\ : \ X\in T^{-}_{1,0}\},
\end{equation}
so that the transversality $T_\C M = T_{0,1}^-\oplus T_{1,0}^-$ implies that $\DT_{1,0}X_+ = \A_+\oplus \B_+$, as required.  Proceed in the same way to obtain the holomorphic Manin triple on $X_-$.
\end{proof}

A holomorphic Manin triple $(\E, \A, \B)$ induces a holomorphic Poisson structure $\sigma$ on its base manifold $X$, by taking the following difference of Dirac structures:
\begin{equation}\label{poismt}
    \Gr(\sigma) = \A - \B = \{X + \alpha - \beta\ :\ X+\alpha\in \A,\ X+\beta\in\B\}
\end{equation}

We may read the above as a Baer sum of $\A$, a Dirac structure in the Courant algebroid $\E$ determined by $\H$, with $-\B$, a Dirac structure in the transpose Courant algebroid $\E^\top$ determined by $-\H$.  The resulting Dirac structure is the graph of a Poisson bivector in the trivial Courant algebroid.  

In this way, we conclude from Theorem~\ref{manintriples} that the complex manifolds $X_\pm$ inherit holomorphic Poisson structures $\sigma_\pm$.  As shown in~\cite{MR3232003}, these coincide with the holomorphic Poisson structures discovered by Hitchin~\cite{MR2217300}. 

Having shown that a generalized K\"ahler manifold involves two complex structures $X_\pm$, each endowed with a holomorphic Manin triple $(\E_\pm,\A_\pm,\B_\pm)$, the main remaining question is how these two holomorphic structures are related.  There is no direct holomorphic morphism between them, but as we shall see, their matched pairs are gauge equivalent in an interesting way.    

\subsection{Gauge equivalence of Manin triples}

In Theorem~\ref{manintriples}, we constructed holomorphic Manin triples $(\E_\pm,\A_\pm,\B_\pm)$ in which $\A_\pm$ were both obtained from the complex Dirac structure $\overline{L}_A$, whereas $\B_+, \B_-$ derived from complex conjugate Dirac structures $\overline{{L}_B}$ and $L_B$, respectively.  We now show how this implies that the matched pairs of $\A_\pm$, $\B_\pm$, and their conjugates are related by the cycle of gauge equivalences from Fig.~\ref{gaucyc}.
\begin{theorem}
\label{Inf descprtion of GK} 
	The holomorphic Manin triples $(\E_\pm, \A_{\pm}, \B_{\pm})$ underlying a generalized K\"ahler structure are related by the following gauge equivalences of corresponding matched pairs:
    
\begin{equation}\label{manindiag}
\begin{gathered}
 \begin{tikzcd}
(\overline{\E_-}, \overline{\A_{-}}, \overline{\B_{-}}) &                     & ({\E_+}, {\A_{+}}, {\B_{+}}) \ar[ll,"e^{i(\omega_+ - \omega_-)}"']\\
   & &               \\
(\overline{\E_+}, \overline{\A_{+}}, \overline{\B_{+}})\ar[uu,  "e^{-i(\omega_++\omega_-)}"] &              & (\E_-,\A_-,\B_-)\ar[ll,"e^{i(\omega_+ - \omega_-)}"]\ar[uu, "e^{-i(\omega_++\omega_-)}"']
\end{tikzcd}
\end{gathered}
\end{equation}
where the vertical maps only intertwine Dirac structures of type $\A$, and the horizontal maps only intertwine Dirac structures of type $\B$.  
\end{theorem}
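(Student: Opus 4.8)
The plan is to verify the four gauge equivalences in~\eqref{manindiag} one edge at a time, exploiting the explicit descriptions of the Dirac structures obtained in the proof of Theorem~\ref{manintriples}. The key observation is that we already know from Proposition~\ref{gaugcyc} that the underlying matched pairs of $\E_\pm$ and their conjugates are intertwined by precisely the pure imaginary gauge transformations $e^{\pm i(\omega_+-\omega_-)}$ (horizontal) and $e^{\mp i(\omega_++\omega_-)}$ (vertical) appearing in Fig.~\ref{gaucyc}. So the only thing left to check is that these \emph{same} gauge transformations carry the specified Dirac substructures to one another. Concretely, I would first settle the horizontal edge on the right of~\eqref{manindiag}: I must show that $e^{i(\omega_+-\omega_-)}$ sends the matched pair of $\B_-\subset\E_-$ to the matched pair of $\B_+\subset\E_+$. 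The cleanest route is to trace both back to their common origin inside $(\DT_\C M,H)$. Since $\B_+ = \DT_{1,0}X_+\cap e^{-i\omega_+}(\overline{L}_B)$ and $\B_- = \DT_{1,0}X_-\cap e^{i\omega_-}(L_B)$, I would lift each to its matched pair and compare with $\overline{L}_B$ and $L_B$ respectively, noting that the composite gauge transformation relating the two lifted objects is exactly $e^{i(\omega_+-\omega_-)}$.

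The organizing principle that makes this manageable is the decomposition $\overline{L}_A = \overline{\l_+}\oplus\overline{\l_-}$ and $\overline{L}_B = \overline{\l_+}\oplus\l_-$ (for type $\B$ one uses $L_B = \l_+\oplus\overline{\l_-}$), together with Lemma~\ref{lilell}, which writes each $\l_\pm = e^{\mp i\omega_\pm}(T_{1,0}X_\pm)$. The $\A$-type structures are built only from $\overline{L}_A$, which contains $\overline{\l_+}$ and $\overline{\l_-}$, whereas the $\B$-type structures mix $\l_+$ with $\overline{\l_-}$ (or $\l_-$). This is precisely why the vertical maps, which relate $X_+$ to its conjugate $\overline{X}_+$ and hence swap $\l_+\leftrightarrow\overline{\l_+}$, preserve only the $\A$-type pieces (both drawn from $\overline{L}_A$), while the horizontal maps, relating $X_+$ to $X_-$ and swapping $\l_+\leftrightarrow\overline{\l_-}$ patterns, preserve only the $\B$-type pieces. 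I would make this bookkeeping explicit by computing, for each of the four edges, the image of the relevant summand and checking it matches the target description in~\eqref{aplus}--\eqref{bplus} (and their analogues for the $-$ side). The matched-pair formalism~\eqref{projbrak} guarantees that once the smooth Dirac structures inside $(\DT_\C M, H)$ are intertwined, the corresponding holomorphic Dirac structures are intertwined as well.

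The one genuinely delicate point, and the step I expect to be the main obstacle, is verifying that the gauge transformations act \emph{compatibly} with the holomorphic reduction, i.e. that intertwining the matched pairs really does descend to an intertwining of the holomorphic Manin triples rather than just the smooth complex Courant algebroids. The subtlety is the one flagged right after Fig.~\ref{gaucyc}: the matched pair functor is not full, so a priori a smooth gauge equivalence need not come from a holomorphic isomorphism, and one must confirm that these particular imaginary gauge transformations respect the Dolbeault operators~\eqref{dolb1} on both ends. I would resolve this by working entirely at the level of matched pairs, where the gauge transformations $e^{\pm i(\omega_+\mp\omega_-)}$ are honest automorphisms of $(\DT_\C M, [\cdot,\cdot]_H)$, and then invoking the fact (already used implicitly in Proposition~\ref{gaugcyc} and built into the equivalences of Fig.~\ref{gaucyc}) that these coincide with the gauge transformations identifying the matched pairs of $\E_\pm$ with $(\DT_\C M, H)$; thus the holomorphic content is automatic once the smooth intertwining of the Dirac substructures is established.

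The remaining edges of the square then follow by symmetry: the left vertical and bottom horizontal edges are obtained either by complex conjugation of the right and top edges, or by repeating the summand computation with the roles of $\l_\pm$ and $\overline{\l_\pm}$ exchanged. Finally, I would confirm the claim in the last sentence of the statement---that the vertical maps intertwine only the $\A$-type Dirac structures and the horizontal maps only the $\B$-type---as a direct consequence of the summand bookkeeping above, since each gauge transformation preserves exactly the one family whose defining summands it fixes rather than conjugates.
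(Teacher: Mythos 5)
Your proposal is correct and takes essentially the same route as the paper: the identities you propose---lifting $\A_\pm,\B_\pm$ to their matched pairs and expressing them as $e^{\mp i\omega_\pm}$-gauge transforms of $\overline{L_A}$, $\overline{L_B}$, $L_B$---are precisely the paper's identities \eqref{mpgk}, which the paper likewise derives from Lemma~\ref{lilell}, the summand decompositions, and \eqref{aplus}--\eqref{bplus}, and then composes to obtain \eqref{pmrel}. The ``delicate point'' you flag is in fact a non-issue, since the theorem (as the paper's proof makes explicit) is a statement purely about matched pairs; note only that the gauge maps are equivalences between the differently twisted matched pairs of Fig.~\ref{gaucyc}, not automorphisms of a single Courant algebroid $(\DT_\C M,[\cdot,\cdot]_H)$.
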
 
\begin{proof}
To avoid ambiguity, we denote the matched pairs of $\A_\pm, \B_\pm$ respectively by $L_{\A_\pm}, L_{\B_\pm}$, so that the content of the Theorem is the pair of identities (together with their complex conjugates)
\begin{equation}\label{pmrel}
L_{\A_+} = e^{-i(\omega_++\omega_-)} L_{\A_{-}}, \quad \overline{L_{\B_{-}}} =   e^{i(\omega_+-\omega_{-})} L_{\B_+}.
\end{equation}

These identities follow from the relationship between the holomorphic Manin triples and the generalized complex eigenbundles described in
Theorem~\ref{manintriples}, from which we derive the identities
\begin{align}\label{mpgk}
\begin{split}
        L_{\A_{+}} &= e^{-i\omega_{+}} \overline{L_A}, \qquad L_{\A_{-}} = e^{i\omega_{-}} \overline{L_A},\\
	L_{\B_{+}} &= e^{-i\omega_{+}} \overline{L_B}, \qquad L_{\B_{-}} = e^{i\omega_{-}} L_B.  
 \end{split}
\end{align}
 We only prove the first gauge equivalence as the others are obtained similarly. From the definition of the matched pair of $\A_+$, and using  \eqref{aplus}, we obtain
 \begin{align*}
L_{\A_+}=T_{0,1}^+\oplus \A_+  &= T_{0,1}^+\oplus \{P_{1,0}^+X - 2g P_{0,1}^+ X\ :\ X\in T_{0,1}^-\} \\
&=T_{0,1}^+\oplus\{ P_{0,1}^+X+P_{1,0}^+X - 2g P_{0,1}^+ X\ :\ X\in T_{0,1}^-\}\\
&=T_{0,1}^+\oplus e^{-(\omega_-+\omega_+)}(T_{0,1}^-),
 \end{align*}
 which coincides with  $e^{-i\omega_{+}} \overline{L_A}$ by~\eqref{gaugla},
 as claimed by~\eqref{mpgk}. Combining the identities~\eqref{mpgk}, we immediately obtain~\eqref{pmrel}, as required. 
 \end{proof}

The gauge equivalences described in Theorem~\ref{Inf descprtion of GK} between the Manin triples $(\E_\pm, \A_\pm, \B_\pm)$ and their complex conjugates allow us to describe in detail the real and imaginary parts of the holomorphic Dirac structures involved.

\begin{definition}
    Let $L$ be a smooth complex Dirac structure in $\DT_\C M$, involutive for the Courant bracket twisted by $H\in \Omega^{3,\cl}(M,\C)$ and such that its tangent projection $\rho(L)$ is transverse to its conjugate, i.e. $\rho(L)+\rho(\overline{L}) = T_\C M$.  The real part of $L$ is defined to be the real Dirac structure 
    \[
    L_R= \tfrac{1}{2}(L+\overline{L}) \cap \DT M = \{ X+\tfrac{1}{2}(\alpha + \beta)\in \DT M\ :\ X+\alpha\in L,\  X+\beta\in \overline{L}\},
    \]
 involutive with respect to $\Re(H)$, while the imaginary part of $L$ is 
    \[
     L_I=\tfrac{1}{2i}(L-\overline{L})  \cap \DT M =\{ X+\tfrac{1}{2i}(\alpha - \beta)\in\DT M\ :\ X+\alpha\in L,\  X+\beta\in \overline{L}\},
    \]
involutive with respect to $\Im(H)$.  For $\A$ a holomorphic Dirac structure with matched pair $L_{\A}$, its real and imaginary parts are the real Dirac structures
    \[
    \A_{R}=\tfrac{1}{2}(L_{\A}+\overline{L_{\A}})  \cap \DT M \subset \E_R, \qquad \A_I=\tfrac{1}{2i}(L_{\A}-\overline{L_{\A}})  \cap \DT M \subset \E_I.
    \] 
\end{definition} 
 
Despite the inclusion of $\A_R$ and $\A_I$ in possibly non-isomorphic Courant algebroids, they are isomorphic as Lie algebroids~\cite{MR4480214}. From the following equivalent description
\[
\A_{R} = \{2\Re (u)+\Re(\a): u + \a \in \A \}, \quad \A_I = \{2\Re(u) + \Im(\a): u+ \a \in \A\},
\]
the Lie algebroid isomorphism is $2\Re(u)+\Re(\a) \mapsto 2\Re(u)+\Im(\a)$.

\begin{example}\label{ex:real hol poisson}
    Let $\A = \Gr(\sigma)$, where $\sigma = \sigma_R + i \sigma_I$ is a holomorphic Poisson structure. Then $\A_R = \Gr({4\sigma_R})$ and $\A_I = \Gr({4\sigma_I})$.   The equivalence of these as Lie algebroids accounts for the fact that the real and imaginary parts of a holomorphic Poisson structure share the same foliation by symplectic leaves. 
\end{example}

\begin{example}\label{rimgc}
    Let $L$ be the $+i$-eigenbundle of a generalized complex structure $\J$. Then we have the following identities for the real and imaginary parts of $L$: 
    \[
    L_R = \tfrac{1}{2}(L+\overline{L})  \cap \DT M = \J(T^*M),\qquad L_I =\tfrac{1}{2i}(L-\overline{L})  \cap \DT M = \Gr({\pi_{\J}}),
    \]
    where $\pi_\J(\alpha) = \rho(\J\alpha)$ (for $\rho$ the projection to $TM$) is the real Poisson structure underlying $\J$.  
\end{example}
Applying Example~\ref{rimgc} to a generalized K\"ahler manifold, we obtain four real Dirac structures: two integrable with respect to $H$, namely  $\J_A(T^*M)$ and $\J_B(T^*M)$, and two Poisson structures $\pi_A, \pi_{B}$.  Equation~\eqref{Gualtieri's formula} may be used to write these Dirac structures in terms of the bi-Hermitian data; in particular, the real Poisson tensors are:
\begin{equation}\label{poisab}
		\pi_A = \frac{1}{2}(I_{+}-I_{-})g^{-1}, \quad \pi_B = \frac{1}{2}(I_{+}+I_{-})g^{-1}.
\end{equation}
We now explain how these relate to the real Dirac structures determined by the pair of holomorphic Manin triples, using Theorem~\ref{Inf descprtion of GK}. 

\begin{corollary}\label{cor: the underlying re and im}
Let $(\E_\pm, \A_{\pm}, \B_{\pm})$ be the holomorphic Manin triples underlying a generalized K\"ahler manifold.  Then, their real parts are canonically isomorphic to each other, as follows:
\[
\Re(\E_+, \A_+, \B_+) \cong((\DT M, H), \J_A(T^*M), \J_B(T^*M)) \cong \Re(\E_-, \A_-, \B_-),
\]
while their imaginary parts are canonically isomorphic to the following real Manin triples: 
\[
\Im(\E_\pm, \A_\pm, \B_\pm) \cong ((\DT M, 0), \Gr({-\pi_A}), \Gr({\mp \pi_B})). 
\]
\end{corollary}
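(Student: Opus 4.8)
The plan is to reduce everything to the level of matched pairs, where the holomorphic Manin triples are related to the generalized complex eigenbundles $L_A,L_B$ by the explicit gauge identities~\eqref{mpgk}, and where the real and imaginary parts of $L_A,L_B$ are already computed in Example~\ref{rimgc}. The entire argument hinges on understanding how the real- and imaginary-part operations interact with an imaginary gauge transformation. The first thing I would record is that for a real two-form $B$ one has $\overline{e^{iB}L}=e^{-iB}\overline{L}$, so a direct substitution into the defining formulas for $L_R,L_I$ yields
\[
\Re(e^{iB}L)=\Re(L),\qquad \Im(e^{iB}L)=e^{B}\,\Im(L).
\]
In words: an imaginary gauge leaves the real part unchanged, while it acts on the imaginary part by the corresponding \emph{real} gauge $e^{B}$. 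I would also record the two conjugation rules $\Re(\overline L)=\Re(L)$ and $\Im(\overline L)=\Gr(-\pi)$ whenever $\Im(L)=\Gr(\pi)$.

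For the real parts, I would feed~\eqref{mpgk} into the first identity. Since $L_{\A_+}=e^{-i\omega_+}\overline{L_A}$ and $L_{\A_-}=e^{i\omega_-}\overline{L_A}$, the imaginary gauge drops out and both collapse to $\Re(\overline{L_A})=\Re(L_A)=\J_A(T^*M)$ by Example~\ref{rimgc}; the same computation with $L_{\B_+}=e^{-i\omega_+}\overline{L_B}$ and $L_{\B_-}=e^{i\omega_-}L_B$ gives $\Re(\B_\pm)=\Re(L_B)=\J_B(T^*M)$, the passage to a conjugate being invisible to the real part. Since Proposition~\ref{gaugcyc} already identifies $\Re(\E_\pm)$ with $(\DT M,H)$, it remains only to check that $\J_A(T^*M)$ and $\J_B(T^*M)$ are transverse, so that they form a genuine Manin triple; this I would verify directly from the bi-Hermitian description~\eqref{Gualtieri's formula}. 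Composing the two identifications $\Re(\E_+)\cong(\DT M,H)\cong\Re(\E_-)$ then produces the asserted canonical isomorphism of real Manin triples.

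For the imaginary parts, I would instead use the second identity, which produces $\Im(\A_\pm)=e^{\mp\omega_\pm}\Im(\overline{L_A})=e^{\mp\omega_\pm}\Gr(-\pi_A)$, and analogously for $\B_\pm$. The decisive observation is that the real gauge $e^{\mp\omega_\pm}$ thus generated is precisely the inverse of the gauge realizing the trivialization of $\Im(\E_\pm)$ furnished by Proposition~\ref{gaugcyc}; under that trivialization the two cancel and we land inside $(\DT M,0)$ with $\Im(\A_\pm)\cong\Gr(-\pi_A)$ and $\Im(\B_\pm)\cong\Gr(\mp\pi_B)$. The asymmetric sign $\mp\pi_B$ is then explained structurally rather than by accident: $\B_+$ and $\B_-$ are built from $\overline{L_B}$ and $L_B$ respectively, so only the former inherits the sign flip $\Gr(\pi_B)\mapsto\Gr(-\pi_B)$, whereas $\A_+,\A_-$ are both built from $\overline{L_A}$ and therefore share the single sign $-\pi_A$. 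Transversality of the resulting graphs, hence the Manin-triple property, is immediate from~\eqref{poisab}, since $\pi_B-\pi_A$ and $\pi_B+\pi_A$ are invertible.

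I expect the main obstacle to be purely a matter of sign bookkeeping: keeping the imaginary gauges $e^{\mp i\omega_\pm}$ that relate the matched pairs cleanly separated from the real gauges $e^{\mp\omega_\pm}$ that trivialize the imaginary parts of $\E_\pm$, and confirming that these cancel with exactly the right signs. To be safe I would cross-check the cancellation against the explicit twistings $\Im(\H_\pm)=\mp d\omega_\pm$, which fixes the trivializing gauge of $\Im(\E_\pm)$ as the inverse of the gauge appearing from the second identity; once this is pinned down, the remaining verifications of isotropy and involutivity are routine and already follow from Theorem~\ref{manintriples}.
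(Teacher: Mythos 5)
Your proposal is correct and follows essentially the same route as the paper: both arguments reduce to the identities \eqref{mpgk} together with Example~\ref{rimgc}, and your general rules $\Re(e^{iB}L)=\Re(L)$, $\Im(e^{iB}L)=e^{B}\Im(L)$ are exactly the paper's inline computation (e.g.\ $(\A_+)_I = e^{-\omega_+}\Gr(-\pi_A)$, $(\A_+)_R=\J_A(T^*M)$) packaged as a lemma. The extra transversality checks and the cross-check against $\Im(\H_\pm)=\mp d\omega_\pm$ are sound but not needed beyond what the paper records.
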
 

\begin{proof}
    By \autoref{rimgc} and \autoref{mpgk}
    \[
	(\A_{+})_{I} = \frac{1}{2i} (L_{\A_{+}} - \overline{L_{\A_{+}}})  \cap \DT M =  \frac{e^{-i\omega_{+} } \bar{L}_1 - e^{i\omega_{+}}L_1} {2i}   \cap \DT M  = e^{-\omega_{+}} \frac{\bar{L}_1 - L_1}{2i}  \cap \DT M = e^{-\omega_{+}} \Gr(-\pi_A). 
	\]
    A similar computation shows $(\B_+)_I = e^{-\omega_+} \Gr(-\pi_B)$. Thus we have the isomorphism of real Manin triples
    \[
   \begin{tikzcd}
       \Im(\E_+,\A_+,\B_+)\arrow[r,"{e^{\omega_+}}","\cong"'] &  ((\DT M, 0), \Gr({-\pi_A}), \Gr({-\pi_B})). 
   \end{tikzcd} 
    \]
    By the same method, we obtain the isomorphism
    \[
\begin{tikzcd}
   \Im(\E_-,\A_-,\B_-)\arrow[r,"{e^{-\omega_-}}","\cong"']& ((\DT M, 0), \Gr({-\pi_A}), \Gr({\pi_B})). 
    \end{tikzcd}\]
    On the other hand, by \autoref{rimgc} and \autoref{mpgk},
    \[
    (\A_+)_R = \frac{1}{2}(L_{\A_+}+\overline{L_{\A_+}})  \cap \DT M = \frac{1}{2}(L_{A}+\overline{L_{A}})  \cap \DT M = \J_A(T^*M), 
    \]
    and similarly,
    \[
    (\B_+)_R = \frac{1}{2}(L_{B}+\overline{L_{B}}) \cap \DT M= \J_B(T^*M).
    \]
    The same argument for $\A_-,\B_-$ provides the required isomorphism for the real parts. 
\end{proof}

\begin{corollary}\label{cor: Holomorphic Hitchin and real Poisson}
	The real parts of the Hitchin Poisson structures $\sigma_{\pm}$ coincide with the underlying Poisson structure of the Manin triple  $((\DT M, H), \J_A(T^*M), \J_B(T^*M)) $ while the imaginary parts coincide with the underlying Poisson structures of $((\DT M,0),\Gr(-\pi_A), \Gr(\mp\pi_B))$.
\end{corollary}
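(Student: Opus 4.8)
The plan is to deduce the statement from Corollary~\ref{cor: the underlying re and im} together with the observation that the extraction of the underlying Poisson structure from a Manin triple, via the Baer-sum difference~\eqref{poismt}, commutes with taking real and imaginary parts. Recall that the Hitchin Poisson structures are characterised by $\Gr(\sigma_\pm)=\A_\pm-\B_\pm$, so that their real and imaginary parts are $\Re(\A_\pm-\B_\pm)$ and $\Im(\A_\pm-\B_\pm)$; and that, by the same formula~\eqref{poismt}, the underlying Poisson structures of the real Manin triples $((\DT M,H),\J_A(T^*M),\J_B(T^*M))$ and $((\DT M,0),\Gr(-\pi_A),\Gr(\mp\pi_B))$ are the Baer sums $\J_A(T^*M)-\J_B(T^*M)$ and $\Gr(-\pi_A)-\Gr(\mp\pi_B)$. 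Since Corollary~\ref{cor: the underlying re and im} already identifies $\Re(\A_\pm)=\J_A(T^*M)$, $\Re(\B_\pm)=\J_B(T^*M)$, $\Im(\A_\pm)=\Gr(-\pi_A)$ and $\Im(\B_\pm)=\Gr(\mp\pi_B)$ under its canonical isomorphisms, everything reduces to the two identities
\begin{equation*}
\Re(\A_\pm-\B_\pm)=\Re(\A_\pm)-\Re(\B_\pm),\qquad \Im(\A_\pm-\B_\pm)=\Im(\A_\pm)-\Im(\B_\pm).
\end{equation*}

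To prove these I would use the element-wise descriptions recorded just after the definition of the real and imaginary parts, namely $\A_R=\{2\Re(u)+\Re(\alpha):u+\alpha\in\A\}$ and $\A_I=\{2\Re(u)+\Im(\alpha):u+\alpha\in\A\}$, which apply equally to the holomorphic Dirac structure $\A_\pm-\B_\pm=\Gr(\sigma_\pm)$. Unwinding the right-hand side, a typical element of $\Re(\A_\pm)-\Re(\B_\pm)$ is $2\Re(X)+\Re(\alpha)-\Re(\beta)$ where $X+\alpha\in\A_\pm$, $X'+\beta\in\B_\pm$, and the tangent parts agree, i.e. $\Re(X)=\Re(X')$; while a typical element of $\Re(\A_\pm-\B_\pm)$ is $2\Re(X)+\Re(\alpha-\beta)$ with $X+\alpha\in\A_\pm$ and $X+\beta\in\B_\pm$ sharing a single $X$. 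The key point is that the real-part map $\Re\colon T_{1,0}X_\pm\to TM$ is a real-linear isomorphism; hence the matching condition $\Re(X)=\Re(X')$ in the Baer sum is equivalent to $X=X'$, the covector contributions $\Re(\alpha)-\Re(\beta)=\Re(\alpha-\beta)$ agree verbatim, and the two sets coincide. Replacing the real part of the covector by its imaginary part gives the second identity by the same argument.

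Combining the two identities with Corollary~\ref{cor: the underlying re and im} yields $\Re(\Gr\sigma_\pm)=\J_A(T^*M)-\J_B(T^*M)$ and $\Im(\Gr\sigma_\pm)=\Gr(-\pi_A)-\Gr(\mp\pi_B)$, which are precisely the underlying Poisson structures of the two real Manin triples, as claimed. The main point requiring care — and what I regard as the crux — is the bookkeeping of the ambient twisted Courant algebroids across the Baer sums: $\Re(\A_\pm)$ and $\Re(\B_\pm)$ both sit in $\E_{\pm,R}\cong(\DT M,H)$, so their difference lands in the untwisted $(\DT M,0)$, consistently with $\Gr(\sigma_\pm)$ being a genuine Poisson graph, whereas the imaginary parts already live in $(\DT M,0)$ by Corollary~\ref{cor: the underlying re and im}. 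Finally, the normalisation relating $\Re(\Gr\sigma)$ to $\Gr(4\,\Re\sigma)$ fixed in Example~\ref{ex:real hol poisson} is applied identically on both sides, so that the asserted coincidence holds with the conventions already in force.
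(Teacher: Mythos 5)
Your proposal is correct and follows essentially the same route as the paper's proof: both reduce the statement to Corollary~\ref{cor: the underlying re and im} together with the fact that taking real and imaginary parts commutes with the Baer difference~\eqref{poismt}, which you verify carefully via the element-wise description and the real-linear isomorphism $\Re\colon T_{1,0}\to TM$, while the paper performs the same distribution implicitly at the level of matched pairers $L_{\sigma_\pm}=L_{\A_\pm}-L_{\B_\pm}$. The one place you are looser than the paper is the imaginary-part step: the identifications $\Im(\A_+)=\Gr(-\pi_A)$, $\Im(\B_+)=\Gr(-\pi_B)$ are not literal equalities but hold via the gauge factor $e^{-\omega_+}$ (so your remark that the imaginary parts ``already live in $(\DT M,0)$'' is imprecise --- they live in $(\DT M,-d\omega_\pm)$), and the conclusion requires the elementary but unstated fact that a common gauge factor cancels in the Baer difference, $e^{B}L_1-e^{B}L_2=L_1-L_2$, which is exactly the cancellation the paper writes out explicitly in its final display.
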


\begin{proof}
Since $\Gr(\sigma_\pm) = \A_\pm - \B_\pm$ (Equation~\eqref{poismt}), the same holds for matched pairs: $L_{\sigma_\pm} = L_{\A_\pm} - L_{\B_\pm}$.  Taking real parts, and using Corollary~\ref{cor: the underlying re and im}, we obtain 
\begin{align*}
\tfrac{1}{2}(L_{\sigma_\pm} +\overline{L_{\sigma_\pm}})\cap \DT M  &=  \tfrac{1}{2}( L_{\A_\pm} - L_{\B_\pm} +  \overline{L_{\A_\pm}} - \overline{L_{\B_\pm}})\cap \DT M\\
&=\J_A(T^*M) - \J_B(T^*M),
\end{align*}
as required.  For the imaginary parts, we have 
\begin{align*}
\tfrac{1}{2i}(L_{\sigma_\pm} -\overline{L_{\sigma_\pm}})\cap \DT M  &=  \tfrac{1}{2i}( L_{\A_\pm} - L_{\B_\pm} -  \overline{L_{\A_\pm}} + \overline{L_{\B_\pm}})\cap \DT M\\
&=e^{-\omega_+}\Gr(-\pi_A) - e^{-\omega_+}\Gr(\mp\pi_B)\\
&=\Gr(-\pi_A) -\Gr(\mp\pi_B),
\end{align*}
as required. 
\end{proof}
Corollary~\ref{cor: Holomorphic Hitchin and real Poisson} provides an easy way to compute the real and imaginary parts of the Hitchin Poisson structures: the Poisson structure underlying the Manin triple $(\DT M_{H}, \J_A(T^*M), \J_B(T^*M) )$ is computed as follows: for $\xi\in T^*$, $Q\xi = \rho(\J_A\alpha)$, where we have the decomposition
    \[
    \xi = \J_A\alpha + \J_B\beta,
    \]
    for unique $\alpha,\beta\in T^*$.  Taking inner product with $\J_B\eta$, $\eta\in T^*$, we obtain 
    \[
    \IP{\xi,\J_B\eta}=\IP{\J_A\alpha,\J_B\eta}=g^{-1}(\alpha,\eta),
    \]
    so that $\alpha = -g\rho\J_B\xi$, and using~\eqref{poisab}, we obtain the (coincident) real parts of the Hitchin Poisson structures   
    \[
    \Re(\sigma_\pm) = Q = -\pi_A g \pi_B = -\tfrac{1}{4}[I_+,I_-]g^{-1}. 
    \]

\subsection{Generalized K\"ahler structures from Manin triples}

In this subsection, we prove a converse of \autoref{Inf descprtion of GK}, obtaining a new characterization of generalized K\"ahler geometry in terms of a pair of holomorphic Manin triples related by gauge equivalences as in~\eqref{manindiag}. 

We begin with a pair of holomorphic exact Manin triples $(\E_\pm, \A_\pm, \B_\pm)$ over the complex manifolds $X_\pm$.  We assume the corresponding matched pairs are isomorphic as follows:
\[
\begin{tikzcd}
\E_-\oplus \DT_{0,1}X_-\arrow[r,"{\phi_1}","{\cong}"'] & {\E_+}\oplus \DT_{0,1}{X_+}
\end{tikzcd},\qquad
\begin{tikzcd}
    \E_+\oplus \DT_{0,1}X_+\arrow[r,"{\phi_2}","{\cong}"'] & \overline{\E_-}\oplus \DT_{0,1}\overline{X_-}
\end{tikzcd}.
\]
That is, $X_\pm$ have the same underlying smooth manifold, i.e. $X_\pm=(M,I_\pm)$, we may identify our Courant algebroids $\E_\pm$ with the normal forms given in Section~\ref{subsec: hol courant} by the closed 3-forms
\begin{equation}\label{hplm}
\H_\pm \in \Omega^{2,1}(M, I_\pm)\oplus\Omega^{3,0}(M, I_\pm),
\end{equation}
and the isomorphisms $\phi_1,\phi_2$ are gauge transformations by 2-forms $\F_1,\F_2\in \Omega^2(M,\C)$ satisfying
\begin{equation}\label{gaughpm}
\H_+-\H_- = d\F_1,\qquad \overline{\H_-}-\H_+ = d\F_2.
\end{equation}
Taking complex conjugates of the above, we obtain the following diagram of gauge equivalences:
\begin{equation}\label{diagcalg}
\begin{aligned}
\begin{tikzcd}
\overline{\H_-} & \H_+\arrow[l,"\F_2"']\\
\overline{\H_+}\arrow[u,"{-\overline{\F_1}}"] & \H_-\arrow[u,"{\F_1}"']\arrow[l,"{-\overline{\F_2}}"]
\end{tikzcd}
\end{aligned}
\end{equation}
Our final assumption concerning the diagram of Courant algebroids is that the vertical and horizontal gauge transformations should coincide, namely $\F_1 = -\overline{\F_1}$ and $\F_2 = -\overline{\F_2}$, or equivalently 
\begin{equation}\label{horvereq}
\F_1 = iF_1,\qquad \F_2 = iF_2,
\end{equation}
for $F_1,F_2$ a pair of real 2-forms. 
With these isomorphisms of Courant algebroids in place, we assume that the Dirac structures $\A_\pm,\B_\pm$ are related  as in~\eqref{manindiag}.  In short, we assume that 
\begin{align}\label{eq:3}
L_{\A_+} = e^{iF_1} L_{\A_{-}}, \quad \overline{L_{\B_{-}}}= e^{iF_2}L_{\B_+} .
\end{align} 
\begin{theorem}\label{thm: Infinitesimal descrption of a GK structure}
    Let $(\E_\pm,\A_\pm,\B_\pm)$ be a pair of holomorphic exact Manin triples related as above, so that we have two complex structures $I_\pm$ on the same smooth manifold $M$, and the corresponding closed 3-forms $\H_\pm$ as in~\eqref{hplm} are related via~\eqref{gaughpm}, for a pair of real 2-forms $F_1, F_2$ satisfying~\eqref{eq:3}.  
    If we project $F_\pm=\tfrac{1}{2}(-F_1\pm F_2)$ to its components of type $(1,1)$ and $(2,0)+(0,2)$ respectively
    \begin{equation}\label{partsoffpm}
    \begin{aligned}
    \omega_\pm &= \tfrac{1}{2}(F_\pm + I_\pm^*F_\pm I_\pm)\\
    \beta_\pm  &= \tfrac{1}{2}(F_\pm - I_\pm^*F_\pm I_\pm),
    \end{aligned}
    \end{equation} 
    then in the decomposition of $F_\pm I_\pm$ into symmetric and skew-symmetric parts
    \[
    -F_\pm I_\pm = -\omega_\pm I_\pm - \beta_\pm I_\pm = g_\pm + b_\pm,
    \] 
    we obtain the coincidences
    \begin{equation}\label{sameprts}
    g_+ = g_- = g,\qquad b_+ = -b_- = b.
    \end{equation}
    such that Equation~\eqref{dcpm} holds for $H = \Re(\H_+) - db = \Re(\H_-)-db$, i.e.
    \[
    \pm d^c_\pm\omega_\pm =  H.
    \]
    That is, we obtain a generalized K\"ahler structure, providing that $g$ is positive-definite. 
\end{theorem}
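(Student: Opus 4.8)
The plan is to reverse the computations of Lemma~\ref{lilell} and Theorems~\ref{manintriples} and~\ref{Inf descprtion of GK}, splitting the work into two essentially independent parts: the $3$-form identity $\pm d^c_\pm\omega_\pm=H$, which I extract from the Courant-algebroid gauge relations~\eqref{gaughpm}, and the coincidences~\eqref{sameprts} of symmetric and skew parts, which I extract from the Dirac-structure gauge relations~\eqref{eq:3}.

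For the $3$-form identity I would use only~\eqref{gaughpm} together with the reality of $F_1,F_2$. Taking real and imaginary parts of $\H_+-\H_-=i\,dF_1$ and $\overline{\H_-}-\H_+=i\,dF_2$ gives at once $\Re(\H_+)=\Re(\H_-)$, together with $\Im(\H_+)=-dF_+$ and $\Im(\H_-)=dF_-$, where $F_\pm=\tfrac12(-F_1\pm F_2)$. Thus each real $2$-form $F_\pm$ furnishes, up to sign, an involutive isotropic splitting of the imaginary part $\E_{\pm,I}$: the hypothesis $dB=\Im(\H)$ of Proposition~\ref{Real structures of CAlg} is met exactly, its resulting pluriclosed form is the $(1,1)$-component $\omega_\pm$ of $F_\pm$, and the correction relating $\Re(\H_\pm)$ to $d^c_\pm\omega_\pm$ is the differential of the $(2,0)+(0,2)$-part, which one identifies with $d b_\pm$. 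Feeding in $\Re(\H_+)=\Re(\H_-)$ and the sign difference built into $F_\pm$ then yields $d^c_+\omega_+=H=-d^c_-\omega_-$ for the common form $H=\Re(\H_\pm)-db$, once $b_+=-b_-$ is known.

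The coincidences~\eqref{sameprts} are where the Dirac-structure relations~\eqref{eq:3} must finally be used, and this I expect to be the main obstacle. The strategy is to translate $L_{\A_+}=e^{iF_1}L_{\A_-}$ and $\overline{L_{\B_-}}=e^{iF_2}L_{\B_+}$ into explicit descriptions of $\A_\pm,\B_\pm$: pushing the summand $T_{0,1}^\mp\subset L_{\A_\mp}$ (resp. $L_{\B_\mp}$) through the gauge transform and demanding that the image land in $\A_\pm\oplus T_{0,1}^\pm$ forces type constraints relating $F_1,F_2$ to $I_\pm$, and, using maximal isotropy together with the transversality $\E_\pm=\A_\pm\oplus\B_\pm$, pins each $\A_\pm,\B_\pm$ down to the graph form~\eqref{aplus}--\eqref{bplus}, whose metric entry is exactly $g_\pm=-\omega_\pm I_\pm$. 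The crucial point is the asymmetry between the two relations: the $\A$-relation links $\A_+$ to $\A_-$ directly (both descend from a single complex Dirac structure $\overline{L_A}$), whereas the $\B$-relation links $\B_+$ to the \emph{conjugate} $\overline{\B_-}$, so that comparing the metrics and $b$-fields read off from the two sides produces $g_+=g_-$ but $b_+=-b_-$. I anticipate the delicate step to be controlling the $(2,0)+(0,2)$-components $\beta_\pm$: these are invisible to the pluriclosed forms $\omega_\pm$ and are governed only by the combined force of the full $\A$- and $\B$-relations, so the heart of the argument is showing that their skew counterparts $b_\pm$ are forced to be opposite.

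Finally, with $g:=g_+=g_-$ symmetric and positive-definite by hypothesis, I would observe that $\omega_\pm=gI_\pm$ are the Hermitian forms of $I_\pm$, so that the identity $\pm d^c_\pm\omega_\pm=H$ established above is precisely~\eqref{dcpm}. Invoking the equivalence of~\cite{MR3232003} between such bi-Hermitian data and generalized K\"ahler structures via~\eqref{Gualtieri's formula} then completes the proof.
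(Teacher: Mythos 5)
Your first half is correct and coincides with the paper's own opening step: taking real and imaginary parts of \eqref{gaughpm} gives $\Re(\H_+)=\Re(\H_-)=H_0$ and $\Im(\H_\pm)=\mp dF_\pm$, and Proposition~\ref{Real structures of CAlg} applied to these trivializations of $(\E_\pm)_I$ yields $\pm d^c_\pm\omega_\pm=H_0\mp db_\pm$, reducing everything to the coincidences \eqref{sameprts}.

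For those coincidences, however, you offer only a strategy, and you yourself flag its central step (showing that the $b_\pm$ are forced to be opposite) as unresolved; this is a genuine gap, not a detail. Moreover the route you sketch has a built-in difficulty: the graph forms \eqref{aplus}--\eqref{bplus} that you propose to ``pin down'' are written in terms of the single metric $g$ of an already-existing generalized K\"ahler structure, which is precisely what does not yet exist in the converse direction. What \eqref{eq:3} gives you directly is a description of the $\A$'s in terms of $F_1$ and of the $\B$'s in terms of $F_2$, whereas $g_\pm,b_\pm$ are built from the mixtures $F_\pm=\tfrac12(-F_1\pm F_2)$; so no comparison of the $\A$-relation by itself against the $\B$-relation by itself can yield \eqref{sameprts} --- the two relations must be played off against each other. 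The paper does this with a short argument that your proposal is missing: transfer all four matched pairs to the common Courant algebroid $(\DT_\C M,H_0)$ via $e^{\pm iF_\pm}$, so that \eqref{eq:3} says they assemble into just two maximal isotropics $L_A=e^{iF_+}L_{\A_+}=e^{-iF_-}L_{\A_-}$ and $L_B=e^{iF_+}L_{\B_+}=e^{iF_-}\overline{L_{\B_-}}$; then intersect, obtaining $\l_+=L_A\cap L_B=e^{iF_+}(T_{0,1}X_+)$ and $\l_-=L_A\cap\overline{L_B}=e^{-iF_-}(T_{0,1}X_-)$, which are the graphs $\{X\pm(g_\pm+b_\pm)X:\ X\in T_{0,1}X_\pm\}$; finally, the real subbundles $V_\pm=\{X\pm(g_\pm+b_\pm)X:\ X\in TM\}$ satisfy $V_\pm\otimes\C=\l_\pm\oplus\overline{\l_\pm}$, and the isotropy of $L_A$ and $L_B$ forces $V_+\perp V_-$; expanding $\IP{V_+,V_-}=0$ gives $(g_+-g_-)+(b_++b_-)=0$ as bilinear forms, whose symmetric and skew parts are exactly \eqref{sameprts}. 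Any completion of your graph-comparison route would have to reproduce this interplay of both relations with isotropy, so you should either supply it or adopt the intersection argument above.
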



\begin{proof}

	From the conditions
		\[
		d(iF_1) = \H_+ - \H_-, \qquad d(iF_2) = \overline{\H_-}-\H_+,
		\]
		we see that  $\Re(\H_+) = \Re(\H_-) = H_0$ and $\Im (\H_\pm) = \mp dF_\pm$.
		By \autoref{Real structures of CAlg}, these trivializations of $(\E_\pm)_I$ generate pluriclosed forms, in particular
 \[
\pm d^c_{\pm} \omega_{\pm} = H_0 \mp db_\pm,
 \] 
for $\omega_\pm$ and $b_{\pm}$ as given in~\eqref{partsoffpm} and \eqref{sameprts}.   It remains to show that \eqref{eq:3} implies the coincidences~\eqref{sameprts}, so that we can set $H = H_0-db_+ = H_0+db_-$ and obtain the result.

Since the imaginary parts of $\E_{\pm}$ are trivial, and $F_\pm$ define gauge transformations as follows:
			\[
   \begin{tikzcd}	(\DT M_{\C}, \H_\pm)\arrow[r,"{e^{\pm iF_{\pm}}}","{\cong}"'] &(\DT_\C M, H_0),
   \end{tikzcd}
			\]
   we may transfer the complex Dirac structures $L_{\A_{\pm}}$ and $L_{\B_{\pm}}$ to the right hand side, 
			\[
			L_A=e^{iF_{+}}L_{\A_{+}} = e^{-iF_{-}}L_{\A_{-}}, \qquad L_B = e^{iF_{+}}L_{\B_{+}} = e^{iF_{-}}\overline{L_{\B_{-}}},
			\]
   where we use~\eqref{eq:3} in the above coincidences.  

   The complex Dirac structures $L_A,L_B$ and $L_A,\overline{L_B}$ have intersection given by
\[
   \begin{aligned}
       \l_+ &= L_A\cap L_B = e^{iF_+}(L_{\A_+} \cap L_{\B_+}) = e^{iF_+}  T_{0,1}X_+\\
       \l_- &= L_A\cap\overline{L_B}=e^{-iF_-}(L_{\A_-}\cap L_{\overline{\B_-}}) = e^{-iF_-} T_{0,1} X_-.
   \end{aligned}
   \]
   We can therefore express $\l_\pm$ as follows:
   \[\begin{aligned}
   \l_\pm &= \{X \pm i F_\pm X\ :\ X\in T_{0,1} X_\pm\}\\
        &=\{X  \mp F_\pm I_\pm X\ :\ X\in T_{0,1} X_\pm\}\\
        &=\{ X \pm (g_\pm + b_\pm) X\ :\ X\in T_{0,1} X_\pm\}.
     \end{aligned}
   \]
   If we now consider the subspaces $V_\pm\otimes\C = \l_\pm \oplus \overline{\l_\pm}$; 
   we have 
   \[
   V_\pm = \{X\pm (g_\pm+ b_\pm) X: X \in TM \}.
   \]
   But since $L_A,L_B$ are maximal isotropic, it follows that $V_+, V_-$ must be orthogonal, which can only hold if $g_+=g_-$ and $b_+=-b_-$, as required.
\end{proof}

\section{Symplectic double groupoids and higher Morita equivalences}\label{section: Symplectic double groupoids}

In the previous section, we expressed a generalized K\"ahler structure as a pair of Manin triples related by gauge transformations.  Manin triples are comprised of Dirac structures, which are Lie algebroids, and so in this sense generalized K\"ahler geometry incorporates infinitesimal structures analogous to Lie algebras.  In Section~\ref{section: integration}, we integrate these to global Lie-theoretic structures: each Manin triple integrates to a  double symplectic Lie groupoid, while the gauge transformations between Manin triples integrate to Morita equivalences between the double groupoids.  In this section, we prepare for the integration by reviewing the known relationship between Manin triples and symplectic double groupoids, and by introducing new types of generalized 1-morphisms as well as Morita 2-equivalences for symplectic double groupoids. The definitions and results in this section hold in both the smooth and holomorphic categories; we omit the corresponding adjectives unless needed. 

Our notation for Lie groupoids is as follows. We denote a Lie groupoid by $G \rightrightarrows M$, where $G$ is the manifold of arrows and $M$ is the manifold of objects. The structure maps of a Lie groupoid are its source, target, inversion, unit inclusion and multiplication, respectively denoted by $\sss,\ttt,\ii,\uu,\mm$; for simplicity we also use the notation $\mm(g,h)=gh$ and $\ii(g)=g^{-1}$. A Lie groupoid $G \rightrightarrows M$ determines an equivalence relation on $M$: for $p\in M$, its corresponding equivalence class is its {\em orbit} which is the immersed submanifold $G\cdot p=\{q\in M\ :\  \exists g\in G, \ttt(g)=q,\sss(g)=p\}$.   We denote the simplicial differential by $\delta^*$; for an $n$-form $\a$ on $M$, $\delta^*\a := \ttt^*\a-\sss^*\a$.

\begin{remark}While every Lie algebroid integrates to a local Lie groupoid \cite{MR4069892}, a global integration need not exist, see \cite{MR1973056}.  We work under the assumption that all our Lie algebroids have corresponding global Lie groupoids, since this is the case for all examples we shall consider in~\S\ref{sec: lie groups}. 
\end{remark}

 \subsection{Quasi-symplectic groupoids} \label{qsg}
 

A Dirac structure in the exact Courant algebroid $(\DT M, H)$ may be viewed as a Lie algebroid $A$ together with a bundle map 
\begin{equation}\label{im2f}
\mu:A\to T^*M
\end{equation}
which, when combined with the anchor $\rho:A\to TM$, defines a bracket-preserving embedding 
\[
(\rho,\mu):A\to \DT M.
\]
In~\cite{MR2068969}, it was proven that if $G\rightrightarrows M$ is a source simply-connected Lie groupoid integrating $A$, then we can obtain the additional map~\eqref{im2f} from a 2-form $\omega\in\Omega^2(G)$ which is uniquely determined by the Dirac structure, via the formula  
\begin{equation}\label{eq:IM 2 form}
    \mu_{\omega} (a)= \uu^*(\iota_a(\omega)).
\end{equation}
In fact, $(G,\omega)$ is known as a quasi-symplectic groupoid, whose definition we now recall.  
\begin{definition}[\cite{MR2068969,MR2153080}]\label{def:quasi symplectic} Let $M$ be a manifold equipped with a closed 3-form $H$ (of type $(2,1)+(3,0)$ in the holomorphic case).  A Lie groupoid $G \rightrightarrows M$ with $\dim G = 2 \dim M$ is {\em quasi-symplectic} if it is equipped with a 2-form $\omega$ on $G$ (of type (2,0), in the holomorphic case) such that: 
\begin{enumerate}
	\item The 2-form $\omega$ is multiplicative, i.e. on $G{_\sss\times}_\ttt G $ we have $\text{pr}_1^*\omega+\text{pr}_2^*\omega=\mm^*\omega$.
 \item The 2-form $\omega$ is relatively closed, i.e. $d\omega = \ttt^*H - \sss^*H$.
\item The intersection $\ker (\omega) \cap \ker(d \sss) \cap \ker(d\ttt)$ vanishes (in the holomorphic case, it has zero intersection with $T_{1,0}G$). 
\end{enumerate} 
\end{definition}

The relationship between quasi-symplectic groupoids and Dirac structures described above also holds in the holomorphic category, as studied in~\cite{MR4480214}. In this case, we have that
$L=\{(\rho(v_{1,0}),\mu_\omega(v_{1,0})):\ v\in A\} \subset (\DT_{1,0}M,H)$ is a holomorphic Dirac structure whose real and imaginary parts are given as follows (from~\cite[\S 6]{MR4480214}). 
\begin{align}
    L_R= (\rho,\mu_{\Re(\omega)})(A), \qquad L_I= (\rho,\mu_{\Im(\omega)})(A); \label{eq:re and im part dirac via q symplectic}
\end{align} 
The clearest characterization of the relation between a quasi-symplectic groupoid $(G,\omega)$ and its corresponding Dirac structure $A$ is that the anchor map 
\begin{equation}\label{sfdm}
\begin{tikzcd}
G \arrow[r, "{(\mathtt{t},\mathtt{s})}"] & M\times M,    &&
\text{Gr}(\omega) \arrow[r, maps to]     & A\times (-A)
\end{tikzcd}
\end{equation}
is a forward Dirac map from $\Gr(\omega)$ to $A\times (-A)$, i.e. the pushforward Dirac structure of $\Gr(\omega)$ to $M \times M$ is $A \times (-A)$. In this situation, $(G,\omega)$ is called an integration of $A$. As shown in \cite{MR2068969,MR4480214}, Any Dirac structure integrable as a Lie algebroid admits such a quasi-symplectic integration.

\subsection{Poisson groupoids}\label{Poisson in terms of quasi-symplectic}

A Manin triple, because it involves a pair of Dirac structures, gives rise to a corresponding pair of quasi-symplectic groupoids, as explained above. In fact, each of these groupoids inherits additional structure as a result of the presence of the other Dirac structure; each groupoid is actually a Poisson groupoid. 
 \begin{definition}[\cite{MR959095,weisygr}] 
A {\em Poisson groupoid} is a Lie groupoid $G \rightrightarrows M$ equipped with a Poisson structure $\pi$ on $G$ such that the graph of the multiplication is a coisotropic submanifold:
\[ \Gr(\mm)=\{(g,h,\mm(g,h))\ :\ \sss(g)=\ttt(h)\}\hookrightarrow(G,\pi) \times (G,\pi) \times (G,-\pi). \]  
A Poisson groupoid with nondegenerate Poisson structure is called a {\em symplectic groupoid}.
\end{definition}

Let $G \rightrightarrows M$ be a Poisson groupoid and $A$ its Lie algebroid. A fundamental result in the theory of Poisson groupoids is that the unit embedding $M \hookrightarrow G$ is coisotropic so that $A^*$ inherits a Lie algebroid structure and $(A\oplus A^*, A, A^*)$ is a Manin triple \cite{MR3451395,MR1746902, MR1262213}. Conversely, if $(E,A, B)$ is a Manin triple and $G_A$ is the source simply connected integration of $A$, then $G_A$ inherits a canonical Poisson groupoid structure. 

Take a Manin triple $(E,A, B)$ such that the Courant algebroid $E$ is exact.
We briefly explain how the corresponding Poisson groupoid structures can be recovered from the integration of $A$ and $B$ as Dirac structures. 
Suppose the Dirac structure $A$ has an integration $(G_A\rightrightarrows M,\omega_A)$ as in~\S\ref{qsg}. It follows from Condition 3 in \autoref{def:quasi symplectic} that the Dirac morphism~\eqref{sfdm}, namely 
\[
(\ttt,\sss): (G_A, \Gr(\omega_A)) \dashrightarrow (M \times M, A \times (-A) )
\]
is a strong Dirac map in the sense of~\cite[Definition 2.4]{MR2642360}. As a consequence of~\cite[Proposition 1.15]{MR2642360}, any complementary Dirac structure on the codomain may be pulled back to obtain a complementary Dirac structure on the domain.  In particular, we obtain a Manin triple over $G_A$:
\begin{equation*}
((\DT G_A,H),\ \Gr(\omega_A),\  (\ttt,\sss)^!(B \times (-B) )),
\end{equation*}
and similarly for an integration $(G_B,\omega_B)$ of $B$. 
The multiplicative Poisson structures $\Pi_A, \Pi_B$ on $G_A, G_B$ are then determined by taking differences as in~\eqref{poismt}.  For instance,
\begin{align}
    \Gr(\Pi_A) = \Gr(\omega_A) - (\ttt,\sss)^!(B \times (-B)) = \Gr(\omega_A) -\ttt^!B+\sss^!B. \label{eq:side poisson via dirac}
\end{align}
See also \cite[Thm. 4.3]{buriglsev}. 

The pair of Poisson groupoids $(G_A,\Pi_A), (G_B,\Pi_B)$ described above ultimately derive from the same Manin triple, and so must be related in a groupoid-theoretic fashion; this relation, found by Weinstein~\cite{MR959095}, is that these groupoids form the \emph{side groupoids} of a symplectic double groupoid.  The remainder of this section is devoted to reviewing and extending the theory of symplectic double groupoids. 

\subsection{Symplectic double groupoids}

A {\em double groupoid} is a groupoid object in the category of groupoids, so it is a diagram of groupoid structures 
\[\begin{tikzcd}
D \arrow[d, shift right] \arrow[d, shift left] \arrow[r, shift right] \arrow[r, shift left] & G_A \arrow[d, shift right] \arrow[d, shift left] \\
G_B \arrow[r, shift right] \arrow[r, shift left]                                            & M                                               
\end{tikzcd}\] in which the horizontal structure maps $D\rightrightarrows G_A$ are groupoid morphisms with respect to the vertical groupoid structures; $G_A$ and $G_B$ are called the side groupoids of $D$. More succinctly, we denote such a double groupoid as $(D, G_A, G_B, M)$ or as $D$ if there is no need to specify the side groupoids. It is convenient to view an element $d \in D$ as in Fig.~\ref{fig:square in double gpd}, where the two vertical sides represent elements in $G_A$, the two horizontal sides represent elements in $G_B$, and the four vertices represent elements in $M$. Let $\sss_A, \ttt_A, \sss_B, \ttt_B$ denote the source and target maps of $D \rightrightarrows G_A$ and $D\rightrightarrows G_B$ respectively. Then in Fig.~\ref{fig:square in double gpd} we have that $a_1 = \sss_A(d)$, $ a_2 = \ttt_A(d)$, $ b_1 = \sss_B(d)$ and $ b_2 = \ttt_B(d)$. Denote by $\mm_A$ and $\mm_B$ the respective multiplications of $D \rightrightarrows G_A$ and $D \rightrightarrows G_B$; $\mm_A$ and $\mm_B$ are also called the horizontal and vertical compositions of $D$, respectively.
\begin{wrapfigure}{r}{4cm} \begin{tikzpicture}[line cap=round,line join=round,>=stealth,x=1cm,y=1cm,scale=0.3]
	\clip(-4.3,-5.12) rectangle (11.7,4.0);
 \draw[fill=black!10] (0.44,-3.78) rectangle (6.86,2.64);
	\draw [<-,line width=0.9pt] (0.44,2.64) -- (6.86,2.64);
	\draw [<-,line width=0.9pt] (6.86,2.64) -- (6.86,-3.78);
	\draw [->,line width=0.9pt] (6.86,-3.78) -- (0.44,-3.78);
	\draw [->,line width=0.9pt] (0.44,-3.78) -- (0.44,2.64);
	\begin{scriptsize}
		
		
		
		
		\draw[color=black] (3.7,3.27) node {$b_2$};
		\draw[color=black] (7.8,-0.4) node {$a_1$};
		\draw[color=black] (3.68,-4.19) node {$b_1$};
		\draw[color=black] (-0.2,-0.4) node {$a_2$};
		\draw[color=black] (3.55,-0.5) node {$d$};
	\end{scriptsize}
\end{tikzpicture}\caption{An element $d\in D$}\label{fig:square in double gpd}\end{wrapfigure}
 A {\em double Lie groupoid} is a double groupoid as above in which all the side groupoid structures are Lie groupoid structures and $(\sss_A,\sss_B):D \rightarrow G_A \times_M G_B$ is a submersion \cite{MR1170713}. Note that, with respect to \cite{MR1170713, MR1174393}, we drop the surjectivity requirement for this map, following \cite{MR1054741}: this allows us to construct examples of double Lie groupoids via the method of Lu-Weinstein in~\S\ref{subsubsection: Lu-Weinstein}.

The final piece of data we need is a symplectic form on the manifold $D$ which is multiplicative in both the horizontal and vertical directions.  This induces Poisson groupoid structures  on the side groupoids $G_A,G_B$, coinciding with the type of structure on the integration of the pair of Dirac structures in a Manin triple. 

\begin{definition}[Symplectic double groupoids \cite{MR959095}]
	A symplectic double groupoid is a double Lie groupoid $(D, G_A, G_B, M)$ equipped with a symplectic form on $D$ that makes both $D \rightrightarrows G_A$ and $D \rightrightarrows G_B$ into symplectic groupoids. 
\end{definition}

It was observed in \cite{MR959095} that any Poisson groupoid $G \rightrightarrows M$ can be embedded as one side of a local symplectic double groupoid $(D, G, G', M)$ and $G$ is in Poisson duality with the other side $G'$ (see \cite[Definition 4.4.1]{MR959095}). Conversely, it was shown in \cite{MR1697617} that the two sides of a symplectic double groupoid are Poisson groupoids in duality. Constructing symplectic double groupoids is difficult in general. However, there is a method to construct symplectic double groupoids for Manin triples sitting in exact Courant algebroids, as we shall see in \S\ref{subsec: lag branes}.

Any double groupoid as above contains a groupoid over $M$ called the \emph{core groupoid} of $D$.  If $D$ is a double symplectic groupoid integrating a Manin triple, then the core is in fact a symplectic groupoid \cite[Thm. 2.9]{MR1697617} which integrates the underlying Poisson structure of the Manin triple. 
 \begin{definition}[Core groupoid]\label{def:sym core} Given a symplectic double groupoid $(D, G_A, G_B, M)$, the space
 $K_D:=\{d\in D\ :\  \sss_A(d)=\uu(p), \sss_B(d)=\uu(p), p\in M\}$ is the {\em (symplectic) core groupoid of $D$} with its symplectic form given by restriction of the symplectic form on $D$. 
\end{definition}


We now explain how to obtain a Manin triple from the symplectic double groupoid $(D, G_A, G_B, M)$. Let $Lie (D) ^A $ be the Lie algebroid of $D \rightrightarrows G_A$ and let $B \to M$ be the Lie algebroid of $G_B \rightrightarrows M$. The tangent functor defines a Lie groupoid structure on $Lie(D)^A$ over $B$ such that the corresponding vector bundle projections determine a groupoid morphism from $Lie(D)^A \rightrightarrows B$ to $G_A \rightrightarrows M$.
Since the structure maps of $Lie(D)^A \rightrightarrows B$ are Lie algebroid morphism, it is a groupoid object in the category of Lie algebroids, such groupoid objects are called {\em LA-groupoids} \cite{MR1174393}. As $(D,\Omega) \rightrightarrows G_A$ is a symplectic groupoid, we have a map $\mu_\Omega:Lie(D)^A\rightarrow T^*G_A$ as in \eqref{eq:IM 2 form}, inducing an isomorphism of vector bundles which is also a groupoid isomorphism:
\begin{equation*}
    \begin{tikzcd}
	Lie(D)^A \arrow[d, shift right] \arrow[d, shift left = 1] \arrow[r, "\cong"] & T^*G_A \arrow[d, shift right] \arrow[d, shift left = 1] \\
	B \arrow[r, "\cong"] & A^*.
\end{tikzcd}\label{eq:iso of LA groupoids}
\end{equation*}

In particular, we obtain that $A$ and $B$ are in duality. We summarize the above discussion into the following proposition.

\begin{proposition}\label{Poisson duality from IM 2-form}
	Let $(D, G_A, G_B, M)$ be a symplectic double groupoid. Let $A$ and $B$ be the Lie algebroids of $G_A$ and $G_B$ respectively. Then 
	\[
	\langle a, b \rangle:=\Omega (a,b), \quad a \in A, b \in B
	\]
	is a nondegenerate pairing between $A$ and $B$. \qed
\end{proposition}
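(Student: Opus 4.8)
The plan is to make sense of the pairing $\Omega(a,b)$ by realizing $A$ and $B$ as complementary algebroid directions inside $T_mD$ at a double unit $m\in M$, and then to identify this pairing with the infinitesimally multiplicative form $\mu_\Omega$ of the horizontal symplectic groupoid. Concretely, let $\uu_A\colon G_A\to D$ and $\uu_B\colon G_B\to D$ denote the unit embeddings of the two groupoid structures on $D$, and regard $M\hookrightarrow D$ as the double units. Then $A_m=\mathrm{Lie}(G_A)_m$ and $B_m=\mathrm{Lie}(G_B)_m$ embed in $T_mD$ via $d\uu_A$ and $d\uu_B$, and this is precisely the meaning of $\Omega(a,b)$ in the statement. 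Since $\dim G_A=2\dim M$ and $\dim D=2\dim G_A$, we have $\dim A_m=\dim B_m=\dim M$, whereas $\dim T_mD=4\dim M$; thus nondegeneracy of the pairing between the equidimensional spaces $A_m,B_m$ will follow once we prove it is injective in a single variable.

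First I would record two structural facts. Because the vertical edges of a vertically trivial square $\uu_B(b)$ are units, the composite $\sss_A\circ\uu_B\colon G_B\to G_A$ factors through the unit inclusion of $M$; differentiating at $m$ and restricting to $B_m=\ker d\sss|_m$, where $\sss\colon G_B\to M$ is the side source, shows $d\uu_B(B_m)\subset\ker d\sss_A=\mathrm{Lie}(D)^A|_m$, so that $\mu_\Omega$ may be evaluated on $B_m$. Second, since $(D,\Omega)\rightrightarrows G_B$ is a symplectic groupoid, its unit submanifold $\uu_B(G_B)$ is Lagrangian; its tangent space at $m$ is $T_mM\oplus B_m$, whence $\Omega(\beta,v)=0$ for all $\beta\in B_m$ and $v\in T_mM$ (base directions).

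The key computation uses the IM form $\mu_\Omega\colon\mathrm{Lie}(D)^A\to T^*G_A$, $\mu_\Omega(\xi)=\uu_A^*(\iota_\xi\Omega)$ of~\eqref{eq:IM 2 form}, which is a vector bundle isomorphism because $\Omega$ is nondegenerate and $\dim D=2\dim G_A$. For $\beta\in B_m$ and $a\in A_m\subset T_{\uu_{G_A}(m)}G_A$ I would compute directly
\[
\IP{\mu_\Omega(\beta),a}=\IP{\uu_A^*(\iota_\beta\Omega),a}=\Omega\big(\beta,d\uu_A(a)\big)=\pm\,\Omega(a,b),
\]
so the pairing of the proposition is exactly $\beta\mapsto\mu_\Omega(\beta)\vert_{A_m}$, up to sign. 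Evaluating $\mu_\Omega(\beta)$ instead on a unit direction $d\uu_{G_A}(v)$, $v\in T_mM$, yields $\Omega(\beta,v)=0$ by the Lagrangian fact, so $\mu_\Omega(\beta)$ annihilates $d\uu_{G_A}(T_mM)$. Since $T_{\uu_{G_A}(m)}G_A=d\uu_{G_A}(T_mM)\oplus A_m$, the covector $\mu_\Omega(\beta)$ is determined by its restriction to $A_m$; hence if $\Omega(a,b)=0$ for all $a\in A_m$ then $\mu_\Omega(\beta)=0$, and injectivity of $\mu_\Omega$ forces $\beta=0$. Nondegeneracy in the remaining variable follows by equality of dimensions, establishing the perfect pairing (equivalently $B\cong A^*$).

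The main obstacle here is purely organizational: fixing compatible conventions for the two unit embeddings and the four structure maps $\sss_A,\ttt_A,\sss_B,\ttt_B$ of $D$, and pinning down the Lagrangian property of $\uu_B(G_B)$, which is precisely what upgrades the abstract isomorphism $\mathrm{Lie}(D)^A\cong T^*G_A$ into a statement pairing $A$ against $B$ rather than the full algebroids. Indeed, the argument may be read as computing, along the units, the base isomorphism $B\cong A^*$ underlying the LA-groupoid isomorphism $\mathrm{Lie}(D)^A\cong T^*G_A$ recalled above, and exhibiting it explicitly as $b\mapsto\Omega(-,b)$.
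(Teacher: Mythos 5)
Your proof is correct, but it takes a genuinely different route from the paper's. The paper argues structurally: it applies the Lie functor to the groupoid direction $D \rightrightarrows G_B$ to make $\mathrm{Lie}(D)^A$ a groupoid over $B$, invokes the fact that $\mu_\Omega\colon \mathrm{Lie}(D)^A \to T^*G_A$ is an isomorphism of groupoids onto the cotangent groupoid $T^*G_A \rightrightarrows A^*$ (this is where multiplicativity of $\Omega$ enters), and reads off the duality $B \cong A^*$ as the induced map on bases. You instead work pointwise at the double units: you identify the pairing with $\beta \mapsto \mu_\Omega(\beta)\vert_{A_m}$, use the Lagrangian property of the $\uu_B$-units to show $\mu_\Omega(\beta)$ annihilates the $\uu_{G_A}$-unit directions, and conclude nondegeneracy from injectivity of $\mu_\Omega$ together with equidimensionality of $A_m$ and $B_m$. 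What your argument buys is that it makes explicit something the paper glosses over — that the base map of its LA-groupoid isomorphism really is $b \mapsto \Omega(\cdot,b)$ — and it avoids having to verify that $\mu_\Omega$ is a groupoid morphism; what the paper's buys is the stronger output (the full isomorphism of LA-groupoids over $B\cong A^*$), which it needs afterwards to transport the Lie brackets and assemble the Manin triple $(A\oplus B, A, B)$.

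Two slips, both inessential to the logic. First, $\dim G_A = 2\dim M$ is not part of the definition of a symplectic double groupoid and is false in general: in the Lu--Weinstein double of a Poisson--Lie group, $M$ is a point while $G_A=G$. What is true, and all your argument actually uses, is $\dim A_m = \dim B_m$, which follows from $\dim G_A = \dim G_B = \tfrac{1}{2}\dim D$ (both sides carry symplectic groupoid structures on the same $D$). Second, injectivity of $\mu_\Omega$ does not follow from nondegeneracy of $\Omega$ and the dimension count alone: one has $\ker \mu_\Omega = \ker d\sss_A \cap \left(T\uu_A(G_A)\right)^{\Omega}$ along the units, and to kill this intersection one needs the units $\uu_A(G_A)$ to be Lagrangian, so that it equals $T\uu_A(G_A)\cap \ker d\sss_A = 0$. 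This is the standard Coste--Dazord--Weinstein fact that the paper itself invokes without proof, so citing it is fine, but your parenthetical justification as written does not establish it.
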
 
Using the pairing of Proposition \ref{Poisson duality from IM 2-form}, we get that $B\cong A^*$ and the corresponding Lie brackets extend to a Courant bracket on $A\oplus B$ thus defining a Manin triple $(A\oplus B,A,B)$ (presented as a Lie bialgebroid) \cite{MR1472888,MR1262213}.  

\begin{remark}\label{rem: real and im manin triples of a double} If $D$ is holomorphic symplectic, then we can use $\Re(\Omega)$ and $\Im(\Omega)$ as in \autoref{Poisson duality from IM 2-form} to produce two different real Manin triples $((A\oplus B)_R,A,B)$ and $((A\oplus B)_I,A,B)$.
\end{remark}

\subsection{Double Lie groupoids and their generalized morphisms}

A key aspect of the theory of groupoids is that beyond the usual notion of morphism or functor, there are generalized morphisms and, in particular, Morita equivalences, between groupoids.  These generalized morphisms involve spans or zigzags of groupoid morphisms, which themselves may be related by maps, leading to the fact that groupoids form a 2-category.   In the remainder of this section, we develop the theory of generalized morphisms between double groupoids; we shall see that double groupoids may be related by \emph{horizontal} or $(1,0)$-morphisms as well as \emph{vertical} or $(0,1)$-morphisms, and that sequences or ``paths'' of such morphisms may be related by ``homotopies'' or $(1,1)$-morphisms.  This higher categorical structure on double groupoids is directly applicable to generalized K\"ahler geometry, as we shall see in~\S\ref{section: integration}.  To develop such higher morphisms of double groupoids, we first recall the notion of an action of a double Lie groupoid \cite[Definition 1.5]{MR1170713}.
\begin{definition}\label{def: hor action}
	A {\em right horizontal action of a double Lie groupoid $(D, G_A, G_B, M)$ on a Lie groupoid morphism $p: (\mathcal{W} \rightrightarrows W) \to (G_A \rightrightarrows M)$} (called the moment map) is a Lie groupoid morphism called the action morphism $\mathcal{W} \times_{G_A} D \to \mathcal{W}$ over $W \times_{M} G_B \to W$, where
	\begin{align*}
	&\mathcal{W} \times_{G_A} D = \{(x, d)\in \mathcal{W} \times D \ :\  p(x) = \ttt_A(d)\}, \quad W\times_M G_B = \{(w, b)\in W\times G_B\ :\  p(w)= \ttt(b)\};
	\end{align*}
	and the groupoid structure of $\mathcal{W} \times_{G_A} D \rightrightarrows W\times_M G_B$ is the one obtained from the product Lie groupoid $\mathcal{W} \times D \rightrightarrows W \times G_B$ by restriction.  
\end{definition}
There is an analogous corresponding definition of left and vertical actions of double Lie groupoids. To simplify notation, we will often abbreviate $\mathcal{W}=(\mathcal{W} \rightrightarrows W)$, $G_A=(G_A\rightrightarrows M)$ and $G_B=(G_B\rightrightarrows M)$.
 Also, a horizontal action of a double Lie groupoid $(D, G_A, G_B, M)$ on $p: \mathcal{W} \to G_A$ as defined above may be encapsulated in the action double Lie groupoid $\mathcal{W} \rtimes_{G_A} D=(\mathcal{W} \times_{G_A} D, \mathcal{W},W \times_M G_B,W )$. 
 
 \begin{definition}
 	Let $q: (\mathcal{W} \rightrightarrows W) \to (\mathcal{Q} \rightrightarrows Q)$ be a surjective submersive Lie groupoid morphism. A horizontal action of $(D, G_A, G_B, M)$ on $p: (\mathcal{W} \rightrightarrows W) \to (G_A \rightrightarrows M)$ is \emph{principal} if the double groupoid morphism $\mathcal{W} \rtimes_{G_A} D \to \mathcal{W} \times_{q} \mathcal{W}$ is an isomorphism, where $\mathcal{W} \times_q \mathcal{W}$ is the submersion double groupoid $(\mathcal{W} \times_q \mathcal{W},\mathcal{W} ,W\times_q W,W)$. 
 \end{definition}
  
\begin{definition}\label{def: hor morphism}
	A {\em horizontal generalized morphism (or $(1,0)$-morphism)} $\mathcal{W}: D \dashrightarrow D'$ from $(D, G_A, G_B, M) $ to $(D', G_{A'}, G_{B'}, M')$ consists of the following data: 
 \begin{equation}
\begin{tikzcd}
	D' \arrow[d, shift right] \arrow[r, shift left] \arrow[r, shift right] \arrow[d, shift left] & G_{A'} \arrow[d, shift right] \arrow[d,shift left] & \mathcal{W}  \arrow[r, "p"] \arrow[d, shift right] \arrow[d,shift left] \arrow[l, "p'"'] & G_A \arrow[d, shift right] \arrow[d,shift left] & D \arrow[d, shift right] \arrow[d,shift left] \arrow[l, shift right] \arrow[l,shift left] \\
	G_{B'}\arrow[r, shift right] \arrow[r,shift left]                                  & M'                                & W \arrow[r] \arrow[l]                                  & M                                & G_B \arrow[l, shift right] \arrow[l,shift left]            \mathrlap{.}                     
\end{tikzcd} \label{eq:hormorequ}  
\end{equation}
 \begin{enumerate}
     \item a Lie groupoid $\mathcal{W}  \rightrightarrows W$ equipped with two Lie groupoid morphisms $p: \mathcal{W}  \to G_A$ and $p': \mathcal{W}  \to G_{A'}$;
     \item a horizontal right $D$-action on $p$ and a horizontal left $D'$-action on $p'$,
 \end{enumerate} 
 such that both actions commute, and the left action is principal with $p$ as its corresponding quotient map.
 In this situation, $\mathcal{W}$ is called a {\em horizontal multiplicative bimodule}. If both actions are principal and the quotient maps coincide with the moment maps, then we say that $P$ is a biprincipal bibundle for a {\em horizontal Morita equivalence} of double Lie groupoids. \end{definition}

Of course, the three definitions above have vertical analogues; corresponding to vertical actions, we have vertical generalized morphisms or $(0,1)$-morphisms and vertical Morita equivalences of double Lie groupoids. Note that the trivial horizontal or vertical Morita equivalence from a double Lie groupoid to itself is the one in which the bimodule is a copy of the same groupoid, and the actions are given by horizontal or vertical multiplication on the left and right.  For a horizontal or vertical generalized morphism to be isomorphic to this trivial one, it suffices to find a bisection of it, defined as follows. 

\begin{definition}[Multiplicative bisection]\label{def:mulbis}
	Let $(D, G_A, G_B, M)$ be a double Lie groupoid that acts horizontally on a Lie groupoid morphism $p: \mathcal{W}  \to G_A$. A \emph{multiplicative} section of $p$ is a groupoid morphism $b: G_A \to \mathcal{W} $ such that $p \circ b =\id$; if $\mathcal{W} $ is a bimodule for a Morita equivalence as in \eqref{eq:hormorequ}, $b: G_A \to \mathcal{W}$ is a {\em multiplicative bisection} if it is a groupoid morphism such that $p'\circ b$ is an isomorphism. Vertical multiplicative (bi)sections are defined analogously.
\end{definition}
For simplicity, we will sometimes identify bisections with their images. The following result is immediate.
\begin{proposition}\label{prop: isomorphism induced by bisection}
	Let $P$ be a horizontal  Morita equivalence between double Lie groupoids $D$ and $D'$ as in \eqref{eq:hormorequ}. Then a multiplicative bisection $b: G_{A} \to \mathcal{W}$ induces an isomorphism of double Lie groupoids $\Psi:D \to D'$ given by $\Psi(d) \cdot b(\sss_A(d))=b(\ttt_{A}(d))\cdot d$ for all $d\in D$. \qed
\end{proposition}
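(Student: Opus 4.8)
The plan is to read the displayed identity as the \emph{definition} of $\Psi$, and then to verify in turn that it is smooth, that it intertwines both the horizontal and the vertical groupoid structures, and finally that it is invertible. Throughout, set $\psi_A:=p'\circ b\colon G_A\to G_{A'}$; since $b$ is a groupoid morphism with $p\circ b=\id$ and $p'\circ b$ an isomorphism, $\psi_A$ is an isomorphism of side groupoids, and it is the base map that $\Psi$ will cover in the horizontal direction.

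First I would check that $\Psi$ is well-defined and smooth. For $d\in D$, the element $b(\ttt_A(d))\cdot d$ is defined by the right $D$-action (since $p(b(\ttt_A(d)))=\ttt_A(d)$) and satisfies $p\big(b(\ttt_A(d))\cdot d\big)=\sss_A(d)=p\big(b(\sss_A(d))\big)$, so it and $b(\sss_A(d))$ lie in a common fibre of $p$. Because the left $D'$-action is principal with quotient map $p$ -- that is, $(d',x)\mapsto(d'\cdot x,x)$ is a diffeomorphism onto $\mathcal{W}\times_p\mathcal{W}$ -- there is a unique $\Psi(d)\in D'$ satisfying the defining equation, and it depends smoothly on $d$. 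Reading off the source and target of the left action, and using that $p'$ is invariant under the right $D$-action, gives $\sss_{A'}(\Psi(d))=\psi_A(\sss_A(d))$ and $\ttt_{A'}(\Psi(d))=\psi_A(\ttt_A(d))$, so $\Psi$ covers $\psi_A$ horizontally.

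Next I would verify compatibility with the two multiplications. For $\mm_A$, with $\sss_A(d_1)=\ttt_A(d_2)$, I expand $b(\ttt_A(d_1d_2))\cdot(d_1d_2)$ using associativity of the right action, substitute the defining relations for $\Psi(d_1)$ and $\Psi(d_2)$, and commute the $D'$-factors past the $D$-factors using commutativity of the two actions together with associativity of the left action; uniqueness then forces $\Psi(d_1d_2)=\Psi(d_1)\Psi(d_2)$. Restricting the whole configuration to objects exhibits $W$ as an ordinary biprincipal $G_{B'}$--$G_B$ bibundle carrying the Morita bisection $b_0\colon M\to W$ underlying $b$; the classical (single-groupoid) form of the statement then supplies isomorphisms $\psi_B\colon G_B\to G_{B'}$ and $\psi_M\colon M\to M'$, and one checks that $\Psi$ covers $\psi_B$ on $\sss_B,\ttt_B$. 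The multiplicativity of $\Psi$ for the vertical product $\mm_B$ is proved by the same manipulation as for $\mm_A$, and I expect this to be the main obstacle: here one must invoke that the action morphisms are morphisms of the \emph{vertical} groupoid structures $\mathcal{W}\times_{G_A}D\rightrightarrows W\times_M G_B$, giving $(x_1\cdot_{\mathcal{W}}x_2)\cdot(d_1\cdot_B d_2)=(x_1\cdot d_1)\cdot_{\mathcal{W}}(x_2\cdot d_2)$, and combine it with the double-groupoid identity $\sss_A(d_1\cdot_B d_2)=\sss_A(d_1)\,\sss_A(d_2)$ and the multiplicativity of $b$ to split $b(\sss_A(d_1\cdot_B d_2))=b(\sss_A(d_1))\cdot_{\mathcal{W}} b(\sss_A(d_2))$. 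It is this interplay of the full double structure with $b$, rather than any single groupoid relation, that needs care.

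Since preservation of source, target, and both multiplications automatically yields preservation of units and inverses, it remains only to see that $\Psi$ is invertible. Because $p'\circ b=\psi_A$ is an isomorphism, $b':=b\circ\psi_A^{-1}$ is a multiplicative bisection of the reverse Morita equivalence $D'\dashrightarrow D$; the symmetric construction produces $\Psi'\colon D'\to D$, and substituting the two defining equations into one another, using uniqueness from principality of the right $D$-action, shows $\Psi'=\Psi^{-1}$. Hence $\Psi$ is an isomorphism of double Lie groupoids covering $\psi_A$, $\psi_B$, and $\psi_M$, as claimed.
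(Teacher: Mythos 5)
Your proof is correct and takes the approach the paper itself intends: the paper states this proposition without proof (declaring it immediate), and your argument simply fills in the details, taking the displayed formula as the definition of $\Psi$, using principality of the left $D'$-action with quotient map $p$ for existence, uniqueness and smoothness, and then deriving horizontal and vertical multiplicativity from commutativity of the actions and the interchange law built into the definition of a horizontal action. The only step you leave compressed — that $\Psi$ covers an isomorphism $\psi_B:G_B\to G_{B'}$ on the vertical sides, obtained by restricting the biprincipal structure to objects and invoking the classical single-groupoid statement — is exactly right and is what guarantees the vertical products $\Psi(d_1)\cdot_{B'}\Psi(d_2)$ are composable before the uniqueness argument is applied.
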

In order to relate horizontal and vertical generalized morphisms, we introduce the concept of $(1,1)$-morphism, which is a special kind of 2-morphism (or 2-cell) in the sense of higher category theory. Invertible $(1,1)$-morphisms are the {\em Morita 2-equivalences} mentioned in the introduction. 

\begin{definition}\label{def: 11 morphism}
	A $(1,1)$-morphism of double Lie groupoids consists of the following data. 
 \begin{equation}
\begin{tikzcd}
	D_{1,1}                             & \mathcal{W} \arrow[l] \arrow[r, two heads]                              & D_{0,1}                             \\
	\mathcal{U} \arrow[u] \arrow[d, two heads] & \mathcal{C} \arrow[u] \arrow[d,two heads] \arrow[l] \arrow[r, two heads] & \mathcal{Z} \arrow[d, two heads] \arrow[u ] \\
	D_{1,0}                             & \mathcal{V} \arrow[l] \arrow[r, two heads]                              & D_{0,0}                         
\end{tikzcd}\hspace{5em}
\begin{tikzpicture}[baseline={(0,0)},scale=0.85,every node/.style={transform shape}]
\pgfdeclarelayer{background}
\pgfdeclarelayer{foreground}
\pgfsetlayers{background,main,foreground}
 \coordinate (a) at (-1,-1);
 \coordinate (b) at (1,-1);
 \coordinate (c) at (1,1);
 \coordinate (d) at (-1,1);
 \coordinate (e) at (0,0);
\coordinate (f) at (0,1);
\coordinate (g) at (0,-1);
\coordinate (h) at (1,0);
\coordinate (i) at (-1,0);
\begin{pgfonlayer}{background}
      \draw[fill=black!10] (a)--(b)--(c)--(d)--cycle;
\draw[postaction={on each segment={mid arrow=black}}] (b)--(c);
\draw[postaction={on each segment={mid arrow=black}}] (b)--(a); 
\draw[postaction={on each segment={mid arrow=black}}] (a)--(d);
\draw[postaction={on each segment={mid arrow=black}}] (c)--(d);
            \end{pgfonlayer}
 \node [below left] at (a)
                {${D}_{1,0}$};
\node [circle, fill=black, inner sep=1pt] at (a){};
\node [below right] at (b)
                {$D_{0,0}$};
\node [circle, fill=black, inner sep=1pt] at (b){};                
\node [above right] at (c)
                {$D_{0,1}$};
\node [circle, fill=black, inner sep=1pt] at (c){};
\node [above left] at (d)
                {${D}_{1,1}$};
\node [circle, fill=black, inner sep=1pt] at (d){};
\node  at (e)
                {$\mathcal{C}$};
\node [above] at (f)
                {$\mathcal{W}$};
\node[below] at (g)
                {$\mathcal{V}$};
\node[right] at (h)
                {$\mathcal{Z}$};
\node[left] at (i)
                {$\mathcal{U}$};
\end{tikzpicture}
\label{eq:(1,1) morphism}
\end{equation}
	\begin{enumerate}
		\item Vertical and horizontal generalized morphisms 
		\[\mathcal{Z}: D_{0,0} \dashrightarrow  D_{0,1} , \qquad  \mathcal{U}: D_{1,0} \dashrightarrow D_{1,1},\qquad  \mathcal{V}: D_{0,0} \dashrightarrow D_{1,0}, \qquad  \mathcal{W}: D_{0,1} \dashrightarrow D_{1,1}. \]
		\item A smooth manifold $\mathcal{C}$ which simultaneously carries generalized morphisms of Lie groupoids $\mathcal{C}: \mathcal{Z} \dashrightarrow \mathcal{U}$ and $\mathcal{C}: \mathcal{V} \dashrightarrow \mathcal{W}$; we call $\mathcal{C}$ the {\em double bimodule} of the $(1,1)$-morphism. \end{enumerate}
		These must be compatible in the sense that the following interchange rule holds: 
		\[
		(c\cdot z)\cdot (v \cdot d) = (c\cdot v)\cdot(z\cdot d),
		\]
		for all $c \in \mathcal{C}, v \in \mathcal{V}, z \in \mathcal{Z}$ and $d \in D_{0,0}$ whenever both sides are defined. As a consequence, the effect of such vertical and horizontal actions can be unambiguously denoted as a concatenation of squares as in the diagram below.
  \[ \adjustbox{scale=0.85,center}{\begin{tikzpicture}[scale=0.7,every node/.style={transform shape}]
  \draw[fill=black!10] (0,0) rectangle (4,4);
  \draw[step=2.0,black,thin] (0, 0) grid (4.0, 4.0);
  \path
    (1,1) node {$v$}
(3, 1) node {$d$}
    (3, 3) node {$z$}
    (1, 3) node {$c$} 
  ;
\end{tikzpicture}} \]
Additionally, there are three analogous interchange rules corresponding to the actions of the other double groupoids.
 A $(1,1)$-morphism is {\em invertible} if all the generalized morphisms that determine it are Morita equivalences (i.e. they are given by biprincipal bibundles). 
\end{definition}
Each double-headed arrow on the left side of~\eqref{eq:(1,1) morphism} labels the quotient map of a principal action; however, all the generalized morphisms we use in the following are Morita equivalences, so we will drop this notation in the rest of the paper.  Also, the directions of the generalized morphisms are indicated on the right in \eqref{eq:(1,1) morphism}.

Usual Lie groupoids constitute the objects of a bicategory in which generalized morphisms are the 1-morphisms and biequivariant maps of bibundles serve as 2-morphisms, see \cite[Prop. 2.12]{MR2439561}. Similarly, we have morphisms between $(1,1)$-morphisms which are given by equivariant maps between the corresponding pairs of bimodules, as in Fig.~\ref{fig:morphism between 2-cells}.
\begin{definition} A morphism between $(1,1)$-morphisms is given by:
\begin{equation}\label{fig:morphism between 2-cells}
\begin{aligned}
    \centering
   \begin{tikzpicture}[scale=0.75,every node/.style={transform shape}]
\pgfdeclarelayer{background}
\pgfdeclarelayer{foreground}
\pgfsetlayers{background,main,foreground}
 \coordinate (a) at (-1,-1);
 \coordinate (b) at (1,-1);
 \coordinate (c) at (1,1);
 \coordinate (d) at (-1,1);
 \coordinate (e) at (0,0);
\coordinate (f) at (0,1);
\coordinate (g) at (0,-1);
\coordinate (h) at (1,0);
\coordinate (i) at (-1,0);
\coordinate (j) at (-1+4.5,-1+0.5);
 \coordinate (k) at (1+4.5,-1+.5);
 \coordinate (l) at (1+4.5,1+.5);
 \coordinate (m) at (-1+4.5,1.5);
 \coordinate (n) at (4.5,0.5);
\coordinate (o) at (4.5,1.5);
\coordinate (p) at (4.5,-0.5);
\coordinate (q) at (5.5,0.5);
\coordinate (r) at (3.5,0.5);
 
 \begin{pgfonlayer}{background}
      \draw[fill=black!10] (a)--(b)--(c)--(d)--cycle;
\draw[postaction={on each segment={mid arrow=black}}] (b)--(c);
\draw[postaction={on each segment={mid arrow=black}}] (b)--(a); 
\draw[postaction={on each segment={mid arrow=black}}] (a)--(d);
\draw[postaction={on each segment={mid arrow=black}}] (c)--(d);
\draw[fill=black!10] (j)--(k)--(l)--(m)--cycle;
      \draw[fill=black!3,opacity=0.5,ultra thin] (b)--(k)--(l)--(c)--cycle;
      \draw[fill=black!3,opacity=0.5] (c)--(l)--(m)--(d)--cycle;
      \draw[opacity=0.5] (a)--(b)--(k)--(j)--cycle;
\draw[postaction={on each segment={mid arrow=black}}] (k)--(l);
\draw[postaction={on each segment={mid arrow=black}}] (l)--(m); 
\draw[postaction={on each segment={mid arrow=black}}] (j)--(m);
\draw[postaction={on each segment={mid arrow=black}}] (k)--(j);
\draw[draw=gray!5,postaction={on each segment={mid arrow=gray}},opacity=0.4] (c)--(l);
\draw[draw=gray!5,opacity=0.4] (b)--(k);
\draw[draw=gray!5,postaction={on each segment={mid arrow=gray}},opacity=0.4] (a)--(j);
\draw[draw=gray!5,opacity=0.4] (d)--(m);
            \end{pgfonlayer}
 \node [below left] at (a)
                {${D}_{1,0}$};
\node [circle, fill=black, inner sep=1pt] at (a){};
\node [below right] at (b)
                {$D_{0,0}$};
\node [circle, fill=black, inner sep=1pt] at (b){};                
\node [above right] at (c)
                {$D_{0,1}$};
\node [circle, fill=black, inner sep=1pt] at (c){};
\node [above left] at (d)
                {${D}_{1,1}$};
\node [circle, fill=black, inner sep=1pt] at (d){};
\node  at (e)
                {$\mathcal{C}$};
\node [above] at (f)
                {$\mathcal{W}$};
\node[below] at (g)
                {$\mathcal{V}$};
\node[right] at (h)
                {$\mathcal{Z}$};
\node[left] at (i)
                {$\mathcal{U}$};
                 \node [below left] at (j)
                {${D}_{1,0}'$};
\node [circle, fill=black, inner sep=1pt] at (j){};
\node [below right] at (k)
                {$D_{0,0}'$};
\node [circle, fill=black, inner sep=1pt] at (k){};                
\node [above right] at (l)
                {$D_{0,1}'$};
\node [circle, fill=black, inner sep=1pt] at (l){};
\node [above left] at (m)
                {${D}_{1,1}'$};
\node [circle, fill=black, inner sep=1pt] at (m){};
\node  at (n)
                {$\mathcal{C}'$};
\node [above] at (o)
                {$\mathcal{W}'$};
\node[below] at (p)
                {$\mathcal{V}'$};
\node[right] at (q)
                {$\mathcal{Z}'$};
\node[left] at (r)
                {$\mathcal{U}'$};
\end{tikzpicture} 
\end{aligned}  \qquad \begin{bmatrix} \phi_{1,1} & \phi_W & \phi_{0,1} \\
\phi_U &\phi_{\mathcal{C}} & \phi_Z \\
\phi_{1,0}& \phi_V & \phi_{0,0}  \end{bmatrix} \end{equation}
\begin{enumerate}
    \item Double Lie groupoid morphisms $\phi_{i,j}:D_{i,j}\rightarrow D_{i,j}'$ for $i,j=0,1$,
    \item Lie groupoid morphisms $\phi_{U}:\mathcal{U}\rightarrow \mathcal{U}'$, $\phi_{V}:\mathcal{V}\rightarrow \mathcal{V}'$, $\phi_{W}:\mathcal{W}\rightarrow \mathcal{W}'$ and  $\phi_{Z}:\mathcal{Z}\rightarrow \mathcal{Z}'$ which are equivariant for the actions by $D_{i,j}$ via the $\phi_{i,j}$, for $i,j=0,1$, 
    \item A map $\phi_{\mathcal{C}}: \mathcal{C}\rightarrow \mathcal{C}'$ which is equivariant for the actions by $\mathcal{U}, \mathcal{V}, \mathcal{W}, \mathcal{Z}$ via $\phi_U,\phi_V,\phi_W,\phi_Z$.
\end{enumerate}   
\end{definition}
Note that $(1,1)$-morphisms can be composed vertically and horizontally, simply by relying on the usual composition for generalized morphisms of Lie groupoids. These compositions are associative up to canonical isomorphisms between $(1,1)$-morphisms just as in the case of Lie groupoids. This means that we can take isomorphisms classes of $(1,1)$-morphisms as our 2-morphisms to have strict associativity and inverses, leading to the following result.
\begin{theorem}
	Double Lie groupoids, isomorphisms classes of horizontal/vertical generalized morphisms and isomorphisms classes of $(1,1)$-morphisms form a double category. \qed
\end{theorem}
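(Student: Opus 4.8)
The plan is to verify the axioms of a double category by exhibiting the stated data as an internal category in $\mathbf{Cat}$, leaning throughout on the fact that Lie groupoids with their generalized morphisms already form a bicategory \cite{MR2439561}, whose weak structure becomes strict upon passing to isomorphism classes. Concretely, I would let $\mathbb{D}_0$ be the category whose objects are double Lie groupoids and whose arrows are isomorphism classes of vertical $(0,1)$-morphisms, and let $\mathbb{D}_1$ be the category whose objects are isomorphism classes of horizontal $(1,0)$-morphisms and whose arrows are isomorphism classes of $(1,1)$-morphisms. The source, target, unit and composition functors relating $\mathbb{D}_1$ and $\mathbb{D}_0$ record the vertical boundary double groupoids and edge morphisms of a $(1,0)$- or $(1,1)$-morphism and implement horizontal composition, while the internal composition in each of $\mathbb{D}_0$ and $\mathbb{D}_1$ is vertical composition of $(0,1)$- and $(1,1)$-morphisms; a double category is by definition exactly such an internal category, so it suffices to check that all of these are well-defined functors.

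First I would confirm that $\mathbb{D}_0$ is a category. Composition of vertical generalized morphisms is the Hilsum--Skandalis tensor product of the underlying bimodules, carried out so as to transport the side-groupoid structures along; the commuting, biprincipal actions required by the vertical analogue of \autoref{def: hor morphism} guarantee that the quotient again carries the data of a vertical bimodule. This composition is associative and unital only up to canonical isomorphism, but those isomorphisms are precisely the coherence $2$-isomorphisms of the underlying Lie-groupoid bicategory, so on isomorphism classes associativity and unitality hold strictly; the unit is the trivial vertical self-Morita-equivalence, and its identification with any composite is witnessed by a multiplicative bisection as in \autoref{prop: isomorphism induced by bisection}. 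Interchanging the two groupoid directions, the same argument shows that double Lie groupoids and isomorphism classes of horizontal morphisms also form a category.

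Next I would treat the $2$-cells. Horizontal and vertical composition of $(1,1)$-morphisms are defined by tensoring the relevant pair of edge bimodules over the shared boundary double groupoid while simultaneously tensoring the two central double bimodules $\mathcal{C}$ over the shared edge bimodule; the four interchange rules built into \autoref{def: 11 morphism} are exactly what is needed to ensure the composite again satisfies those rules, hence is a genuine $(1,1)$-morphism. Both compositions are once more associative and unital up to canonical isomorphism, with unit $2$-cells given by the trivial $(1,1)$-morphisms assembled from identity edge bimodules, so isomorphism classes make them strict. It then remains only to verify the middle-four interchange law, namely that the two ways of composing a $2\times 2$ array of $(1,1)$-morphisms agree; this follows by iterating the single interchange rule of \autoref{def: 11 morphism} and using associativity of the tensor product to reassociate the resulting ninefold composite, as the concatenation-of-squares picture already suggests. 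Assembling these verifications gives the functoriality of the structure maps, and hence the double category.

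The main obstacle is the coherence bookkeeping rather than any isolated computation: one must check that the associator and unitor isomorphisms produced by the tensor product at the level of the edge bimodules and of the central double bimodule are mutually compatible, so that quotienting by isomorphism really yields strict associativity and a strictly functorial interchange, rather than merely a weak (pseudo) double category. Since each edge and the center $\mathcal{C}$ simultaneously carry several commuting module structures, the delicate point is to show that a single Hilsum--Skandalis quotient can be formed compatibly for all of these structures at once; once this simultaneous compatibility is established, the double-category axioms reduce to the coherence already available for Lie-groupoid bimodules together with the interchange rules of \autoref{def: 11 morphism}.
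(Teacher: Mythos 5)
Your proposal is correct and follows essentially the same route as the paper: the paper's (sketched) argument also composes $(1,1)$-morphisms horizontally and vertically via the usual Hilsum--Skandalis composition of groupoid bimodules, observes that associativity and unitality hold only up to canonical isomorphism, and passes to isomorphism classes to obtain strictness, citing the same bicategory-of-Lie-groupoids result \cite{MR2439561} that you adapt. The coherence and simultaneous-quotient issues you flag as the main obstacle are exactly the details the paper leaves to the reader (its remark offers the alternative of recasting everything as double spans inside a double bicategory), so your internal-category-in-$\mathbf{Cat}$ packaging is a legitimate organization of the same proof.
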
 
\begin{remark} One can prove the Theorem above by adapting the proofs of \cite[Prop. 2.12, Prop. 2.12]{MR2439561}. Just as generalized morphisms of Lie groupoids can be expressed in terms of {\em spans} \cite[\S 7.2]{MR2178041}, our $(1,1)$-morphisms can be rewritten as follows. We can replace all the bimodules in the picture with action double groupoids and thus we obtain {\em double spans}, these are commutative diagrams like \eqref{eq:(1,1) morphism} in which all the arrows are double Lie groupoid morphisms. By doing this, double Lie groupoids with $(1,0)$, $(0,1)$ and $(1,1)$-morphisms (without passing to isomorphism classes of these) can be described using the theory of {\em double bicategories} \cite{MR2591970}. 
\end{remark}

\begin{definition}\label{def:triv(1,1)} A {\em vertical trivialization} of an invertible $(1,1)$-morphism as \eqref{eq:(1,1) morphism} is given by a pair of multiplicative bisections $\Lambda_{U}\subset \mathcal{U}$ and $\Lambda_{Z}\subset \mathcal{Z}$ together with a bisection $\Lambda_\text{Ver}\subset \mathcal{C} $ for the $\mathcal{W}$ and $\mathcal{V}$ actions such that it induces an identification $\mathcal{V} \rightarrow \mathcal{W}$ which is equivariant with respect to the corresponding isomorphisms of double Lie groupoids $D_{0,0}\rightarrow D_{0,1}$ and $D_{1,0}\rightarrow D_{1,1}$ determined by $\Lambda_U, \Lambda_Z$. We may similarly define {\em horizontal trivializations} given by $\Lambda_W\subset \mathcal{W}$, $\Lambda_V\subset \mathcal{V}$, and $\Lambda_\text{Ver}\subset \mathcal{C}$. 
\[
\adjustbox{scale=0.85,center}{
    \begin{tikzcd}[scale=1,every node/.style={transform shape}]
	D_{1,1} \arrow[r, shift left] \arrow[r, shift right] \arrow[d, shift left] \arrow[d, shift right] & G_{A_{1,1}} \arrow[d, shift left] \arrow[d, shift right] & \mathcal{W} \arrow[d, shift right] \arrow[d, shift left] \arrow[r] \arrow[l] & G_{A_{0,1}} \arrow[l,"\Lambda_{W}"', bend right]\arrow[d, shift left] \arrow[d, shift right] & D_{0,1} \arrow[d, shift left] \arrow[d, shift right] \arrow[l, shift left] \arrow[l, shift right] \\
	G_{B_{1,1}} \arrow[r, shift left] \arrow[r, shift right]                                  & M_{1,1}                                  & W \arrow[r] \arrow[l]                                  & M_{0,1}   \arrow[l, bend right]                               & G_{B_{0,1}} \arrow[l, shift right] \arrow[l, shift left]                                  \\
	\mathcal{U} \arrow[u] \arrow[d] \arrow[r, shift left] \arrow[r, shift right]               & U \arrow[d] \arrow[u]               & \mathcal{C} \arrow[u] \arrow[d] \arrow[r] \arrow[l]                  & Z \arrow[l,"\Lambda_\text{Hor}"', bend right]\arrow[d] \arrow[u]           & \mathcal{Z} \arrow[l, shift left] \arrow[l, shift right] \arrow[u] \arrow[d]               \\
	G_{B_{1,0}} \arrow[u,"\Lambda_{U}", bend left]\arrow[r, shift right] \arrow[r, shift left]                                  & M_{1,0} \arrow[u, bend left]                                 & V\arrow[u,"\Lambda_\text{Ver}", bend left] \arrow[r] \arrow[l]                                  & M_{0,0}\arrow[u, bend right]  \arrow[l, bend left]                                 & G_{B_{0,0}}\arrow[u,"\Lambda_{Z}"', bend right] \arrow[l, shift left] \arrow[l, shift right]                                  \\
	D_{1,0} \arrow[u, shift right] \arrow[u, shift left] \arrow[r, shift right] \arrow[r, shift left] & G_{A_{1,0}} \arrow[u, shift right] \arrow[u, shift left] & \mathcal{V} \arrow[u, shift right] \arrow[u, shift left] \arrow[r] \arrow[l] & G_{A_{0,0}}\arrow[l,"\Lambda_{V}", bend left] \arrow[u, shift left] \arrow[u, shift right] & D_{0,0} \arrow[u, shift left] \arrow[u, shift right] \arrow[l, shift left] \arrow[l, shift right] 
\end{tikzcd}  }\]
An invertible $(1,1)$-morphism is {\em trivialized} if it admits vertical and horizontal trivializations as in the above diagram, such that they induce a commutative diagram of identifications relating all the spaces displayed in \eqref{eq:(1,1) morphism}. 
\end{definition}
\begin{example}\label{ex:triv(1,1)mor} The simplest example of a $(1,1)$-morphism is the trivial one in which all the double Lie groupoids and bimodules in \eqref{eq:(1,1) morphism} are equal to a fixed double Lie groupoid, the actions consist of left and right horizontal and vertical multiplications, and the multiplicative bisections are the horizontal and vertical identity bisections. 
\end{example}  
\subsection{Symplectic Morita equivalences and multiplicative bisections}
To complete our treatment of the Morita theory of double Lie groupoids, we explain how the morphisms introduced above interact with the symplectic geometry of symplectic double groupoids. 

 We say that a horizontal action as in \autoref{def: hor action} is \emph{symplectic} if $D$ is a symplectic double groupoid, $\mathcal{W}$ is a symplectic groupoid and the action morphism is {\em symplectic}, in the sense that its graph $\{(w,d,w\cdot d)\}\subset \mathcal{W} \times D \times (-\mathcal{W})$ is Lagrangian, where $(-\mathcal{W})$ denotes taking the negative of the symplectic form on $\mathcal{W}$. Similarly, \autoref{def: hor morphism} and \autoref{def: 11 morphism} can be naturally extended to include symplectic structures. 
 \begin{definition} A $(1,1)$-morphism as in~\autoref{def: 11 morphism} between symplectic double groupoids is {\em symplectic} if $\mathcal{U}, \mathcal{V}, \mathcal{W}, \mathcal{Z}$ have symplectic structures such that all horizontal and vertical morphisms are symplectic, and the double bimodule $\mathcal{C}$ has a symplectic stucture such that all the actions on $\mathcal{C}$ are symplectic.  
 \end{definition} 

Finally, we explain how the bisections of~\autoref{def:triv(1,1)} interact with the symplectic forms.  
\begin{proposition}\label{prop: change of the sympletcic forms}
	Let $(\mathcal{W},\Omega_{\mathcal{W}}):(D,\Omega)\dashrightarrow (D',\Omega')$ be a horizontal symplectic Morita equivalence equipped with a multiplicative bisection $b: G_{A} \to \mathcal{W}$, which induces isomorphisms $\Phi: D \to \mathcal{W}$ and $\Psi: D \to D'$.  Then we have the following relation between symplectic forms: 	
 \[
\Phi^*\Omega_\mathcal{W} - \Omega = \ttt_{A}^*F, \qquad \Psi^* \Omega' - \Omega = \ttt_{A}^*F - \sss_{A}^*F,
\]
where $F = b^*\Omega_\mathcal{W}$. In particular, $\Phi,\Psi$ are symplectomorphisms if $b$ is Lagrangian. 
\end{proposition}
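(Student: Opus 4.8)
The plan is to reduce the whole statement to the multiplicativity of the symplectic forms, which is precisely the content of the hypothesis that the two actions defining the Morita equivalence are symplectic. First I would record the two relations coming from the Lagrangian graph conditions. Writing $a\colon \mathcal{W}\times_{G_A}D\to\mathcal{W}$ for the right $D$-action and $a'\colon D'\times_{G_{A'}}\mathcal{W}\to\mathcal{W}$ for the left $D'$-action, the requirement that $\Gr(a)\subset \mathcal{W}\times D\times(-\mathcal{W})$ and $\Gr(a')\subset D'\times\mathcal{W}\times(-\mathcal{W})$ be Lagrangian translates, on the respective fibre products, into
\[
a^*\Omega_\mathcal{W}=\mathrm{pr}_\mathcal{W}^*\Omega_\mathcal{W}+\mathrm{pr}_D^*\Omega,\qquad (a')^*\Omega_\mathcal{W}=\mathrm{pr}_{D'}^*\Omega'+\mathrm{pr}_\mathcal{W}^*\Omega_\mathcal{W}.
\]
These are the only analytic inputs; everything else is bookkeeping with the bisection $b$.

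Next I would pin down the isomorphism $\Phi$ explicitly. Since the right action is principal with quotient map $p$ and $b$ is a bisection with $p\circ b=\id$, the map $\Phi(d)=b(\ttt_A(d))\cdot d$ is a diffeomorphism $D\to\mathcal{W}$, namely the trivialization of the principal bibundle by $b$. It factors as $\Phi=a\circ(b\circ\ttt_A,\id_D)$, where $(b\circ\ttt_A,\id_D)\colon D\to\mathcal{W}\times_{G_A}D$ lands in the fibre product because $p\circ b=\id$. Pulling back the first relation above, and using $\mathrm{pr}_\mathcal{W}\circ(b\circ\ttt_A,\id_D)=b\circ\ttt_A$ together with $\mathrm{pr}_D\circ(b\circ\ttt_A,\id_D)=\id_D$, gives $\Phi^*\Omega_\mathcal{W}=\ttt_A^*(b^*\Omega_\mathcal{W})+\Omega=\ttt_A^*F+\Omega$, which is the first claimed identity.

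For the second identity I would invoke \autoref{prop: isomorphism induced by bisection}, which defines $\Psi$ by $\Psi(d)\cdot b(\sss_A(d))=b(\ttt_A(d))\cdot d=\Phi(d)$. Thus $\Phi=a'\circ(\Psi,b\circ\sss_A)$, and pulling back the second relation yields $\Phi^*\Omega_\mathcal{W}=\Psi^*\Omega'+\sss_A^*(b^*\Omega_\mathcal{W})=\Psi^*\Omega'+\sss_A^*F$. Comparing the two expressions for $\Phi^*\Omega_\mathcal{W}$ and cancelling the common term gives $\Psi^*\Omega'-\Omega=\ttt_A^*F-\sss_A^*F$. Finally, if $b$ is Lagrangian then $F=b^*\Omega_\mathcal{W}=0$, so both right-hand sides vanish and $\Phi,\Psi$ are symplectomorphisms. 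The only real care needed is to fix the sign conventions in the Lagrangian graph relations consistently with the definition of a symplectic action, and to verify the two factorizations of $\Phi$ through $a$ and $a'$ (in particular that $(\Psi,b\circ\sss_A)$ lands in $D'\times_{G_{A'}}\mathcal{W}$, which follows from the construction of $\Psi$); there is no genuine analytic obstacle, as the proposition is a formal consequence of multiplicativity.
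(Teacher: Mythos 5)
Your proof is correct and follows essentially the same route as the paper: both identities are obtained by pulling back the Lagrangian-graph conditions of the two actions along maps built from the bisection, and the first identity is derived identically. The only cosmetic difference is in the second identity, where the paper introduces the auxiliary trivialization $\Phi':D'\to\mathcal{W}$ and inverse bisection $b'$ and writes $\Psi=(\Phi')^{-1}\circ\Phi$, whereas you factor $\Phi = a'\circ(\Psi, b\circ\sss_{A})$ directly from the defining relation of $\Psi$; this is slightly more economical (it avoids the implicit compatibility $b'\circ p'\circ b=b$) but is mathematically the same argument.
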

\begin{proof}
	Consider the map $(b\circ \ttt_{A}, \id, \Phi): D \to \mathcal{W} \times D \times \mathcal{W}$,
	whose image is in the graph $\Gamma$ of the  action. As $\Gamma$ is Lagrangian in $\mathcal{W} \times D \times (-\mathcal{W})$, we have $(b\circ \ttt_A)^*\Omega_\mathcal{W} + \Omega - \Phi^*\Omega_\mathcal{W} = 0$, i.e.
	\begin{align}\label{eq: compare the symplectic forms 1}
		\Phi^*\Omega_\mathcal{W} - \Omega = \ttt_{A}^*F. 
	\end{align}
	Similarly, let $\Phi': D' \to \mathcal{W}$ be the isomorphism induced by $b$ and let $b': G_{A'} \to \mathcal{W}$ be the inverse of $p'|_{b(G_{A})}$, then 
	\begin{align}\label{eq: compare the symplectic forms 2}
		(\Phi')^*\Omega_\mathcal{W} - \Omega' = \sss_{A'}^*(b')^*\Omega.
	\end{align}
	As $\Psi = (\Phi')^{-1}\circ \Phi$, by \eqref{eq: compare the symplectic forms 1} and \eqref{eq: compare the symplectic forms 2}, we then have the required relation 
	\[
	\Psi^*\Omega' = \ttt^*_{A}F + \Omega - \Phi^*((\Phi')^{-1})^*\sss_{A'}^*(b')^*F = \Omega + \ttt_{A}^*F- \sss_{A}^*F.\vspace{-2em}
	\]
\end{proof}

An important aspect of symplectic Morita equivalences between double symplectic groupoids is that their Lagrangian bisections may be deformed by Hamiltonian flows.  This will be particularly important in~\S\ref{subsec: GK deform}, where such deformations will induce deformations of generalized K\"ahler metrics.  

Let $\mathcal{W}:D\dashrightarrow D'$ be a horizontal symplectic Morita equivalence as in \eqref{eq:hormorequ}, equipped with a multiplicative Lagrangian bisection $L_0$, which induces an isomorphism $\Psi_0: D \to D'$. We may deform the bisection by a Hamiltonian flow: let $f \in C^{\infty}(M)$ and suppose that the Hamiltonian vector field of  $p^*\delta^*f \in C^{\infty}(\mathcal{W})$ is complete, where $\delta^*f = \ttt^*f - \sss^*f \in C^{\infty}(G_{A})$ is the simplicial pullback of $f$ to $G_{A}$. Applying the flow  of $X_{p^*\delta^*f}$ to $L_0$, we obtain a family of bisections $L_t$ and induced isomorphisms $\Psi_t: D \to D'$.  We now show that $\Psi_t$ may be factorized as the composition of $\Psi_0$ with a Hamiltonian automorphism of $D$ generated by the same function, but pulled back to $D$.

\begin{corollary}\label{The induced isomorphisms} 
Let $\mathcal{W}:D\dashrightarrow D'$ be a horizontal symplectic Morita equivalence with Lagrangian bisection $L_0$, and use the Hamiltonian flow of $f\in C^\infty(M)$ to deform $L_0$ as above, obtaining a family $L_t$ of bisections, inducing the isomorphisms $\Psi_t:D\to D'$.  Then we have the following factorization:
	\[
	\Psi_t = \Psi_0 \circ \Upsilon_{t},
	\]
where $\Upsilon_t$ be the family of automorphisms of $D$ given by the Hamiltonian flow of 
\[
\delta_{A}^*\delta^*f = \ttt_{A}^*\delta^*f-\sss_{A}^*\delta^*f\in C^{\infty}(D,\R).
\]
\end{corollary}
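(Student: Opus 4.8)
The plan is to realize the three relevant Hamiltonian flows --- on $\mathcal{W}$, on $D$, and on $D'$ --- as a single flow tangent to the Lagrangian graphs of the two commuting actions, and then to read off the factorization from the defining relation of the induced isomorphism in Proposition~\ref{prop: isomorphism induced by bisection}. Write $\phi_t$ for the flow of $X_{p^*\delta^*f}$ on $\mathcal{W}$, so that $L_t=\phi_t(L_0)$. The two structural facts I would establish first are (i) that $p^*\delta^*f$ is invariant under the left $D'$-action, hence $\phi_t$ commutes with it, and (ii) that $\phi_t$ is equivariant for the right $D$-action along the flow $\Upsilon_t$.

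For (i), recall from Definition~\ref{def: hor morphism} that the left $D'$-action is principal with quotient map $p$; since $p^*\delta^*f$ is a pullback along $p$, it is constant on $D'$-orbits. The graph $\{(d',w,d'\cdot w)\}$ of the left action is Lagrangian in $D'\times\mathcal{W}\times(-\mathcal{W})$, and the invariant function $p^*\delta^*f(w)-p^*\delta^*f(d'\cdot w)$ vanishes on it, so its Hamiltonian vector field is tangent to the graph, forcing $\phi_t(d'\cdot w)=d'\cdot\phi_t(w)$. For (ii), I use that $\delta^*f$ is a groupoid cocycle on $G_A$ (from $\ttt(gh)=\ttt(g)$, $\sss(gh)=\sss(h)$), so $\delta_A^*\delta^*f$ is multiplicative on $D\rightrightarrows G_A$ and its flow $\Upsilon_t$ consists of symplectic groupoid automorphisms covering the Hamiltonian flow $\upsilon_t$ of $\delta^*f$ on the Poisson side $(G_A,\Pi_A)$. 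The moment-map identities $p(w)=\ttt_A(d)$ and $p(w\cdot d)=\sss_A(d)$ yield the cocycle relation
\[
p^*\delta^*f(w)-p^*\delta^*f(w\cdot d)=(\delta_A^*\delta^*f)(d),
\]
so the function $p^*\delta^*f(w)-(\delta_A^*\delta^*f)(d)-p^*\delta^*f(w')$ vanishes on the Lagrangian graph $\Gamma\subset\mathcal{W}\times D\times(-\mathcal{W})$ of the right action. Tangency of its Hamiltonian flow then gives $\phi_t(w\cdot d)=\phi_t(w)\cdot\Upsilon_t(d)$ together with $p\circ\phi_t=\upsilon_t\circ p$.

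Finally I would assemble the factorization. Since $p\circ\phi_t=\upsilon_t\circ p$, the deformed bisection is $b_t=\phi_t\circ b_0\circ\upsilon_t^{-1}$, and $\Psi_t$ is characterized by $\Psi_t(d)\cdot b_t(\sss_A(d))=b_t(\ttt_A(d))\cdot d$. Substituting $b_t$, setting $\tilde d=\Upsilon_t^{-1}(d)$ (so that $\sss_A(\tilde d)=\upsilon_t^{-1}\sss_A(d)$ and $\ttt_A(\tilde d)=\upsilon_t^{-1}\ttt_A(d)$), and pushing $\phi_t$ through the two actions using (ii) and then (i), the right-hand side becomes $\Psi_0(\tilde d)\cdot\phi_t(b_0(\sss_A(\tilde d)))$ after invoking the defining relation for $\Psi_0$. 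Principality of the left action lets me cancel the common factor $\phi_t(b_0(\sss_A(\tilde d)))$, giving $\Psi_t=\Psi_0\circ\Upsilon_t$, once the sign conventions in the Hamiltonian flows are fixed consistently. The main obstacle I anticipate is Step (ii): pinning down that the flow on $D$ intertwined with $\phi_t$ is generated by exactly $\delta_A^*\delta^*f$ and covers $\upsilon_t$ on $G_A$, which requires careful bookkeeping of the moment-map and left/right action conventions; the Lagrangian-correspondence principle that a function vanishing on a Lagrangian has a Hamiltonian flow tangent to it is the conceptual engine throughout.
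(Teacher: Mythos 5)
Your route is genuinely different from the paper's: the paper transports the flow to $D$ through the symplectomorphism $\Phi:D\to\mathcal{W}$ induced by $L_0$ (so that the flow of $p^*\delta^*f$ corresponds to that of $\sss_A^*\delta^*f$), identifies $L_t$ with the composite bisection $L_0\ast B_t$ of the composed Morita equivalence, and concludes because conjugation takes composition of bisections to composition of induced isomorphisms. You instead stay on $\mathcal{W}$ and extract the factorization from equivariance properties of the flow $\phi_t$; your step (i) and the final cancellation by freeness of the principal left action are correct, and this strategy can be made to work.

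However, step (ii) as stated is false, and the discrepancy cannot be absorbed into conventions. Your function $H(w,d,w')=p^*\delta^*f(w)-(\delta_A^*\delta^*f)(d)-p^*\delta^*f(w')$ does vanish on the Lagrangian graph $\Gamma\subset\mathcal{W}\times D\times(-\mathcal{W})$, but its Hamiltonian vector field is
\[
X_H=\bigl(X_{p^*\delta^*f},\,-X_{\delta_A^*\delta^*f},\,X_{p^*\delta^*f}\bigr):
\]
the middle component carries a minus sign because $H$ contains $-(\delta_A^*\delta^*f)$, whereas on the third factor the sign flip of the function is cancelled by the reversed symplectic form $-\omega_{\mathcal{W}}$. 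Tangency of this flow to $\Gamma$ therefore yields $\phi_t(w\cdot d)=\phi_t(w)\cdot\Upsilon_{-t}(d)$ and $p\circ\phi_t=\upsilon_{-t}\circ p$, not the relations you assert; this relative sign is the same whichever global convention ($\iota_X\omega=df$ or $\iota_X\omega=-df$) one adopts, since all three flows flip together. The error is not harmless: feeding your stated (ii) into your own assembly, with $b_t=\phi_t\circ b_0\circ\upsilon_t^{-1}$ and $\tilde d=\Upsilon_t^{-1}(d)$, the cancellation gives $\Psi_t(d)=\Psi_0(\Upsilon_t^{-1}(d))$, i.e. $\Psi_t=\Psi_0\circ\Upsilon_{-t}$, the inverse of the claimed factorization. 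The repair is mechanical: with the corrected equivariance one gets $p\circ\phi_t=\upsilon_{-t}\circ p$, hence $b_t=\phi_t\circ b_0\circ\upsilon_t$, and substituting $\tilde d=\Upsilon_t(d)$ into the defining relation for $\Psi_0$, then using (ii) and (i) exactly as you planned, the freeness argument delivers $\Psi_t=\Psi_0\circ\Upsilon_t$. So the idea is sound, but as written the proof establishes the wrong sign until this step is corrected.
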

\begin{proof} Let $\Phi: D \to \mathcal{W}$ be the diffeomorphism induced by $L_0$, which is a symplectomorphism by \autoref{prop: change of the sympletcic forms}. So $\Phi$ intertwines the Hamiltonian flows of $p^* \delta^*f$ and $\sss_A^*\delta^*f$ since $\Psi^*p^* \delta^*f = \sss_A^*\delta^*f$. Now we have that the Hamiltonian vector field of $\sss^*_A \delta^*f $ (which is the left-invariant extension $(d\delta^* f)^l$) produces a bisection $B_t$ in $D$, obtained by flowing $\texttt{u}(G_A)$ for time $t$. By definition, $\Phi$ maps $B_t$ to $L_t$.
Now notice that $\Upsilon_t$ coincides with conjugation by the bisection $B_t$ and, by our previous observation, $L_t$ is obtained as the composite bisection $L_0\ast B_t$ resulting from composing $G_{A'}\xleftarrow{p'} \mathcal{W} \xrightarrow{p} G_{A}$ with the self Morita equivalence of $G_{A}$ given by $D$. Since conjugation takes composition of bisections to composition of isomorphisms, we obtain the result. \end{proof} 

\section{Integration of generalized K\"ahler structures}\label{section: integration}

In this section, we solve the problem of integration for generalized K\"ahler structures. As shown in \S\ref{section: The holomorphic Manin triples of GK structures}, a generalized K\"ahler structure determines a pair of holomorphic Manin triples related by gauge equivalences. On the other hand, we know from \S\ref{section: Symplectic double groupoids} that these Manin triples are the infinitesimal counterparts of symplectic double groupoids. We now show that the gauge equivalences  between Manin triples determine horizontal and vertical Morita equivalences between the corresponding symplectic double groupoids, which assemble to produce a holomorphic symplectic $(1,1)$-morphism.  We also observe that the symmetry of the infinitesimal data in \autoref{Inf descprtion of GK} under complex conjugation leads to the existence of a real structure on the corresponding $(1,1)$-morphism, rendering it what we call~\emph{self adjoint}.  Finally, we show that the $(1,1)$-morphism associated to a generalized K\"ahler structure comes equipped with a Lagrangian trivialization of its real part, a fact which is essential in~\S\ref{sec:gk potential} for the existence of a generalized K\"ahler potential.  The full structure of the integration is summarized in~\autoref{thm: the (1,1) morphism of a GK}.

The solution we describe, since it requires the integration of an exact Manin triple, involves holomorphic symplectic manifolds with four times the dimension of the base manifold.  A recent approach to generalized K\"ahler geometry in terms of $(2,2)$ superfields also involves quadrupling the dimension of the space \cite{hull2024n22superfieldsgeometryrevisited}; it would be interesting to connect these approaches if possible.

\subsection{The Poisson groupoids of a generalized K\"ahler manifold}\label{subsec:poi gro gk man}
Let $(M, \J_A, \J_B)$ be a Generalized K\"ahler manifold and let $(\E_{\pm}, \A_{\pm}, \B_{\pm})$ be the associated holomorphic Manin triples over $X_{\pm} = (M, I_{\pm})$. We suppose that $\A_{\pm}$, $\B_\pm$ integrate to source-simply-connected quasi-symplectic groupoids $(G_{\A_{\pm}},\Omega_\pm^A)\rightrightarrows X_\pm$ and $(G_{\B_{\pm}},\Omega_\pm^B)\rightrightarrows X_\pm$, respectively. Let $G_A$ and $G_B$ be the underlying real Poisson groupoids of $G_{\A_{-}}$ and $G_{\B_{-}}$, respectively. By \autoref{cor: the underlying re and im}, $\A_{+}$ and $\A_{-}$ (respectively, $\B_{+}$ and $\B_{-}$) have the same underlying real parts. As a result, $G_A$ and $G_B$ are also the underlying real Poisson groupoids of the holomorphic Poisson groupoids $G_{\A_+}$ and $G_{\B_+}$ corresponding to $(\E_+, \A_+,\B_+)$. With the above preparation, we now state the first important result that reveals the structure of $G_A$ and $G_B$.
\begin{proposition}\label{prop: deg symplectic GK}
	The holomorphic Poisson groupoid structures $\Pi^A_{\pm}$ and $\Pi^B_{\pm}$ on $G_{\A_\pm}$ and $G_{\B_\pm}$ constructed via \eqref{eq:side poisson via dirac} have gauge equivalent matched pairs:
 \[
	L_{\Pi_+^B} = e^{2iF_B}L_{\Pi_{-}^B}, \quad L_{\Pi_+^A} = e^{2iF_A} L_{\overline{\Pi_-^A}},
	\]
 where $F_A$, $F_B$ are symplectic forms integrating, respectively, $-\pi_A$ and $-\pi_B$ as in \eqref{poisab}.
\end{proposition}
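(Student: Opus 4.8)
The plan is to reduce the two claimed gauge equivalences to separate statements about real and imaginary parts, exploiting that $\Pi^A_\pm,\Pi^B_\pm$ live in the \emph{trivial} Courant algebroids over the groupoids, so the pure-imaginary gauge $e^{2iF}$ in the statement can only act on imaginary parts. A short direct computation with the definition of real and imaginary parts shows that, for transverse complex Dirac structures $L,L'$ in a trivial Courant algebroid, the relation $L=e^{2iF}L'$ with $F$ a real closed $2$-form is \emph{equivalent} to the two conditions $L_R=L'_R$ and $L_I=e^{2F}L'_I$ (the gauge preserves real parts and doubles on imaginary parts). Thus for the $A$-side it suffices to prove $(L_{\Pi^A_+})_R=(L_{\overline{\Pi^A_-}})_R$ and $(L_{\Pi^A_+})_I=e^{2F_A}(L_{\overline{\Pi^A_-}})_I$, and analogously, but without conjugation, for the $B$-side.

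First I would pass everything through the matched-pair functor and the defining formula \eqref{eq:side poisson via dirac}. Writing $G_A$ for the common underlying real source-simply-connected groupoid of $G_{\A_+}$, $G_{\A_-}$ and $\overline{G_{\A_-}}$, the matched pairs decompose as
\[
L_{\Pi^A_\pm}=L_{\Gr(\Omega^A_\pm)}-\ttt^!L_{\B_\pm}+\sss^!L_{\B_\pm},
\]
using that the matched-pair functor commutes with Baer sums and with the holomorphic pullbacks $\ttt^!,\sss^!$. Taking real parts and invoking \autoref{cor: the underlying re and im}: the forms $\Re(\Omega^A_+)$ and $\Re(\Omega^A_-)$ both integrate the single Dirac structure $\J_A(T^*M)$, so by uniqueness of the IM $2$-form on $G_A$ (as in \eqref{eq:re and im part dirac via q symplectic}) they coincide; likewise $\Re(\B_+)=\Re(\overline{\B_-})=\J_B(T^*M)$; and $\ttt,\sss$ are the same maps $G_A\to M$ in either complex structure. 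Feeding these coincidences into the real part of \eqref{eq:side poisson via dirac} gives $(L_{\Pi^A_+})_R=(L_{\overline{\Pi^A_-}})_R$, and the same argument settles the $B$-side.

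For the imaginary parts I would first pin down $F_A$ explicitly. By \autoref{cor: the underlying re and im}, trivializing $\Im(\E_+)$ by $\omega_+$ carries $(\A_+)_I$ to $\Gr(-\pi_A)$; lifting this trivialization changes the multiplicative form by $\delta^*\omega_+$, so $\Im(\Omega^A_+)+\delta^*\omega_+$ is closed and integrates $-\pi_A$ and therefore equals the symplectic form $F_A$, i.e. $\Im(\Omega^A_+)=F_A-\delta^*\omega_+$ (this is the content of \autoref{Real structures of CAlg} applied on $G_A$). The analogous computation on the conjugate side yields $\Im(\overline{\Omega^A_-})=-F_A-\delta^*\omega_-$. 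The crucial point is the \emph{opposite sign} of $F_A$, which is exactly what produces the factor $e^{2F_A}$, since $e^{2F_A}\Gr(-F_A)=\Gr(F_A)$. The $B$-side is identical in spirit, but no conjugation is needed there because \autoref{cor: the underlying re and im} already records $(\B_+)_I$ and $(\B_-)_I$ with the \emph{opposite} Poisson structures $\Gr(-\pi_B)$ and $\Gr(\pi_B)$, so the sign flip is built in.

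The main obstacle is the remaining bookkeeping in the imaginary part. Substituting $\Im(\Omega^A_\pm)=\pm F_A+\delta^*(\cdots)$ together with $(\B_+)_I=e^{-\omega_+}\Gr(-\pi_B)$ and $(\overline{\B_-})_I=e^{-\omega_-}\Gr(-\pi_B)$ into the imaginary part of \eqref{eq:side poisson via dirac} leaves, beyond the leading $\pm F_A$, gauge terms $\delta^*\omega_\pm$ on the symplectic factor and the mismatched factors $\ttt^*\omega_\pm,\sss^*\omega_\pm$ on the pulled-back $\B$-summands. One must verify that these assemble compatibly with the \emph{single} gauge $e^{2F_A}$, rather than merely agreeing to leading order; this is where the generalized Kähler identities relating $\omega_++\omega_-$ to $g$ and to $\pi_A,\pi_B$, as in \eqref{omegasproj} and \autoref{Inf descprtion of GK}, must be brought in. I would also need the routine but load-bearing lemmas that the matched-pair, real/imaginary-part, pullback, and Baer-sum operations all commute, so that \eqref{eq:side poisson via dirac} may legitimately be split term by term into its real and imaginary parts.
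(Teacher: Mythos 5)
Your reduction rests on a claimed equivalence whose converse direction is false, and unfortunately the converse is exactly what you need. If $L=e^{2iF}L'$ with $F$ real, then indeed $L_R=L'_R$ and $L_I=e^{2F}L'_I$ (your forward computation is correct). But a complex Dirac structure is \emph{not} determined by its real and imaginary parts, even under the transversality hypothesis $\rho(L)+\rho(\overline{L})=T_\C M$, so the two conditions on parts do not imply the gauge relation. A counterexample lives exactly in the setting of this proposition: take the zero holomorphic Poisson structure and let $L=T^*_{1,0}X\oplus T_{0,1}X$ and $L'=\overline{L}=T^*_{0,1}X\oplus T_{1,0}X$ be its matched pairs on $X$ and on $\overline{X}$ respectively. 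By \autoref{ex:real hol poisson} (or \autoref{rimgc}) both have $L_R=L'_R=T^*M$ and $L_I=L'_I=\Gr(0)=T^*M$, yet $L\neq e^{2iF}L'$ for every real $F$, since gauge transformations preserve tangent projections and $\rho(L)=T_{0,1}X\neq T_{1,0}X=\rho(L')$. The information your reduction discards is precisely the $T_{0,1}$-summand of the matched pair, i.e.\ the relation between the two complex structures on the common underlying real groupoid $G_A$ (that of $G_{\A_+}$ versus the conjugate of that of $G_{\A_-}$); proving that $e^{2iF_A}$ carries the anti-holomorphic distribution of one structure into the other matched pair is the actual content of the proposition, and no bookkeeping with real and imaginary parts alone can recover it. This is also why the paper is careful to note, in the proof of \autoref{2d Bursztyn shuffle}, that a holomorphic Poisson structure is determined by its real part \emph{and} the complex structure.

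By contrast, the paper's proof never reconstructs a complex Dirac structure from its parts: it stays with the complex Dirac structures throughout, substituting the previously established gauge relation \eqref{pmrel} between $L_{\A_+}$ and $L_{\A_-}$ --- which is exactly where the relation between the two complex structures is encoded --- into the defining formula \eqref{eq:side poisson via dirac}, and using additivity of Baer sums with graphs of $2$-forms to obtain directly $L_{\Pi^B_+}=e^{2iF}L_{\Pi^B_-}$ with $2iF=\Omega^B_+-\Omega^B_-+i\delta^*(\omega_++\omega_-)$ as in \eqref{eq:F in gauge equivalence}. Only at that final stage does it invoke the real/imaginary decompositions (your identities $\Im\Omega^B_+=-\delta^*\omega_++F_B$ and $\Im\Omega^B_-=\delta^*\omega_--F_B$, and their $A$-side analogues, agree with the paper's) in order to recognize the complex form $F$ as the real multiplicative symplectic form $F_B$ integrating $-\pi_B$. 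If you wished to salvage your strategy you would have to supplement the two conditions on parts with a proof that $e^{2iF_A}$ maps $T_{0,1}$ of $\overline{G_{\A_-}}$ into $L_{\Pi^A_+}$; but that statement is essentially a repackaging of \eqref{pmrel}, at which point the paper's direct substitution is both shorter and complete, whereas your own final paragraph concedes that the remaining ``bookkeeping'' is unresolved.
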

\begin{proof}
	We prove the result for $G_{\B_\pm}$, as the case of $G_{\A_\pm}$ is similar.
As explained in \S\ref{qsg}, $G_{\B_\pm}$ is equipped with quasi-symplectic structure $\Omega_{\pm}^B\in \Omega^{(2,0)_\pm}(G_{\B_{\pm}})$ integrating the holomorphic Dirac structure $\B_{\pm}$. As explained in \S\ref{Poisson in terms of quasi-symplectic}, the holomorphic Poisson structure $\Pi^B_{\pm}$ is then given by \eqref{eq:side poisson via dirac}:
\[ \Gr(\Pi_{\pm}^B)=\Gr(\Omega^B_{\pm}) -\delta^!{\A_{\pm}}, \quad \delta^!{\A_\pm}:=\ttt^!{\A_{\pm}} - \sss^!{\A_{\pm}}. \]
Hence, by passing to the matched pairs of the holomorphic Dirac structures above, we obtain
	\[
	 L_{\Pi_{\pm}^B} =  \Gr(\Omega^B_{\pm})  -\delta^!L_{\A_{\pm}}, \quad \delta^!L_{\A_\pm}:=\ttt^!L_{\A_{\pm}} - \sss^!L_{\A_{\pm}}.
	\]
	By \eqref{pmrel}, we have that $\delta^!L_{\A_+} = e^{-i\delta^*(\omega_++\omega_-)}\delta^!L_{\A_{-}}$. Thus 
	\begin{align*}
		 L_{\Pi_{+}^B} &= \Gr (\Omega_{+}^B  ) - \delta^!L_{\A_+} \\
		 &= \Gr (\Omega_{+}^B - \Omega _{-}^B) +  \Gr(\Omega_{-}^B) + \Gr(i\delta^*(\omega_{+}+\omega_{-})) -\delta^!L_{\A_{-}} \\
		 & =  \Gr(\Omega_{+}^B - \Omega _{-}^B + i\delta^*(\omega_{+}+\omega_{-}) ) + \Gr(\Omega_-^B) - \delta^!L_{\A_{-}}\\
		& = \Gr(\Omega_{+}^B - \Omega _{-}^B + i\delta^*(\omega_{+}+\omega_{-}) ) + L_{\Pi_{-}^B}.
	\end{align*}
	As a result, $L_{\Pi_{+}^B} = e^{2i F} L_{\Pi_{-}^B}$, where 	
 \begin{equation}
     F =  \frac{1}{2i} [\Omega_{+}^B - \Omega _{-}^B + i\delta^*(\omega_{+}+\omega_{-}) ]. \label{eq:F in gauge equivalence}
 \end{equation}	
	To verify that $F$ integrates $-\pi_B$, note that, by \autoref{cor: the underlying re and im} and \eqref{eq:re and im part dirac via q symplectic}, if we let $F_B$ be the unique multiplicative symplectic integration of $-\pi_B$ (\cite[Thm. 5.1]{MR2068969}), we have that 
	\begin{align*}\begin{aligned}
	    \Re(\Omega_{+}^B)=\Re(\Omega_{-}^B), \qquad
		\Im \Omega_{+}^B = -\delta^*\omega_{+} + F_B ,
		\qquad \Im \Omega^B_{-} = \delta^*\omega_{-} - F_B.
	\end{aligned} 
	\end{align*}
	It thus follows from \eqref{eq:F in gauge equivalence} that $F=F_B$. \end{proof}
 To interpret the gauge equivalences of \autoref{prop: deg symplectic GK} in a purely holomorphic fashion, we need to integrate the Poisson groupoids $G_{\A_\pm} $, $G_{\B_\pm} $ to symplectic double groupoids and hence obtain holomorphic symplectic Morita equivalences that relate them. 
\subsection{The Lu-Weinstein construction of symplectic double groupoids}\label{subsubsection: Lu-Weinstein}
As a first step to explain the global holomorphic meaning of \autoref{prop: deg symplectic GK}, we establish the existence of holomorphic symplectic double groupoids $(D_{\pm}, G_{\A_{\pm}}, G_{\B_{\pm}}, X_{\pm})$ integrating $(\E_\pm,\A_\pm,\B_\pm)$ under minimal assumptions. We have that, given $(G_{\A_{\pm}},\Omega_\pm^A)\rightrightarrows X_\pm$ and $(G_{\B_{\pm}},\Omega_\pm^B)\rightrightarrows X_\pm$ quasi-symplectic groupoids as in \S\ref{subsec:poi gro gk man}, we may explicitly construct (symplectic) double groupoids $D_\pm$ that we will call of {\em Lu-Weinstein type}: 
\begin{align}  {D_\pm} =\{(x,v,u,y) \in (G_{{\A_\pm}} ){}_{\sss}\times_\ttt G_{{\B_\pm}}\times (G_{{\B_\pm}}) {}_\sss\times_\ttt G_{\A_\pm}\ :\  \, \ttt(x)=\ttt(u),\, \sss(v)=\sss(y) \};\label{eq:luweidougro} 
\end{align} 
with the two multiplications defined by 
\begin{align*}  {\mm}_{{B}} ((x,v,u,y) ,(x',w,v,y') )=( xx',w,u, yy'),\qquad {\mm}_{{A}} ((x,v,u,y) ,(y,v',u',z) )=(x,vv',uu',z);
\end{align*}
and the two inversion maps are
\begin{align*}
	\ii_A(x,v,u,y) = (x^{-1},u,v,y^{-1}),\qquad \ii_B(x,v,u,y) = (y,v^{-1},u^{-1}, x).
\end{align*}
Recall that the {\em pair groupoid over a set $S$}, denoted by $P(S)$, is $S\times S \rightrightarrows S$ with the multiplication defined by the formula $\mm((x,y),(y,z))=(x,z)$. Using this concept, $D_\pm$ can be viewed as the fiber product of the pair groupoid $G_{\A_\pm} \times G_{\A_\pm} \rightrightarrows G_{\A_\pm}$ and the product $G_{\B_\pm} \times G_{\B_\pm} \rightrightarrows X_\pm \times X_\pm$ over the pair groupoid over $P(X_\pm^2) \rightrightarrows X_\pm^2$ as in the following diagram.
\[ \adjustbox{scale=0.85,center}{\begin{tikzcd}[arrows={crossing over}]
	&  & D_\pm \arrow[lld] \arrow[rrd] \arrow[d, shift right] \arrow[d, shift left] &  &                                                                                                   \\
	G_{\A_\pm}\times G_{\A_\pm}  \arrow[d, shift left] \arrow[d, shift right] &  & G_{\A_\pm} \arrow[lld] \arrow[rrd]                                            &  & G_{\B_\pm}\times G_{\B_\pm} \arrow[lld, "{(\ttt,\ttt,\sss,\sss)}" description] \arrow[d, shift right] \arrow[d, shift left] \\
	G_{\B_\pm} \arrow[rrd, "{(\ttt,\sss)}"']                                                                       &  & P(X_\pm^2)\arrow[llu,<-, "{(\ttt,\sss,\ttt,\sss)}" description] \arrow[d, shift right] \arrow[d, shift left]                                       &  & X_\pm\times X_\pm \arrow[lld, no head, equal]                                                        \\
	&  & X_\pm^2                                                                                    &  &                                                                                                  
\end{tikzcd}} \]
The transversality of the $\A$ and $\B$ foliations implies that these morphisms are transverse and hence $D_\pm \rightrightarrows G_{
A_\pm}$ is a Lie groupoid. We can exchange the roles of $G_{
A_\pm}$ and $G_{
B_\pm}$ to ensure $D_\pm \rightrightarrows G_{
B_\pm}$ is also a Lie groupoid. Also, $D_\pm$ inherits compatible complex structures $I_\pm$ from $G_{
A_\pm}$ and $G_{
B_\pm}$.
\begin{proposition}\label{prop:2form luwei} Let $(G_{\A_{\pm}},\Omega_\pm^A)$ and $(G_{\B_{\pm}},\Omega_\pm^B)$ be quasi-symplectic groupoids over $X_\pm$
    as in \S\ref{subsec:poi gro gk man}. Then the Lu-Weinstein double groupoids $D_\pm$ in \eqref{eq:luweidougro}, endowed with the 2-forms $\Omega_\pm=\delta^*_B\Omega_\pm^B-\delta^*_A\Omega_\pm^A$,  are holomorphic symplectic double groupoids integrating the Manin triples $(\E_\pm,\mathcal{A}_\pm,\mathcal{B}_\pm)$.   
\end{proposition}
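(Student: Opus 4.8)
The plan is to check, one at a time, the conditions defining a holomorphic symplectic double groupoid integrating $(\E_\pm,\A_\pm,\B_\pm)$: that $D_\pm$ is a double Lie groupoid, that $\Omega_\pm$ is closed, that it is multiplicative for both the horizontal structure $D_\pm\rightrightarrows G_{\A_\pm}$ and the vertical structure $D_\pm\rightrightarrows G_{\B_\pm}$, that it is nondegenerate, and finally that the Manin triple it induces via \autoref{Poisson duality from IM 2-form} is the given one. The double Lie groupoid structure has already been produced before the statement, the transversality of the $\A$- and $\B$-foliations guaranteeing smoothness of the relevant fibre products and the Lie groupoid axioms, so I concentrate on $\Omega_\pm$. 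Write $p_x,p_y\colon D_\pm\to G_{\A_\pm}$ and $p_u,p_v\colon D_\pm\to G_{\B_\pm}$ for the four edge projections, so that $\ttt_A=p_x$, $\sss_A=p_y$, $\ttt_B=p_u$, $\sss_B=p_v$ and $\Omega_\pm=p_u^*\Omega^B_\pm-p_v^*\Omega^B_\pm-p_x^*\Omega^A_\pm+p_y^*\Omega^A_\pm$.

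Closedness is a telescoping cancellation of the twisting. Since $\A_\pm$ and $\B_\pm$ both live in $\E_\pm=(\DT_{1,0}X_\pm,\H_\pm)$, both leg forms are relatively closed with the same $3$-form: $d\Omega^A_\pm=d\Omega^B_\pm=\ttt^*\H_\pm-\sss^*\H_\pm$. The four composites of $\sss,\ttt$ with $p_x,p_y$ and with $p_u,p_v$ are exactly the four corner maps $D_\pm\to X_\pm$, so $\delta^*_A(\ttt^*\H_\pm-\sss^*\H_\pm)$ and $\delta^*_B(\ttt^*\H_\pm-\sss^*\H_\pm)$ are the same alternating sum of the four corner pullbacks of $\H_\pm$; hence $d\Omega_\pm=\delta^*_B d\Omega^B_\pm-\delta^*_A d\Omega^A_\pm=0$. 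Multiplicativity is likewise telescoping. For $\mm_B$ the two $G_{\B_\pm}$-edges are merely glued along their shared edge, so the $\delta^*_B\Omega^B_\pm$-part collapses with no hypothesis on $\Omega^B_\pm$, whereas the two $G_{\A_\pm}$-edges genuinely compose, so the $\delta^*_A\Omega^A_\pm$-part is taken care of by the multiplicativity of $\Omega^A_\pm$; the two roles interchange for $\mm_A$, which follows by the $A\leftrightarrow B$ symmetry of the construction (sending $\Omega_\pm\mapsto-\Omega_\pm$ and $\mm_A\leftrightarrow\mm_B$).

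The main obstacle is nondegeneracy. As $\dim D_\pm=4\dim M=2\dim G_{\A_\pm}$ and $\Omega_\pm$ is closed and multiplicative, it suffices by the nondegeneracy criterion for multiplicative $2$-forms \cite{MR2068969} to show that the bundle map $\mu_{\Omega_\pm}\colon \mathrm{Lie}(D_\pm)^A\to T^*G_{\A_\pm}$ of \eqref{eq:IM 2 form} is an isomorphism (and symmetrically over $G_{\B_\pm}$). Evaluating $\iota_\xi\Omega_\pm$ against $d\uu_A$ at an $A$-unit $\uu_A(x)=(x,\uu(\sss x),\uu(\ttt x),x)$ and using that unit sections are isotropic, the $p_x^*\Omega^A_\pm$-term of $\mu_{\Omega_\pm}$ contributes the tautological covector $\iota_{\dot x}\Omega^A_\pm$, while the $p_u^*\Omega^B_\pm-p_v^*\Omega^B_\pm$-terms contribute the source- and target-pullbacks $\ttt^!\B_\pm$ and $\sss^!\B_\pm$ of the companion Dirac structure. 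Up to the sign conventions of \eqref{eq:side poisson via dirac}, this identifies the Dirac structure induced on $G_{\A_\pm}$ --- the forward image of $\Gr(\Omega_\pm)$ under $(\ttt_A,\sss_A)$ as in \eqref{sfdm} --- with $\Gr(\Pi^A_\pm)=\Gr(\Omega^A_\pm)-\ttt^!\B_\pm+\sss^!\B_\pm$. The map $\mu_{\Omega_\pm}$ is therefore an isomorphism exactly when this Dirac structure is the graph of a bivector, which is precisely the transversality $\E_\pm=\A_\pm\oplus\B_\pm$ of the Manin triple; Condition 3 of \autoref{def:quasi symplectic} for each leg is what makes the anchor \eqref{sfdm} a strong Dirac map, so that $\Gr(\Pi^A_\pm)$ is a well-defined pushforward. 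Hence $\Omega_\pm$ is nondegenerate, and $(D_\pm,\Omega_\pm)\rightrightarrows G_{\A_\pm}$ is the symplectic groupoid integrating $(G_{\A_\pm},\Pi^A_\pm)$.

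With $\Omega_\pm$ symplectic and multiplicative for both structures, $(D_\pm,G_{\A_\pm},G_{\B_\pm},X_\pm)$ is a symplectic double groupoid; \autoref{Poisson duality from IM 2-form} then equips its side Lie algebroids --- which are $\A_\pm=\mathrm{Lie}(G_{\A_\pm})$ and $\B_\pm=\mathrm{Lie}(G_{\B_\pm})$ by construction --- with the nondegenerate pairing $\Omega_\pm(a,b)$ recovering the duality $\B_\pm\cong\A_\pm^*$, so that $D_\pm$ integrates the Manin triple $(\E_\pm,\A_\pm,\B_\pm)$. Holomorphicity is then immediate: $\Omega^A_\pm$ and $\Omega^B_\pm$ are of type $(2,0)$ for $I_\pm$ and the edge projections are holomorphic for the complex structure $D_\pm$ inherits, so $\Omega_\pm$ is holomorphic symplectic and all the structure maps are holomorphic. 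The one step demanding genuine care is the nondegeneracy argument, where the transversality of the Manin triple and Condition 3 of \autoref{def:quasi symplectic} must be combined precisely.
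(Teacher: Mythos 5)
Your proof is correct, and it takes a genuinely different route from the paper's. The paper's own proof occupies two sentences: the forms $\Omega_\pm$ are of type $(2,0)_\pm$ and closed, hence holomorphic, and the entire integration statement---multiplicativity over both sides, nondegeneracy, and the identification of the induced Manin triple---is delegated to the external reference \cite[Thm. 4.10]{Dan1}. You reconstruct that content directly: closedness by the telescoping cancellation of the four corner pullbacks of $\H_\pm$ (the key point being that both legs are relatively closed with the \emph{same} twisting 3-form), multiplicativity by the same telescoping (the $\delta_B^*\Omega^B_\pm$ part collapses along the shared edge in the domain of $\mm_B$, while the $\delta_A^*\Omega^A_\pm$ part uses multiplicativity of $\Omega^A_\pm$, and symmetrically for $\mm_A$), and nondegeneracy by computing that the image of $(\rho_A,\mu_{\Omega_\pm})$ on $\mathrm{Lie}(D_\pm)^A$ is the Dirac difference $\Gr(\Omega^A_\pm)-\ttt^!\B_\pm+\sss^!\B_\pm$ of \eqref{eq:side poisson via dirac}, which is the graph of a bivector precisely by the transversality $\E_\pm=\A_\pm\oplus\B_\pm$ together with the strong Dirac map property furnished by Condition 3 of \autoref{def:quasi symplectic}; a rank count then makes $\mu_{\Omega_\pm}$ an isomorphism, hence $\Omega_\pm$ nondegenerate at units and, by multiplicativity, everywhere. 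This is consistent with the sign bookkeeping recorded in \autoref{rem:sign symplectic form doubles}. Your version buys self-containedness and makes visible exactly where the transversality of the Manin triple enters; the paper's buys brevity by deferring to the reference in which this construction was developed.

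The one step you leave genuinely compressed is the last one. Appealing to \autoref{Poisson duality from IM 2-form} to say the pairing ``recovers the duality $\B_\pm\cong\A_\pm^*$'' shows only that $D_\pm$ is a holomorphic symplectic double groupoid whose sides integrate $\A_\pm$ and $\B_\pm$ as Lie algebroids; to conclude that it integrates the \emph{given} Manin triple $(\E_\pm,\A_\pm,\B_\pm)$, rather than some other Lie bialgebroid structure on $\A_\pm\oplus\B_\pm$, you must check that the pairing induced by $\Omega_\pm$ agrees, up to the fixed sign convention, with the Courant pairing of $\E_\pm$ under the Dirac embeddings $(\rho,\mu_{\Omega^A_\pm})$ and $(\rho,\mu_{\Omega^B_\pm})$. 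This is a short double-unit computation: with $\uu_A(x)=(x,\uu(\sss x),\uu(\ttt x),x)$ and $\uu_B(v)=(\uu(\ttt v),v,v,\uu(\sss v))$, one finds, for $a\in\A_\pm$ and $b\in\B_\pm$, that $\Omega_\pm(d\uu_A(a),d\uu_B(b))=\Omega^B_\pm(d\uu\,\rho(a),b)-\Omega^A_\pm(a,d\uu\,\rho(b))=-\langle\mu_{\Omega^B_\pm}(b),\rho(a)\rangle-\langle\mu_{\Omega^A_\pm}(a),\rho(b)\rangle$, which is $-\langle a,b\rangle_{\E_\pm}$. Since the Courant bracket of a Manin triple is determined by the two side brackets, the anchors and the pairing (this is the Lie bialgebroid presentation invoked right after \autoref{Poisson duality from IM 2-form}), this computation is exactly what closes the argument; without it the phrase ``integrating the Manin triple'' is asserted rather than proved.
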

\begin{proof} The 2-forms $\Omega_\pm$ are automatically of type $(2,0)_\pm$ and they are closed. As a consequence, they are holomorphic. The fact that they integrate the desired Manin triples $(\E_\pm,\A_\pm,\B_\pm)$ (and hence the Poisson structures on the side groupoids) is established in \cite[Thm. 4.10]{Dan1}. \end{proof} 
\begin{remark}\label{rem:sign symplectic form doubles} In the situation of \autoref{prop:2form luwei}, let $Lie(D)^A$, $Lie(D)^B$ be the respective Lie algebroids of $D\rightrightarrows G_A$ and $D\rightrightarrows G_B$ with anchors $\rho_A:Lie(D)^A \rightarrow TG_A $ and $\rho_B:Lie(D)^B \rightarrow TG_B $. Then we have that $(\rho_A,\mu_{\Omega_\pm})(Lie(D)^A)=\Gr(-\Pi^A_\pm)$ and $(\rho_B,\mu_{\Omega_\pm})(Lie(D)^B)=\Gr(\Pi^B_\pm)$, where $\Gr(\Pi^A_\pm)$ is defined via \eqref{eq:side poisson via dirac}. This implies that, when we construct the Poisson structures on the side groupoids using \eqref{eq:side poisson via dirac}, we have to change the sign on one of them in order to view them as induced by a symplectic double groupoid. \end{remark}
\begin{remark}\label{rem:genluwei} Note that the Lu-Weinstein construction \eqref{eq:luweidougro} can be applied to any pair of Lie groupoids over the same base whose singular orbit foliations are transverse; the output is then a double Lie groupoid. For the original version of this construction see \cite{MR1054741}; its adaptation to more general Manin triples appeared in \cite{Dan1}. 
\end{remark}

\begin{remark}\label{minlw} The Lu-Weinstein integration of a Manin triple is minimal, in the sense that any other integration with the same side groupoids maps to the Lu-Weinstein integration by means of a double Lie groupoid morphism which is a local diffeomorphism. Such integrations can then be equipped with the pullback of the symplectic form of \autoref{prop:2form luwei}.
\end{remark}

\subsection{Gauge transformations and Morita equivalences: the Bursztyn shuffle}\label{subsec: gauge and morita, symplectic type} In this section we recall the general principle according to which a gauge equivalence between holomorphic Poisson structures integrates to a symplectic Morita equivalence between symplectic groupoids, thus setting the stage to integrate the gauge equivalences of \autoref{prop: deg symplectic GK}.    This principle was the main tool used in~\cite{MR4612595,MR4466669} to integrate generalized complex structures and generalized K\"ahler structures of symplectic type, and it first appeared for real Poisson structures in~\cite{MR1973074}.  We refer to it as the \emph{Bursztyn shuffle}, as it allows one to explicitly deform one symplectic groupoid into the other and into the symplectic bimodule between them. 

Let $I_\pm$ be a pair of complex structures on $M$, and let $\Pi_\pm$ be $I_\pm$-holomorphic Poisson structures whose matched pairs are  gauge equivalent, i.e. there is a closed 2-form $B$ on $M$ such that 
\begin{equation}\label{gapoi}
L_{\Pi_+} = e^B L_{\Pi_-}.
\end{equation}
In this situation, we have the following result, which is a modification of
\cite[Proposition 6.3, 6.4]{MR4612595}.
 \begin{proposition}(Bursztyn shuffle)\label{thm:BG} Given gauge-equivalent holomorphic Poisson structures as in~\eqref{gapoi}, as well as a holomorphic symplectic groupoid $G_- = (G,\Omega_-)$ integrating $\Pi_-$, then the complex 2-forms 
 \begin{align}
 \Omega_+ = \Omega_- + \ttt^*B-\sss^*B,\qquad 
 \Omega_Z = \Omega_- + \ttt^*B
 \end{align}
endow the underlying real Lie groupoid $G$ with new complex and holomorphic symplectic structures such that $G_+ = (G,\Omega_+)$ integrates $\Pi_+$ and $Z=(G,\Omega_Z)$ is a holomorphic symplectic Morita equivalence,
	\[
	\begin{tikzcd}
		G_+=(G,\Omega_+)  \arrow[d,"\sss", shift left] \arrow[d, "\ttt"', shift right] & Z=(G,\Omega_Z) \arrow[ld,"\ttt"'] \arrow[rd,"\sss"] & G_-=(G,\Omega_-) \arrow[d,"\sss", shift left] \arrow[d, "\ttt"',shift right]                         \\
		(M,I_+,\Pi_+)                                 & \mbox{}                 & (M,I_-,\Pi_-)
	\end{tikzcd}
	\]
 with actions given by the underlying multiplication on $G$.
\end{proposition}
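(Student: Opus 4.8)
The plan is to verify the three assertions --- that $\Omega_+$ makes $G_+$ a holomorphic symplectic groupoid for $\Pi_+$, that $\Omega_Z$ is nondegenerate, and that the two multiplications give symplectic actions --- directly, using the IM 2-form correspondence of \S\ref{qsg} together with the characterization of symplectic Morita equivalences via Lagrangian action graphs. The first point to record is that the gauge transformation $e^B$ is in particular an isomorphism of the underlying real Lie algebroids of $\Pi_+$ and $\Pi_-$, so the source-simply-connected groupoid $G$ integrating $\Pi_-$ integrates $\Pi_+$ as well; this is what allows the same underlying real Lie groupoid to carry all three structures $\Omega_-,\Omega_+,\Omega_Z$.

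First I would show $G_+=(G,\Omega_+)$ integrates $\Pi_+$. The form $\delta^*B=\ttt^*B-\sss^*B$ is multiplicative for any $B$ --- on a composable pair the interior terms cancel because $\sss\circ\mathrm{pr}_1=\ttt\circ\mathrm{pr}_2$ --- so $\Omega_+=\Omega_-+\delta^*B$ is multiplicative, and it is closed since $dB=0$. Evaluating the IM 2-form \eqref{eq:IM 2 form} along the units, where $d\sss$ kills $A=\mathrm{Lie}(G)$ and $d\ttt$ restricts to the anchor $\rho$, gives
\[
\mu_{\Omega_+}(a)=\mu_{\Omega_-}(a)+\iota_{\rho(a)}B,
\]
so the Dirac structure integrated by $\Omega_+$ is $e^{B}L_{\Pi_-}=L_{\Pi_+}$ by \eqref{gapoi}. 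As $L_{\Pi_+}$ is the matched pair of the honest holomorphic Poisson structure $\Pi_+$, the holomorphic quasi-symplectic/Dirac correspondence of \cite{MR4480214} --- equivalently, uniqueness of the multiplicative form with prescribed IM data on a source-simply-connected groupoid \cite{MR2068969} --- identifies $\Omega_+$ with the canonical integration of $\Pi_+$; in particular it is automatically of type $(2,0)_+$ and nondegenerate.

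Next I would treat the bibundle $Z=(G,\Omega_Z)$; note $\Omega_Z$ is closed because $dB=0$. Its actions are left and right multiplication in $G$ with moment maps $\ttt$ and $\sss$; these are free, have orbits the $\sss$- and $\ttt$-fibers, and admit the surjective submersions $\ttt,\sss$ as quotient maps, so $Z$ is biprincipal. A short computation shows both action graphs are Lagrangian: for the left $G_+$-action the condition $\mm^*\Omega_Z=\mathrm{pr}_1^*\Omega_++\mathrm{pr}_2^*\Omega_Z$ collapses, using multiplicativity of $\Omega_-$ and $\ttt\circ\mm=\ttt\circ\mathrm{pr}_1$, to the identity $\mathrm{pr}_1^*\sss^*B=\mathrm{pr}_2^*\ttt^*B$, valid on composable pairs, while the right $G_-$-action is immediate. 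For nondegeneracy of $\Omega_Z$: if $\iota_v\Omega_Z=0$ then $\iota_v\Omega_-=-\ttt^*(\iota_{d\ttt v}B)$ lies in the image of $\ttt^*$, so $v\in(\ker d\ttt)^{\Omega_-}=\ker d\sss$ by symplectic orthogonality of the source and target fibers; testing against vectors $\Omega_-^{\sharp}\ttt^*\gamma$ and using that $\ttt$ is a Poisson map then gives $(\id-\Pi_-^{\sharp}B)(d\ttt v)=0$, and the operator $\id-\Pi_-^{\sharp}B$ is invertible precisely because $\Pi_+$ is an honest Poisson structure, forcing $d\ttt v=0$ and hence $v=0$.

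Finally, the holomorphicity of $\Omega_Z$ is obtained by exhibiting the complex structure $\mathcal{I}_Z$ on $Z$ for which $\ttt$ and $\sss$ are respectively holomorphic onto $X_+$ and anti-holomorphic onto $X_-$, and checking $\Omega_Z$ has type $(2,0)$ for it. I expect this holomorphic bookkeeping to be the main obstacle: neither $\Omega_+$ nor $\Omega_Z$ is of pure type term-by-term, since $\Omega_-$ is of type $(2,0)_-$ while $B$ mixes types relative to $I_+$, so the correct type must be deduced indirectly --- for $\Omega_+$ from its integrating the holomorphic Dirac structure $L_{\Pi_+}$, and for $\Omega_Z$ from compatibility with the two holomorphic actions. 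The remaining verifications, that $\ttt$ and $\sss$ are the asserted (anti-)Poisson moment maps to $(M,\Pi_+)$ and $(M,\Pi_-)$, then follow routinely from the identities above.
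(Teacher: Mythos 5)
The paper itself does not prove \autoref{thm:BG}: it is quoted as a modification of \cite[Prop.\ 6.3, 6.4]{MR4612595}, so your proposal must be measured against the method of that cited work (also used in \cite{MR4466669}). Your computational skeleton agrees with it on several points, and those parts are correct: $\delta^*B$ is multiplicative and closed, so $\Omega_+$ is a closed multiplicative complex form; the IM-form computation $\mu_{\Omega_+}(a)=\mu_{\Omega_-}(a)+\iota_{\rho(a)}B$ identifies the Dirac structure of $(G,\Omega_+)$ with $e^BL_{\Pi_-}=L_{\Pi_+}$, which together with the holomorphic Dirac--quasi-symplectic correspondence of \cite{MR4480214} handles $G_+$ at citation level (and your direct computation in fact makes the appeal to uniqueness of source-simply-connected integrations dispensable, which is good, since the proposition does not assume source-simple-connectedness); and the Lagrangian action-graph identities for left multiplication by $G_+$ and right multiplication by $G_-$ on $Z$, reducing to $\mathrm{pr}_1^*\sss^*B=\mathrm{pr}_2^*\ttt^*B$ on composable pairs, are verified correctly, as is biprincipality.

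The genuine gap is the step you explicitly defer: producing the new complex structure on $Z$ and showing $\Omega_Z$ is of type $(2,0)$ and holomorphic for it. This is the essential content of the proposition -- without it there is no holomorphic symplectic Morita equivalence -- and ``compatibility with the two holomorphic actions'' is not a mechanism for it, since $\Omega_Z$ is not multiplicative for the groupoid structure of $G$, so no IM/Dirac correspondence applies to $Z$ the way it does to $G_\pm$. Worse, your nondegeneracy argument is in tension with the holomorphicity you still owe: a holomorphic symplectic form is \emph{never} nondegenerate on $T_{\C}G$, its kernel being exactly the $(0,1)$-bundle. So either your $v$ ranges over real vectors, in which case the appeal to $(\ker d\ttt)^{\perp_{\Omega_-}}=\ker d\sss$ needs the type decomposition (that orthogonality holds inside $T_{1,0}G$; for real $v$ one gets $v\in(\ker d\sss)_{1,0}\oplus T_{0,1}G$ and must then use reality to land in $\ker d\sss$), or $v$ ranges over $T_{\C}G$ and the conclusion $v=0$ is false -- and must be false for the proposition to hold. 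The proof in the cited literature turns this around: one computes $\ker\Omega_Z\subset T_{\C}G$, shows it has complex dimension $\tfrac12\dim_{\R}G$ and trivial intersection with its conjugate (this is where invertibility of $1+B\Pi_-^{\sharp}$, i.e.\ the hypothesis that $\Pi_+$ is an honest Poisson structure, enters), and declares this kernel to be the $(0,1)$-bundle of a new almost complex structure; involutivity of the kernel of the closed form $\Omega_Z$ gives integrability, and a closed complex form annihilated by $T_{0,1}$ is automatically of type $(2,0)$ and holomorphic. Your real-vector computation is the germ of exactly this kernel computation, but organized as ``nondegeneracy proved, holomorphicity deferred'' it does not establish the statement; the two items are two faces of one computation, and the same mechanism is what one would also use for $\Omega_+$ if one did not wish to lean on \cite{MR4480214}.
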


We now make use of this procedure to integrate the Poisson groupoid structures described in~\autoref{prop: deg symplectic GK}.  For this purpose, we will need to deform our symplectic double groupoids in two independent directions to obtain a $(1,1)$-morphism of double symplectic groupoids.

\subsection{The two-dimensional Bursztyn shuffle}\label{section: 2dbs}
In the following result we combine \autoref{thm:BG} with \autoref{prop: deg symplectic GK} to produce a $(1,1)$-morphism that integrates \eqref{manindiag}. By assuming that we are given a holomorphic symplectic double groupoid $D_-$ with sides $G_{\A_-}$ and $G_{\B_-}$ as in \S\ref{subsec:poi gro gk man}, we can successively obtain all the holomorphic symplectic spaces in \eqref{eq:(1,1) morphism via 2d shuffle} below via deformations of $D_-$ as in \autoref{thm:BG}.
\begin{proposition}[2D Bursztyn shuffle]\label{2d Bursztyn shuffle}
	Let $(D_-,G_{\A_-},G_{\B_-},X_-)$ be a holomorphic symplectic groupoid that integrates the Manin triple $(\E_-,\A_-,\B_-)$ underlying a generalized K\"ahler structure. Then there is a holomorphic symplectic $(1,1)$-morphism: 
\begin{equation}
\begin{tikzcd}
D_-'                   & \mathcal{W} \arrow[l] \arrow[r]                     & D_+                             \\
\mathcal{Z}' \arrow[u] & \mathcal{C} \arrow[u] \arrow[l] \arrow[r] \arrow[d] & \mathcal{Z} \arrow[u] \arrow[d] \\
D_+' \arrow[u]         & \mathcal{W}' \arrow[r] \arrow[l]                    & D_-                    
\end{tikzcd}\label{eq:(1,1) morphism via 2d shuffle}  
\end{equation} 
in which the holomorphic symplectic manifolds above are: 
 \begin{align*}
  &D_-'=(D,\Omega_-+ 2i( \delta_A^*F_A + \delta_B^*F_B )) \quad &\mathcal{W}=(D,\Omega_- + 2i(\ttt_A^*F_A+\delta_B^*F_B)) \quad &D_+=(D,\Omega_- +2i\delta_B^*F_B)   \\
 &\mathcal{Z}'=(D,\Omega_-+ 2i(\delta_A^*F_A+\ttt_B^*F_B)) \quad &\mathcal{C}=(D,\Omega_-+2i(\ttt_A^*F_A+\ttt_B^*F_B)) \quad &\mathcal{Z}=(D,\Omega_-+2i\ttt_B^*F_B) \\
 &D_+'=(D,\Omega_-+ 2i\delta_A^*F_A)
 \quad &\mathcal{W}'=(D,\Omega_-+ 2i\ttt_A^*F_A) \qquad & D_-=(D,\Omega_-) \end{align*}
  where $D$ is the underlying smooth double groupoid of $D_-$. The $(1,1)$-morphism structure is determined as in \autoref{ex:triv(1,1)mor}. 
  \end{proposition}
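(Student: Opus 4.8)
The plan is to present the entire object as a \emph{decorated} copy of the trivial $(1,1)$-morphism of \autoref{ex:triv(1,1)mor}. All nine spaces in \eqref{eq:(1,1) morphism via 2d shuffle} share the single underlying smooth double groupoid $D$ of $D_-$: every double groupoid and every bimodule appearing is a copy of $D$, and every action is the horizontal or vertical multiplication of $D$. With the underlying groupoid-theoretic data fixed in this way, the commutativity of the actions, the interchange rules, and all the equivariance demanded by \autoref{def: 11 morphism} hold automatically, since they are already satisfied by the genuine double groupoid $D$. The only genuinely new content is symplectic: each of the nine copies of $D$ must carry the prescribed $2$-form as a \emph{holomorphic symplectic} form (closed, of type $(2,0)$, nondegenerate), and each multiplication playing the role of an action must be symplectic, i.e.\ have Lagrangian graph. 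Both are precisely the output of the one-dimensional Bursztyn shuffle of \autoref{thm:BG}, so the strategy is to invoke that result twice, once for each of the two groupoid structures on $D$.

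Concretely, I would first apply \autoref{thm:BG} to the vertical groupoid $D_-\rightrightarrows G_{\B_-}$, with gauge datum $B=2iF_B$ on the base $G_{\B_-}$ and the gauge equivalence $L_{\Pi_+^B}=e^{2iF_B}L_{\Pi_-^B}$ supplied by \autoref{prop: deg symplectic GK}; this produces the right-hand column $D_-\to\mathcal{Z}=(D,\Omega_-+2i\ttt_B^*F_B)\to D_+=(D,\Omega_-+2i\delta_B^*F_B)$, with $\mathcal{Z}$ a vertical symplectic Morita equivalence. I would then apply \autoref{thm:BG} to the horizontal groupoid, with gauge $2iF_A$ and the equivalence $L_{\Pi_+^A}=e^{2iF_A}L_{\overline{\Pi_-^A}}$, to each of the three objects $D_-,\mathcal{Z},D_+$, filling in the three rows and hence all nine forms; by construction each is $\Omega_-$ plus an imaginary combination of the four pullbacks $\ttt_A^*F_A,\sss_A^*F_A,\ttt_B^*F_B,\sss_B^*F_B$, matching the table in the statement. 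The bookkeeping is organised by the multiplicativity properties forced by the double-groupoid axioms: since $\ttt_A,\sss_A\colon D\to G_{\A_-}$ are morphisms of the vertical groupoid and $\ttt_B,\sss_B\colon D\to G_{\B_-}$ are morphisms of the horizontal groupoid, the form $\ttt_A^*F_A$ is vertically multiplicative (but not horizontally) while $\ttt_B^*F_B$ is horizontally multiplicative (but not vertically), and the simplicial forms $\delta_A^*F_A,\delta_B^*F_B$ are multiplicative in both directions. This is exactly why the four corners $D_\pm,D_\pm'$ are genuine symplectic double groupoids, while the edge bimodules $\mathcal{W},\mathcal{W}',\mathcal{Z},\mathcal{Z}'$ are symplectic groupoids in their transverse directions.

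The step I expect to be the main obstacle is verifying that the two shuffles genuinely commute, so that the grid closes up independently of the order of deformation. The difficulty is that the transverse base Poisson structure is \emph{not} literally preserved by a shuffle in the complementary direction: adding $2i\ttt_A^*F_A$ alters the vertical infinitesimal $2$-form \eqref{eq:IM 2 form} by a term built from the IM form of the multiplicative symplectic $F_A$, and symmetrically in the other direction. What must be checked is that this alteration is a gauge transformation compatible with \autoref{prop: deg symplectic GK}, so that the vertical equivalence $L_{\Pi_+^B}=e^{2iF_B}L_{\Pi_-^B}$ persists along all three rows and \autoref{thm:BG} may indeed be applied rowwise to $D_-,\mathcal{Z},D_+$. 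This is where one uses that $F_A,F_B$ are multiplicative (hence vanish on units) and that their pullbacks under the transverse structure maps commute, together with careful tracking of the sign of \autoref{rem:sign symplectic form doubles} --- by which the horizontal side of $(D,\Omega_\pm)$ induces $-\Pi^A_\pm$ rather than $\Pi^A_\pm$ --- and of the complex conjugate appearing on the $A$-side of \autoref{prop: deg symplectic GK}; reconciling these with the stated coefficient $+2i$ is the delicate heart of the argument.

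Granting this compatibility, the remaining verifications are formal. Each of the nine forms is closed, being a sum of $\Omega_-$ and pullbacks of the closed forms $F_A,F_B$; each is of type $(2,0)$ because $2i\ttt_A^*F_A=\Omega_{\mathcal{W}'}-\Omega_-$ and $2i\ttt_B^*F_B=\Omega_{\mathcal{Z}}-\Omega_-$ are differences of type-$(2,0)$ forms produced by \autoref{thm:BG}, so that every imaginary combination of the four pullbacks is again of type $(2,0)$; and nondegeneracy, together with the Lagrangian property of each action graph, is inherited from \autoref{thm:BG} at each shuffle step. Declaring the underlying $(1,1)$-morphism structure to be that of \autoref{ex:triv(1,1)mor} then completes the construction.
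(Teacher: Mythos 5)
Your overall architecture coincides with the paper's: keep the trivial $(1,1)$-morphism of \autoref{ex:triv(1,1)mor} as the groupoid-theoretic skeleton on the fixed smooth double groupoid $D$, and generate the nine symplectic forms by iterating \autoref{thm:BG} in the two directions, with the multiplicativity bookkeeping ($\ttt_A^*F_A$ vertically but not horizontally multiplicative, $\ttt_B^*F_B$ horizontally but not vertically, $\delta_A^*F_A$ and $\delta_B^*F_B$ doubly multiplicative) dictating which spaces are double groupoids and which are bimodules. That bookkeeping is correct and is exactly how the paper argues that $D_+$ is again a symplectic \emph{double} groupoid. The paper additionally records two verifications you compress: that the new complex structure $\Im(\Omega_+)^{-1}\Re(\Omega_+)$ is multiplicative for both sides, and that the side Poisson groupoids of $D_+$ are really $G_{\A_+}$ and $G_{\B_+}$ (because $\Re(\Omega_+)=\Re(\Omega_-)$ and a holomorphic Poisson structure is determined by its real part and the complex structure); the latter is what entitles one to feed the gauge data of \autoref{prop: deg symplectic GK} into the next shuffle.

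The genuine gap is precisely the step you flag but do not resolve: applying \autoref{thm:BG} ``rowwise'' to $\mathcal{Z}$. The hypothesis of \autoref{thm:BG} is a \emph{pair} of holomorphic Poisson structures on the base whose matched pairs are gauge related; for the middle row this means you must know in advance that $e^{2iF_A}L_{\Pi_Z}$, where $\Pi_Z$ is the Poisson structure induced on the base $Z$ of the horizontal groupoid $\mathcal{Z}\rightrightarrows Z$, is again the matched pair of a holomorphic Poisson structure on the underlying manifold $G_A$. \autoref{prop: deg symplectic GK} supplies such statements only for the side groupoids $G_{\A_\pm}$, $G_{\B_\pm}$, not for the bimodule bases $Z$, $W$, and the ingredients you list (multiplicativity, vanishing on units, commuting pullbacks, sign tracking) do not by themselves produce it. The paper's resolution is to build the boundary of the square first: vertical shuffle for the right column, horizontal shuffles from $D_+$ and from $D_-$ (both covered by \autoref{prop: deg symplectic GK} together with the sign conventions) for the top and bottom rows, and then a vertical shuffle from $D_+'$ for the left column, so that $\mathcal{Z}'$ and $\mathcal{W}$ exist before the middle is touched. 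The missing gauge equivalence is then immediate from a comparison of multiplicative forms: $\Omega_{\mathcal{Z}'}-\Omega_{\mathcal{Z}}=2i\delta_A^*F_A$ and $\Omega_{\mathcal{W}}-\Omega_{\mathcal{W}'}=2i\delta_B^*F_B$ are multiplicative symplectic forms on the same underlying groupoid differing by the simplicial differential of a closed $2$-form on the base, so their base Poisson structures are gauge equivalent via $2iF_A$, respectively $2iF_B$; only at that point does \autoref{thm:BG} apply to $\mathcal{Z}$ (horizontally) or to $\mathcal{W}'$ (vertically), producing $\mathcal{C}$ simultaneously as a horizontal and a vertical bimodule, i.e.\ as a double bimodule. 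If you reorder your plan so that the four corners and four edge bimodules are constructed first and the middle is obtained from this comparison, your argument closes and becomes the paper's proof.
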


\begin{remark}\label{rem:sign double gpd} Our convention for the sign in the symplectic form on the holomorphic double Lie groupoid $D_-$ is as follows. Differentiating the symplectic structure $\Omega_-$ should induce $\Pi_-^B$ on $G_B$ and $-\Pi_-^A$ on $G_A$, see \autoref{rem:sign symplectic form doubles}. \end{remark}
\begin{proof}[Proof of \autoref{2d Bursztyn shuffle}] Since $\Pi_{-}^B$ and $\Pi_{+}^B$ are gauge equivalent via $2iF_B$, i.e., $L_{\Pi_+^B} = e^{2iF_B}L_{\Pi_{-}^B}$, we may start from the symplectic groupoid $(D_-, \Omega_-) \rightrightarrows (G_{\B_{-}}, \Pi_-^B)$ and build the symplectic groupoid $(D, \Omega_+) \rightrightarrows (G_{\B_+}, \Pi_+^B)$ whose underlying smooth groupoid is the same as $D$ and with symplectic form $\Omega_+ = \Omega_- - 2i(\sss_B^*F_B-\ttt_B^*F_B)$, by using the Bursztyn shuffle in the vertical direction. Then $\mathcal{Z}=(D, \Omega_+  + 2i t_B^*F_B)$ serves as a  bimodule for a Morita equivalence from $D_-\rightrightarrows G_{\B_-}$ to $D_+\rightrightarrows G_{\B_+}$, see Theorem \ref{thm:BG}.
We now verify that $D_-, \mathcal{Z}, D_+$ are related by an invertible $(0,1)$-morphism. 	Note that the complex structure on $(D,\Omega_+)$ is given by $\Im(\Omega_+)^{-1}\Re(\Omega_+)$.
	To check that $D_+=(D,\Omega_+)$ is also a symplectic groupoid over $G_{\A_+}$ we note that since $\ttt_B, \sss_B$ are both groupoid morphisms from $D \rightrightarrows G_{A}$ to $G_{B} \rightrightarrows M$ and $\Omega$ is multiplicative for $D \rightrightarrows G_{\A}$ and $F_B$ is multiplicative for $G_{B}\rightrightarrows M$, $\Omega_+$ is also multiplicative for $D \rightrightarrows G_{A}$. By the formula of the complex structure of $D_+$, this also shows that it is multiplicative for both sides and hence $D_+$ is a holomorphic symplectic double groupoid. To see that the induced side Poisson groupoids of $D_+$ are the same as $G_{\A_+}$ and $G_{\B_+}$ note that the underlying real parts of $\Omega_+$ and $\Omega_-$ coincide and so the underlying real parts of $\Pi_+^{A}$ and $\Pi_+^B$ are the same as those of $\Pi_-^A$ and $\Pi_-^B$ respectively. So the result follows from the fact that a holomorphic Poisson structure is determined by its real part and the complex structure. Proceed similarly to show that $\Omega_+  + 2i t_B^*F_B$ determines a holomorphic symplectic groupoid structure $\mathcal{Z} \rightrightarrows Z$ on the underlying smooth groupoid $D\rightrightarrows G_A$. 

 Having established the rightmost side of the diagram~\eqref{eq:(1,1) morphism via 2d shuffle},  we now perform horizontal Bursztyn shuffle to obtain the top side.  According to \autoref{prop: deg symplectic GK} and \autoref{rem:sign double gpd}, the Poisson structure induced by $\Omega_+$ on $G_{{A}}$ is $-\Pi_+^A$ which satisfies $  L_{-\overline{\Pi_-^A}}=e^{2iF_A}L_{-\Pi_+^A}$. Therefore, in the same manner as above, we can obtain a horizontal Morita equivalence of holomorphic symplectic double groupoids. Starting from $D_+=(D,\Omega_+) = (D, \Omega_- + 2i \delta_B^*F_B)$, we can build $D_-' = (D, \Omega_+ +2i\delta_A^*F_A) = (D,\Omega_-+ 2i(\delta_A^*F_A+\delta_B^*F_B))$,
which is a holomorphic symplectic double groupoid integrating $(\overline{\E_-},\overline{\A_-}, \overline{\B_-})$, and 
\[
 \mathcal{W}=(D, \Omega_+ + 2i\ttt^*_AF_A) = (D, \Omega_- + 2i(\delta_B^*F_B+\ttt_A^*F_A))
\]
defines a new holomorphic symplectic groupoid structure $\mathcal{W}\rightrightarrows W$ on the underlying Lie groupoid $D \rightrightarrows G_B$ which constitutes the bimodule for a Morita equivalence from $D_+$ to $D_-'$. 

Repeating the argument to obtain the bottom side of~\eqref{eq:(1,1) morphism via 2d shuffle},  we use $ L_{-\overline{\Pi_+^A}}=e^{2iF_A}L_{-\Pi_-^A}$ to obtain a horizontal Morita equivalence from $D_-$ to $D_+'$ with bimodule $\mathcal{W}'\rightrightarrows W'$, and then, moving vertically, a Morita equivalence from $D_+'$ to $D_-'$ with bimodule $\mathcal{Z'}\rightrightarrows Z'$ determined by $L_{\overline{\Pi_-^B}} = e^{2iF_B}L_{\overline{\Pi_{+}^B}}$. 

Finally, since the symplectic structures on $\mathcal{W}'$ and $\mathcal{W}$ differ by $2i\delta_B^*F_B$, their underlying Poisson structures are gauge equivalent via $2iF_B$ and, similarly, the underlying Poisson structures of $\mathcal{Z}'$ and $\mathcal{Z}$ are gauge equivalent via $2iF_A$. Therefore we see that $\mathcal{C}$ may be obtained as a symplectic bimodule either vertically from $\mathcal{W}'$ or horizontally from $\mathcal{Z}$ by Bursztyn shuffle, showing it is a double bimodule, as required.
\end{proof}
 \begin{remark}\label{rem:symplectic type A and B leaves} If $D_-$ is of Lu-Weinstein type then we have the following. If $\mathcal{O}_B$ is the $\B $-leaf that passes through $p\in X_\pm$, it inherits the structure of a Generalized K\"ahler structure of symplectic type \cite[Remark 3.10]{MR3232003}. On the other hand, if we take the $D_\pm$-orbit of $\uu(p)\in G_{\A_\pm}$, that is a union of symplectic leaves of $G_{\A_\pm}$, but it is also a groupoid given simply by the restriction $G_{\A_\pm}|_{\mathcal{O}_B} \rightrightarrows \mathcal{O}_B$. Similarly, $Z|_{\mathcal{O}_B}$ is a $\mathcal{Z}$-orbit and so it is symplectic. Then we get that, automatically, the restricted Morita equivalence
    \[ \begin{tikzcd}[scale cd=.85]
G_{\mathcal{A}_+}|_{\mathcal{O}_B} \arrow[d, shift right] \arrow[d, shift left]  &  & Z|_{\mathcal{O}_B} \arrow[lld] \arrow[rrd] &  & G_{\mathcal{A}_-}|_{\mathcal{O}_B} \arrow[d, shift right] \arrow[d, shift left]  \\
\mathcal{O}_B                                                                                                 &  &                                            &  & \mathcal{O}_B                                                                                                 
\end{tikzcd}\]
is a symplectic Morita equivalence and the analogous property holds for $\A$-leaves.
\end{remark}
 \subsection{Self-adjoint $(1,1)$--morphisms}\label{subsec:real str}
 As we saw in~\autoref{Inf descprtion of GK}, the four Manin triples underlying a generalized K\"ahler structure consist of two complex conjugate pairs.  We now explain how this symmetry persists at the global level, giving rise to a real structure on the corresponding $(1,1)$-morphism.
 \begin{definition}
     A holomorphic symplectic $(1,1)$-morphism $\mathcal{C}$ as in \eqref{eq:(1,1) morphism} is {\em self-adjoint} if there is an isomorphism of holomorphic symplectic $(1,1)$-morphisms 
     \[ \tau:\mathcal{C}\rightarrow \overline{\mathcal{C}}^\top, \qquad \overline{\tau}^\top \tau=\text{id} \]
     where $\overline{\mathcal{C}}^\top$ is the complex conjugate of the double inverse of $\mathcal{C}$, with respect to both vertical and horizontal compositions 
     as in  Fig.~\ref{fig:real structure}. 
 \end{definition}
 \begin{figure}[H]
    \centering
    \begin{tikzpicture}[scale=0.75,every node/.style={transform shape}]
\pgfdeclarelayer{background}
\pgfdeclarelayer{foreground}
\pgfsetlayers{background,main,foreground}
 \coordinate (a) at (-1,-1);
 \coordinate (b) at (1,-1);
 \coordinate (c) at (1,1);
 \coordinate (d) at (-1,1);
 \coordinate (e) at (0,0);
\coordinate (f) at (0,1);
\coordinate (g) at (0,-1);
\coordinate (h) at (1,0);
\coordinate (i) at (-1,0);
\coordinate (j) at (-1+4.5,-1+0.5);
 \coordinate (k) at (1+4.5,-1+.5);
 \coordinate (l) at (1+4.5,1+.5);
 \coordinate (m) at (-1+4.5,1.5);
 \coordinate (n) at (4.5,0.5);
\coordinate (o) at (4.5,1.5);
\coordinate (p) at (4.5,-0.5);
\coordinate (q) at (5.5,0.5);
\coordinate (r) at (3.5,0.5);
 
 \begin{pgfonlayer}{background}
      \draw[fill=black!10] (a)--(b)--(c)--(d)--cycle;
\draw[postaction={on each segment={mid arrow=black}}] (b)--(c);
\draw[postaction={on each segment={mid arrow=black}}] (b)--(a); 
\draw[postaction={on each segment={mid arrow=black}}] (a)--(d);
\draw[postaction={on each segment={mid arrow=black}}] (c)--(d);
\draw[fill=black!10] (j)--(k)--(l)--(m)--cycle;
      \draw[fill=black!3,opacity=0.5,ultra thin] (b)--(k)--(l)--(c)--cycle;
      \draw[fill=black!3,opacity=0.5] (c)--(l)--(m)--(d)--cycle;
      \draw[opacity=0.5] (a)--(b)--(k)--(j)--cycle;
\draw[postaction={on each segment={mid arrow=black}}] (k)--(l);
\draw[postaction={on each segment={mid arrow=black}}] (l)--(m); 
\draw[postaction={on each segment={mid arrow=black}}] (j)--(m);
\draw[postaction={on each segment={mid arrow=black}}] (k)--(j);
\draw[draw=gray!5,postaction={on each segment={mid arrow=gray}},opacity=0.4] (c)--(l);
\draw[draw=gray!5,opacity=0.4] (b)--(k);
\draw[draw=gray!5,postaction={on each segment={mid arrow=gray}},opacity=0.4] (a)--(j);
\draw[draw=gray!5,opacity=0.4] (d)--(m);
            \end{pgfonlayer}
 \node [below left] at (a)
                {${D}_{+}'$};
\node [circle, fill=black, inner sep=1pt] at (a){};
\node [below right] at (b)
                {$D_{-}$};
\node [circle, fill=black, inner sep=1pt] at (b){};                
\node [above right] at (c)
                {$D_{+}$};
\node [circle, fill=black, inner sep=1pt] at (c){};
\node [above left] at (d)
                {${D}_{-}'$};
\node [circle, fill=black, inner sep=1pt] at (d){};
\node  at (e)
                {$\mathcal{C}$};
\node [above] at (f)
                {$\mathcal{W}$};
\node[below] at (g)
                {${\mathcal{W}'}$};
\node[right] at (h)
                {$\mathcal{Z}$};
\node[left] at (i)
                {${\mathcal{Z}'}$};
                 \node [below left] at (j)
                {$\overline{D}_{+}$};
\node [circle, fill=black, inner sep=1pt] at (j){};
\node [below right] at (k)
                {$\overline{D_-'}$};
\node [circle, fill=black, inner sep=1pt] at (k){};                
\node [above right] at (l)
                {$\overline{D_+'}$};
\node [circle, fill=black, inner sep=1pt] at (l){};
\node [above left] at (m)
                {$\overline{D}_-$};
\node [circle, fill=black, inner sep=1pt] at (m){};
\node  at (n)
                {$\overline{\mathcal{C}}$};
\node [above] at (o)
                {$(-\overline{\mathcal{W}'})$};
\node[below] at (p)
                {$(-\overline{\mathcal{W}})$};
\node[right] at (q)
                {$(-\overline{\mathcal{Z}}')$};
\node[left] at (r)
                {$(-\overline{\mathcal{Z}})$};
\end{tikzpicture} 
    \caption{Self-adjoint $(1,1)$-morphism}
    \label{fig:real structure}
\end{figure} 
\begin{remark}
Note that such a self-adjoint $(1,1)$-morphism induces a real structure $\tau_\mathcal{C}$ on the double bimodule, which satisfies $\tau_\mathcal{C}^*\Omega_\mathcal{C} = \overline{\Omega}_\mathcal{C}$.
\end{remark}

We now verify that the $(1,1)$-morphism constructed via the Bursztyn shuffle \eqref{eq:(1,1) morphism via 2d shuffle} is self-adjoint, under a mild assumption. 
From the proof of \autoref{2d Bursztyn shuffle}, the two sides of $D_{-}'$ are $\overline{G_{\A_-}}$ and $\overline{G_{\B_-}}$. As a result, $D_-'$ coincides with the complex conjugate of $\overline{D_-}$ (equivalently, $D_+'$ is conjugate to $\overline{D_+}$), as long as
\begin{align}\label{eq: imomega}
\overline{\Omega}_- - \Omega_- =2i(\delta_A^*F_A+\delta_B^* F_B).
\end{align}
 In fact, this equation holds for the Lu-Weinstein integration, see \autoref{prop:2form luwei}, and so we can assume that it holds for $D_-$. Equation \eqref{eq: imomega} also implies that any pair of holomorphic symplectic manifolds in \eqref{eq:(1,1) morphism via 2d shuffle} related by reflection through the center are isomorphic via double groupoid inversion and complex conjugation. In the following, a minus sign before a space indicates taking the negative of its symplectic form. 
\begin{proposition}\label{prop:real str} If \eqref{eq: imomega} holds, then we have symplectomorphisms
\begin{align} \ii_A\circ\ii_B: \overline{\mathcal{Z}} \rightarrow \mathcal{Z}', \qquad \ii_A\circ\ii_B: \overline{\mathcal{W}} \rightarrow \mathcal{W}', \qquad \ii_B:\mathcal{Z}\rightarrow (-\overline{\mathcal{Z}}'), \qquad\ii_A:\mathcal{W}\rightarrow (-\overline{\mathcal{W}}'); \label{eq:symplecto 1,1 morphism} 
\end{align}
 where $\ii_A$ and $\ii_B$ are the inverses of $D\rightrightarrows G_A$ and $D\rightrightarrows G_B$, respectively. Moreover, $ \ii_A\circ\ii_B=\ii_B\circ\ii_A$ defines a symplectic real structure on $\mathcal{C} = (D, \Omega_{\mathcal{C}})$: it is a symplectomorphism as below: 
\[\begin{tikzcd} 
\mathcal{C} =(D, \Omega_{\mathcal{C}})\arrow[rr, "\ii_A\circ\ii_B"] &  & \overline{\mathcal{C}}=(D, \overline{\Omega_{\mathcal{C}}}).
\end{tikzcd}\]
\end{proposition}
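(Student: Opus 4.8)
The plan is to exploit the fact that every holomorphic symplectic manifold appearing in \eqref{eq:(1,1) morphism via 2d shuffle} has the \emph{same} underlying smooth double groupoid $D$, their symplectic forms differing from $\Omega_-$ only by purely imaginary combinations of the pullbacks $\ttt_A^*F_A,\sss_A^*F_A,\ttt_B^*F_B,\sss_B^*F_B$. Consequently each of the claimed maps is a symplectomorphism precisely when a linear identity among these four $2$-forms holds, and the whole proposition reduces to computing the pullbacks of these building blocks under $\ii_A$, $\ii_B$ and their composite, together with one application of \eqref{eq: imomega}.

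The first step is to record how the two inversions interact with the relevant data. Since $(D,\Omega_-)$ is a symplectic groupoid for \emph{both} the $A$- and the $B$-structure, multiplicativity of $\Omega_-$ together with the isotropy of the units forces $\ii_A^*\Omega_-=\ii_B^*\Omega_-=-\Omega_-$ (inversion is anti-symplectic; this is harmless despite $\Omega_-$ inducing $-\Pi_-^A$ on the $A$-side by \autoref{rem:sign double gpd}). Next, as $\ii_A$ is the inversion of $D\rightrightarrows G_A$ it swaps the $A$-structure maps, $\sss_A\circ\ii_A=\ttt_A$ and $\ttt_A\circ\ii_A=\sss_A$, while as a morphism of the transverse groupoid $D\rightrightarrows G_B$ it covers the inversion $\ii_{G_B}$ on $G_B$, so $\sss_B\circ\ii_A=\ii_{G_B}\circ\sss_B$ and $\ttt_B\circ\ii_A=\ii_{G_B}\circ\ttt_B$; the analogous statements hold for $\ii_B$ with $A$ and $B$ exchanged. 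Combining these with the anti-invariance $\ii_{G_A}^*F_A=-F_A$, $\ii_{G_B}^*F_B=-F_B$ of the side symplectic forms under their own groupoid inversions yields the transformation table
\[ \ii_A^*\delta_A^*F_A=-\delta_A^*F_A,\qquad \ii_A^*\ttt_B^*F_B=-\ttt_B^*F_B,\qquad \ii_A^*\delta_B^*F_B=-\delta_B^*F_B, \]
together with the mirror identities obtained by swapping $A\leftrightarrow B$ and $\ii_A\leftrightarrow\ii_B$.

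With this table in hand, the verification of each map in \eqref{eq:symplecto 1,1 morphism} is a direct substitution. For instance, for $\ii_A\circ\ii_B:\overline{\mathcal{Z}}\to\mathcal{Z}'$ one applies $\ii_B^*\circ\ii_A^*$ to $\Omega_{\mathcal{Z}'}=\Omega_-+2i(\delta_A^*F_A+\ttt_B^*F_B)$ and checks that the result equals $\overline{\Omega_{\mathcal{Z}}}=\overline{\Omega_-}-2i\,\ttt_B^*F_B$; the match appears only after rewriting $\overline{\Omega_-}=\Omega_-+2i(\delta_A^*F_A+\delta_B^*F_B)$ via \eqref{eq: imomega}, which is exactly where the hypothesis enters. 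The same substitution disposes of the remaining three edge maps and, finally, of the central claim: pulling $\overline{\Omega_{\mathcal{C}}}=\Omega_--2i(\sss_A^*F_A+\sss_B^*F_B)$ back along $\ii_A\circ\ii_B$ returns $\Omega_{\mathcal{C}}=\Omega_-+2i(\ttt_A^*F_A+\ttt_B^*F_B)$, so $\tau:=\ii_A\circ\ii_B$ is a symplectomorphism $\mathcal{C}\to\overline{\mathcal{C}}$. That $\tau=\ii_B\circ\ii_A$ and $\tau^2=\id$ follow from the commutation of the two inversions in any double groupoid, so $\tau$ is a genuine real structure with $\tau^*\Omega_{\mathcal{C}}=\overline{\Omega_{\mathcal{C}}}$.

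The main obstacle is not the algebra of substitution but getting the interaction of the double-groupoid inversions with the two side symplectic structures exactly right. The crucial and slightly counterintuitive point is that $\ii_A$ genuinely swaps its own source and target (contributing no sign against $F_A$ beyond the reordering) yet acts on the transverse structure maps through $\ii_{G_B}$, thereby importing a sign against $F_B$; it is precisely this ``swap in one direction, invert in the transverse direction'' asymmetry, matched against \eqref{eq: imomega}, that makes all the signs close up. Once the transformation table is correctly established and the sign conventions of \autoref{rem:sign double gpd} are respected, the five identities reduce to bookkeeping.
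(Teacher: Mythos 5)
Your proposal is correct and follows essentially the same route as the paper's proof: both reduce each claimed symplectomorphism to a pullback computation on the common underlying manifold $D$, using that the groupoid inversions are anti-symplectic for the multiplicative form $\Omega_-$, that each inversion swaps the source/target of its own structure while covering the side-groupoid inversion on the transverse structure maps (hence importing a sign against $F_A$ or $F_B$ via their anti-invariance), and finally invoking \eqref{eq: imomega} to convert $\overline{\Omega_-}$ into $\Omega_-+2i(\delta_A^*F_A+\delta_B^*F_B)$. The only differences are cosmetic — you organize the computation into an explicit transformation table and verify the maps in the opposite direction (equivalent, since $\ii_A\circ\ii_B$ is an involution), whereas the paper chains the pullbacks directly for one representative case.
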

\begin{proof} We only analyse the first symplectomorphism in \eqref{eq:symplecto 1,1 morphism} as the others are similar. Using \autoref{2d Bursztyn shuffle}, we get that the complex conjugate of $\mathcal{Z}=(D, \Omega_- + 2i\ttt^*_BF_B)$ can be identified with $\mathcal{Z}'=(D, \Omega_- + 2i(\delta_A^*F_A+\ttt_B^*F_B))$ via $\ii_B\circ \ii_A$. Since 
\[
\ii_A^*\ii_B^*(\overline{\Omega_-} -2i\ttt_B^*F_B)=\ii_A^*(-\overline{\Omega_-} -2i\sss_B^*F_B) = \overline{\Omega_-} +2i\sss_B^*F_B,
\]
by \eqref{eq: imomega}, we have 
\[
\ii_A^*\ii_B^*(\overline{\Omega_-} -2i\ttt_B^*F_B) = \Omega_- +2i(\delta_A^*F_A+\ttt^*_BF_B).
\] 
	Finally, it also follows from \eqref{eq: imomega} that 
$	\ii_B^*\ii_A^*\Omega_\mathcal{C}=\ii_B^*\ii_A^*(\Omega_- + 2i(\ttt_A^*F_A+\ttt_B^*F_B)) = \overline{\Omega_\mathcal{C}}$. 
\end{proof}
We may combine the symplectomorphisms $\ii_B:\mathcal{Z}\rightarrow (-\overline{\mathcal{Z}}')$ and $\ii_A:\mathcal{W}\rightarrow (-\overline{\mathcal{W}}')$ with the identity maps for the double groupoids and the real structure $\ii_A\circ\ii_B:\mathcal{C}\rightarrow \overline{\mathcal{C}}$ to define an isomorphism between holomorphic symplectic $(1,1)$-morphisms as in Fig.~\ref{fig:real structure}, where the $(1,1)$-morphism structure on the right side is given by the double inverse of the $(1,1)$-morphism on the left, with respect to both vertical and horizontal compositions.


 
\subsection{Lagrangian bisections and the real symplectic core}\label{subsec: lag branes}

When a symplectic Morita equivalence is produced by the Bursztyn shuffle as in Proposition~\ref{thm:BG}, the bimodule $Z=(G,\Omega_Z)$ inherits a natural bisection given by the identity bisection of $G$.  This bisection is not necessarily holomorphic in the new complex structure determined by $\Omega_Z$, and the pullback of $\Omega_Z=\Omega_- + \ttt^*B$ to the bisection simply recovers the original gauge transformation $B$.  For generalized K\"ahler geometry, our gauge transformations are all \emph{pure imaginary}, by Proposition~\ref{prop: deg symplectic GK}.  In such a case, the bisection is Lagrangian with respect to the real part of the holomorphic symplectic form, and the pullback of $\Omega_Z$ recovers the imaginary gauge transformation.  The presence of such real Lagrangian bisections is essential in generalized  K\"ahler geometry, as they encode the generalized K\"ahler metric, see \autoref{thm: reconstruction}. 

Accordingly, in \autoref{2d Bursztyn shuffle}, the embedding of the subgroupoid $G_B \rightrightarrows M$ into $\mathcal{Z}\rightrightarrows Z$ is a multiplicative bisection which is also Lagrangian with respect to the real part of the symplectic form on $\mathcal{Z}$. In the same manner, there is a multiplicative bisection $\Lambda_W$ in $\mathcal{W}$, and also bisections $\Lambda_{\text{Hor}}$, $\Lambda_{\text{Ver}}$ in $\mathcal{C}$ with respect to the respective Morita equivalences relating $\mathcal{Z}$ with $ \overline{\mathcal{Z}}$ and $\mathcal{W} $ with $\overline{\mathcal{W}}$; all of these bisections are Lagrangian with respect to the real part of the respective background symplectic forms. 

Let $(X,\Omega)$ be a holomorphic symplectic manifold. To simplify the description, we shall say that a submanifold $\mathcal{L}\subset X$ is {\em Re-Lagrangian (respectively, Im-Lagrangian)}, if it is Lagrangian with respect to $\Re(\Omega) $ (respectively, $\Im(\Omega)$); we say that $\mathcal{L}$ is {\em Re-symplectic (respectively, Im-symplectic)} if it is symplectic with respect to $\Re(\Omega)$ (respectively, $\Im(\Omega)$).

\begin{definition}\label{def:real triv} Suppose that a holomorphic symplectic $(1,1)$-morphism $\mathcal{C}   $ as in \eqref{eq:(1,1) morphism} is self-adjoint via the real structure $\tau$. Given a trivialization by Re-Lagrangian bisections of $\mathcal{C}$, we say that it is a {\em real trivialization} if, after taking real parts, we obtain a commutative diagram:
\begin{equation}
 \begin{tikzcd}
{\mathcal{C}} \arrow[rr, "\tau"] \arrow[d, "\Lambda"'] &  & {\mathcal{C}^\top} \arrow[d, "\Lambda"] \\
{E(D_{0,0})} \arrow[rr, "{\tau_{{0,0}}}"']                 &  & {E(D_{0,0})^\top}                                 
\end{tikzcd} \qquad \tau_{0,0}=\begin{bmatrix} \text{id} & \ii_A & \text{id} \\
\ii_B &\ii_A\circ \ii_B & \ii_B \\
\text{id}& \ii_A & \text{id}  \end{bmatrix}  \label{eq:real triv}   
\end{equation}
where $\Lambda$ is the isomorphism to the trivial $(1,1)$-morphism $E(D_{0,0})$ over $D_{0,0}$ determined by the given bisections and $\tau_{{0,0}}$ is the involution constructed from the vertical and horizontal inverse maps as shown above, see \eqref{fig:morphism between 2-cells}.
\end{definition}

An interesting consequence of the fact that the symplectic bimodule $\mathcal{C}$ of a self-adjoint $(1,1)$-morphism has an anti-holomorphic symplectic involution is that its fixed point locus $\mathcal{S}$, if nonempty, defines a distinguished real symplectic submanifold of $\mathcal{C}$.  In the remainder of this section, we show the remarkable fact that, in some cases, the choice of a Lagrangian submanifold of $\mathcal{S}$ gives rise to a real trivialization of the $(1,1)$-morphism as defined above.    

\begin{proposition}\label{S is LS}
	If the fixed point locus $\mathcal{S} \subset \mathcal{C}$ of the symplectic real structure $\tau$ on a self-adjoint $(1,1)$-morphism is non-empty, then it is $\Re$-symplectic and $\Im$-Lagrangian. We call it the {\em real symplectic core} of the $(1,1)$-morphism \eqref{diagram:(1,1)morphism}.
\end{proposition}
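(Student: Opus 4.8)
The plan is to reduce the statement to a pointwise linear-algebra computation on the tangent spaces along $\mathcal{S}$, exploiting the two defining properties of the real structure: that $\tau=\tau_\mathcal{C}$ is an anti-holomorphic involution, and that $\tau^*\Omega_\mathcal{C}=\overline{\Omega_\mathcal{C}}$ (as recorded in the remark following the definition of self-adjointness). First I would write $\Omega_\mathcal{C}=\omega_R+i\omega_I$ with $\omega_R=\Re(\Omega_\mathcal{C})$ and $\omega_I=\Im(\Omega_\mathcal{C})$, and observe that since $\Omega_\mathcal{C}$ is a nondegenerate holomorphic $(2,0)$-form, both $\omega_R$ and $\omega_I$ are honest real symplectic forms on the underlying real manifold of $\mathcal{C}$. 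This follows because, on the complexified tangent space $T_{1,0}\oplus T_{0,1}$, the forms $\Omega_\mathcal{C}$ and $\overline{\Omega_\mathcal{C}}$ are nondegenerate on the respective summands and vanish on the other, so that $2\omega_R=\Omega_\mathcal{C}+\overline{\Omega_\mathcal{C}}$ and $2\omega_I=\tfrac{1}{i}(\Omega_\mathcal{C}-\overline{\Omega_\mathcal{C}})$ are block-diagonal and nondegenerate. Splitting the identity $\tau^*\Omega_\mathcal{C}=\overline{\Omega_\mathcal{C}}$ into real and imaginary parts then yields
\[
\tau^*\omega_R=\omega_R,\qquad \tau^*\omega_I=-\omega_I.
\]

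Next I would analyze the tangent space at a fixed point $p\in\mathcal{S}$. Since $\tau$ is an involution, $\mathcal{S}$ is a smooth submanifold with $T_p\mathcal{S}=V_+$, the $(+1)$-eigenspace of $d\tau_p$, and we have the eigenspace splitting $T_p\mathcal{C}=V_+\oplus V_-$. Anti-holomorphicity means $d\tau_p\circ J=-J\circ d\tau_p$, so $J$ interchanges $V_+$ and $V_-$; as $J$ is invertible, this gives $\dim_\R V_+=\dim_\R V_-=\tfrac12\dim_\R T_p\mathcal{C}$. From $\tau^*\omega_I=-\omega_I$ one computes, for $u,v\in V_+$, that $\omega_I(u,v)=\omega_I(d\tau_p u,d\tau_p v)=-\omega_I(u,v)$, so $\omega_I|_{V_+}=0$; combined with the half-dimensionality of $V_+$, this shows $T_p\mathcal{S}$ is Lagrangian for $\omega_I$, proving that $\mathcal{S}$ is $\Im$-Lagrangian.

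Finally, from $\tau^*\omega_R=\omega_R$ I would show that $V_+$ and $V_-$ are $\omega_R$-orthogonal: for $u\in V_+$ and $v\in V_-$ we get $\omega_R(u,v)=\omega_R(d\tau_p u,d\tau_p v)=\omega_R(u,-v)=-\omega_R(u,v)$, hence $\omega_R(u,v)=0$. Since $\omega_R$ is nondegenerate on $T_p\mathcal{C}=V_+\oplus V_-$ and the two summands are $\omega_R$-orthogonal, any $u\in V_+$ with $\omega_R(u,V_+)=0$ would also satisfy $\omega_R(u,T_p\mathcal{C})=0$ and hence vanish; therefore $\omega_R|_{V_+}$ is nondegenerate, so $\mathcal{S}$ is $\Re$-symplectic. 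The argument is essentially symplectic linear algebra once the two transformation rules are in hand; the step I would be most careful about is the interplay between the \emph{anti}-holomorphicity of $\tau$ (which forces the correct half-dimensionality of $V_+$) and the opposite signs with which $\tau$ acts on $\omega_R$ and $\omega_I$. It is precisely this sign mismatch that makes $\mathcal{S}$ symplectic for the real part and Lagrangian for the imaginary part, so I would be sure to isolate and verify it cleanly before assembling the two conclusions.
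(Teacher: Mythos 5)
Your proof is correct and follows essentially the same route as the paper's: both extract the transformation rules $\tau^*\Re(\Omega_\mathcal{C})=\Re(\Omega_\mathcal{C})$ and $\tau^*\Im(\Omega_\mathcal{C})=-\Im(\Omega_\mathcal{C})$ from the self-adjointness relation $\tau^*\Omega_\mathcal{C}=\overline{\Omega_\mathcal{C}}$ and then conclude that the fixed locus is $\Re$-symplectic and $\Im$-Lagrangian. The paper simply invokes the standard facts about fixed loci of symplectic and anti-symplectic involutions, whereas you prove them pointwise; the details you supply (half-dimensionality of $V_+$ via anti-holomorphicity of $\tau$, $\omega_R$-orthogonality of the eigenspaces) are all sound, and half-dimensionality could alternatively be obtained by noting that $V_-$ is also $\omega_I$-isotropic.
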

\begin{proof}
	Since $\tau^*\overline{\Omega_\mathcal{C}} = \Omega_\mathcal{C}$, we see that $\tau$ is symplectic for $\Re(\Omega_\mathcal{C})$ and anti-symplectic for $\Im(\Omega_\mathcal{C})$. Thus its fixed point set is $\Re$-symplectic and $\Im$-Lagrangian.  
\end{proof}

\begin{example} In the case of \autoref{2d Bursztyn shuffle}, if the starting symplectic double groupoid $D_-$ is of Lu-Weinstein type, then all the holomorphic symplectic bimodules are smoothly identified with the same manifold of \eqref{eq:luweidougro}. Thus the real symplectic core of \eqref{diagram:(1,1)morphism} (being the fixed point set of $\tau=\ii_A\circ\ii_B$) is  
\begin{align}\label{Fixed locus S}
\mathcal{S} =\{(x,y^{-1}, y, x^{-1}): (x, y) \in G_A \times_{(\sss,\sss)}  G_B, \ttt(x) = \ttt(y)\}
\end{align}
Moreover, since the underlying smooth manifold of $\mathcal{C}$ coincides with $D$ as in \eqref{eq:luweidougro}, the fixed  locus of its real structure can be smoothly identified with the {\em core groupoids $K_{D_\pm }$ in $D_\pm$}. In fact, the map
\begin{align}
    \mathcal{S} \rightarrow K_{D_\pm}, \qquad &(x,y^{-1},y,x^{-1}) \mapsto (x,\uu (\sss(x)),y,\uu(\sss(y)))  \label{eq:ident sympl core},  
\end{align}
 is an isomorphism, see \autoref{def:sym core}. Denote by $\omega_\mathcal{S}$ the real symplectic form on $\mathcal{S}$. As a consequence of our previous discussion, $(\mathcal{S},\omega_\mathcal{S})$ is symplectomorphic to $(K_{D_\pm},2\Re(\Omega_\pm)|_{K_{D_{\pm}}})$ via \eqref{eq:ident sympl core}. With respect to this identification, the natural projections from $\mathcal{S} $ to the base manifolds are identified with the source and target of $K_{D_\pm}$. Recall that the real parts of the symplectic forms on $D_\pm$ coincide due to the self-adjointness of \eqref{diagram:(1,1)morphism}.     
\end{example}
 
  A general self-adjoint $(1,1)$-morphism can be written as in \eqref{diagram:(1,1)morphism} below, using its real structure. So we obtain a pair of smooth groupoids $K_{D_\pm}^\R\rightrightarrows X_\pm^\R$ underlying the core groupoids $K_{D_\pm}$ and their conjugates, equipped with the symplectic forms $\omega_\pm:=2\Re(\Omega_\pm)|_{K_{D_{\pm}}}$.  
\begin{proposition}\label{prop: morita core groupoids} A self-adjoint $(1,1)$-morphism as in \eqref{diagram:(1,1)morphism} with non empty real symplectic core $(\mathcal{S},\omega_\mathcal{S})$ serves as the symplectic bimodule for a pair of commuting symplectic actions of $(K_{D_\pm}^\R,\omega_\pm)$: 
\begin{equation}
  \begin{tikzcd}[scale cd=.85]
(K_{D_-}^\R,\omega_-) \arrow[d, shift right] \arrow[d, shift left]  &  & {(\mathcal{S},\omega_{\mathcal{S}})} \arrow[lld, "p_-"'] \arrow[rrd, "p_+"] &  & (K_{D_+}^\R,\omega_+) \arrow[d, shift right] \arrow[d, shift left] \\
X_-                                                                                             &  &                                                                         &  & X_+;                                                                                             
\end{tikzcd}  \label{eq:symplectic core as a morita} 
\end{equation} 
if the $(1,1)$-morphism is constructed as in \eqref{eq:(1,1) morphism via 2d shuffle}, using a symplectic double groupoid of Lu-Weinstein type, then this is a real symplectic Morita equivalence. 
\end{proposition}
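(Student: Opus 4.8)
The plan is to establish the two assertions in turn: first that, for an arbitrary self-adjoint $(1,1)$-morphism with nonempty real symplectic core, the space $\mathcal{S}$ carries a pair of commuting symplectic actions of the core groupoids $K_{D_\pm}^\R$ making it a symplectic bimodule, and then that in the Lu-Weinstein case these actions are biprincipal, upgrading $\mathcal{S}$ to a genuine real symplectic Morita equivalence. I would begin by recording the relevant groupoid structures. By \cite[Thm. 2.9]{MR1697617}, the core groupoid $K_{D_\pm}$ of the symplectic double groupoid $D_\pm$ (see \autoref{def:sym core}) is a holomorphic symplectic groupoid over $X_\pm$ integrating the underlying (Hitchin) Poisson structure $\sigma_\pm$. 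Passing to the underlying real groupoid and equipping it with $\omega_\pm = 2\Re(\Omega_\pm)|_{K_{D_\pm}}$ produces a real symplectic groupoid $(K_{D_\pm}^\R,\omega_\pm)\rightrightarrows X_\pm^\R$ integrating the common real part of $\sigma_\pm$, by \autoref{cor: Holomorphic Hitchin and real Poisson}. The moment maps $p_\pm:\mathcal{S}\to X_\pm$ are then the source and target of these core groupoids, under the identification of $\mathcal{S}$ with the core discussed below.

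For the general bimodule structure I would use that the central double bimodule $\mathcal{C}$ carries actions of the four corner double groupoids $D_{i,j}$, subject to the interchange rules of \autoref{def: 11 morphism}. The anti-holomorphic real structure $\tau$, being an isomorphism of $(1,1)$-morphisms onto $\overline{\mathcal{C}}^\top$, intertwines each corner action with that of its conjugate corner; combining the holomorphic action of $D_{0,0}=D_-$ (resp. $D_{0,1}=D_+$) with its $\tau$-conjugate and restricting to the core subgroupoid yields an action of the real groupoid $K_{D_-}^\R$ (resp. $K_{D_+}^\R$) that preserves the fixed locus $\mathcal{S}$. The two actions commute because the corner actions do. Symplecticity is then formal: since $\tau^*\overline{\Omega_\mathcal{C}}=\Omega_\mathcal{C}$ and $\tau$ fixes $\mathcal{S}$ pointwise, $\Omega_\mathcal{C}$ restricts to a real form on $\mathcal{S}$, which is symplectic by \autoref{S is LS}; and the graphs of the holomorphic actions on $\mathcal{C}$, being Lagrangian for $\Omega_\mathcal{C}$, restrict to Lagrangian graphs for the real forms $\omega_\pm$ and $\omega_\mathcal{S}$.

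The decisive case is the Lu-Weinstein one, where everything becomes explicit. Here $D=D_\pm$ is the manifold \eqref{eq:luweidougro}, the real symplectic core is given by \eqref{Fixed locus S}, and \eqref{eq:ident sympl core} identifies it with the common core manifold. Crucially, the real symplectic forms $2\Re(\Omega_\pm)|_{K_{D_\pm}}$ agree, since $\Omega_+-\Omega_-$ is purely imaginary by \eqref{eq: imomega}, so that $(K_{D_+}^\R,\omega_+)$ and $(K_{D_-}^\R,\omega_-)$ coincide as real symplectic groupoids over the common underlying manifold of $X_\pm$. Parametrizing both $\mathcal{S}$ and the core as $\{(x,y): \sss(x)=\sss(y),\ \ttt(x)=\ttt(y)\}$ with $x\in G_A$, $y\in G_B$, the moment maps $p_\pm$ become the source and target of the core groupoid, and I would check that the actions of the previous paragraph reduce to left and right multiplication $(a,b)\cdot(x,y)=(ax,by)$ and $(x,y)\cdot(a',b')=(xa',yb')$. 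The assertion then reduces to the \emph{tautological} self-Morita-equivalence of the symplectic groupoid $K_{D_\pm}^\R$ by itself: left and right multiplication commute, each is free and proper with orbits equal to the fibers of the opposite moment map, and each graph is Lagrangian for $\omega_\pm$ by multiplicativity of the symplectic form. Biprincipality, and hence the real symplectic Morita equivalence \eqref{eq:symplectic core as a morita}, follows automatically.

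The main obstacle I anticipate lies in the general case: matching the abstract corner actions on $\mathcal{C}$, twisted by $\tau$, with honest actions of the real core groupoids $K_{D_\pm}^\R$, and verifying that they restrict to $\mathcal{S}$. In the Lu-Weinstein case this difficulty evaporates, because the explicit formulas exhibit $\mathcal{S}$ as the core groupoid and principality is automatic for any Lie groupoid acting on itself; for the general case I would argue directly from the double bimodule axioms or, where applicable, reduce to the Lu-Weinstein picture via the minimality of \autoref{minlw}. By contrast, the symplecticity and commutativity of the actions are comparatively formal consequences of the self-adjoint structure and the interchange rules, and I expect them to require only routine verification.
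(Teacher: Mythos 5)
Your proposal matches the paper's proof in both steps: the general actions are defined exactly as you describe, by letting $k$ act through one corner and its $\tau$-conjugate $\ii_A\ii_B(k)$ through the opposite corner (the paper makes this precise as a $3\times 3$ grid of squares padded with unit elements of the edge bimodules), and the Lu-Weinstein case is handled, as you propose, by using \eqref{eq:ident sympl core} to identify $(\mathcal{S},\omega_\mathcal{S})$ with the core groupoid $(K_{D_\pm}^\R,\omega_\pm)$ acting on itself by left and right multiplication, i.e.\ its tautological self-Morita equivalence. The only detail you defer --- turning the ``corner action twisted by $\tau$'' into an honest action of $K_{D_-}^\R$ (the corners do not act directly on $\mathcal{C}$, only through the edge bimodules) --- is resolved in the paper precisely by that unit-padded grid, which also makes the preservation of the fixed locus and the commutativity of the two actions (via the interchange rule) immediate.
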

\begin{proof} 
One may define one of the actions $k\cdot s$ as in the following diagram, where $s\in \mathcal{S}$ and $k\in K_{D_-}^\R$:
\[ \qquad\qquad \adjustbox{scale=0.65}{\begin{tikzpicture}[scale=0.7,every node/.style={transform shape}]
\clip(-0.5,-0.5) rectangle (6.5,6.5);
  \draw[fill=black!10] (0,0) rectangle (6,6);
  \draw[step=2.0,black,thin] (0, 0) grid (6.0, 6.0);
  \path
    (1.8, 1.3) node[below left] {$\uu_A\uu(y)$}
(3.6, 1.3) node[below left] {$\uu({w^{-1}})$}
    (4.3, 1.3) node[below right] {$\ii_A\ii_B(k)$}
    (0.5, 2.7) node[above right] {$\uu(z)$}
    (3.2, 2.7) node[above left] {$s$}
(5.6, 2.7) node[above left] {$\uu(z^{-1})$}
(1.2, 4.7) node[above left] {$k$}
(3.5, 4.7) node[above left] {$\uu(w)$}
(5.9, 4.7) node[above left] {$\uu_A\uu(y)$}
  ;
\end{tikzpicture}}\adjustbox{scale=0.85}{\begin{tikzpicture}[line cap=round,line join=round,>=stealth,x=1cm,y=1cm,scale=0.4]
		\clip(-3.5,-5.12) rectangle (9.7,5.3);
  \draw[fill=black!10] (0.44,-3.78) rectangle (6.86,2.64);
		\draw [<-,line width=0.9pt] (0.44,2.64) -- (6.86,2.64);
		\draw [<-,line width=0.9pt] (6.86,2.64) -- (6.86,-3.78);
		\draw [->,line width=0.9pt] (6.86,-3.78) -- (0.44,-3.78);
		\draw [->,line width=0.9pt] (0.44,-3.78) -- (0.44,2.64);
		\begin{scriptsize}
			
			\draw[color=black] (-0.2,-3.55) node {$x$};
			
			\draw[color=black] (7.3,-3.55) node {$x$};
			
			\draw[color=black] (7.3,3.0) node {$x$};
			
			\draw[color=black] (-0.2,3.07) node {$p$};
			\draw[color=black] (3.7,3.27) node {$b$};
			\draw[color=black] (8.2,-0.4) node {$\uu(x)$};
			\draw[color=black] (3.68,-4.39) node {$\uu(x)$};
			\draw[color=black] (-1.1,-0.4) node {$a$};
			\draw[color=black] (3.55,-0.5) node {$k$};
		\end{scriptsize}
	\end{tikzpicture}\hspace{1em}\begin{tikzpicture}[line cap=round,line join=round,>=stealth,x=1cm,y=1cm,scale=0.4]
		\clip(-1.3,-5.12) rectangle (11.7,5.3);
  \draw[fill=black!10] (0.44,-3.78) rectangle (6.86,2.64);
		\draw [<-,line width=0.9pt] (0.44,2.64) -- (6.86,2.64);
		\draw [<-,line width=0.9pt] (6.86,2.64) -- (6.86,-3.78);
		\draw [->,line width=0.9pt] (6.86,-3.78) -- (0.44,-3.78);
		\draw [->,line width=0.9pt] (0.44,-3.78) -- (0.44,2.64);
		\begin{scriptsize}
			
			\draw[color=black] (-0.2,-3.55) node {$y$};
			
			\draw[color=black] (7.3,-3.55) node {$x$};
			
			\draw[color=black] (7.3,3.0) node {$y$};
			
			\draw[color=black] (-0.2,3.07) node {$x$};
			\draw[color=black] (3.7,3.27) node {$w$};
			\draw[color=black] (8.2,-0.4) node {$z^{-1}$};
			\draw[color=black] (3.68,-4.39) node {$w^{-1}$};
			\draw[color=black] (-1.1,-0.4) node {$z$};
			\draw[color=black] (3.55,-0.5) node {$s$};
		\end{scriptsize}
	\end{tikzpicture}} \]
 The other action is defined similarly, using the action of $K_{D_+}^\R$ viewed as a subspace of $D_+$ and $\overline{D_+}$ instead of $D_-$ and $\overline{D_-}$.
The last statement follows from using \eqref{eq:ident sympl core} to identify these pair of symplectic actions with the Morita autoequivalence of $(K_{D_+}^\R,\omega_+)\cong(K_{D_-}^\R,\omega_-)$ given by right and left multiplication. \end{proof}
 We now explain how, assuming that $D_-$ is of Lu-Weinstein type, real trivializations of \eqref{diagram:(1,1)morphism} are generated by a single Lagrangian submanifold in its real symplectic core. 
\begin{proposition}\label{thm: key Lagrangian in the central brane} In a real trivialization of a self adjoint $(1,1)$-morphism, the intersection of the Lagrangian bisections in the double bimodule is  a Lagrangian bisection of the real symplectic core. 

Conversely, in a $(1,1)$-morphism which is constructed as in \eqref{eq:(1,1) morphism via 2d shuffle} using a symplectic double groupoid of Lu-Weinstein type, real trivializations are determined by Lagrangian bisections of the real symplectic core. 
\end{proposition}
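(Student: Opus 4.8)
The plan is to prove the two implications separately, reducing each to the model case of the trivial $(1,1)$-morphism, where the statement becomes transparent. Throughout, I use that a trivialization (\autoref{def:triv(1,1)}) assembles into a single commutative diagram of identifications whose central component is an isomorphism $\lambda\colon\mathcal{C}\to D_{0,0}$ carrying the distinguished bisections to the identity bisections of the trivial $(1,1)$-morphism $E(D_{0,0})$ (\autoref{ex:triv(1,1)mor}); in particular $\lambda(\Lambda_{\mathrm{Hor}})=\uu_A(G_{A_{0,0}})$ and $\lambda(\Lambda_{\mathrm{Ver}})=\uu_B(G_{B_{0,0}})$.

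For the forward implication, suppose we are given a real trivialization. Since both distinguished bisections are $\Re$-Lagrangian, repeated application of \autoref{prop: change of the sympletcic forms} shows that $\lambda$ is a symplectomorphism for the real parts of the holomorphic symplectic forms. Consequently $\lambda$ maps $\Lambda_{\mathrm{Ver}}\cap\Lambda_{\mathrm{Hor}}$ onto $\uu_A(G_{A_{0,0}})\cap\uu_B(G_{B_{0,0}})=\uu(M_{0,0})$, the locus of total units, which is a Lagrangian bisection inside the real symplectic core of $E(D_{0,0})$. Finally, the real-trivialization condition \eqref{eq:real triv} says that $\lambda$ intertwines the real structure $\tau$ with the central involution $\ii_A\circ\ii_B$ of $\tau_{0,0}$: for $s\in\mathcal{S}$ one has $\lambda(s)=\lambda(\tau(s))=(\ii_A\circ\ii_B)(\lambda(s))$, so $\lambda(\mathcal{S})$ is exactly the fixed locus of $\ii_A\circ\ii_B$ in $D_{0,0}$, and $\lambda$ restricts to an $\Re$-symplectomorphism from $\mathcal{S}$ onto it. Since $\uu(M_{0,0})$ lies in that fixed locus, pulling back gives $\Lambda_{\mathrm{Ver}}\cap\Lambda_{\mathrm{Hor}}=\lambda^{-1}(\uu(M_{0,0}))\subset\mathcal{S}$, and transport of the Lagrangian bisection $\uu(M_{0,0})$ through the $\Re$-symplectomorphism $\lambda^{-1}$ exhibits it as a Lagrangian bisection of $(\mathcal{S},\omega_{\mathcal{S}})$, as claimed.

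For the converse, assume the $(1,1)$-morphism is built as in \autoref{2d Bursztyn shuffle} from a symplectic double groupoid $D_-$ of Lu-Weinstein type, so that all the bimodules in \eqref{eq:(1,1) morphism via 2d shuffle} are the single smooth manifold $D$ of \eqref{eq:luweidougro} and $\mathcal{S}$ is identified with the core groupoid $K_{D_\pm}$ via \eqref{Fixed locus S}--\eqref{eq:ident sympl core}. Given a Lagrangian bisection $\mathcal{L}\subset\mathcal{S}$, the idea is to propagate it through the two commuting groupoid structures of $D$: I would define $\Lambda_{\mathrm{Hor}}$ (respectively $\Lambda_{\mathrm{Ver}}$) as the multiplicative section of $\mathcal{C}=D$ over $G_A$ (respectively $G_B$) obtained by horizontally (respectively vertically) translating $\mathcal{L}$, equivalently by conjugating the identity bisection by $\mathcal{L}$ as in \autoref{prop: isomorphism induced by bisection}. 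By construction these are multiplicative bisections with $\Lambda_{\mathrm{Hor}}\cap\Lambda_{\mathrm{Ver}}=\mathcal{L}$, and the edge bisections $\Lambda_W,\Lambda_Z,\Lambda_V,\Lambda_U$ come from the same data, assembling into a trivialization. One then checks: (i) $\Re$-Lagrangianity; (ii) $\tau$-invariance and compatibility with \eqref{eq:real triv}, which follows because $\mathcal{L}\subset\mathcal{S}$ is pointwise $\tau$-fixed and $\tau=\ii_A\circ\ii_B$ exchanges the left/right translations, so the propagated bisections fit the self-adjoint pattern of \eqref{diagram:(1,1)morphism}; and (iii) that the resulting trivialization recovers $\mathcal{L}$ as the intersection, closing the correspondence with the forward direction.

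The main obstacle is the Lagrangian verification in the converse, step (i): one must show that translating a Lagrangian bisection of the core out to a full bisection of $\mathcal{C}$ yields an $\Re$-Lagrangian submanifold. The cleanest route is to realize the translation maps through the multiplications of the symplectic groupoid structures on $D$, for which $\Re(\Omega)$ is multiplicative; a multiplicative symplectic form sends the composite of the Lagrangian core bisection $\mathcal{L}$ with the identity bisections — which are $\Re$-Lagrangian precisely because all the Bursztyn-shuffle gauge transformations are pure imaginary (\autoref{prop: deg symplectic GK}) — to a Lagrangian, and a dimension count confirms it is of middle dimension. The forward direction already isolates the secondary subtlety, via the intertwining relation \eqref{eq:real triv}, that the propagated bisections be genuinely $\tau$-invariant; here it is essential that $\mathcal{L}$ lies in the fixed locus $\mathcal{S}$ rather than being an arbitrary $\Re$-Lagrangian bisection of the ambient $\mathcal{C}$.
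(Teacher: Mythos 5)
Your forward direction is correct and follows the paper's own argument: via \eqref{eq:real triv} the trivialization identifies the two central bisections with the side-groupoid embeddings of a double groupoid, so their intersection is the common unit submanifold, which is fixed by $\ii_A\circ \ii_B$ and is a Lagrangian bisection of the core; your additional observation that the trivializing map is an $\Re$-symplectomorphism (via \autoref{prop: change of the sympletcic forms}) is exactly what justifies the transport.

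The converse, however, has a genuine gap, concentrated in the construction you leave implicit. In the Lu-Weinstein coordinates \eqref{eq:luweidougro}, a point of the core bisection reads $\lambda(p)=(\lambda_Z(p)^{-1},\lambda_W(q)^{-1},\lambda_W(q),\lambda_Z(p))$, so its horizontal and vertical sources and targets are $\lambda_Z(p)^{\pm1}$ and $\lambda_W(q)^{\pm1}$, which are not units. Consequently $\lambda(p)$ is composable with a unit-bisection element only at the single element $\uu_A(\lambda_Z(p))$ (resp.\ $\uu_B(\lambda_W(q))$), and that composition returns $\lambda(p)$ itself: ``translating $\mathcal{L}$ by the identity bisections'' produces nothing new. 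Nor can you invoke \autoref{prop: isomorphism induced by bisection}: $\mathcal{L}$ has dimension $\dim M$, a quarter of $\dim\mathcal{C}$, so it is not a bisection of either groupoid structure on $\mathcal{C}$ and cannot be used to conjugate. What the paper actually does is different in kind: it splits $\mathcal{L}$ into its components $\lambda_Z$, $\lambda_W$, which are bisections of the edge Morita equivalences $Z$ and $W$ and induce groupoid isomorphisms $\phi_Z$, $\phi_W$; the multiplicative bisections are then defined by \emph{filling in squares} whose four sides are prescribed by these data, e.g.\ $\Lambda_Z(b)=(\lambda_Z(\ttt(b)),b,\phi_W^{-1}(b),\lambda_Z(\sss(b)))$. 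The well-definedness of such squares rests on the compatibilities $\phi_W|\circ\phi_Z|=\id$, $\phi_Z|\circ\phi_W|=\id$ and on the fiber-product description of the bimodules --- this is precisely where the Lu-Weinstein hypothesis enters, and it is absent from your sketch.

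Your Lagrangian verification (i) also fails as stated. The components $\lambda_Z,\lambda_W$ are \emph{not} individually $\Re$-Lagrangian; the condition $\lambda^*\omega_{\mathcal{S}}=0$ translates only into the cancellation $\lambda_W^*\beta_B+\phi_W^*\lambda_Z^*\beta_A=0$ between the two components (here $\beta_A,\beta_B$ are the real quasi-symplectic forms on $G_A,G_B$). The paper then obtains $\Re$-Lagrangianity of the multiplicative bisections from the identity
\[
-\Lambda_Z^*(\Omega_{\mathcal{Z}})_R=\delta^*\bigl(\lambda_W^*\beta_B+(\lambda_Z\circ\phi_W)^*\beta_A\bigr)=0,
\]
so multiplicativity (the simplicial $\delta^*$) is applied to this cancellation, not to a product of Lagrangian submanifolds. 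A ``composite of Lagrangians is Lagrangian'' argument is unavailable because the factors being assembled are not Lagrangian: $\mathcal{L}$ is only isotropic of quarter dimension in $(\mathcal{C},\Re\Omega_{\mathcal{C}})$, and the unit bisections, while $\Re$-Lagrangian, never compose with it nontrivially. So the two points you flag as the main obstacles are exactly the ones your argument does not supply.
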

\begin{remark} The significance of this result is given by \autoref{thm: reconstruction}, where we identify Lagrangian bisections of the symplectic core which determine real trivializations giving rise to generalized K\"ahler metrics. 
\end{remark}
\begin{proof}[Proof of \autoref{thm: key Lagrangian in the central brane}] The first part of the statement is a consequence of the fact that \eqref{eq:real triv} allows us to view the two bisections in the double bimodule as the embeddings of the side groupoids in a double groupoid. So their intersection is their common unit submanifold which is fixed by $\ii_A \circ \ii_B$.

For the second part of the statement, let $\lambda$ be a Lagrangian bisection of \eqref{eq:symplectic core as a morita}. Such a bisection is of the form:
	\[\lambda (M)= \{\lambda(p)=(\lambda_Z(p)^{-1},\lambda_W(q)^{-1},\lambda_W(q),\lambda_Z(p))\ :\ p\in M \}\subset \mathcal{S}. \]  
	The components of $\lambda$, $\lambda_W$ and $\lambda_Z$, are automatically (coisotropic) bisections for the Morita equivalences 
	\begin{equation*}
	 \begin{tikzcd}[scale cd=.85]
		G_{\mathcal{A}_+} \arrow[d, shift right] \arrow[d, shift left]  &  & Z \arrow[rrd, "p_-"] \arrow[lld, "p_+"'] &  & G_{\mathcal{A}_-} \arrow[d, shift right] \arrow[d, shift left] & \overline{G_{\mathcal{B}_-}} \arrow[d, shift left] \arrow[d, shift right]  &  & W \arrow[lld, "q_-"'] \arrow[rrd, "q_+"] &  & G_{\mathcal{B}_+} \arrow[d, shift right] \arrow[d, shift left]  \\
		X_+                                                                                          &  &                                        &  & X_- \arrow[llu, "\lambda_Z", bend left]                                                       & \overline{X_-}                                                                                          &  &                           &  & X_+ \arrow[llu, "\lambda_W", bend left]                                                      
	\end{tikzcd}   \label{eq: Z and W morita} 
	\end{equation*} 
	and then they determine smooth Lie groupoid  isomorphisms $\phi_Z:G_{\mathcal{A}_-}   \rightarrow G_{\mathcal{A}_+} $, $\phi_W:G_{\mathcal{B}_+} \rightarrow \overline{G_{\mathcal{B}_-}} $. An immediate property that these isomorphisms satisfy is that 
	\begin{align} &\phi_W|_{X_-}\circ \phi_Z|_{X_-}=\text{id}:X_- \rightarrow \overline{X_-} \label{eq:mulbis1} \\
		&\phi_Z|_{{X_+}}\circ \phi_W|_{{X_+}}=\text{id}:X_+\rightarrow \overline{X_+} \label{eq:mulbis2} 
		. \end{align} 
	Note that, forgetting to the smooth category, both Morita equivalences above are self Morita equivalences of Lie groupoids. Consider the real quasi-symplectic forms $\beta_A$, $\beta_B$ induced by the integration of $(\A_\pm)_R $ and $(\B_\pm)_R $ respectively, see \autoref{cor: the underlying re and im}. By multiplicativity of $\beta_A$, $\beta_B$ and by virtue of being induced by bisections, the isomorphisms $\phi_A$, $\phi_B$ satisfy the following identities:
	\begin{align*}  \phi_W^*\beta_B=\beta_B+\delta^*\lambda_W^*\beta_B,  \qquad \phi_Z^*\beta_A=\beta_A+\delta^*\lambda_Z^*\beta_A. \end{align*}  
	Now we may define the following multiplicative bisections:
	\begin{align*} &\Lambda_Z:  G_{\mathcal{B}_-} \rightarrow \mathcal{Z}, \quad \Lambda_Z(b)=(\lambda_Z(\ttt(b)),b,\phi_W^{-1}(b),\lambda_Z(\sss(b))), \quad b\in G_{\B_-}  \\
		 &\Lambda_W:  G_{\mathcal{A}_+} \rightarrow \mathcal{W},\quad  \Lambda_W(a)=(\phi_Z^{-1}(a),\lambda_W(\sss(a)),\lambda_W(\ttt(a)),a), \quad a\in G_{\A_+}. 
	\end{align*}  
	Note that we are using the fact that $\mathcal{Z} $ and $\mathcal{W}$ are diffeomorphic to $D$ as in \eqref{eq:luweidougro}; equation \eqref{eq:mulbis1} implies that $\Lambda_Z$ is well defined, whereas \eqref{eq:mulbis2} does the same for $\Lambda_W$. Now the real parts of the symplectic forms $\Omega_{\mathcal{Z} }$ and $\Omega_{\mathcal{W} }$ coincide with $\Re{(\Omega_-)}$ as in \autoref{prop:2form luwei} according to \autoref{thm: the (1,1) morphism of a GK}. So we get that 
	\[ -\Lambda_Z^* (\Omega_{\mathcal{Z} })_R=\phi_W^* \beta_B - \beta_B +\delta^*( \lambda_Z\circ \phi_W|_{X_+})^* \beta_A=\delta^*\lambda_W^*\beta_B ++\delta^*( \lambda_Z\circ \phi_W|_{X_+})^* \beta_A=0. \]
	The last equation above follows from the fact that $\lambda$ is a Lagrangian quadruple section of $\mathcal{S}$ and 
	\[ \lambda(\phi_W(\ttt(b)))=(\lambda_Z(\phi_W(\ttt(b)))^{-1},\lambda_W(\ttt(b))^{-1},\lambda_W(\ttt(b)),\lambda_Z(\phi_W(\ttt(b))))\quad  \forall b \in G_{\mathcal{B}_+ }, \]
	which implies that $\ttt^*\lambda_W^*\beta_B+\ttt^*\phi_W^*\lambda_Z^*\beta_A=0$ and, similarly, $\sss^*\lambda_W^*\beta_B+\sss^*\phi_W^*\lambda_Z^*\beta_A=0$. 
	Analogous considerations imply that $\Lambda_W^* (\Omega_{\mathcal{W} })_R =0$. By complex conjugation, we automatically have definitions for $\Lambda_{\overline{Z}}$ and $\Lambda_{\overline{W}}$.
	
	Finally, notice that the bisections induce a diffeomorphism $\psi_H:Z \rightarrow \overline{Z} $ given by the composition indicated in the following diagram 
	\[ \adjustbox{scale=0.8,center}{\begin{tikzcd}
		\overline{G_{\A-}} \arrow[d, shift right] \arrow[d, shift left] \arrow[dd, "(\psi_Z')^{-1}"', bend right=49] & \mathcal{W} \arrow[r] \arrow[l] \arrow[d, shift right] \arrow[d, shift left] & G_{\A+} \arrow[d, shift right] \arrow[d, shift left] \arrow[ll, "\phi_Z^{-1}"', bend right] \\
		\overline{X_-}                                                                                    & W \arrow[l] \arrow[r]                                                        & X_+                                                                                     \\
		\overline{Z} \arrow[u]                                                                            & \mathcal{C} \arrow[u] \arrow[r] \arrow[l]                                              & Z \arrow[u] \arrow[l, "\Lambda_\text{Hor}", bend left] \arrow[uu, "\psi_Z"', bend right=49]     
	\end{tikzcd} \begin{tikzcd}
a\cdot \lambda_Z(\phi_Z^{-1}(\sss(a))) \arrow[rr, "\psi_Z", maps to]                    &  & a              \\
\phi_Z^{-1}(a)\cdot\lambda_Z(\phi_Z^{-1}(\sss(a)))^{-1} \arrow[rr, "\psi_Z'", maps to] &  & \phi_Z^{-1}(a).
\end{tikzcd} }\]
	Now we can define $\Lambda_\text{Hor}(z)$ for $z=a\cdot \lambda_Z(\phi_Z^{-1}(s(a)))$ and $a\in G_{\mathcal{A}_+ }$ as the following element of $\mathcal{C}$: 
	\[  \adjustbox{scale=0.85,center}{\begin{tikzpicture}[line cap=round,line join=round,>=stealth,x=1cm,y=1cm,scale=0.4]
		\clip(-12.3,-5.12) rectangle (14.7,4.3);

     \draw[fill=black!10] (0.44,-3.78) rectangle (6.86,2.64);
		\draw [<-,line width=0.9pt] (0.44,2.64) -- (6.86,2.64);
		\draw [<-,line width=0.9pt] (6.86,2.64) -- (6.86,-3.78);
		\draw [->,line width=0.9pt] (6.86,-3.78) -- (0.44,-3.78);
		\draw [->,line width=0.9pt] (0.44,-3.78) -- (0.44,2.64);
		\begin{scriptsize}
			
			\draw[color=black] (-0.5,-3.55) node {$\sss(a)$};
			
			\draw[color=black] (8.6,-3.55) node {$\phi_Z^{-1}(\sss(a))$};
			
			\draw[color=black] (7.5,3.2) node {$\ttt(a)$};
			
			\draw[color=black] (-2.8,3.2) node {$\phi_Z^{-1}(\ttt(a))=\phi_W(\ttt(a))$};
			\draw[color=black] (3.7,3.27) node {$\lambda_W(\ttt(a))$};
			\draw[color=black] (10.5,-0.4) node {$z=a\cdot \lambda_Z(\phi_Z^{-1}(\sss(a)))$};
			\draw[color=black] (3.68,-4.39) node {$\lambda_W(\sss(a))^{-1}$};
			\draw[color=black] (-5.3,-0.4) node {$ \psi_H(z):=\phi_Z^{-1}(a)\cdot\lambda_Z(\phi_Z^{-1}(\sss(a)))^{-1}$};
			\draw[color=black] (3.55,-0.5) node {$\lambda(\phi_Z^{-1}(\sss(a)))$};
		\end{scriptsize}
	\end{tikzpicture}} \]
	using again the fact that $\lambda$ is Lagrangian, we get that $(\Lambda_\text{Hor}^*\Omega_\C)_R=0$. Now we have the equivariance property of $\Lambda_\text{Hor}$ which shows it is a horizontal trivialization: for suitable $a_\pm \in G_{\A_\pm}$ and $z\in Z$ as above, $z\mapsto \Lambda_\text{Hor}(z)(\psi_H(z),\lambda_W(\sss(a))^{-1},\lambda_W(\ttt(a)),z)$ satisfies:
 \[  \Lambda_\text{Hor}(a_+\cdot z\cdot a_-)=\Lambda_W(a_+)\cdot \Lambda_\text{Hor}(z)\cdot \Lambda_{\overline{W}}(a_-) .\]
 We proceed similarly to define a bisection $\Lambda_\text{Ver}:W \rightarrow \mathcal{C}$ which determines a vertical trivialization of \eqref{diagram:(1,1)morphism}; all these bisections together trivialize \eqref{diagram:(1,1)morphism} in the sense of \autoref{def: 11 morphism}.  \end{proof}  

\begin{remark}\label{rem: lu-wei bimodules} In order to define the Lagrangian bisections described in the proof of \autoref{thm: key Lagrangian in the central brane}, instead of assuming from the start that all the symplectic bimodules are diffeomorphic to $D$ as in \eqref{eq:luweidougro}, it is enough to assume that they are of the following form: 
\begin{equation*}\begin{aligned}
    &\mathcal{Z}=\left\{(z,g,g',z')\in Z\times G_{\B_-}\times G_{\B_+}\times Z\left| \begin{aligned}
        p_-(z)=\ttt(g),\, p_+(z)=\ttt(g'),\\
        p_-(z')=\sss(g),\, p_+(z')=\sss(g')
    \end{aligned}\right.\right\}, \\
    &\mathcal{W}=\left\{(h,w,w',h')\in  \overline{G_{\A_-}}\times W\times W\times G_{\A_+}\left|\begin{aligned}
         q_-(w)=\sss(h),\, q_+(w)=\sss(h'),\\
         q_-(w')=\ttt(h),\, q_+(w')=\ttt(h')
    \end{aligned}\right.\right\};
\end{aligned}
\end{equation*}
and, similarly, $\mathcal{C}$ should be the fibred product of $Z$, $\overline{Z}$, $W$ and $\overline{W}$ over $X_-$, $ \overline{X_-} $, $ X_+ $, $ \overline{X_+}$. Moreover, we need the action of $(x,k,g,x')\in D_-\subset G_{\A_-}\times G_{\B_-} \times G_{\B_-}\times G_{\A_-}$ on $(z,g,g',z')\in \mathcal{Z}$ to be given by
\begin{equation}
    (z,g,g',z')\cdot(x,k,g,x')=(z\cdot x,k,g',z'\cdot x'), \label{eq:luweiact}
\end{equation} 
and analogous expressions should define the actions on $\mathcal{W}$ and on $\mathcal{C}$. In this situation, we say that the symplectic bimodules are of {\em Lu-Weinstein type}.
\end{remark}
\subsection{Integration of generalized K\"ahler structures}
We summarize the main results of \S\ref{section: 2dbs}, \S\ref{subsec:real str} and \S\ref{subsec: lag branes} in the following theorem, which describes the solution to the integration problem for generalized K\"ahler structures. 

\begin{theorem}\label{thm: the (1,1) morphism of a GK}
	A generalized K\"ahler structure $(\J_A,\J_B) $ on $M$ for which the underlying real Poisson structures $\pi_A, \pi_B$ are integrable determines a self-adjoint $(1,1)$-morphism of holomorphic symplectic double groupoids  
	\begin{equation}
	\begin{tikzcd}
\overline{D_-}                              & {\mathcal{W}} \arrow[r] \arrow[l]                   & D_+                               \\
(-\overline{\mathcal{Z}}) \arrow[d] \arrow[u] & \mathcal{C} \arrow[d] \arrow[r] \arrow[u] \arrow[l] & {\mathcal{Z}} \arrow[d] \arrow[u] \\
\overline{D_+}                              & (-\overline{\mathcal{W}} )\arrow[r] \arrow[l]         & D_-                              
\end{tikzcd}\label{diagram:(1,1)morphism}
 \end{equation} 
 Moreover, there is a natural Lagrangian embedding of $M$ in the real symplectic core of $\mathcal{C}$, giving rise to a real trivialization of this $(1,1)$-morphism.
\end{theorem}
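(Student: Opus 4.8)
The plan is to assemble the theorem from the constructions of \S\ref{section: 2dbs}--\S\ref{subsec: lag branes}, after first converting the hypothesis on $\pi_A,\pi_B$ into integrability of the holomorphic Dirac structures $\A_\pm,\B_\pm$. By \autoref{cor: the underlying re and im}, the imaginary parts of $\A_\pm$ and $\B_\pm$ are isomorphic as Lie algebroids to $\Gr(-\pi_A)$ and $\Gr(\mp\pi_B)$ respectively; since the real and imaginary parts of a holomorphic Dirac structure are isomorphic as Lie algebroids, and the real part agrees with the underlying real Lie algebroid of the holomorphic structure via $u+\a\mapsto 2\Re(u)+\Re(\a)$, integrability of $\pi_A,\pi_B$ is equivalent to integrability of $\A_\pm,\B_\pm$. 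As recalled in \S\ref{qsg}, an integrable Dirac structure admits a quasi-symplectic integration, so I obtain the source-simply-connected quasi-symplectic groupoids $(G_{\A_\pm},\Omega^A_\pm)$ and $(G_{\B_\pm},\Omega^B_\pm)$ required in \S\ref{subsec:poi gro gk man}.

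First I would fix $D_-$ to be the Lu-Weinstein double groupoid \eqref{eq:luweidougro} built from $(G_{\A_-},\Omega^A_-)$ and $(G_{\B_-},\Omega^B_-)$; the transversality of the $\A_-$ and $\B_-$ foliations guarantees it is a holomorphic symplectic double groupoid integrating $(\E_-,\A_-,\B_-)$ by \autoref{prop:2form luwei}. The gauge equivalences of \autoref{prop: deg symplectic GK}, with $F_A,F_B$ the multiplicative symplectic integrations of $-\pi_A,-\pi_B$, are exactly the input required by the two-dimensional Bursztyn shuffle, so applying \autoref{2d Bursztyn shuffle} to $D_-$ produces the holomorphic symplectic $(1,1)$-morphism \eqref{eq:(1,1) morphism via 2d shuffle}.

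Next I would promote this $(1,1)$-morphism to a self-adjoint one. Because $D_-$ is of Lu-Weinstein type, the identity \eqref{eq: imomega} holds (it is built into \autoref{prop:2form luwei}), which is precisely the hypothesis of \autoref{prop:real str}. That proposition supplies the symplectomorphisms $\ii_A\circ\ii_B\colon\overline{\mathcal{Z}}\to\mathcal{Z}'$, $\ii_A\circ\ii_B\colon\overline{\mathcal{W}}\to\mathcal{W}'$, $\ii_B\colon\mathcal{Z}\to(-\overline{\mathcal{Z}}')$ and $\ii_A\colon\mathcal{W}\to(-\overline{\mathcal{W}}')$, together with the real structure $\tau=\ii_A\circ\ii_B$ on $\mathcal{C}$. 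Rewriting \eqref{eq:(1,1) morphism via 2d shuffle} under the resulting identifications $D_-'\cong\overline{D_-}$, $D_+'\cong\overline{D_+}$, $\mathcal{Z}'\cong-\overline{\mathcal{Z}}$ and $\mathcal{W}'\cong-\overline{\mathcal{W}}$ yields the diagram \eqref{diagram:(1,1)morphism} and exhibits $\tau$ as the required real structure, establishing the first assertion.

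Finally I would construct the Lagrangian embedding of $M$ and invoke \autoref{thm: key Lagrangian in the central brane}. The real symplectic core $\mathcal{S}$ is the fixed locus \eqref{Fixed locus S} of $\tau$, and the identification \eqref{eq:ident sympl core} makes it symplectomorphic to the core symplectic groupoid $(K_{D_\pm},2\Re(\Omega_\pm))$. The diagonal unit embedding $p\mapsto(\uu(p),\uu(p),\uu(p),\uu(p))$ carries $M$ onto the unit submanifold of $K_{D_\pm}$, which is $\Re$-Lagrangian because the units of any symplectic groupoid form a Lagrangian submanifold of half the dimension; hence it is a Lagrangian bisection of $\mathcal{S}$. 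Since the whole $(1,1)$-morphism was built from a Lu-Weinstein double groupoid, \autoref{thm: key Lagrangian in the central brane} converts this Lagrangian bisection into a real trivialization of \eqref{diagram:(1,1)morphism} in the sense of \autoref{def:triv(1,1)}, giving the second assertion. The bulk of the argument is bookkeeping across the prior propositions; the one point demanding genuine care — already packaged in \autoref{prop:real str} and \autoref{thm: key Lagrangian in the central brane} — is verifying that the Lu-Weinstein identifications are simultaneously compatible with all four Morita structures and with the interchange laws, so that the single canonical unit bisection really does trivialize the $(1,1)$-morphism coherently.
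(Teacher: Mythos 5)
Your proposal is correct and follows essentially the same route as the paper: Corollary~\ref{cor: the underlying re and im} plus holomorphic/real Lie algebroid integrability to get the integrations, the Lu--Weinstein construction of $D_-$ (Proposition~\ref{prop:2form luwei}) fed into the two-dimensional Bursztyn shuffle (Proposition~\ref{2d Bursztyn shuffle}), self-adjointness via \eqref{eq: imomega} and Proposition~\ref{prop:real str}, and finally the unit embedding of $M$ in the real symplectic core together with Proposition~\ref{thm: key Lagrangian in the central brane} to produce the real trivialization. The only cosmetic difference is that the paper traces through the proof of Proposition~\ref{thm: key Lagrangian in the central brane} to exhibit the trivializing bisections explicitly as the unit inclusions $G_A,G_B\subset D$, whereas you invoke that proposition as a black box after checking that the unit section is Re-Lagrangian; both amount to the same argument.
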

\begin{proof}
The integrability of $\pi_A, \pi_B$ implies, by \autoref{cor: the underlying re and im}, that the imaginary parts of $\A_\pm,\B_\pm$ are integrable; by \cite{MR4480214}, this implies integrability, in particular, of the holomorphic Dirac structures $\A_-,\B_-$ to a holomorphic symplectic double groupoid $D_-$ integrating the Manin triple $(\E_-,\mathcal{A}_-,\mathcal{B}_-)$, as in \autoref{prop:2form luwei}. \autoref{2d Bursztyn shuffle} implies then that $(\E_+,\A_+,\B_+)$ is also integrable by a holomorphic symplectic double groupoid $D_+$ and, moreover, these integrations and their conjugates are related by the holomorphic symplectic $(1,1)$-morphism \eqref{eq:(1,1) morphism via 2d shuffle}. The holomorphic symplectic form $\Omega_-$ of \autoref{prop:2form luwei} satisfies \eqref{eq: imomega} and then \autoref{prop:real str} implies that \eqref{eq:(1,1) morphism via 2d shuffle} is of the form \eqref{diagram:(1,1)morphism} and, also, that $\mathcal{C}$ is equipped with a symplectic real structure. Note that diagram \eqref{diagram:(1,1)morphism} is obtained from \eqref{eq:(1,1) morphism via 2d shuffle} via the identifications of the side Morita equivalences given by Fig.~\ref{fig:real structure}.

Finally, since the underlying smooth $(1,1)$-morphism of \eqref{diagram:(1,1)morphism} is the trivial one as in \autoref{ex:triv(1,1)mor}, $\mathcal{C}$ can be identified with the underlying smooth manifold $D$ of $D_-$ and hence there is a canonical inclusion $M\subset \mathcal{C}$ as the intersection between horizontal and vertical identity bisections. Because the real structure on  $\mathcal{C}$ is given by $\ii_A\circ\ii_B$ (\autoref{prop:real str}), this includes $M$ in the real symplectic core. Following the proof of \autoref{thm: key Lagrangian in the central brane}, we see that the induced Lagrangian bisections $\Lambda_{\text{Hor}}\subset\mathcal{C}$ and $\Lambda_{\text{Ver}}\subset\mathcal{C}$ are given by the unit inclusions $G_A,G_B\subset D$, respectively. Similarly, $\Lambda_{W}\cong G_A$ and $\Lambda_{Z}\cong G_B$ are the unit inclusions, constituting a real trivialization of \eqref{diagram:(1,1)morphism}. 
\end{proof}
\subsection{Examples}\label{subsec:examples}
Before presenting a family of novel examples in \S\ref{sec: lie groups}, we first describe how our formalism applies to generalized K\"ahler structures of symplectic type, which includes usual K\"ahler structures, as well as the case of commuting type generalized K\"ahler structures, in which the complex structures $I_+,I_-$ commute. 
\subsubsection{Generalized K\"ahler structures of symplectic type}\label{subsec:sym type}

\noindent\textbf{Infinitesimal data:}
As explained in \cite[\S 1]{MR4466669}, the holomorphic Manin triples associated to a symplectic type generalized K\"ahler  manifold are $( \DT_{1,0}X_\pm,\Gr(\sigma_{\pm}), T_{1,0}X_\pm)$, where $\sigma_{\pm}$ are Hitchin's Poisson structures; they are related by the gauge equivalence 
\[
L_{\sigma_+} = e^{2iF} L_{\sigma_-}, \qquad F=\pi_B^{-1}. 
\]

\noindent\textbf{Integration:}
 Let $(G_-, \Omega_-) \rightrightarrows X_-$ be a holomorphic symplectic groupoid integrating $\sigma_-$. Then the Bursztyn shuffle gives a holomorphic symplectic groupoid $(G_+,\Omega_+) \rightrightarrows X_+$ integrating $\sigma_+$, a holomorphic symplectic Morita equivalence $(Z,\Omega_Z)$ between the groupoids, and a $\Re$-Lagrangian bisection $L$ for $(Z,\Omega_Z)$.  To produce the $(1,1)$-morphism \eqref{diagram:(1,1)morphism}, we use the Lu-Weinstein construction \eqref{eq:luweidougro} as follows.  Define the following spaces:
\begin{equation*}
\begin{split}
    G_{\A_\pm} &= G_\pm\\
    Z &=Z
\end{split}\hspace{2em}
\begin{split}
    G_{\B_{\pm}} &= P(X_{\pm})\\
    \mathcal{Z} &= P(Z)
\end{split}\hspace{2em}
\begin{split}
    D_{\pm} &= P(G_{\pm})\\
    W &= \overline{X_-} \times X_+
\end{split}\hspace{2em}
\begin{split}
    \mathcal{C} &= \overline{Z} \times Z\\
    \mathcal{W} &= \overline{G_-}\times G_+,
\end{split}
\end{equation*}
where $P(Y) = Y \times Y$ denotes the pair groupoid over $Y$, with unit embedding given by the diagonal embedding $Y=Y_{\Delta}\hookrightarrow P(Y)$. We remark that the symplectic form on $\mathcal{W}$ is $\overline{\Omega_-} \times (- \Omega_+)$ and the symplectic form on $\mathcal{C}$ is $\overline{\Omega_Z} \times (- \Omega_Z)$. The symplectic real structure on $\mathcal{C}$ is given explicitly by the map $(z_1, z_2) \mapsto (z_2, z_1)$. The real symplectic core $\mathcal{S}$ is thus the diagonal submanifold $Z_{\Delta} \hookrightarrow \overline{Z} \times Z$. \\

\noindent\textbf{Lagrangian bisection of the real symplectic core:}
Let $\lambda$ be a Lagrangian bisection of $\mathcal{S}$. All the other bisections are generated by $\lambda$ according to \autoref{thm: key Lagrangian in the central brane} and they can be described as follows. Since $\lambda$ identifies the underlying real groupoids $G_{-}$, $G_+$, we let $G$ be the underlying smooth manifold of $G_{\pm}$ and $M$ be the underlying smooth manifold of $X_{\pm}$. Note that $\lambda$ projects to a $\Re$-Lagrangian bisection $L_Z$ in $Z$. Then  
\[
\Lambda_{Z} = P(L_Z), \quad \Lambda_{W} = G_{\Delta}, \quad \Lambda_\text{Hor}= Z_{\Delta}, \quad \Lambda_\text{Ver} = \overline{L_Z}\times L_Z. 
\]
In this way, we recover the treatment of generalized K\"ahler structure of symplectic type in~\cite{MR4466669}.

\begin{example}[K\"ahler manifolds]\label{ex: kahler case} In the case of a K\"ahler manifold, $X_+=X_-=X$ and we have $G_\pm=T^*X$, the holomorphic cotangent bundle. Then $Z=(T^*X,\omega_{\text{can}}+{2iF})$ is a symplectic affine bundle which is isomorphic to the one constructed by Donaldson in \cite{MR1959581}. $\Re$-Lagrangian bisections of $Z$ correspond to K\"ahler metrics within the fixed K\"ahler class $[F]$.  Note that the factor of two is occurring as a result of our convention, from \S\ref{subsec:poi gro gk man}, to integrate the Manin triples for the Courant algebroids $\E_\pm$, which have classes $[\H_\pm]=2H$.  The factor is then removed in the reconstruction~\eqref{factwo} of the generalized K\"ahler structure from the choice of Lagrangian bisections. 
\end{example}

\subsubsection{Generalized K\"ahler structures of commuting type}\label{subsec: comm}

\noindent\textbf{Infinitesimal data:}
A generalized K\"ahler manifold is of commuting type if $[I_+,I_-] = 0$, or, equivalently, if $\sigma_{\pm} = 0$. As shown in \cite[Theorem A]{MR2287917}, such spaces are locally ``twisted'' products of K\"ahler manifolds. By \cite[Remark 3.10]{MR3232003}, the induced generalized K\"ahler structures structures on $\A$ and $\B$ leaves are K\"ahler since they are of symplectic type and their Hitchin Poisson structures vanish.  

To treat the local twisted product, we assume our manifold is $M = M_1 \times M_2$, where $M_1, M_2$ are equipped with complex structures $I_{1}$ and $I_2$. We let $g_1$ be a $M_2$-family of Kähler metrics on $(M_1, I_1)$ and $g_2$ be a $M_1$-family of Kähler metrics on $(M_2, I_2)$. Consider the tensors 
\[
I_{\pm} = \begin{pmatrix} 
	I_1 & 0 \\
	0 & \pm I_2 
\end{pmatrix} , \qquad g = \begin{pmatrix}
	g_1 & 0 \\
	0 & g_2 
\end{pmatrix}, \qquad \omega_{\pm} :=gI_\pm= \begin{pmatrix}
	F_1 & 0 \\
	0 & \pm F_2
\end{pmatrix};
\]
where $F_1 = gI_1$, and $F_2 = gI_2$. The induced holomorphic Manin triples on $M$ are isomorphic to
\begin{align*}
	&(\E_+,\A_+, \B_+) = (\DT_{1,0} M_1\times \DT_{1,0}M_2,\ \ T_{1,0}^*M_1 \times T_{1,0}M_2,\ \ T_{1,0}M_1 \times T_{1,0}^*M_2), \\
	&(\E_-,\A_-, \B_-) = (\DT_{1,0} M_1\times \DT_{0,1}M_2,\ \  T_{1,0}^*M_1 \times T_{0,1}M_2,\ \ T_{1,0}M_1 \times T_{0,1}^*M_2),
\end{align*}

\noindent\textbf{Integration:}
We construct a $(1,1)$-morphism as in \eqref{diagram:(1,1)morphism} as follows: 
\begin{equation*}
\begin{split}
    G_{\A_+} &= T^*M_1 \times P(M_2)\\
    G_{\A_-} &= T^*M_1 \times P(\overline{M_2})\\
    D_+ &= P(T^*M_1) \times P(T^*M_2)\\
    Z &= T^*M_1 \times M_2 \times \overline{M_2}\\
    W &= \overline{M_1} \times M_1 \times T^*M_2\\
    \mathcal{C} &= \overline{T^*M_1} \times T^*M_1 \times \overline{T^*M_2} \times T^*M_2\\
\end{split}\hspace{3em}
\begin{split}
 G_{\B_+} &= P(M_1) \times T^*M_2 \\
 G_{\B_-} &= P(M_1) \times \overline{T^*M_2}. \\
	 D_- &=  P(T^*M_1) \times P(\overline{T^*M_2}), \\
	\mathcal{Z} &= P(T^*M_1) \times T^*M_2 \times \overline{T^*M_2}, \\
	\mathcal{W} &= \overline{T^*M_1}\times T^*M_1 \times P(T^*M_2).
\end{split}
\end{equation*}
Let $\omega_1, \omega_2$ be the canonical holomorphic symplectic forms on $T^*M_1$ and $T^*M_2$, and let $p_1, p_2$ be the corresponding cotangent bundle projections. Then the symplectic forms on the above spaces are determined as follows: $D_+$ is equipped with $\delta^*\omega_1\times -\delta^*\omega_2$, a similar expression holds for $D_-$, and we have 
\begin{align*}
	&\Omega_{\mathcal{Z}} = (\omega_1 + 2i p_1^*F_1) \times -(\omega_1 + 2i p_1^*F_1) \times (-\omega_2) \times \overline{\omega_2} \\
	&\Omega_{\mathcal{W}} = \overline{\omega_1}\times (-\omega_1) \times (\omega_2 + 2ip_2^*F_2) \times -(\omega_2 + 2ip_2^*F_2)\\
 & \Omega_{\mathcal{C}}=(\omega_1-2ip_1^*F_1)\times (-\omega_1-2ip_1^*F_1)\times (\omega_2-2ip_2^*F_2)\times (-\omega_2-2ip_2^*F_2).
\end{align*}
The real structure on $\mathcal{C}$ is given by $(u_1, u_2, v_1, v_2) \mapsto (-u_2, -u_1, -v_2, -v_1)$, giving rise to the real symplectic core $\mathcal{S}$ defined by $ T^*M_1 \times T^*M_2\cong \{ (u_1,-u_1,v_1,-v_1)\ : \ (u_1,v_1)\in T^*M_1 \times T^*M_2 \}$. \\

\noindent\textbf{Lagrangian bisection of the real symplectic core:} 
The zero section of the cotangent bundle defines a Lagrangian bisection in $\mathcal{S}$, which determines the following Lagrangian bisections: 
\begin{equation*}
\begin{split}
    \Lambda_Z &= P(Z_{M_1}) \times (T^*M_2)_{\Delta}\\
    \Lambda_\text{Hor} &= (T^*M_1)_{\Delta} \times P(Z_{M_2})
\end{split}
\qquad
\begin{split}
    \Lambda_W &= (T^*M_1)_{\Delta} \times P(Z_{M_2})\\
    \Lambda_\text{Ver} &= P(Z_{M_1}) \times (T^*M_2)_{\Delta},
\end{split}
\end{equation*}
where $Z_{M_1}$ and $Z_{M_2}$ are the zero sections of $T^*M_1$ and $T^*M_2$ respectively. The choice of a more general Lagrangian can be formulated in terms of Hamiltonian deformations, see \autoref{ex:ham def com type} below.

\section{The generalized K\"ahler potential}\label{sec:gk potential}
According to \autoref{thm: the (1,1) morphism of a GK}, a generalized K\"ahler manifold determines, under natural conditions, a holomorphic symplectic $(1,1)$-morphism. We may interpret this morphism as a geometric realization of the {\em generalized K\"ahler class}, since, as we have seen in \autoref{ex: kahler case}, it generalizes Donaldson's geometric realization of the usual K\"ahler class, see also \cite[\S 2]{MR4466669}. Moreover, \autoref{thm: key Lagrangian in the central brane} shows that we obtain a real trivialization of the (1,1) morphism \eqref{diagram:(1,1)morphism} by means of the choice of a Lagrangian bisection of its real symplectic core. We now show in \autoref{thm: reconstruction} how this choice, under a generic positivity condition, determines canonically a generalized K\"ahler metric. This allows us to interpret generalized K\"ahler metrics as positive Lagrangian bisections in the real symplectic core.  With an appropriate choice of Darboux coordinates on the real symplectic core, therefore, we obtain a local description of a generalized K\"ahler metric in terms of a single real-valued function, the generalized K\"ahler potential. 
\subsection{From Lagrangian bisections to generalized K\"ahler metrics} \label{subsec: reco}

\begin{definition} We say that a pseudo-Riemannian metric $g$ and a pair of  complex structures $I_{\pm}$ compatible with $g$ constitute a {\em pseudo generalized K\"ahler structure} if the associated hermitian forms $\omega_{\pm}$ satisfy \eqref{dcpm} for a closed 3-form $H$.
\end{definition}
In this subsection, we prove a differentiation result that takes a a self-adjoint holomorphic symplectic $(1,1)$-morphism as in Fig.~\ref{fig:real structure}, together with a real trivialization subject to an open nondegeneracy condition, to a pseudo generalized K\"ahler structure on $M$.


\begin{theorem}\label{thm: reconstruction}A self-adjoint $(1,1)$-morphism of symplectic double groupoids, equipped with a real trivialization in which the Re-Lagrangian bisections are Im-symplectic, determines canonically a pseudo generalized K\"ahler structure on the underlying smooth base manifold of the $(1,1)$-morphism. 

Moreover, the corresponding metric depends only on the Lagrangian bisection of the real symplectic core given by the intersection of the Lagrangian bisections in the central bimodule. 
\end{theorem}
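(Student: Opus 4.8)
The plan is to reverse the construction of \autoref{thm: the (1,1) morphism of a GK}: differentiate the entire $(1,1)$-morphism to the infinitesimal level and then invoke \autoref{thm: Infinitesimal descrption of a GK structure}. First I would differentiate the two holomorphic symplectic double groupoids $D_\pm$ appearing in Fig.~\ref{fig:real structure}, using the procedure of \S\ref{section: Symplectic double groupoids} (\autoref{Poisson duality from IM 2-form} and the discussion following it), to recover a pair of holomorphic exact Manin triples $(\E_\pm, \A_\pm, \B_\pm)$ over complex manifolds $X_\pm$. The real trivialization supplies multiplicative bisections $\Lambda_Z\subset\mathcal{Z}$ and $\Lambda_W\subset\mathcal{W}$ which, by \autoref{prop: isomorphism induced by bisection}, induce isomorphisms of the side groupoids and hence a canonical identification of the underlying smooth manifolds of $X_+$ and $X_-$ with a single base $M$; this is what makes the output \emph{canonical} and sets up the common domain on which the two Manin triples live.

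Next I would differentiate the four side Morita equivalences into gauge equivalences of matched pairs. Each of $\mathcal{Z}, \mathcal{W}$ (and their conjugates) carries the Re-Lagrangian multiplicative bisection supplied by the trivialization, so by \autoref{prop: change of the sympletcic forms} it induces an isomorphism $\Psi$ of the relevant double groupoids with $\Psi^*\Omega' - \Omega = \ttt^*F - \sss^*F$, where $F$ is the pullback of the bimodule's symplectic form to the bisection. Because the bisection is Re-Lagrangian, $\Re(F) = 0$, so $F$ is pure imaginary; differentiating $\Psi$ yields an isomorphism of the matched pairs and, via the IM $2$-form correspondence \eqref{eq:IM 2 form} restricted to units, a base gauge transformation $e^{iF_j}$ with $F_j$ a \emph{real} $2$-form. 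This produces exactly the relations \eqref{eq:3}, and the accompanying relative-closedness identities yield \eqref{gaughpm}. The self-adjoint real structure $\tau$ then forces the horizontal and vertical gauge forms to agree with the negatives of their conjugates, giving condition \eqref{horvereq} and closing up the diagram \eqref{diagcalg} of representative $3$-forms.

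At this point the hypotheses of \autoref{thm: Infinitesimal descrption of a GK structure} are in place: two holomorphic exact Manin triples on a common $M$, related by the pure-imaginary gauge equivalences $e^{iF_1}, e^{iF_2}$ for real $2$-forms $F_1, F_2$ satisfying \eqref{gaughpm} and \eqref{eq:3}. The Im-symplectic hypothesis on the Re-Lagrangian bisections is precisely what guarantees that the symmetric part $g$ of $-F_\pm I_\pm$, with $F_\pm = \tfrac12(-F_1 \pm F_2)$ as in \eqref{partsoffpm}, is nondegenerate, exactly as Re-Lagrangian-plus-Im-symplectic encodes the nondegeneracy of a K\"ahler metric in \autoref{ex: kahler case}. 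Thus \autoref{thm: Infinitesimal descrption of a GK structure} outputs a pseudo generalized K\"ahler structure $(g, b, I_\pm)$ on $M$.

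For the final assertion, I would invoke \autoref{thm: key Lagrangian in the central brane}: in a real trivialization, all of $\Lambda_Z, \Lambda_W, \Lambda_\text{Hor}, \Lambda_\text{Ver}$ are generated by the single Lagrangian bisection $\mathcal{L} = \Lambda_\text{Hor} \cap \Lambda_\text{Ver}$ of the real symplectic core (which is $\Re$-symplectic and $\Im$-Lagrangian by \autoref{S is LS}). Since the gauge forms $F_1, F_2$, and therefore $g$ and $b$, are obtained purely as pullbacks of the fixed imaginary symplectic forms along these bisections, they depend only on $\mathcal{L}$, whereas the complex structures $I_\pm$ are intrinsic to $D_\pm$ and do not vary with the choice of $\mathcal{L}$. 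The main obstacle throughout is the bookkeeping in the differentiation step: one must match the pullback forms $F = b^*\Omega$ on the side groupoids to the specific base $2$-forms $F_1, F_2$ of \eqref{eq:3}, keeping careful track of which form lives on which space, of the restriction to the unit section, of signs, and of the factor of two arising from the convention $[\H_\pm] = 2H$ (cf. \autoref{ex: kahler case}), while verifying that the $g$ so produced is genuinely the symmetric tensor governed by the Im-symplectic condition.
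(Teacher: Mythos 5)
Your overall strategy---differentiate the $(1,1)$-morphism down to Manin triples and then invoke \autoref{thm: Infinitesimal descrption of a GK structure}---is a natural reverse of \autoref{thm: the (1,1) morphism of a GK}, and your handling of the final dependence claim (via \autoref{thm: key Lagrangian in the central brane}, with $I_\pm$ intrinsic to $D_\pm$) agrees in substance with the paper. But there is a genuine gap at the crucial middle step: the extraction of the base $2$-forms $F_1,F_2$ on $M$. The form $F=b^*\Omega_{\mathcal{W}}$ furnished by \autoref{prop: change of the sympletcic forms} lives on the side groupoid $G_A$, not on $M$, and your proposed descent mechanism---``the IM $2$-form correspondence \eqref{eq:IM 2 form} restricted to units''---cannot work: a multiplicative $2$-form pulls back to zero along the unit section (apply multiplicativity to $\uu(x)=\uu(x)\cdot\uu(x)$), and the IM correspondence produces a bundle map $A\to T^*M$, not a $2$-form on the base. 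In this story the base gauge forms appearing in \eqref{eq:3} are, up to sign and combination, the Hermitian forms $\omega_\pm$ themselves; they are not restrictions of any ambient form but must be \emph{constructed} as the inverses $\omega_\pm=(\pi_B\pm\pi_A)^{-1}$ of the Poisson bivectors obtained by differentiating the multiplicative symplectic forms $F_A,F_B$ pulled back to the bisections. (This is also where the Im-symplectic hypothesis actually enters: it makes $F_A,F_B$ symplectic, hence $\pi_A,\pi_B$ well defined, and makes the pairings \eqref{eq: a nondeg pairing} nondegenerate so that $\Gr(\pi_A)$ and $\Gr(\pm\pi_B)$ are transverse and the inverses exist.)

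Consequently, verifying the hypotheses of \autoref{thm: Infinitesimal descrption of a GK structure} is not routine bookkeeping. To check \eqref{eq:3} and \eqref{gaughpm} you would first have to establish invertibility of $\pi_B\pm\pi_A$, the compatibility of the resulting $\omega_\pm$ with $I_\pm$, and---since the abstract double groupoid only hands you the Manin triple in Lie bialgebroid form $(A\oplus B, A, B)$ via \autoref{Poisson duality from IM 2-form}---you would also need to construct the isotropic splitting and the closed $3$-form $H$ giving the exact Courant presentation over $M$. This is exactly the content of the paper's proof: the groupoid-level converse of \autoref{prop: deg symplectic GK} (its Step 1, yielding \eqref{eq: on FAFB 1}--\eqref{eq: on FAFB 2}), the Dirac embeddings of $A$ and $B$ as $\Gr(\pi_A)$ and $\Gr(\pm\pi_B)$ (Step 2), the compatibility identities for $\omega_\pm=(\pi_B\pm\pi_A)^{-1}$ as in \eqref{eq: from poisson to hermtian forms} (Step 3), and the construction of $H$ together with the integrability $d^c_\pm\omega_\pm=\pm H$ (Step 4). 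The paper chooses to verify the generalized K\"ahler conditions directly at this point rather than routing back through the infinitesimal theorem, because by the time the hypotheses of \autoref{thm: Infinitesimal descrption of a GK structure} are available one has already done the work; your proposal treats precisely that work as automatic, which is where it fails.
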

\begin{remark}Assuming that the $(1,1)$-morphism is given by double groupoids and bimodules of Lu-Weinstein type (\autoref{rem: lu-wei bimodules}), a real trivialization is determined by a Lagrangian bisection of the real symplectic core, see \autoref{thm: key Lagrangian in the central brane}. 
\end{remark}
\begin{proof}[Proof of \autoref{thm: reconstruction}] Using the real structure on the $(1,1)$-morphism, we may assume that it is of the form \eqref{diagram:(1,1)morphism}. The strategy in our proof is as follows.
 First, we pull back the holomorphic symplectic forms on $\mathcal{W}$ and $\mathcal{Z}$ to $\Lambda_W$ and $\Lambda_Z$ to obtain two real multiplicative symplectic forms.  In the second step, we differentiate these symplectic forms to obtain real Poisson structures. In the third step, we extract a nondegenerate symmetric tensor $g$ from the pair of Poisson structures and we show that $I_\pm$ are compatible with $g$. Finally, we prove the integrability condition \eqref{dcpm}.

\textbf{Step 1.} By \autoref{prop: isomorphism induced by bisection}, the Re-Lagrangian bisections $\Lambda_{Z}$ and $\Lambda_{W}$ identify the underlying smooth manifolds of $D_-, D_+, Z$ and $W$. We will let $G_A, G_B, $ and $M$ be the underlying smooth manifolds of $G_{\A_{\pm}}, G_{\B_{\pm}}$ and $M$. We start with an analysis of $G_A$ and $G_B$. 

Let $\Omega_{\mathcal{Z}}$ and $\Omega_{\mathcal{W}}$ be the holomorphic symplectic forms on $\mathcal{Z}$ and $\mathcal{W}$, and let
\begin{equation}\label{factwo}
F_A= \tfrac{1}{2i}\Lambda_W^*\Omega_{\mathcal{W}}, \qquad F_B= \tfrac{1}{2i}\Lambda_Z^*\Omega_{\mathcal{Z} }.
\end{equation}
By \autoref{prop: change of the sympletcic forms} and a similar argument as in \cite[Theorem 5.3]{MR4466669}, $G_{A}$ and $G_B$ are endowed with gauge equivalent Poisson groupoid structures $(\Pi_{+}^A, \overline{\Pi_-^A}, F_A)$ and $(\Pi_+^B, \Pi_-^B, F_B)$ respectively. In other words, we obtain the converse of \autoref{prop: deg symplectic GK}. Since these Lagrangian bisections are part of a real trivialization, the identification between $D_-$ and $\overline{D_-}$ induced by $\Lambda_Z$ and $\Lambda_W$ is the identity. By \cite[\S1.1]{MR4466669}, we may encode then these structures by means of the following equations
\begin{align} 
	&F_BI_++I_-^*F_B=0, \qquad F_AI_--I_+^*F_A=0 \label{eq: on FAFB 1}\\
	&I_+=I_-+2Q_BF_B, \qquad -I_-=I_++2Q_AF_A, \label{eq: on FAFB 2}
\end{align}
where $Q_A$ and $Q_B$ are the real parts of the holomorphic Poisson structures on $G_{\mathcal{A}_- }$ and ${G}_{\mathcal{B}_- }$ respectively.

\textbf{Step 2.} Since both $F_A$ and $F_B$ are multiplicative symplectic forms, by using \eqref{eq:IM 2 form}, we can identify the underlying smooth Lie algebroids $A$ and $B$ of $G_A$ and $G_B$ with graphs of Poisson structures $\pi_A, \pi_B$ on $M$:
\begin{align}\label{eq: pi1 and pi2}
&\Gr(\pi_A)=(\rho_A,\mu_{F_A})(A), \qquad \Gr(\pm\pi_B) =(\rho_B,\pm \mu_{F_B})(B).  
\end{align}
We now check that $\Gr(\pi_A)$ and $\Gr(\pm \pi_B)$ are complementary and hence the embeddings of $A$ and $B$ as graphs of Poisson structures induce vector bundle isomorphisms 
\[
\begin{tikzcd}
    A \oplus B\arrow[r,"{\Phi_{\pm}}","\cong"'] &\DT M
\end{tikzcd}
\]
\[
\Phi_\pm (a,b) = \rho_A(a)+ \rho_B(b) + \mu_{F_A}(a) \pm \mu_{F_B}(b), \quad (a,b) \in A\oplus B. 
\]
It suffices to check that the pairings of $\Gr(\pi_A)$ with $\Gr(\pm \pi_B)$ are nondegenerate. A consequence of these isomorphisms is then that $A\oplus B$ is an exact Courant algebroid. We use the identity
\begin{align}\label{Relation between F_A F_B}
	(\Omega_\pm )_I:= \Im(\Omega_{\pm}) =-\delta_A^* F_A \pm \delta_A^* F_B, 
\end{align}
which follows from applying \autoref{prop: change of the sympletcic forms} to the isomorphisms determined by the real trivialization. Computing the pairing for all $(a,b)\in A \oplus B$, we obtain 
\begin{align}\label{eq: a nondeg pairing}
	\begin{split}
		\langle (\rho(a), \mu_{F_A}(a) ), (\rho(b), \pm \mu_{F_B}(b)) \rangle &= \langle \mu_{F_A}(a), \rho(b)\rangle  \pm \langle \mu_{F_B}(b), \rho(a)\rangle  \\
		&= F_A(dt_A(a), dt_A(b))  \pm F_B(dt_B(a), dt_B(b)) \\
		&= -(\Omega_{\mp})_{I} (a,b),  
	\end{split}
\end{align}
where the last identity follows from \eqref{Relation between F_A F_B}. Note that, according to \autoref{Poisson duality from IM 2-form}, the pairings above are nondegenerate and hence $((\DT M,0),\Gr(\pi_A),\Gr(\pm\pi_B))$ are Manin triples. 

\textbf{Step 3.} Since $\Gr(\pi_A)$ and $ \Gr(\pm\pi_B)$ are complementary, we may define  2-forms as follows:
\begin{align}\label{eq: from poisson to hermtian forms}
\omega_{\pm} = (\pi_B \pm \pi_A)^{-1} .
\end{align}
We now prove that $\omega_{\pm}$ satisfy the generalized K\"ahler compatibility conditions, namely 
\begin{align}
	\omega_+ I_{\pm} &= - I_{\mp}^*\omega_{-} \label{eq:compatibility1} \\
	I_+^*\omega_+ &= I_-^* \omega_- \label{eq:compatibility2}
\end{align}
Both equations then imply that $g = -\omega_{\pm}I_{\pm}$ is a metric (possibly indefinite) compatible with $I_{\pm}$. 

To prove \eqref{eq:compatibility1}, it suffices to show that 
\begin{align}\label{eq: pi1 pi_2 are mixed type}
	I_{\pm}\pi_A = \pi_A I^*_{\mp}, \qquad -I_{\pm} \pi_B = \pi_B I^*_{\mp} .
\end{align} 
Using \eqref{eq: pi1 and pi2} and \eqref{eq: on FAFB 1}, we have for all $a \in A$ and $b \in B$,
\begin{align*} 
	& (\rho_A,\mu_{F_A})(I_\pm a) = I_\pm \rho_A(a) + I_\mp^* \mu_{F_A}(a)=I_\pm \pi_A(\mu_{F_A}(a))+ I_\mp^* \mu_{F_A}(a) \\
	& (\rho_B,\mu_{F_B})(I_\pm b) = I_\pm \rho_B(b)  -I_\mp^* \mu_{F_B}(b)=I_\pm \pi_B(\mu_{F_B}(b))+ I_\mp^* \mu_{F_B}(b),
\end{align*}  
establishing \eqref{eq: pi1 pi_2 are mixed type} and hence \eqref{eq:compatibility1}.

To prove \eqref{eq:compatibility2}, we first note that the equation is equivalent to 
\begin{align}\label{eq: long}
I_+^*-I_-^* = I_-^*(\omega_-\omega_+^{-1}-1).
\end{align}
We expand the left hand side using  \eqref{eq: on FAFB 1} and \eqref{eq: on FAFB 2} as follows
\begin{align}\label{eq: I+minusI-}
I_+^*-I_-^*=\mu_{F_B} \circ (I_+-I_-)\circ \mu_{F_B}^{-1} = 2\mu_{F_B}Q_B|_{T^*M} = 2\mu_{F_B}I_+R^B_{+}|_{T^*M};
\end{align}
where $R_{\pm}^B$ are the imaginary parts of the holomorphic Poisson structures $\Pi_{\pm}^B $ of $G_{\B_{\pm}}$. Note that the maps $(R^B_{\pm})^\sharp: T^*G_B \to TG_B$ restrict to maps $T^*M \to B$ which are given by $\phi_{\pm} \circ \rho_A^*: T^*M \to B$,  
where $\phi_{\pm}:A^* \rightarrow B$ are the dualities induced by the inclusions of $A$ and $B$ inside $(\E_{\pm})_I$. As a result, 
\begin{align}\label{Eq 0 for muFB}
	2\mu_{F_B} I_+R_+^B|_{T^*M}=-2I_-^*\mu_{F_B}\circ \phi_+\circ \rho_A^*. 
\end{align}
This may be simplified further as follows, 
\begin{align}\label{Eq 1 for muFBphirho}
	\langle \mu_{F_B} \circ \phi_+ \circ \rho_A^*\a, \omega_-^{-1} \mu_{F_A}(a) \rangle = -\langle \phi_+^{-1}\phi_+ \circ \rho_A^*\a, a \rangle = -\langle \a, \pi_A\mu_{F_A}(a)\rangle, \quad a \in A,
\end{align}
where we use the following observation: for all $a \in A, b \in B$,
	\begin{align}
		\langle \phi_\pm^{-1}(b),a \rangle =\mp \langle \omega_\mp^{-1} \mu_{F_A} (a),\mu_{F_B} (b) \rangle.\label{eq: rewrite the pairing}
	\end{align} 
which holds by the following computation (and similarly for the opposite sign):
\begin{align*}
		\langle \phi_+^{-1} (b), a \rangle &= \langle \rho_B(b) \oplus \mu_{F_B}(b), \rho_A(a)\oplus \mu_{F_A}(a) \rangle = \pi_A(\mu_{F_A}(a), \mu_{F_B}(b)) + \pi_B(\mu_{F_B}(b), \mu_{F_A}(a)) \\
		&= -\langle (\pi_B - \pi_A) \mu_{F_A}(a), \mu_{F_B} (b) \rangle =- \langle \omega_{-} \mu_{F_A}(a), \mu_{F_B}(b) \rangle .
	\end{align*}
For the right hand side of~\eqref{eq: long}, notice that $1-\omega_-\omega_+^{-1}= -2\omega_-\pi_A$. Therefore, 
\begin{align}\label{Eq 2 for muFBphirho}
	\begin{split}
		\langle (1-\omega_-\omega_+^{-1})\a, \omega_{-}^{-1}\mu_{F_A}(a) \rangle &= -2 \langle \omega_- \pi_A\a, \omega_{-}^{-1} \mu_{F_A}(a) \rangle = 2 \langle \mu_{F_A}(a), \pi_A\a \rangle  \\
		&= -2 \langle \a, \pi_A\mu_{F_A}(a) \rangle  
	\end{split}
\end{align}
By \eqref{Eq 1 for muFBphirho} and \eqref{Eq 2 for muFBphirho}, we have  
\begin{align}\label{Eq 3 for muFBphirho}
	2\mu_{F_B} \circ \phi_+ \circ \rho_{A}^* = 1- \omega_-\omega_+^{-1}.
\end{align}
Combining \eqref{eq: I+minusI-}, \eqref{Eq 0 for muFB}, and \eqref{Eq 3 for muFBphirho}, we conclude \eqref{eq: long} and hence \eqref{eq:compatibility2}.

\textbf{Step 4.} It remains to check the integrability of $\omega_{\pm}$. We note that $\omega_{\pm}$ determine the  maps \eqref{eq:IM 2 form} corresponding to $F_{A}^{(1,1)_{\pm}}$ and $F_{B}^{(1,1)_{\pm}}$. In other words,
\begin{equation}\label{IMFAFB}
		\pm  \omega_\pm \circ \rho_A= \mu_{F_A^{(1,1)_\pm}},  \qquad
		 \omega_\pm \circ \rho_B= \mu_{F_B^{(1,1)_\pm}}. 
\end{equation} 
To obtain the $(1,1)$ components of $F_A, F_B$, we decompose $F_AI_\pm$ and $F_BI_\pm$ into symmetric and skew-symmetric components  as follows:
\[ F_AI_\pm= S_A^\pm + \beta^\pm_A, \quad F_BI_\pm= S_B^\pm + \beta^\pm_B.
\]
Applying equations \eqref{eq: on FAFB 1}, we obtain
\begin{equation} \label{eq:symske}
\begin{split} 
S_A^+&=-S_A^-=\pm F^{(1,1)_\pm}_A I_\pm\\
S_B^+&=S_B^-= F^{(1,1)_\pm}_B I_\pm
\end{split}
\qquad
\begin{split}
    \beta_A^+&=\beta_A^-= F^{(2,0)_\pm+(0,2)_\pm}_A I_\pm\\
    \beta_B^+&=-\beta_B^-=\pm F^{(2,0)_\pm+(0,2)_\pm}_B I_\pm
\end{split}
\end{equation} 
Since all the tensors above are multiplicative, we obtain the following counterpart to~\eqref{Relation between F_A F_B}: 
\[ 
(\Omega_\pm )_R=-\delta^* (F_A I_\pm)\pm \delta^*( F_BI_\pm)= -\delta^* \beta_A^\pm \pm \delta^* \beta_B^\pm. 
\]
Since the 2-forms $(\Omega_\pm )_R$ are symplectic and multiplicative over both $G_A$ and $G_B$, we get that $\mu_{(\Omega_\pm )_R}:(\text{Lie}(D)^B \rightrightarrows A)\cong( T^*G_B \rightrightarrows B^*)$ as in \eqref{eq:IM 2 form} are isomorphisms and  just as in \eqref{eq: a nondeg pairing}, the following pairings are nondegenerate
\[ 
\langle \mu_{(\Omega_\pm )_R}^B (a),b \rangle =(\Omega_\pm )_R(a,b)=-\beta_A^\pm(a,\rho(b))\pm \beta^\pm_B(\rho(a),b) \qquad   (a,b) \in A\oplus B. 
\] 
Then the maps $(\rho_A,\mu_{\beta_A^\pm }):A \rightarrow \mathbb{T}M  $ and $(\rho_B,\pm \mu_{\beta_B^\pm }) :B \rightarrow \mathbb{T}M $ are embeddings and their images are two transverse maximal isotropic subbundles $\tilde{A}$ and $\tilde{B}$. In other words, we have that 
\[
\Psi_\pm:=(\rho_A,\mu_{\beta_A^\pm })\oplus (\rho_B,\pm \mu_{\beta_B^\pm }):A\oplus B \rightarrow \tilde{A}\oplus \tilde{B}=\mathbb{T}M 
\]
coincide, i.e. $\Psi:=\Psi_+=\Psi_-$. Since $\Psi$ preserves pairings and is compatible with the anchors, it defines a splitting of the Courant algebroid $(A\oplus B)_R$, see \autoref{rem: real and im manin triples of a double}, which identifies the Courant algebroid $(A\oplus B)_R $ with $ (\DT M,H)$, where $H\in \Omega^3_{cl}(M)$ is the curvature of the splitting. 

By definition, $\beta_A^\pm$ and $\pm\beta_B^\pm$ are quasi-symplectic forms integrating the Dirac structures $\tilde{A}$ and $\tilde{B}$, so  satisfy
\begin{align}  d \beta^\pm_A= \delta^*H, \qquad \pm d \beta^\pm_B= \delta^*H. \label{eq:dirgk} 
\end{align} 
	Note that for any closed 2-form $F$ we have that $d^c F^{(1,1)}=d( F^{(2,0)+(0,2)}I)$ and so, combining this with \eqref{eq:dirgk} and \eqref{eq:symske}, we have 
 \begin{align}\label{dcFAFB}
 d^c_{\pm} F_{A}^{(1,1)_{\pm}} = \delta^*H, \quad \pm d^c_{\pm} F_{B}^{(1,1)_{\pm}} = \delta^*H.
 \end{align}
 By taking the (2,1)+(1,2) components of both sides, it follows that
	\begin{align}\label{IMFAFB2} 
     dF_A^{(1,1)_\pm}=\delta^* \eta_\pm, \quad \pm dF_B^{(1,1)_\pm}=\delta^* \eta_\pm, \quad \eta_\pm=iH^{(2,1)_\pm}-iH^{(1,2)_\pm}. \end{align}
 Using \eqref{IMFAFB} and \eqref{IMFAFB2}, together with the following equation for $\mu = \mu_{F_{A}^{(1,1)_{\pm}}}$ or $\mu_{F_{B}^{(1,1)_{\pm}}}$ and $\eta = \eta_{\pm}$ proven in \cite[Prop. 3.5 (ii)]{MR2068969}: for all $a, b \in \Gamma(A)$,
    \[
    \mu ([a,b] )= \L_{\rho(a)} \mu(b) - \L_{\rho(b)}(\mu(a)) + d \langle (\mu(a),\rho_A(b)\rangle - \iota_{\rho_A(a)\wedge \rho_A(b)} \eta; 
    \]
    we obtain the identities  
	\begin{align}\label{IMEQ}
    \pm d \omega_\pm (\rho_A(\cdot), \rho_A(\cdot), \cdot)=\eta_\pm(\rho_A( \cdot), \rho_A(\cdot), \cdot),\quad d \omega_\pm (\rho_B(\cdot), \rho_B( \cdot), \cdot)=\pm \eta_\pm(\rho_B(\cdot), \rho_B(\cdot), \cdot).
    \end{align}
    For example, for $\mu = \mu_{F_A^{(1,1)_{+}}}$, we can compute for all $a, b \in \Gamma(A), X \in \Gamma(TM)$,
    \begin{align*}
    \omega_{+}(\rho_A[a,b],X) = \iota_X \mu_{F_A^{(1,1)_{+}}}([a,b]) &= \iota_X \L_{\rho_A(a)} \omega_+(\rho_A(b))-\iota_X\L_{\rho_A(b)}\omega_+(\rho_A(a)) \\
    &+\iota_Xd\omega_+(\rho_A(a),\rho_A(b))-\iota_X\iota_{\rho_A(a)\wedge \rho_A(b)} \eta_+ \\
    &= \iota_X \iota_{[\rho_A(a),\rho_A(b)]}\omega_+ +\iota_Xd\omega_+(\rho_A(a),\rho_A(b))-\iota_X\iota_{\rho_A(a)\wedge \rho_A(b)} \eta_+,
    \end{align*}
    which shows one of the identities in \eqref{IMEQ}. The other one may be proven similarly.   
    
	Since the $A$-orbits and the $B$-orbits are transverse, \eqref{IMEQ} then implies that 
	$d \omega_\pm =\pm \eta_\pm$. On the other hand, using this transversality property and taking (3,0)+(0,3) components of both sides of \eqref{dcFAFB}, we get that $H$ is of type $(2,1) +(1,2)$ and therefore $d^c \omega_\pm =\pm H$, as needed.  

 Finally, note that $g$ is given by $\omega_\pm$ and these only depend on $\pi_A$,  $\pi_B$ and $I_\pm$. But these are the Poisson structures inherited by the bisections $\lambda_Z\subset Z$ and $\lambda_Z\subset W$ from the imaginary parts of the background holomorphic Poisson bivectors. As a consequence, $g$ only depends on $\lambda_Z,\lambda_W$, which are the projections of $\lambda=\Lambda_{\text{Ver}}\cap \Lambda_{\text{Hor}}$ to $Z$ and $W$, where $\Lambda_{\text{Ver}}, \Lambda_{\text{Hor}}\subset \mathcal{C}$ are the bisections in the central bimodule that are part of our chosen real trivialization.
\end{proof}

\subsection{The generalized Kähler potential and Hamiltonian deformations}\label{subsec: GK deform} 
Based on \autoref{thm: key Lagrangian in the central brane}, a Lagrangian bisection of the real symplectic core determines a real trivialization of the $(1,1)$-morphism \eqref{diagram:(1,1)morphism} of a generalized K\"ahler  manifold, as long as our symplectic double groupoids are of Lu-Weinstein type \eqref{eq:luweidougro}. Moreover, \autoref{thm: reconstruction} implies that such a trivialization induces a pseudo generalized K\"ahler metric. We will say that such a Lagrangian bisection is {\em positive} if the corresponding metric is positive definite.
\begin{definition}[Generalized Kähler potential] Consider a positive Lagrangian bisection $\lambda$ in the real symplectic core $\mathcal{S}$ of the integrating $(1,1)$-morphism \eqref{diagram:(1,1)morphism} of a generalized K\"ahler  manifold. Introduce a tubular neighborhood symplectic embedding $\iota:T^*L\hookrightarrow \mathcal{S}$ for an auxiliary Lagrangian submanifold $L\subset \mathcal{S}$. We say that a {\em generalized Kähler potential} for $\lambda$ is a function $f\in C^\infty(L)$ such that $\Gr(df) =\lambda\cap \iota(T^*L)$.
\end{definition} 


By analogy with the K\"ahler case, in which the cohomology class $[F]$ of the 
K\"ahler form classifies the Morita bimodule up to isomorphism (See Example~\ref{ex: kahler case}), we propose the following definition of generalized K\"ahler  classes.

\begin{definition}[Generalized Kähler class] The generalized K\"ahler  class of a generalized K\"ahler manifold is the isomorphism class of an integrating $(1,1)$-morphism as in \eqref{diagram:(1,1)morphism}. 
\end{definition}  
Now that we have separated the holomorphic degrees of freedom from the smooth ones, we may fix the $(1,1)$-morphism and consider deformations only of the Lagrangian bisection of $\mathcal{S}$. In this way, 
\autoref{thm: reconstruction} may be applied to the construction and deformation of generalized Kähler metrics. The main idea is to deform the bisection of $\mathcal{S}$ by composing it with Lagrangian bisections of the groupoids that act upon it. Many of the explicit deformations of generalized K\"ahler metrics, such as those in~\cite{MR2217300, MR2681704,MR2287917}, are examples of this more general construction.

Suppose that we begin with the situation of \autoref{thm: key Lagrangian in the central brane}, in which a bisection $\lambda$ of the real symplectic core $\mathcal{S}$ is given by bisections $\lambda_Z, \lambda_W$ of $Z$ and $W$. We now act on $\lambda$ by a real Lagrangian section of the symplectic core groupoid of $D_+$.

A bisection of the core groupoid $K_{D_+}$ may be described as follows:  
\begin{equation}
\lambda_+ = \{\lambda_+(p)=(\uu(p), \uu(p), \lambda_B(\phi_A(p)), \lambda_A(p))\ :\ p\in X_+\},  \label{eq:non st core}
\end{equation}
where $\lambda_A, \lambda_B$ are $C^{\infty}$ bisections of $G_{\A_+}$, $G_{\B_+}$ respectively such that $\phi_B|_{X_+} \circ \phi_A|_{X_+} = \id_{X+}$, where $\phi_A, \phi_B$ are the groupoid isomorphisms induced by $\lambda_A, \lambda_B$.  Together these bisections define a bisection $\lambda_+$ of the core $K_{D_+}$.

As a consequence of \autoref{prop: morita core groupoids}, $\lambda_{+}$ acts on $\lambda$ as shown in Figure~\ref{coreact}, producing the composite bisections $\lambda_Z' = \lambda_A *\lambda_Z$, $\lambda_W' = \lambda_W * \lambda_B$, which determines a new bisection $\lambda'$ of $\mathcal{S}$.
 \begin{figure}[H]
 \begin{equation*}
  \qquad\adjustbox{scale=0.65}{\begin{tikzpicture}[scale=0.9,every node/.style={transform shape}]
\clip(-0.5,-0.5) rectangle (6.5,6.5);
  \draw[fill=black!10] (0,0) rectangle (6,6);
  \draw[step=2.0,black,thin] (0, 0) grid (6.0, 6.0);
  \path
    (1, 1) node {$\ii_A\ii_B\lambda_+(p)$}
    (3, 1) node {$\uu(\lambda_W(p)^{-1})$}
    (5, 1) node {$\uu_A\uu(x)$}
    (1, 3) node {$\uu\lambda_Z(x)^{-1}$}
    (3,3) node {$\lambda(x)$}
   (5, 3) node {$\uu\lambda_Z(x)$}
(1, 5) node {$\uu_A\uu(x)$}
(3, 5) node {$\uu\lambda_W(p)$}
(5, 5) node {$\lambda_+(p)$};
 \end{tikzpicture}}
\hspace{3em}
\adjustbox{scale=0.95}{
\begin{tikzpicture}[line cap=round,line join=round,>=stealth,x=1cm,y=1cm,scale=0.4]
		\clip(-2.7,-5.12) rectangle (11.7,5.3);
  \draw[fill=black!10] (0.44,-3.78) rectangle (6.86,2.64);
		\draw [<-,line width=0.9pt] (0.44,2.64) -- (6.86,2.64);
		\draw [<-,line width=0.9pt] (6.86,2.64) -- (6.86,-3.78);
		\draw [->,line width=0.9pt] (6.86,-3.78) -- (0.44,-3.78);
		\draw [->,line width=0.9pt] (0.44,-3.78) -- (0.44,2.64);
		\begin{scriptsize}
			
			\draw[color=black] (-0.2,-3.55) node {$p$};
			
			\draw[color=black] (7.3,-3.55) node {$x$};
			
			\draw[color=black] (7.3,3.0) node {$p$};
			
			\draw[color=black] (-0.2,3.07) node {$x$};
			\draw[color=black] (3.7,3.27) node {$\lambda_W(p)$};
			\draw[color=black] (8.2,-0.4) node {$\lambda_Z(x)$};
			\draw[color=black] (3.68,-4.39) node {$\lambda_W(p)^{-1}$};
			\draw[color=black] (-1.1,-0.4) node {$\lambda_Z(x)^{-1}$};
			\draw[color=black] (3.55,-0.5) node {$\lambda(x)$};
		\end{scriptsize}
	\end{tikzpicture} 
 \hspace{-1em}
  \begin{tikzpicture}[line cap=round,line join=round,>=stealth,x=1cm,y=1cm,scale=0.4]
		\clip(-3.5,-5.12) rectangle (9.7,5.3);
  \draw[fill=black!10] (0.44,-3.78) rectangle (6.86,2.64);
		\draw [<-,line width=0.9pt] (0.44,2.64) -- (6.86,2.64);
		\draw [<-,line width=0.9pt] (6.86,2.64) -- (6.86,-3.78);
		\draw [->,line width=0.9pt] (6.86,-3.78) -- (0.44,-3.78);
		\draw [->,line width=0.9pt] (0.44,-3.78) -- (0.44,2.64);
		\begin{scriptsize}
			
			\draw[color=black] (-0.2,-3.55) node {$p$};
			
			\draw[color=black] (7.3,-3.55) node {$p$};
			
			\draw[color=black] (7.3,3.0) node {$q$};
			
			\draw[color=black] (-0.2,3.07) node {$p$};
			\draw[color=black] (3.7,3.27) node {$\lambda_B(q)$};
			\draw[color=black] (8.2,-0.4) node {$\lambda_A(p)$};
			\draw[color=black] (3.68,-4.39) node {$\uu(p)$};
			\draw[color=black] (-1.1,-0.4) node {$\uu(p)$};
			\draw[color=black] (3.7,-0.5) node {$\lambda_{+}(p)$};
		\end{scriptsize}
	\end{tikzpicture}}
\end{equation*}
\caption{Action of core groupoid on symplectic core}\label{coreact}
\end{figure}

 The proof of \autoref{thm: key Lagrangian in the central brane} then immediately implies that the resulting bisection $\lambda'$ is Lagrangian, yielding the following.
\begin{lemma}
\label{prop:def lag real core} The bisection $\lambda'$ is Lagrangian if $\lambda_{+}$ is a real Lagrangian bisection of $K_{D_+}$. The multiplicative Lagrangian bisections induced by $\lambda'$ by \autoref{thm: key Lagrangian in the central brane} are then given by  $\Lambda_Z'= \Lambda_A\ast \Lambda_Z$ and $\Lambda_W'=\Lambda_W\ast \Lambda_B$, where
\begin{align}
  \begin{aligned} 
    \Lambda_A(b)&=(\lambda_A(\ttt(b)),b,\phi_B^{-1}(b),\lambda_A(\sss(b))), \quad b\in G_{\B_+};  \\  
    \Lambda_B(a)&=(\phi_A^{-1}(a),\lambda_B(\sss(a)),\lambda_B(\ttt(a)),a), \quad a\in G_{\A_+}. 
  \end{aligned}   \label{eq:def branes}  
\end{align} \end{lemma}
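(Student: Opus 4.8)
The plan is to prove the two assertions of the lemma separately: the Lagrangianity of $\lambda'$ from the symplectic Morita equivalence of \autoref{prop: morita core groupoids}, and the explicit formulas for the induced multiplicative bisections from the construction in the proof of \autoref{thm: key Lagrangian in the central brane}. For the first assertion, recall that by \autoref{prop: morita core groupoids} the real symplectic core $(\mathcal{S},\omega_{\mathcal{S}})$ is a real symplectic Morita equivalence, so the action of $(K_{D_+}^\R,\omega_+)$ on $\mathcal{S}$ is symplectic and its graph $\{(k,s,k\cdot s)\}\subset K_{D_+}^\R\times\mathcal{S}\times(-\mathcal{S})$ is Lagrangian. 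Since the action is principal and $\lambda_+$ is a real Lagrangian bisection of $K_{D_+}$, the composite $\lambda'=\lambda_+\cdot\lambda$ is a well-defined smooth bisection of $\mathcal{S}$; being the composition of the Lagrangian $\lambda_+\times\lambda$ with the Lagrangian action graph, it is again Lagrangian. Reading off the two projections of $\lambda'$ from the concatenation of squares in Figure~\ref{coreact} gives $\lambda_Z'=\lambda_A\ast\lambda_Z$ and $\lambda_W'=\lambda_W\ast\lambda_B$.

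For the second assertion I would apply the construction in the proof of \autoref{thm: key Lagrangian in the central brane} to $\lambda'$. Its components $\lambda_Z',\lambda_W'$ induce groupoid isomorphisms which, by functoriality of the assignment in \autoref{prop: isomorphism induced by bisection} under composition of bisections, factor as $\phi_Z'=\phi_A\circ\phi_Z$ and $\phi_W'=\phi_W\circ\phi_B$, where $\phi_A,\phi_B$ are the automorphisms of $G_{\A_+},G_{\B_+}$ determined by $\lambda_A,\lambda_B$ subject to $\phi_B|_{X_+}\circ\phi_A|_{X_+}=\id$ as in \eqref{eq:non st core}. Substituting $\lambda_Z'=\lambda_A\ast\lambda_Z$ and $(\phi_W')^{-1}=\phi_B^{-1}\circ\phi_W^{-1}$ into the defining quadruple $\Lambda_Z'(b)=(\lambda_Z'(\ttt(b)),b,(\phi_W')^{-1}(b),\lambda_Z'(\sss(b)))$ for $b\in G_{\B_-}$, and regrouping the entries according to the Lu--Weinstein action \eqref{eq:luweiact}, I would recognize the outcome as the concatenation of $\Lambda_Z$ with the bisection $\Lambda_A$ of \eqref{eq:def branes}. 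One first checks that $\Lambda_A$ is indeed a multiplicative bisection of $D_+$, using $\phi_B^{-1}|_{X_+}=\phi_A|_{X_+}$ to verify that its entries meet the composability constraints $\ttt(x)=\ttt(u)$, $\sss(v)=\sss(y)$ of \eqref{eq:luweidougro}; the identity $\Lambda_Z'=\Lambda_A\ast\Lambda_Z$ then follows, and the symmetric computation yields $\Lambda_W'=\Lambda_W\ast\Lambda_B$.

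The step I expect to be the main obstacle is precisely this last verification: it requires careful bookkeeping of which source and target maps ($\sss,\ttt$ on the side groupoids, versus $\sss_A,\ttt_A,\sss_B,\ttt_B$ on the double groupoid) govern each of the four slots of the Lu--Weinstein quadruples, of the moment maps for the $D_+$-action on $\mathcal{Z}$ and $\mathcal{W}$, and of the order in which the induced isomorphisms compose. Once these conventions are fixed to agree with the proof of \autoref{thm: key Lagrangian in the central brane}, the computation is mechanical, since each entry of $\Lambda_Z'$ and $\Lambda_W'$ is a composite of the bisection values $\lambda_A,\lambda_B,\lambda_Z,\lambda_W$ together with their images under $\phi_A,\phi_B,\phi_Z,\phi_W$.
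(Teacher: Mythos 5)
Your proposal is correct and takes essentially the same route as the paper: the paper also obtains $\lambda'$ from the symplectic action of \autoref{prop: morita core groupoids} (as in Figure~\ref{coreact}) and reads off both the Lagrangianity and the identities $\Lambda_Z'=\Lambda_A\ast\Lambda_Z$, $\Lambda_W'=\Lambda_W\ast\Lambda_B$ from the construction in the proof of \autoref{thm: key Lagrangian in the central brane}. Your only addition is to make explicit the composition-of-Lagrangian-correspondences argument (the Lagrangian action graph composed with $\lambda_+\times\lambda$) that the paper compresses into the phrase ``immediately implies,'' which is a faithful expansion rather than a different method.
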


  Based on the setup described above, we obtain a description of a group acting transitively on lagrangian bisections of the real symplectic core, in which the space of generalized K\"ahler metrics in a fixed generalized K\"ahler class sits as an open set.  Summarizing the above, we have the following. 
  \begin{theorem}
The space of Lagrangian bisections of the real symplectic core $\mathcal{S}$ is a bitorsor for the groups of real Lagrangian bisections of $K_{D_\pm}$, via the action above.
  \end{theorem}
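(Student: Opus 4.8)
The plan is to recognise the statement as an instance of the general fact that the Lagrangian bisections of a \emph{biprincipal} symplectic bibundle form a bitorsor under the Lagrangian-bisection groups of the two groupoids it connects. By \autoref{prop: morita core groupoids}, because our $(1,1)$-morphism is built from a symplectic double groupoid of Lu-Weinstein type, the real symplectic core $(\mathcal{S},\omega_{\mathcal{S}})$ is a real symplectic Morita equivalence: it carries commuting \emph{principal} symplectic actions of $(K_{D_-}^\R,\omega_-)$ and $(K_{D_+}^\R,\omega_+)$ with moment maps $p_\pm:\mathcal{S}\to X_\pm$, and the induced action on bisections is exactly the one recorded in \autoref{prop:def lag real core}. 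I would first observe that the real Lagrangian bisections of each symplectic groupoid $K_{D_\pm}^\R$ form a group under bisection multiplication, since the symplectic form is multiplicative, so that the product and inverse of Lagrangian bisections are again Lagrangian; these are the two groups in the statement.

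The central tool will be a ``two-out-of-three'' principle. For the right action, the graph $\Gamma=\{(m,k,m\cdot k)\}\subset \mathcal{S}\times K_{D_+}^\R\times(-\mathcal{S})$ is Lagrangian by definition of a symplectic action. Suppose $\lambda,\lambda'\subset\mathcal{S}$ are bisections and $\mu_+\subset K_{D_+}^\R$ is a bisection with $\lambda'=\lambda\cdot\mu_+$; writing $\lambda(x)$ for the point of $\lambda$ over $x\in X_-$ via $p_-$, the assignment $x\mapsto(\lambda(x),k(x),\lambda'(x))$ defines a map $X_-\to\Gamma$, where $k(x)\in\mu_+$ is the unique element with $\ttt(k(x))=p_+(\lambda(x))$. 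Since $\Gamma$ is Lagrangian this map pulls $\omega_{\mathcal{S}}\oplus\omega_+\oplus(-\omega_{\mathcal{S}})$ back to zero, so if two of the three pullbacks $\lambda^*\omega_{\mathcal{S}}$, $\mu_+^*\omega_+$, $(\lambda')^*\omega_{\mathcal{S}}$ vanish then so does the third; as each of $\lambda,\lambda',\mu_+$ is a bisection and hence half-dimensional, isotropy forces Lagrangianity. This single principle yields both well-definedness of the action (Lagrangian $\lambda$ and Lagrangian $\mu_+$ give Lagrangian $\lambda\cdot\mu_+$) and, later, that the bisection connecting two Lagrangian bisections is itself Lagrangian.

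Freeness and transitivity then follow from principality. Given Lagrangian bisections $\lambda,\lambda'\subset\mathcal{S}$, principality of the $K_{D_+}^\R$-action along the fibres of $p_-$ produces, for each $x\in X_-$, a \emph{unique} $k(x)\in K_{D_+}^\R$ with $\lambda'(x)=\lambda(x)\cdot k(x)$; smoothness of the division map shows $\mu_+=\{k(x)\}$ is a smooth bisection (its source and target are the diffeomorphisms $x\mapsto p_+(\lambda'(x))$ and $x\mapsto p_+(\lambda(x))$ onto $X_+$), and the two-out-of-three principle makes it Lagrangian, giving transitivity; uniqueness of $k(x)$ gives freeness. Running the same argument with principality along the fibres of $p_+$ produces the free transitive left action of the Lagrangian bisections of $K_{D_-}^\R$. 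The two actions commute because $\mathcal{S}$ is a bibundle, so $(\mu_-\cdot\lambda)\cdot\mu_+=\mu_-\cdot(\lambda\cdot\mu_+)$ is the pointwise consequence of the commuting groupoid actions, and the action axioms reduce to the compatibility $(\lambda\cdot\mu_+)\cdot\mu_+'=\lambda\cdot(\mu_+\ast\mu_+')$ of a groupoid action with bisection multiplication. Finally, the space of Lagrangian bisections is nonempty, since \autoref{thm: the (1,1) morphism of a GK} exhibits $M$ itself as a Lagrangian bisection of $\mathcal{S}$, so we obtain a genuine bitorsor.

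I expect the main obstacle to be reconciling this abstract bibundle argument with the concrete Lu-Weinstein description in \autoref{prop:def lag real core}: one must verify, using the identification \eqref{eq:ident sympl core} of $\mathcal{S}$ with the core groupoids $K_{D_\pm}$, that the moment maps $p_\pm$ correspond to the source and target of the core groupoid, that the bibundle is the tautological self-equivalence of $K_{D_+}^\R\cong K_{D_-}^\R$, and in particular that the division element $k(x)$ constructed abstractly lands in the \emph{core} groupoid (equivalently, agrees with the composite bisections $\lambda_A\ast\lambda_Z$ and $\lambda_W\ast\lambda_B$ of \autoref{prop:def lag real core}) rather than in the ambient double groupoid. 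Once this identification is in place, smoothness of the division map and the matching of the two group actions are immediate from the groupoid structure.
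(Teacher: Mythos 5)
Your proposal is correct, and it reaches the statement by a route that is organized differently from the paper's. The paper gives no standalone proof: the theorem is stated as a summary of \autoref{prop: morita core groupoids} (which, under the Lu--Weinstein hypothesis, identifies $(\mathcal{S},\omega_{\mathcal{S}})$ with the core groupoid $(K_{D_\pm}^\R,\omega_\pm)$ via \eqref{eq:ident sympl core} and the two actions with left and right multiplication), of the composite-bisection description of the action in Fig.~\ref{coreact}, and of \autoref{prop:def lag real core}, whose preservation-of-Lagrangianity claim defers to the explicit Lu--Weinstein pullback computations in the proof of \autoref{thm: key Lagrangian in the central brane}; in that picture the bitorsor property reduces to the standard fact that the group of Lagrangian bisections of a symplectic groupoid is a bitorsor over itself under left and right translation. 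You instead run the intrinsic bibundle argument: principality (again supplied by \autoref{prop: morita core groupoids}) yields a smooth division bisection between any two bisections of $\mathcal{S}$, and your ``two-out-of-three'' principle --- pulling back $\omega_{\mathcal{S}}\oplus\omega_+\oplus(-\omega_{\mathcal{S}})$ along a map of $X_-$ into the Lagrangian graph of the action and upgrading isotropy to Lagrangianity by the half-dimension count --- handles in one stroke both the well-definedness of the action on Lagrangian bisections and the Lagrangianity of the connecting bisection, a step the paper leaves implicit. Your route is more general and more economical: it applies verbatim to any biprincipal symplectic bibundle between symplectic groupoids and never re-enters the model computations of \autoref{thm: key Lagrangian in the central brane}. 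The paper's route, by contrast, produces the explicit composite formulas $\lambda_Z'=\lambda_A\ast\lambda_Z$ and $\lambda_W'=\lambda_W\ast\lambda_B$, which are precisely what the subsequent Hamiltonian-deformation arguments and \autoref{thm: reconstruction} consume. The reconciliation you flag at the end (matching $p_\pm$ with the core source and target, and checking that the division element lands in $K_{D_+}^\R$) is indeed the only remaining bookkeeping, and it is already provided by \autoref{prop: morita core groupoids} together with \eqref{eq:ident sympl core}; your appeal to \autoref{thm: the (1,1) morphism of a GK} for nonemptiness is a point the paper does not make explicit but should.
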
 
Note that if we use a holomorphic Lagrangian bisection $\lambda_+$ of $K_{D_+}$, then $\Lambda_A$ and $\Lambda_B$ are holomorphic Lagrangian, and as result, the generalized K\"ahler metric determined by $\lambda$ will not be affected by composition with $\lambda_+$.  This is analogous to the well-known ambiguity in the usual K\"ahler potential, whereby K\"ahler potentials differing by the real part of a holomorphic function give rise to the same metric.

\begin{example}[Hamiltonian deformations of generalized K\"ahler metrics] A  plentiful source of real Lagrangian bisections of the core of $D_+$ is the following:  begin with a real-valued function $f \in C^{\infty}(X_+, \R)$, pull it back to the double groupoid $D_+$ as follows:
\begin{align*}
\ttt^*f\in C^\infty(G_{\A_+},\R),\qquad
-\sss^*f\in C^\infty(G_{\B_+},\R),\qquad
-\sss_A^*\ttt^* f\in C^\infty(D_+,\R).
\end{align*}
The corresponding Hamiltonian vector fields $X_A, X_B, X_D$ are then related as follows:
\begin{align*}
\begin{split}
(\sss_A)_* X_D &= X_A\\
 (\ttt_B)_* X_D &= X_B
\end{split}
\begin{split}
(\ttt_A)_* X_D &= 0\\
 (\sss_B)_* X_D &= 0.
\end{split}
\end{align*}
As a result, the simultaneous Hamiltonian flow preserves the core $K_+$, taking its identity bisection to a real Lagrangian bisection $\lambda_+(t)$ as in \eqref{eq:non st core}.

By composition as in Lemma~\ref{prop:def lag real core}, and by flowing for sufficiently small times (to preserve nondegeneracy and/or positivity), such  Hamiltonian bisections of $K_{D_+}$ act on the space of generalized  K\"ahler metrics in a fixed class.  In the following example, we demonstrate this more explicitly in a special case. 
\end{example}

\begin{example}[Hamiltonian deformations in the commuting type case]\label{ex:ham def com type} We continue the discussion about commuting type generalized K\"ahler  manifolds started in \S\ref{subsec: comm}. In what follows, we combine \autoref{The induced isomorphisms} with \autoref{prop:def lag real core}. Let $f$ be a real-valued function on $M_1 \times M_2$. In order to describe the Hamiltonian deformation of the generalized K\"ahler  metric via $f$, it suffices to examine how $F_A\in\Omega^2(G_A)$ and $F_B\in\Omega^2(G_B)$ are changed. 
	Notice that $F_B$ is the pullback of $\Omega_{\mathcal{Z}}$ by the map 
	\[
	\Lambda_{Z}:P(M_1) \times T^*M_2 \to \mathcal{Z}, \qquad    (x,y, u) \mapsto ( 0_x, 0_y, u, u). 
	\]
	For the first two components of $F_B$, the Hamiltonian deformation determined by $f$ changes their pullback $(F_1,-F_1)$ by adding $(td_1d_1^c f,-td_1d_1^c f)$, where $d_1, d_1^c$ are, respectively, the de Rham and the $I_1$-twisted de Rham differentials on $M_1$; on the other hand, $-(\omega_2)_I$ is the third component of $F_B$ and is unchanged by the Hamiltonian deformation. A similar discussion applies to $F_A$. Thus the Hamiltonian flow of $f$ deforms $F_A$ and $F_B$ to 
	\begin{align*}
	    F_A^t&=(-(\omega_1)_I,(F_2 + td_2d_2^c f,-(F_2+td_2d_2^cf))) \in \Omega^2(T^*M_1\times P(M_2))\\
     F_B^t&= ((F_1 + td_1d_1^cf, -(F_1+td_1d_1^cf)), -(\omega_2)_I)\in \Omega^2(P(M_1)\times T^*M_2).
	\end{align*}	
	The real Poisson structures $\pi_A^t$ and $\pi_B^t$ induced by $F_A^t$ and $F_B^t$ can be described simply by inverting the symplectic forms $F_1 + td_1d_1^c f$ and $F_2+ td_2d_2^cf$. Using \eqref{eq: from poisson to hermtian forms}, we derive the following expressions for the deformed Hermitian forms $\omega_{\pm}^t$ in terms of the original ones $\omega_\pm$:
	\[
	\omega_{\pm}^t = \omega_{\pm} + t (d_2d_2^cf \pm d_1d_1^cf),
	\]
	thus recovering the deformation found in~\cite[Proposition 2]{MR2287917}.
\end{example}
\begin{remark}
	We refer the reader to \cite[Section 8]{MR4466669} for Hamiltonian deformations of generalized K\"ahler metrics of symplectic type.
\end{remark}

\section{Generalized K\"ahler structures on compact Lie groups}\label{sec: lie groups}
We illustrate our main results by integrating a well-known family of generalized K\"ahler structures carried by the even-dimensional semisimple compact Lie groups.  To do this, we must construct holomorphic symplectic manifolds related by real structures as in~\autoref{thm: the (1,1) morphism of a GK}, together with a real Lagrangian bisection as in~\autoref{thm: reconstruction} which determines the generalized K\"ahler metric. We show that for such compact Lie groups $K$, the required holomorphic symplectic manifolds may be described as moduli spaces of flat  $G$-connections on an annulus with decorated boundary, where $G$ is the complexification of $K$.  

\subsection{Generalized K\"ahler structures on compact Lie groups and Manin triples}\label{subsec:intgkcomlie} 

Let $K$ be an even-dimensional, connected, simply-connected Lie group which is a product of a compact semisimple factor and an abelian factor, and let $G$ be its complexification. Take a bi-invariant metric $g$ on $K$ determined by an Ad-invariant metric $s$ on its Lie algebra $\mathfrak{k}$, and suppose that $I: \mathfrak{k} \rightarrow \mathfrak{k}$ is an orthogonal complex structure whose $+i$-eigenspace $\mathfrak{g}_+\subset \mathfrak{g}$ is a Lie subalgebra of the Lie algebra of $G$. Let  $\mathfrak{g}_-=\overline{\mathfrak{g}_+ } $ denote the $-i$-eigenspace of $I$. Such complex structures are determined as follows: take a triangular decomposition $\mathfrak{g}=\mathfrak{n}_+\oplus \mathfrak{h}\oplus \mathfrak{n}_-$, that is, $\mathfrak{h}=\mathfrak{t}_{\C}$ is the complexification of the Lie algebra $\mathfrak{t}\subset \mathfrak{k}$ of the product of a maximal torus with the abelian factor, and $\mathfrak{n}_+,\mathfrak{n}_-$ are the sums of corresponding positive and negative roots. Then, by choosing an $s$-orthogonal complex structure on $\mathfrak{t}$ with corresponding decomposition $\mathfrak{h}=\mathfrak{t}_+\oplus \mathfrak{t}_-$, we can define $\mathfrak{g}_+=\mathfrak{n}_+\oplus \mathfrak{t}_+$ and $\mathfrak{g}_-= \mathfrak{t}_-\oplus\mathfrak{n}_-$ as the $\pm i$-eigenspaces of a complex structure $I:\mathfrak{k}\to \mathfrak{k}$, see \cite[Remark 3.4]{MR2642360}.   Consequently, $(\mathfrak{g} ,\mathfrak{g}_+ ,{\mathfrak{g}_- }) $ is a Manin triple of complex Lie algebras with respect to the complexified pairing $s_{\mathbb{C}}$ on $\g$. In the rest of this section, we denote by $s_R$ and $s_I$ the real and imaginary parts of $s_{\mathbb{C}}$ and we denote by $G_+,G_- \subset G$ the connected subgroups integrating $\mathfrak{g}_+ $ and ${\mathfrak{g}_- } $, respectively.

Our main object of study in this section is the generalized K\"ahler structure on $K$ determined by the above choices and given (as a bihermitian structure) by 
\begin{align}  (g,I^r,I^l),    \label{eq:gkcomlie} \end{align} 
where $I^l,I^r$ are the left and right invariant extensions of $I$, see \cite[Example 2.25]{MR3232003}. 

In addition to the complex Manin triple $(\mathfrak{g} ,\mathfrak{g}_+ ,{\mathfrak{g}_- })$ described above, we obtain two other real Manin triples, $(\mathfrak{g} ,\mathfrak{k} , \mathfrak{g}_\pm)$, where the inner product is the imaginary part $s_I$. Throughout this section, we assume that the multiplication maps $G_\pm \times K\to G$ are diffeomorphisms, a fact guaranteed if $K$ is simply connected and semisimple, see \cite[Prop. 2.43]{luphd}. This also holds for (the universal cover of) the group $SU(2)\times U(1)$, which we will use below to explicitly illustrate our construction. A consequence of the identifications $G=G_\pm \times K$ is that we obtain \emph{dressing actions} of $G\pm$ on $K$,  as follows: 
\begin{align}  ak={}^aka^k, \quad \quad bk={}^bkb^k, \quad  a\in G_-,\, b\in G_+,\,  k \in K ; \label{eq:dres1} \end{align}  
where ${}^ak,{}^bk\in K$ denote the left dressing action of $a$ (resp. $b$) on $k$ and $a^k\in G_-$, $b^k\in G_-$, see \cite[Prop. 2.41]{luphd}.
\begin{example}\label{ex:hopf surf 0} Consider $SU(2)\times U(1)$ with its complexification $SL_2(\mathbb{C}) \times \mathbb{C}^*$. We will pass to the universal covers ${G} =SL_2(\mathbb{C}) \times \mathbb{C} \rightarrow SL_2(\mathbb{C}) \times \mathbb{C}^*$ and $K=SU(2)\times \mathbb{R}\rightarrow SU(2)\times U(1)$ for simplicity. We use the pairing on $\mathfrak{g} $ given by $s_\mathbb{C}((X,u),(Y,v))=-\frac{1}{2} \text{tr} (XY)-uv $. Consider the basis of $\mathfrak{su}(2)$:
\[ u_1=\begin{bmatrix} 0 & i \\ i & 0 \end{bmatrix}, \quad u_2=\begin{bmatrix} 0 & -1 \\ 1 & 0 \end{bmatrix},\quad u_3=\begin{bmatrix} i & 0 \\ 0 & -i \end{bmatrix} \]
and $v=i\in i\mathbb{R}=\mathfrak{u}(1)$. So $\{u_i,v\}_{i=1,2,3}$ is a basis of $\mathfrak{k}$ and we can define a complex structure $I:\mathfrak{k} \rightarrow \mathfrak{k}$ by putting $I(u_1)=u_2$ and $I(v)=u_3$. Then the corresponding Lagrangian subgroups of $G$ are
\[ G_-=\left\{\left.\left(\begin{bmatrix} e^{iz} & w \\ 0 & e^{-iz} \end{bmatrix},z\right)\right|w,z\in \mathbb{C}  \right\}, \quad G_+=\left\{\left.\left(\begin{bmatrix} e^{-iz} & 0 \\ w & e^{iz} \end{bmatrix},z\right)\right|w,z\in \mathbb{C}  \right\}.\vspace{-6mm}\] 
\end{example}
	In the remainder of this section, we explicitly construct the integration of the above generalized K\"ahler structure \eqref{eq:gkcomlie}, using the theory of quasi-Hamiltonian moduli spaces of $G$-connections on surfaces.
	
	
	\subsection{Quasi-Hamiltonian structures on spaces of groupoid representations} 
	
	We begin with a brief review of the construction of symplectic moduli spaces of flat bundles over decorated surfaces, following \cite{MR3330247,MR3001637}. Let $\Sigma$ be a compact connected and oriented surface, equipped with a finite set of marked points $V\subset \partial \Sigma$ which intersects every boundary component; in this situation, we say that $(\Sigma,V)$ is a {\em marked surface}. Our main example will use an annulus with four marked points on each boundary circle; see Fig.~\ref{fig:doucomlie}. The primary object of interest is the space $M$ of groupoid morphisms 
    \[
    M=\hom(\Pi_1(\Sigma,V),G),
    \]
    where $\Pi_1(\Sigma,V)$ is the fundamental groupoid of $\Sigma $ based at $V$. The manifold $M$ parametrizes isomorphism classes of flat $G$-bundles over $\Sigma$ endowed with a trivialization over $V$. The key insight which makes such spaces relevant to the integration problem is that they carry a {\em quasi-Hamiltonian} structure \cite{MR1638045}, which gives rise to symplectic quotient spaces via a moment map reduction procedure.  The appropriate moment map is described as follows. 

	The orientation on $\Sigma$ allows us to define a {\em boundary graph $\Gamma $} in which $V$ is the set of vertices; the set of edges $E$ consists of the oriented boundary arcs in $\partial \Sigma$ between pairs of points in $V$.  This boundary graph then determines a group-valued moment map
    \[
    \mu: M\to G^E,
    \]
    sending a representation $\rho$ to $\prod_{e\in E} \rho(e)$.  This map is equivariant for the action of the group $G^V$ of gauge transformations, which acts as follows.   A gauge transformation $g\in G^V$ acts on a representation $\rho$ via 
        \begin{align} (g\cdot \rho)(\gamma)=g_{\mathtt{t}(\gamma)}\rho(\gamma)g_{\mathtt{s}(\gamma)}^{-1}\quad  \forall\ \rho \in M, \ \gamma\in \Pi_1(\Sigma,V),\label{eq:gauact} \end{align}   
   and on an element $h\in G^E$ via $(g\cdot h)(e) = g_{\mathtt{t}(e)}h(e) g_{\mathtt{s}(e)}^{-1}$.    
The manifold $M$, which inherits a complex structure from the complex Lie group $G$, is equipped with a holomorphic 2-form $\Omega_{\Sigma,V}$ known as a quasi-symplectic structure, which enables the procedure of quasi-Hamiltonian reduction developed in \cite{MR1638045}.  In the next subsection, we use this procedure to produce the holomorphic symplectic manifolds we require.   

To write the quasi-symplectic 2-form $\Omega_{\Sigma,V}$ explicitly, we choose a triangulation of $\Sigma$ which includes $V$ and $E$, but which may contain new vertices and edges in the interior.  Once we know the quasi-symplectic form associated to a single triangle, we may assemble these through a natural reduction procedure as described in \cite[\S 1.2]{MR3330247}.  For this reason, we begin with the example of the triangle, following \cite{MR3330247}. 
	\begin{example} A disc with $n$ marked points on its boundary will be denoted by $P_n$ in the following examples. In the case of a triangle $P_3$, we use the homotopy classes of two positively oriented boundary edges of $P_3$ to make the identification $\hom(\Pi_1(P_3),G)\cong G^2$. 
    \[\begin{tikzpicture}[line cap=round,line join=round,>=stealth,x=1cm,y=1cm,scale=0.3]
			\clip(-3.3,-4.73) rectangle (11.7,3.6);
\coordinate (a) at (0.44,2.64);
 \coordinate (b) at (6.86,2.64);
 \coordinate (c) at (6.86,-3.78);
 \coordinate (d) at (0.44,-3.78);
   \draw[fill=black!10] (a)--(b)--(c)--cycle;
			\draw [<-,line width=0.9pt] (0.44,2.64) -- (6.86,2.64);
			\draw [<-,line width=0.9pt] (6.86,2.64) -- (6.86,-3.78);
			\draw [->,line width=0.9pt] (6.86,-3.78) -- (0.44,2.64);
			\begin{scriptsize}
				
				
				
				
				\draw[color=black] (3.7,3.27) node {$g_2$};
				\draw[color=black] (7.8,-0.4) node {$g_1$};
				\draw[color=black] (4.4,0.2) node {$g_2g_1$};
			\end{scriptsize}
		\end{tikzpicture} \]
    The quasi-Hamiltonian $G^3$-space structure on this space has momentum map $\mu:G^2 \rightarrow G^3$ given by  $\mu(g_2,g_1)=(g_2,g_1,g_1^{-1}g_2^{-1})$, and quasi-symplectic 2-form  
		\begin{equation}  
			\Omega_{P_3}|_{(g_1,g_2)}=\frac{1}{2}s_{\mathbb{C} }\left( g_2^*\theta^l,g_1^*\theta^r \right) \quad \forall g_1,g_2\in G ;\label{eq:polwie} 
		\end{equation} 
		where $\theta^l,\theta^r$ are the Maurer-Cartan 1-forms on $G$ and the expressions $g_1^*, g_2^*$ denote pullback by the two projections $G^2\to G$.   
        \end{example}
	
	\begin{example}\label{ex:2forsqu} Consider a square $P_4$ with four marked points and fundamental groupoid generators as shown:
    \[\begin{tikzpicture}[line cap=round,line join=round,>=stealth,x=1cm,y=1cm,scale=0.3]
			\clip(-3.3,-4.73) rectangle (11.7,3.6);
\coordinate (a) at (0.44,2.64);
 \coordinate (b) at (6.86,2.64);
 \coordinate (c) at (6.86,-3.78);
 \coordinate (d) at (0.44,-3.78);
   \draw[fill=black!10] (a)--(b)--(c)--(d)--cycle;
			\draw [<-,line width=0.9pt] (0.44,2.64) -- (6.86,2.64);
			\draw [<-,line width=0.9pt] (6.86,2.64) -- (6.86,-3.78);
			\draw [->,line width=0.9pt] (6.86,-3.78) -- (0.44,-3.78);
			\draw [->,line width=0.9pt] (0.44,-3.78) -- (0.44,2.64);
			\draw [dashed,line width=0.9pt] (6.86,-3.78) -- (0.44,2.64);
			\begin{scriptsize}
				
				
				
				
				\draw[color=black] (3.7,3.27) node {$g_3$};
				\draw[color=black] (8.2,-0.4) node {$g_2$};
				\draw[color=black] (3.68,-4.39) node {$g_1$};
				\draw[color=black] (-1.1,-0.4) node {$g_4$};
				\draw[color=black] (4.4,0.2) node {$g_3g_2$};
			\end{scriptsize}
		\end{tikzpicture} \]
        The quasi-symplectic 2-form  on $\hom(\Pi_1(P_4),G)$ is given as follows: we use the identification $\hom(\Pi_1(P_4),G)\cong G^3$ with coordinates $(g_1,g_2,g_3)$ corresponding to the generators shown above, so that $g_4=g_3g_2g_1^{-1}$.  The 2-form is then  
		\begin{align*} &\Omega_{P_4}|_{(g_1,g_2,g_3)}=\frac{1}{2}\left( s_{\mathbb{C} }(g^*_4\theta^l,g_1^*\theta^r)-s_{\mathbb{C} }(g_3^*\theta^l,g_2^*\theta^r) \right),
			  \end{align*}
		where we have used the decomposition of $P_4$ into triangles shown above to pull back the 2-form $\Omega_{P_3}$ in the previous example \eqref{eq:polwie}. 
  \end{example}
	\begin{example}\label{ex:2forluwei} If $(\Sigma,V)$ is an annulus with four marked points on each boundary component as in Fig.~\ref{fig:doucomlie}, we obtain its quasi-symplectic 2-form  $\Omega_{\Sigma,V}$ by cutting it along the dashed lines indicated in the figure and summing the forms obtained in the previous example:
		\begin{align} \Omega_{\Sigma,V}|_{(g_i,k_i)_{i=1\dots 4}}=\Omega_{P_4}|_{(k_3,g_4,k_4)}+\Omega_{P_4}|_{(k_2,g_3,k_3)}-\Omega_{P_4}|_{(k_1,g_1,k_4)}-\Omega_{P_4}|_{(k_2,g_2,k_1)}, \label{eq:luweicomlie} \end{align}
		where we use the generators $(g_i,k_i)_{i=1\dots 4}$ displayed in Fig.~\ref{fig:doucomlie} to describe the representation. 
	\end{example}  
 \begin{figure}[H]  \centering\begin{tikzpicture}[line cap=round,line join=round,>=stealth,x=1cm,y=1cm, scale=0.45]
			\clip(-5.171814835019457,6.0) rectangle (11.18595630529845,14.9);
   \coordinate (a) at (0,7);
 \coordinate (b) at (7,14);
 \coordinate (c) at (2,9);
 \coordinate (d) at (5,12);
   \draw[fill=black!10] (a) rectangle (b);
   \draw[fill=white] (c) rectangle (d);
			\draw [line width=1pt, <-] (0,7)-- (7,7);
			\draw [line width=1pt, ->] (7,7)-- (7,14);
			\draw [line width=1pt,->] (7,14)-- (0,14);
			\draw [line width=1pt, <-] (0,14)-- (0,7);
			\draw [line width=1pt,<-] (2,9)-- (5,9);
			\draw [line width=1pt,->] (5,9)-- (5,12);
			\draw [line width=1pt,->] (5,12)-- (2,12);
			\draw [line width=1pt,<-] (2,12)-- (2,9);
			\draw [dashed,line width=1pt,<-] (5,12)-- (7,14);
			\draw [dashed,line width=1pt,->] (7,7)-- (5,9);
			\draw [dashed,line width=1pt,->] (0,7)-- (2,9);
			\draw [dashed,line width=1pt,->] (0,14)-- (2,12);
			\begin{scriptsize}
				\draw [fill=black] (0,7) circle (1.7pt);
				\draw [fill=black] (7,7) circle (1.7pt);
				\draw[color=black] (3.548843996618713,6.4) node {$L_1$};
				\draw [fill=black] (7,14) circle (1.7pt);
				\draw[color=black] (7.6,10.6) node {$L_2$};
				\draw [fill=black] (0,14) circle (1.7pt);
				\draw[color=black] (3.548843996618713,14.4) node {$L_3$};
				\draw[color=black] (-0.49,10.64464319092457) node {$L_4$};
				\draw [fill=black] (2,9) circle (1.7pt);
				\draw [fill=black] (5,9) circle (1.7pt);
				\draw[color=black] (3.548843996618713,8.4) node {$L_5$};
				\draw [fill=black] (5,12) circle (1.7pt);
				\draw[color=black] (5.6,10.5) node {$L_6$};
				\draw [fill=black] (2,12) circle (1.7pt);
				\draw[color=black] (3.548843996618713,12.4) node {$L_7$};
				\draw[color=black] (1.4,10.6) node {$L_8$};
			\end{scriptsize}
		\end{tikzpicture}
		\begin{tikzpicture}[line cap=round,line join=round,>=stealth,x=1cm,y=1cm, scale=0.45]
			\clip(-3.171814835019457,6.0) rectangle (11.18595630529845,14.9);
    \coordinate (a) at (0,7);
 \coordinate (b) at (7,14);
 \coordinate (c) at (2,9);
 \coordinate (d) at (5,12);
   \draw[fill=black!10] (a) rectangle (b);
   \draw[fill=white] (c) rectangle (d);
			\draw [line width=1pt, <-] (0,7)-- (7,7);
			\draw [line width=1pt, ->] (7,7)-- (7,14);
			\draw [line width=1pt,->] (7,14)-- (0,14);
			\draw [line width=1pt, <-] (0,14)-- (0,7);
			\draw [line width=1pt,<-] (2,9)-- (5,9);
			\draw [line width=1pt,->] (5,9)-- (5,12);
			\draw [line width=1pt,->] (5,12)-- (2,12);
			\draw [line width=1pt,<-] (2,12)-- (2,9);
			\draw [dashed,line width=1pt,<-] (5,12)-- (7,14);
			\draw [dashed,line width=1pt,->] (7,7)-- (5,9);
			\draw [dashed,line width=1pt,->] (0,7)-- (2,9);
			\draw [dashed,line width=1pt,->] (0,14)-- (2,12);
			\begin{scriptsize}
				\draw [fill=black] (0,7) circle (1.7pt);
				\draw [fill=black] (7,7) circle (1.7pt);
				\draw[color=black] (3.548843996618713,6.654390523370536) node {$g_2$};
				\draw [fill=black] (7,14) circle (1.7pt);
				\draw[color=black] (7.66,10.64464319092457) node {$g_3$};
				\draw [fill=black] (0,14) circle (1.7pt);
				\draw[color=black] (3.548843996618713,14.347115238172494) node {$g_4$};
				\draw[color=black] (-0.79,10.64464319092457) node {$g_1$};
				\draw [fill=black] (2,9) circle (1.7pt);
				\draw [fill=black] (5,9) circle (1.7pt);
				\draw [fill=black] (5,12) circle (1.7pt);
				\draw [fill=black] (2,12) circle (1.7pt);
				\draw[color=black] (6.4,12.7) node {$k_3$};
				\draw[color=black] (6.4,8.286302910004277) node {$k_2$};
				\draw[color=black] (0.8094579827389956,8.386302910004277) node {$k_1$};
				\draw[color=black] (0.9094579827389956,12.7) node {$k_4$};
			\end{scriptsize}
		\end{tikzpicture}
		\caption{Lagrangian boundary conditions and a choice of independent generators for $\Pi_1(\Sigma,V)$}\label{fig:doucomlie}  \end{figure}
  \subsection{Integration via decorated moduli spaces} 
 In the following subsections, we construct the $(1,1)$-morphisms, real structures and real trivializations which integrate the generalized K\"ahler structures introduced in \S\ref{subsec:intgkcomlie}.  The holomorphic symplectic double groupoids and Morita bimodules are all given as moduli spaces of $G$-bundles on the annulus with varying boundary conditions as in Fig.~\ref{fig:doucomlie}.  We make use of the notation and assumptions introduced in \S\ref{subsec:intgkcomlie}. Since there is an identification $G\cong K \times G_\pm$, we can produce the four complex structures $\pm I_\pm$ on $K$ as follows. We may view $K$ as a complex homogeneous space for $G$ in four different ways: as a quotient of $G$ by the right $G_\pm$-action $K\cong G/G_\pm$ or as the quotient of $G$ by the left $G_\pm$-action $K\cong G_\pm\backslash G$. In either case, there is a residual $G$-action which, when restricted to $K$, explains the fact that the quotient complex structure on $K$ is left-invariant in the first case and right-invariant in the second one. 	
 
 \subsubsection{Holomorphic symplectic $(1,1)$-morphisms from boundary conditions} 
 
 To produce a holomorphic symplectic manifold from the complex quasi-Hamiltonian $G^V$-space $\hom(\Pi_1(\Sigma,V),G)$ corresponding to a marked surface, we introduce {\em Lagrangian boundary conditions}, namely, we 
 choose a Lagrangian complex subgroup $L_e\subset G$ for each boundary edge $e\in E$.  In our case $L_e$ will be chosen to be either $G_+$ or $G_-$, depending on which space in the $(1,1)$-morphism is required (see \eqref{eq:dec}).  These Lagrangian boundary conditions define a subgroup $L=\prod_{e\in E} L_e$ of the codomain $G^E$ of the moment map $\mu$, and hence determine a subspace of representations which satisfy the boundary conditions:
\[
\mu^{-1}(L)\subset \hom(\Pi_1(\Sigma,V),G).
\]
The subgroup of the gauge group preserving this subspace is
\[H=\prod_{v\in V}H_v \subset G^V,\]
where $H_v$ is the identity component of the intersection of $L_e$ with $L_{e'}$, for edges $e,e'$ meeting at $v$.  
If the gauge action of $H$ is free and proper, then the quotient 
	\[ \mathfrak{M}_{L}(G)= \mu^{-1}(L) / H \]    
	is a holomorphic symplectic manifold with symplectic form given by the pullback of $\Omega_{\Sigma,V}$, the quasi-symplectic form  on $\hom(\Pi_1(\Sigma,V),G)$ described above (see \cite[Thm. 1.4]{MR3330247}).


To obtain holomorphic symplectic manifolds of the form $\mathfrak{M}_{L}(G)$ which carry compatible groupoid, double groupoid, or bimodule structures, we may exploit the fact that the moduli-theoretic construction is functorial in morphisms between surfaces with boundary conditions.  This technique, which we learned from \cite{MR2352135,Severa,MR3001637}, can be used to produce many interesting symplectic groupoids and double groupoids occurring in Lie theory (see \cite{MR4705016}), as well as the ones we need below.

The main idea is the following:	if a marked surface $(\Sigma,V)$ may be expressed as the gluing of two copies of the same surface $\Sigma=\Sigma_1\cup_S \Sigma_1$, where $(\Sigma_1,V_1)$ is a marked surface and $S\subset \partial \Sigma_1$ is a collection of boundary arcs that satisfies $V_1\cap S=\emptyset$, we apply the moduli construction to the two inclusions 
   \[ 
           (\Sigma_1,V_1)\rightrightarrows(\Sigma,V)
   \]
to obtain a Lie groupoid structure
    \[ \hom(\Pi_1(\Sigma,V) ,G)\rightrightarrows \hom(\Pi_1(\Sigma_1,V_1),G), \]
    in which the source and target maps are obtained by pullback of flat connections along the inclusions $\Sigma_1\to \Sigma$; the groupoid multiplication is given by gluing flat $G$-connections \cite[Thm. 3.5,\S 3.3.1]{MR4705016}. 
    
    If in addition, $\Sigma_1$ is obtained in the same manner, namely, $\Sigma_1=\Sigma_0\cup_T \Sigma_0$ for another collection of boundary arcs $T\subset\partial\Sigma_0$, then $\Sigma$ can alternatively be glued from two copies of the surface $\Sigma_1'=\Sigma_0\cup_{S\cap \partial \Sigma_0} \Sigma_0$, giving rise to the following diagram of surface inclusions
    \begin{equation*}
 \adjustbox{scale=0.9,center}{%
 \begin{tikzcd}
				(\Sigma_0,V_0) \arrow[d, shift right] \arrow[d, shift left] \arrow[r,  shift left] \arrow[r, shift right] &   (\Sigma_1,V_1) \arrow[d,  shift right] \arrow[d, shift left] \\
				{(\Sigma'_1,V'_1)} \arrow[r,  shift right] \arrow[r, shift left]                                                                                                                                                 &   {(\Sigma,V)}                                                                                            
			\end{tikzcd}   }
 		\end{equation*} 
    Taking moduli spaces, the above diagram gives rise to a double groupoid structure on $\hom(\Pi_1(\Sigma,V) ,G)$ with sides $\hom(\Pi_1(\Sigma,V) ,G)$ and $\hom(\Pi_1(\Sigma'_1,V_1') ,G)$, see \cite[Corollary 4.9]{MR4705016} and its proof for a detailed discussion.

    We now apply this method to the oriented annulus $(\Sigma,V)$ equipped with four marked points on each boundary component. Note that $\Sigma$ may be viewed as $\Sigma_1\cup_S \Sigma_1$, where $(\Sigma_1,V_1)$ is the square (disc with four marked points) and $S$ consists of two boundary arcs. In addition, $\Sigma_1=\Sigma_0\cup_T \Sigma_0$, where $(\Sigma_0,V_0)$ is the disc with two marked points and $T$ consists of a single boundary arc. In this case, $ \Sigma_1'$ is homeomorphic to $\Sigma_1$, so the situation simplifies and the double groupoid structure on $\hom(\Pi_1(\Sigma,V) ,G)$ may be explicitly described as follows.
    \begin{lemma} The diagram below indicates the source and target maps of a double groupoid structure \begin{equation}
 		  \begin{tikzcd}[scale cd=.85]
				{\hom(\Pi_1(\Sigma,V),G)} \arrow[dd, "{[g_i,k_i]\mapsto [k_1,g_2,k_2]}"', shift right] \arrow[dd, "{[g_i,k_i]\mapsto [k_4,g_4,k_3]}", shift left] \arrow[rr, "{[g_i,k_i]\mapsto [k_4,g_1,k_1]}", shift left] \arrow[rr, "{[g_i,k_i]\mapsto [k_3,g_3,k_2]}"', shift right] &  & {\hom(\Pi_1(\Sigma_1,V_1),G)} \arrow[dd, "{[x,g,y]\mapsto y}"', shift right] \arrow[dd, "{[x,g,y]\mapsto x}", shift left] \\
				&  &                                                                                                                           \\
				{\hom(\Pi_1(\Sigma_1,V_1),G)} \arrow[rr, "{[x,g,y]\mapsto y}"', shift right] \arrow[rr, "{[x,g,y]\mapsto x}", shift left]                                                                                                                                                 &  & {\hom(\Pi_1(\Sigma_0,V_0),G)}                                                                                            
			\end{tikzcd}   \label{eq:dou gro modspa}
 		\end{equation} 
			with the generators displayed in Fig.~\ref{fig:doucomlie} and with multiplication given by cutting and gluing surfaces, as illustrated in Fig.~\ref{fig:mul via cut}.    
    \end{lemma}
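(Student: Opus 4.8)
The plan is to derive both groupoid structures on $\hom(\Pi_1(\Sigma,V),G)$ from the functoriality of the moduli construction under gluing of marked surfaces, as established in \cite[Thm.~3.5, \S 3.3.1, Cor.~4.9]{MR4705016}, and then to check that the two structures are mutually compatible so that they assemble into a double groupoid. First I would record the two independent ways of presenting the annulus as a gluing of two squares. Cutting $\Sigma$ along the pair of dashed arcs of Fig.~\ref{fig:doucomlie} that separate its left and right halves exhibits $\Sigma=\Sigma_1\cup_S\Sigma_1$ with $S$ a pair of boundary arcs satisfying $V_1\cap S=\emptyset$; cutting instead along the pair of dashed arcs separating the top and bottom halves gives a second such presentation $\Sigma=\Sigma_1'\cup_{S'}\Sigma_1'$. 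Since in both cases the halves are squares and $\Sigma_1'\cong\Sigma_1$, each presentation yields, by pullback of flat connections along the two inclusions $(\Sigma_1,V_1)\rightrightarrows(\Sigma,V)$, a Lie groupoid $\hom(\Pi_1(\Sigma,V),G)\rightrightarrows\hom(\Pi_1(\Sigma_1,V_1),G)$ whose source and target are the restriction maps and whose multiplication is gluing of $G$-connections. The same construction applied to $\Sigma_1=\Sigma_0\cup_T\Sigma_0$ gives the bottom row $\hom(\Pi_1(\Sigma_1,V_1),G)\rightrightarrows\hom(\Pi_1(\Sigma_0,V_0),G)$.

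Next I would read off the structure maps in the chosen generators. Identifying $\hom(\Pi_1(\Sigma_1,V_1),G)\cong G^3$ via $[x,g,y]$ and $\hom(\Pi_1(\Sigma,V),G)\cong G^8$ via the generators $(g_i,k_i)_{i=1,\dots,4}$ of Fig.~\ref{fig:doucomlie}, the restriction of a representation to each embedded square selects precisely the subfamily of generators lying on that half. The left–right cutting uses the outer edges $g_1,g_3$ and the top–bottom cutting uses $g_2,g_4$, so restriction reproduces exactly the four maps displayed in \eqref{eq:dou gro modspa}, and the two multiplications are the horizontal and vertical gluing operations on representations illustrated in Fig.~\ref{fig:mul via cut}.

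The crux of the argument — and the step I expect to be the main obstacle — is verifying that the horizontal and vertical groupoid structures are compatible, namely that the horizontal structure maps are morphisms for the vertical ones and vice versa, together with the submersion condition $(\sss_A,\sss_B):D\to G_A\times_M G_B$ from the definition of a double Lie groupoid, where here $G_A=G_B=\hom(\Pi_1(\Sigma_1,V_1),G)$ and $M=\hom(\Pi_1(\Sigma_0,V_0),G)$. I would deduce compatibility from the topological fact that the two cutting directions are transverse: their common refinement decomposes $\Sigma$ into four squares, so gluing two representations horizontally and then vertically yields the same flat connection as gluing vertically and then horizontally. This interchange of gluings is exactly the statement that the structure maps of one direction are groupoid morphisms for the other, which places us precisely in the setting of \cite[Cor.~4.9]{MR4705016} whose proof establishes the double groupoid axioms here. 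The submersivity holds because, in the generator model, all structure maps are induced by coordinate projections and products on powers of $G$, so the relevant fibered products are smooth and $(\sss_A,\sss_B)$ is a submersion — using the relaxed definition of double Lie groupoid from \cite{MR1054741} which drops the surjectivity requirement. The remaining work is the orientation and spanning-tree bookkeeping needed to match the gluing formulas with the explicit expressions in \eqref{eq:dou gro modspa} on the nose.
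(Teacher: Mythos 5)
Your overall strategy is sound, but it is a genuinely different route from the paper's own proof. You derive both groupoid structures from the cutting/gluing functoriality of the moduli construction and then verify horizontal--vertical compatibility via the interchange law coming from the two transverse systems of cuts, deferring the axioms to \cite[Cor.~4.9]{MR4705016}; this is in fact the route sketched in the paper's expository text \emph{preceding} the lemma. The paper's proof itself is a one-line structural identification: the side groupoid is $\hom(\Pi_1(\Sigma_1,V_1),G)\cong G^3\rightrightarrows G$ with multiplication $(x,g,y)\circ(y,h,z)=(x,gh,z)$, and the annulus moduli space, with the four restriction maps of \eqref{eq:dou gro modspa}, is precisely the fibre product defining the generalized Lu--Weinstein double groupoid \eqref{eq:luweidougro} built from two copies of this groupoid (see \autoref{rem:genluwei}); since this groupoid is transitive, the transversality hypothesis is automatic and all double-groupoid axioms, including the interchange law and the submersivity of $(\sss_A,\sss_B)$, come for free from the general construction. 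Beyond brevity, the paper's identification is not cosmetic: the fact that this double groupoid (and later the bimodules) are \emph{of Lu--Weinstein type} is used downstream, e.g.\ in \autoref{pro:doumorequ} and when invoking \autoref{thm: key Lagrangian in the central brane} via \autoref{rem: lu-wei bimodules}. Your approach buys geometric transparency (the interchange law is literally the commutation of the two gluings on the four-square refinement), but it does not by itself deliver this Lu--Weinstein identification.

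One concrete imprecision you should repair: the decompositions $\Sigma=\Sigma_1\cup_S\Sigma_1$ are \emph{not} obtained by cutting along the dashed arcs of Fig.~\ref{fig:doucomlie}. Those arcs join marked points, so cutting along a pair of them violates the requirement $V_1\cap S=\emptyset$ and produces two hexagons, whose moduli spaces are $G^5$, not $\hom(\Pi_1(\Sigma_1,V_1),G)\cong G^3$; the restriction maps would then not be the ones displayed in \eqref{eq:dou gro modspa}. The correct cuts are arcs running from the outer to the inner boundary through the \emph{interiors} of the top and bottom (respectively, left and right) quadrilaterals, avoiding all marked points; each resulting half then deformation retracts, rel marked points, onto a quadrilateral of the figure, and only with this choice do the restriction maps select the generator triples $[k_4,g_1,k_1]$, $[k_3,g_3,k_2]$, $[k_1,g_2,k_2]$, $[k_4,g_4,k_3]$ as claimed. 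With the cuts so placed, the rest of your argument (interchange from the common four-square refinement, and submersivity because all structure maps are coordinate projections in the identification with $G^8$) goes through.
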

    \begin{proof} The groupoid structure on $\hom(\Pi_1(\Sigma_1,V_1),G) \rightrightarrows \hom(\Pi_1(\Sigma_0,V_0),G)$ is given by the multiplication $(x,g,y)\circ (y,h,z)=(x,gh,z)$. The double groupoid structure is the (generalized) Lu-Weinstein double groupoid \eqref{eq:luweidougro} corresponding to both side groupoids being equal to $\hom(\Pi_1(\Sigma_1,V_1),G) \rightrightarrows \hom(\Pi_1(\Sigma_0,V_0),G)$, see \autoref{rem:genluwei}.  
    \end{proof}   
 \begin{figure}[H] \centering\begin{tikzpicture}[line cap=round,line join=round,>=stealth,x=1cm,y=1cm, scale=0.45]
			\clip(-1.171814835019457,6.0) rectangle (8.18595630529845,14.9);
   \coordinate (a) at (0,7);
 \coordinate (b) at (7,14);
 \coordinate (c) at (2,9);
 \coordinate (d) at (5,12);
 \coordinate (e) at (5,9);
 \coordinate (f) at (7,7);
 \coordinate (g) at (2,12);
 \coordinate (h) at (0,14);
   \draw[fill=black!10] (a) rectangle (b);
   \draw[fill=white] (c) rectangle (d);
   \fill[black!4] (e)--(f)--(b)--(d)--cycle;
			\draw [line width=1pt, <-] (0,7)-- (7,7);
			\draw [line width=1pt, ->] (7,7)-- (7,14);
			\draw [line width=1pt,->] (7,14)-- (0,14);
			\draw [line width=1pt, <-] (0,14)-- (0,7);
			\draw [line width=1pt,<-] (2,9)-- (5,9);
			\draw [line width=1pt,->] (5,9)-- (5,12);
			\draw [line width=1pt,->] (5,12)-- (2,12);
			\draw [line width=1pt,<-] (2,12)-- (2,9);
			\draw [dashed,line width=1pt,<-] (5,12)-- (7,14);
			\draw [dashed,line width=1pt,->] (7,7)-- (5,9);
			\draw [dashed,line width=1pt,->] (0,7)-- (2,9);
			\draw [dashed,line width=1pt,->] (0,14)-- (2,12);
			\begin{scriptsize}
				\draw [fill=black] (0,7) circle (1.7pt);
				\draw [fill=black] (7,7) circle (1.7pt);
				\draw[color=black] (3.548843996618713,6.654390523370536) node {$g_2$};
				\draw [fill=black] (7,14) circle (1.7pt);
				\draw[color=black] (7.66,10.64464319092457) node {$g_3$};
				\draw [fill=black] (0,14) circle (1.7pt);
				\draw[color=black] (3.548843996618713,14.347115238172494) node {$g_4$};
				\draw[color=black] (-0.79,10.64464319092457) node {$g_1$};
				\draw [fill=black] (2,9) circle (1.7pt);
				\draw [fill=black] (5,9) circle (1.7pt);
				\draw [fill=black] (5,12) circle (1.7pt);
				\draw [fill=black] (2,12) circle (1.7pt);
				\draw[color=black] (6.4,12.7) node {$k_3$};
				\draw[color=black] (6.4,8.286302910004277) node {$k_2$};
				\draw[color=black] (0.8094579827389956,8.386302910004277) node {$k_1$};
				\draw[color=black] (0.9094579827389956,12.7) node {$k_4$};
    \draw[color=black] (3.5,10.5) node {$\rho$};
			\end{scriptsize}
		\end{tikzpicture}
  \begin{tikzpicture}[line cap=round,line join=round,>=stealth,x=1cm,y=1cm, scale=0.45]
			\clip(-1.171814835019457,6.0) rectangle (8.18595630529845,14.9);
   \coordinate (a) at (0,7);
 \coordinate (b) at (7,14);
 \coordinate (c) at (2,9);
 \coordinate (d) at (5,12);
 \coordinate (e) at (5,9);
 \coordinate (f) at (7,7);
 \coordinate (g) at (2,12);
 \coordinate (h) at (0,14);
   \draw[fill=black!10] (a) rectangle (b);
   \draw[fill=white] (c) rectangle (d);
   \fill[black!4] (a)--(c)--(g)--(h)--cycle;
			\draw [line width=1pt, <-] (0,7)-- (7,7);
			\draw [line width=1pt, ->] (7,7)-- (7,14);
			\draw [line width=1pt,->] (7,14)-- (0,14);
			\draw [line width=1pt, <-] (0,14)-- (0,7);
			\draw [line width=1pt,<-] (2,9)-- (5,9);
			\draw [line width=1pt,->] (5,9)-- (5,12);
			\draw [line width=1pt,->] (5,12)-- (2,12);
			\draw [line width=1pt,<-] (2,12)-- (2,9);
			\draw [dashed,line width=1pt,<-] (5,12)-- (7,14);
			\draw [dashed,line width=1pt,->] (7,7)-- (5,9);
			\draw [dashed,line width=1pt,->] (0,7)-- (2,9);
			\draw [dashed,line width=1pt,->] (0,14)-- (2,12);
			\begin{scriptsize}
				\draw [fill=black] (0,7) circle (1.7pt);
				\draw [fill=black] (7,7) circle (1.7pt);
				\draw[color=black] (3.548843996618713,6.654390523370536) node {$g_2'$};
				\draw [fill=black] (7,14) circle (1.7pt);
				\draw[color=black] (7.66,10.7) node {$g_3'$};
				\draw [fill=black] (0,14) circle (1.7pt);
				\draw[color=black] (3.548843996618713,14.5) node {$g_4'$};
				\draw[color=black] (-0.79,10.64464319092457) node {$g_3$};
				\draw [fill=black] (2,9) circle (1.7pt);
				\draw [fill=black] (5,9) circle (1.7pt);
				\draw [fill=black] (5,12) circle (1.7pt);
				\draw [fill=black] (2,12) circle (1.7pt);
				\draw[color=black] (6.4,12.7) node {$k_3'$};
				\draw[color=black] (6.4,8.286302910004277) node {$k_2'$};
				\draw[color=black] (0.8094579827389956,8.386302910004277) node {$k_2$};
				\draw[color=black] (0.9094579827389956,12.7) node {$k_3$};
    \draw[color=black] (3.5,10.5) node {$\rho'$};
			\end{scriptsize}
		\end{tikzpicture}
  \begin{tikzpicture}[line cap=round,line join=round,>=stealth,x=1cm,y=1cm, scale=0.45]
			\clip(-1.171814835019457,6.0) rectangle (8.18595630529845,14.9);
   \coordinate (a) at (0,7);
 \coordinate (b) at (7,14);
 \coordinate (c) at (2,9);
 \coordinate (d) at (5,12);
   \draw[fill=black!10] (a) rectangle (b);
   \draw[fill=white] (c) rectangle (d);
			\draw [line width=1pt, <-] (0,7)-- (7,7);
			\draw [line width=1pt, ->] (7,7)-- (7,14);
			\draw [line width=1pt,->] (7,14)-- (0,14);
			\draw [line width=1pt, <-] (0,14)-- (0,7);
			\draw [line width=1pt,<-] (2,9)-- (5,9);
			\draw [line width=1pt,->] (5,9)-- (5,12);
			\draw [line width=1pt,->] (5,12)-- (2,12);
			\draw [line width=1pt,<-] (2,12)-- (2,9);
			\draw [dashed,line width=1pt,<-] (5,12)-- (7,14);
			\draw [dashed,line width=1pt,->] (7,7)-- (5,9);
			\draw [dashed,line width=1pt,->] (0,7)-- (2,9);
			\draw [dashed,line width=1pt,->] (0,14)-- (2,12);
			\begin{scriptsize}
				\draw [fill=black] (0,7) circle (1.7pt);
				\draw [fill=black] (7,7) circle (1.7pt);
				\draw[color=black] (3.548843996618713,6.654390523370536) node {$g_2g_2'$};
				\draw [fill=black] (7,14) circle (1.7pt);
				\draw[color=black] (7.66,10.64464319092457) node {$g_3'$};
				\draw [fill=black] (0,14) circle (1.7pt);
				\draw[color=black] (3.548843996618713,14.5) node {$g_4g_4'$};
				\draw[color=black] (-0.79,10.64464319092457) node {$g_1$};
				\draw [fill=black] (2,9) circle (1.7pt);
				\draw [fill=black] (5,9) circle (1.7pt);
				\draw [fill=black] (5,12) circle (1.7pt);
				\draw [fill=black] (2,12) circle (1.7pt);
				\draw[color=black] (6.4,12.7) node {$k_3'$};
				\draw[color=black] (6.4,8.286302910004277) node {$k_2'$};
				\draw[color=black] (0.8094579827389956,8.386302910004277) node {$k_1$};
				\draw[color=black] (0.9094579827389956,12.7) node {$k_4$};
    \draw[color=black] (3.5,10.5) node {$\mm(\rho,\rho')$};
			\end{scriptsize}
		\end{tikzpicture}
		\caption{The horizontal multiplication of  $\rho,\rho'\in\hom(\Pi_1(\Sigma,V),G)$ is given by cutting the lighter quadrilaterals in the first two surfaces above and gluing the remaining surfaces along the edges labelled by $k_2$ and $k_3$ to produce the third surface}\label{fig:mul via cut}  \end{figure} 
	Now we shall show how different choices of Lagrangian subgroups allow us to promote \eqref{eq:dou gro modspa} to a holomorphic symplectic $(1,1)$-morphism. We just list below the corresponding values of $L=(L_1,\dots,L_8)$ as indicated in Fig.~\ref{fig:doucomlie}:  
	\begin{align} &L_{ \overline{D}_- }=(G_+,G_+,G_+,G_+,G_+,G_-,G_+,G_-),\qquad &L_{W}=(G_+,G_-,G_+,G_+,G_+,G_+,G_+,G_-), \notag \\ 
  &L_{D_{+}}=(G_+,G_-,G_+,G_-,G_+,G_+,G_+,G_+),\qquad &L_{\overline{Z} }=(G_-,G_+,G_+,G_+,G_-,G_-,G_+,G_-),\notag \\ 
  &L_{C}=(G_-,G_-,G_+,G_+,G_-,G_+,G_+,G_-), \qquad
		&L_{Z}=(G_-,G_-,G_+,G_-,G_-,G_+,G_+,G_+),\label{eq:dec} \\
		&L_{\overline{D}_+ }=(G_-,G_+,G_-,G_+,G_-,G_-,G_-,G_-),\qquad &L_{\overline{W} }=(G_-,G_-,G_-,G_+,G_-,G_+,G_-,G_-)\notag\\
		&L_{D_-}=(G_-,G_-,G_-,G_-,G_-,G_+,G_-,G_+)\notag. \end{align}
        \begin{theorem}\label{pro:doumorequ} The integration of the generalized K\"ahler structure on $K$ defined in \eqref{eq:gkcomlie} is obtained by quasi-Hamiltonian reduction of the trivial $(1,1)$-morphism determined by the double groupoid \eqref{eq:dou gro modspa}, with respect to the boundary conditions in \eqref{eq:dec}, yielding the following holomorphic symplectic $(1,1)$-morphism:
	\begin{equation} 
    \begin{tikzcd}[scale cd=.8]
			\mathfrak{M}_{L_{\overline{D}_- }}(G)                    &  & \mathfrak{M}_{L_{W}}(G) \arrow[rr] \arrow[ll]               &  & \mathfrak{M}_{L_{D_{+}}}(G)                    \\
			\mathfrak{M}_{L_{\overline{Z} }}(G) \arrow[u] \arrow[d] &  & \mathfrak{M}_{L_C}(G) \arrow[ll] \arrow[rr] \arrow[d] \arrow[u] &  & \mathfrak{M}_{L_{Z}}(G) \arrow[d] \arrow[u] \\
			\mathfrak{M}_{L_{\overline{D}_+ }}(G)                    &  & \mathfrak{M}_{L_{\overline{W} }}(G) \arrow[rr] \arrow[ll]               &  & \mathfrak{M}_{L_{D_-}}(G)                  
		\end{tikzcd} \label{eq:doumormodspa} \end{equation} 
	\end{theorem}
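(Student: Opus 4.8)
The plan is to construct the entire diagram \eqref{eq:doumormodspa} by quasi-Hamiltonian reduction, exploiting the functoriality of the moduli construction under the surface inclusions $\Sigma_0\subset\Sigma_1\subset\Sigma$, and then to verify that the result is precisely the integrating $(1,1)$-morphism furnished by \autoref{thm: the (1,1) morphism of a GK}. First I would check that each of the nine spaces $\mathfrak{M}_L(G)$ is a holomorphic symplectic manifold: for this it suffices to confirm that the relevant gauge subgroup $H=\prod_v H_v$ acts freely and properly on $\mu^{-1}(L)$, after which \cite[Thm. 1.4]{MR3330247} endows the quotient with the reduced holomorphic symplectic form induced by the $\Omega_{\Sigma,V}$ of \eqref{eq:luweicomlie}. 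The freeness and properness follow from the global decomposition $G\cong K\times G_\pm$ assumed in \S\ref{subsec:intgkcomlie}, which makes each intersection $H_v=(L_e\cap L_{e'})^\circ$ either trivial or a freely acting complexified torus.

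Next I would descend the double groupoid structure \eqref{eq:dou gro modspa}, together with its associated bimodule structures, to the reduced spaces. The source, target and multiplication maps are all given by pullback and gluing of flat connections along inclusions of subsurfaces \cite{MR4705016}; the only thing to verify is that the boundary conditions in \eqref{eq:dec} are compatible with these operations, i.e. that restricting a representation satisfying the boundary conditions of a corner space to a subsurface yields one satisfying the boundary conditions of the corresponding edge or base space, and that gluing two compatible representations produces a compatible one. This is a finite combinatorial check, edge by edge, using the labelling of Fig.~\ref{fig:doucomlie}; the decorations in \eqref{eq:dec} are arranged precisely so that each structure map sends boundary conditions to boundary conditions, and the interchange law of \autoref{def: 11 morphism} then follows from the associativity of gluing.

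The central step is to identify the reduced corners with the integrations of the correct Manin triples. I would first show that the $2$-pointed disc $\Sigma_0$, with the boundary conditions induced by \eqref{eq:dec}, reduces to $K$ equipped with one of the four complex structures $\pm I_\pm$, using the identifications $K\cong G/G_\pm\cong G_\pm\backslash G$ and the dressing actions \eqref{eq:dres1}. I would then show that the square $\Sigma_1$ reduces to the quasi-symplectic groupoids integrating $\A_\pm$ and $\B_\pm$; these are the transformation groupoids of the dressing actions, and differentiating the reduced form of $\Omega_{P_4}$ from Example~\ref{ex:2forsqu} recovers the corresponding Dirac structures via \eqref{eq:IM 2 form}. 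Since the lemma preceding the theorem shows that $\hom(\Pi_1(\Sigma,V),G)$ carries the Lu-Weinstein double groupoid structure \eqref{eq:luweidougro} built from these side groupoids, \autoref{prop:2form luwei} guarantees that each reduced corner is a holomorphic symplectic double groupoid integrating the corresponding Manin triple; matching these against the description of $(\E_\pm,\A_\pm,\B_\pm)$ in \S\ref{subsec:intgkcomlie} confirms that the four corners are $D_\pm$ and their conjugates. The self-adjoint structure then comes from the orientation-reversing reflection of the annulus exchanging its two boundary components, combined with the complex conjugation of $G$ that swaps $G_+\leftrightarrow G_-$: the symmetry of \eqref{eq:dec} under this reflection shows that $\Omega_{\Sigma,V}$ satisfies \eqref{eq: imomega}, so \autoref{prop:real str} supplies the real structure $\tau=\ii_A\circ\ii_B$ on $\mathfrak{M}_{L_C}(G)$, and the class of the trivial representation yields the canonical Lagrangian bisection $M\cong K$ of its real symplectic core as in \autoref{thm: the (1,1) morphism of a GK}.

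I expect the main obstacle to be the comparison of holomorphic symplectic forms in the central step: matching the reduced quasi-Hamiltonian $2$-form obtained from \eqref{eq:luweicomlie} with the Lu-Weinstein form $\delta_B^*\Omega^B_\pm-\delta_A^*\Omega^A_\pm$ of \autoref{prop:2form luwei}, and tracking the signs and orientations so that differentiation yields $\Pi^B_\pm$ on one side and $-\Pi^A_\pm$ on the other (\autoref{rem:sign double gpd}). The bookkeeping of which edges carry $G_+$ versus $G_-$, and of the orientation conventions inherited from the triangulation, is where the identification with the generalized K\"ahler data of \S\ref{subsec:intgkcomlie} could most easily go wrong, so I would carry out this comparison explicitly on the $SU(2)\times U(1)$ example of Example~\ref{ex:hopf surf 0} as a consistency check before asserting it in general.
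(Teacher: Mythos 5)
Your construction of the diagram itself follows essentially the same route as the paper's proof: reduce each of the nine spots of the trivial $(1,1)$-morphism with respect to the decorations \eqref{eq:dec}, identify the reduced corners and edges with Lu--Weinstein double groupoids and bimodules built from the translation/dressing action groupoids $G_+\ltimes(G/G_-)$, $G_-\ltimes(G/G_-)$, $(G_+\backslash G)\rtimes G_\pm$, and match the reduced form of \eqref{eq:luweicomlie} with the Lu--Weinstein form of \autoref{prop:2form luwei}. The paper does exactly this, only by explicit element-wise computation of the quotients rather than by your functorial descent plus combinatorial check (it treats one column in detail, cf.\ Fig.~\ref{fig:rho} and \eqref{eq:lu weinstein bimodule}, and declares the rest analogous); note that the form-matching you defer as the ``main obstacle'' is precisely where the paper's explicit computations do the work. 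Two of your justifications are imprecise but repairable: the vertex isotropy groups $H_v$ are not ``complexified tori'' --- they are either trivial, since $(G_+\cap G_-)^\circ=\{e\}$, or one of the solvable groups $G_\pm$; freeness instead follows because every decoration in \eqref{eq:dec} has at least one vertex with trivial isotropy, and a gauge transformation fixing a representation is determined by its value at a single vertex of the connected fundamental groupoid. Likewise, the canonical Lagrangian bisection is not ``the class of the trivial representation'' but the $K$-parameter family of classes $[1,\dots,1,k,\dots,k]$, $k\in K$, of Fig.~\ref{fig:canlag}.

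The genuine gap is in your self-adjointness step. You propose to obtain the real structure from ``the orientation-reversing reflection of the annulus exchanging its two boundary components'' composed with $\Theta$. An orientation-reversing diffeomorphism pulls back the quasi-Hamiltonian $2$-form to its negative, so your map would satisfy $\tau^*\Omega_{\mathcal{C}}=-\overline{\Omega}_{\mathcal{C}}$ rather than $\tau^*\Omega_{\mathcal{C}}=\overline{\Omega}_{\mathcal{C}}$, which is what a real structure on the double bimodule must satisfy. With the wrong sign, the fixed-point locus would be $\Re$-Lagrangian and $\Im$-symplectic --- the opposite of \autoref{S is LS} --- and the entire mechanism of \autoref{thm: key Lagrangian in the central brane} and \autoref{thm: reconstruction} producing the metric from a Lagrangian bisection would not apply. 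The correct symmetry, used in \autoref{pro:real str modspa}, is the \emph{orientation-preserving} rotation of the marked annulus by $\pi$ about its center: it preserves each boundary circle rather than exchanging them, it realizes the double inversion $\ii_A\circ\ii_B$ (the composite of two orientation-reversing reflections), and it is compatible with the decoration $L_C$ after applying $\Theta$, since $\Theta(G_\pm)=G_\mp$. Alternatively, once the corners are identified as Lu--Weinstein integrations, equation \eqref{eq: imomega} holds automatically and \autoref{prop:real str} supplies the real structure without any surface symmetry; but as written, your reflection argument does not establish \eqref{eq: imomega} and would produce a structure of the wrong signature. Finally, be aware that the paper's proof of \autoref{pro:doumorequ} only establishes the holomorphic symplectic $(1,1)$-morphism; the real structure and the trivialization are handled separately in \autoref{pro:real str modspa} and \autoref{thm:poisson str on bisections K example}, so your plan bundles together what the paper proves in three stages.
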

	\begin{proof} 
    We only analyze the last column in \eqref{eq:doumormodspa} as the other Morita equivalences are analogous. Here the holomorphic Poisson groupoids are the following: the first one, denoted by $\mathcal{G}_{\mathcal{A}_-}$, is the quotient of the action groupoid 
		\[ (G_+ \times G_-) \ltimes G \rightrightarrows G, \quad (u,x)\cdot g \mapsto ugx^{-1}, \quad \forall (u,x,g)\in( G_+ \times G_- )\ltimes G \]
		by the left action of $G_-^2$ determined by $  (y,z)\cdot(u,x,g) =(u,yxz^{-1},gz^{-1})$ for all $(y,z)\in G_-^2$ and all $(u,x,g)\in (G_+ \times G_-) \ltimes G$. One can easily check that the quotient of such an action is the action groupoid $G_+ \ltimes (G/G_-) \rightrightarrows G/G_-$. On the other hand, we have $\mathcal{G}_{\mathcal{B}_- }$ which is the quotient of the action groupoid
		\[ (G_- \times G_-) \ltimes G \rightrightarrows G, \quad (x,y)\cdot g \mapsto xgy^{-1}, \quad \forall (x,y,g)\in (G_- \times G_- )\ltimes G \]
		by the left $G_-^2$-action $(z_1,z_2)\cdot (x,y,g)=(x,z_1yz_2^{-1},gz_2^{-1})$ for all $(z_1,z_2)\in G_-^2$ and all $(x,y,g)\in (G_- \times G_-) \ltimes G$. This quotient is the action groupoid $G_- \ltimes(G/G_-)\rightrightarrows G/G_-$. So the Lu-Weinstein construction of \autoref{rem:genluwei} applied to $\mathcal{G}_{\mathcal{A}_- }$ and $\mathcal{G}_{\mathcal{B}_- }$ gives us a holomorphic double groupoid 
		\[ \begin{tikzcd}
			{D_-} \arrow[r, shift right] \arrow[r, shift left] \arrow[d, shift right] \arrow[d, shift left] & \mathcal{G}_{\mathcal{A}_-} \arrow[d, shift right] \arrow[d, shift left] \\
			\mathcal{G}_{\mathcal{B}_-} \arrow[r, shift right] \arrow[r, shift left]                                                                 & G/G_-;                                                     
		\end{tikzcd}\]
		the symplectic structure on $D_-$ comes from the identification $D_- \cong \mathfrak{M}_{L_{D_-}}(G)$ which is given as follows. An element $\rho\in D_-$ as below 
		\[ \rho=\left((u',ygG_-),(y,gG_-),(y',ugG_-),(u,gG_-)\right)\in D_-\subset \mathcal{G}_{\mathcal{A}_- } \times_{G/G_-} \mathcal{G}_{\mathcal{B}_- } \times \mathcal{G}_{\mathcal{B}_- } \times_{G/G_-} \mathcal{G}_{\mathcal{A}_-}    \]
		satisfies that $y'ug=u'ygz^{-1}$ for a unique $z\in G_-$. So we may identify $\rho$ with the representation whose values on the generators of $\Pi_1(\Sigma,V)$ indicated in Fig.~\ref{fig:doucomlie} are displayed in Fig.~\ref{fig:rho}, where $g'=y'ug=u'ygz^{-1}$. Conversely, any element in $\mathfrak{M}_{L_{D_-}}(G)$ is of the form of $\rho$ above. 
  
\noindent\begin{minipage}{\linewidth}
\begin{wrapfigure}{r}{0pt}\begin{tikzpicture}[line cap=round,line join=round,>=stealth,x=1cm,y=1cm, scale=0.5]
	\clip(-2.2,6.22534829668545) rectangle (9.1,15.23302026597934);
  \coordinate (a) at (0,7);
 \coordinate (b) at (7,14);
 \coordinate (c) at (2,9);
 \coordinate (d) at (5,12);
   \draw[fill=black!10] (a) rectangle (b);
   \draw[fill=white] (c) rectangle (d);
				\draw [line width=1pt, <-] (0,7)-- (7,7);
				\draw [line width=1pt, ->] (7,7)-- (7,14);
				\draw [line width=1pt,->] (7,14)-- (0,14);
				\draw [line width=1pt, <-] (0,14)-- (0,7);
				\draw [line width=1pt,<-] (2,9)-- (5,9);
				\draw [line width=1pt,->] (5,9)-- (5,12);
				\draw [line width=1pt,->] (5,12)-- (2,12);
				\draw [line width=1pt,<-] (2,12)-- (2,9);
				\draw [dashed,line width=1pt,<-] (5,12)-- (7,14);
				\draw [dashed,line width=1pt,->] (7,7)-- (5,9);
				\draw [dashed,line width=1pt,->] (0,7)-- (2,9);
				\draw [dashed,line width=1pt,->] (0,14)-- (2,12);
				\begin{scriptsize}
					\draw [fill=black] (0,7) circle (1.7pt);
					\draw [fill=black] (7,7) circle (1.7pt);
					\draw[color=black] (3.548843996618713,6.654390523370536) node {$z\in G_-$};
					\draw [fill=black] (7,14) circle (1.7pt);
					\draw[color=black] (7.4,10.64464319092457) node {$1$};
					\draw [fill=black] (0,14) circle (1.7pt);
					\draw[color=black] (3.548843996618713,14.347115238172494) node {$1$};
					\draw[color=black] (-0.89,10.64464319092457) node {$1$};
					\draw [fill=black] (2,9) circle (1.7pt);
					\draw [fill=black] (5,9) circle (1.7pt);
					\draw[color=black] (3.548843996618713,8.6) node {$y\in G_-$};
					\draw [fill=black] (5,12) circle (1.7pt);
					\draw[color=black] (6.0,10.64464319092457) node {$u\in G_+$};
					\draw [fill=black] (2,12) circle (1.7pt);
					\draw[color=black] (3.548843996618713,12.343136968374939) node {$y'\in G_-$};
					\draw[color=black] (1.1,10.64464319092457) node {$u'\in G_+$};
					\draw[color=black] (6.3,12.5) node {$ug$};
					\draw[color=black] (6.1760106308045435,8.286302910004277) node {$g$};
					\draw[color=black] (1.9,8.00) node {$ygz^{-1}$};
					\draw[color=black] (0.9094579827389956,12.5) node {$g'$};
				\end{scriptsize}    
\end{tikzpicture} \caption{The values of $\rho$}\label{fig:rho}\end{wrapfigure}
By definition, the symplectic form on $D_-$ coincides with the pullback of \eqref{eq:luweicomlie} by the identification above, it is straightforward to verify it is doubly multiplicative. The same considerations applied to $\mathfrak{M}_{L_{D_{+}}}(G)$ give us now a holomorphic symplectic double groupoid $D_+$ in which the side groupoids are the action groupoids $\mathcal{G}_{\mathcal{A}_+ }=((G_+\backslash G) \rtimes G_- \rightrightarrows G_+\backslash G)$ and $\mathcal{G}_{\mathcal{B}_+ }=((G_+\backslash G) \rtimes G_+ \rightrightarrows G_+\backslash G)$ corresponding to the right actions, respectively, of $G_-$ and $G_+$ on the quotient by left translations $G_+\backslash G$. Now we will describe the Morita equivalence between $D_+$ and $D_-$ determined by $\mathfrak{M}_{L_{Z}}(G)$. The holomorphic biprincipal bibundle relating $\mathcal{G}_{\mathcal{A}_+ }$ and $\mathcal{G}_{\mathcal{A}_- }$ is the quotient $Z$ of $G_+ \times G_- \times G$ by the left $G_+ \times G_-$-action defined in the equation below:
		\[ (u,x)\cdot (v,y,g) =(uv,yx^{-1},gx^{-1}), \quad \forall (u,x)\in G_+ \times G_-, \,  (v,y,g)\in G_+ \times G_- \times G. \]
		So we have that $Z$ is identified with $G$ and the two commuting actions that define the Morita equivalence are given by left and right translation\end{minipage}
  
		\begin{equation*}\label{eq:z morita} \begin{tikzcd}[scale cd=0.9]
(G_+\backslash G) \rtimes G_- \arrow[d, shift right] \arrow[d, shift left] &  & Z\cong G \arrow[lld, "p_+"'] \arrow[rrd, "p_-"] &  & G_+ \ltimes(G/G_-) \arrow[d, shift right] \arrow[d, shift left] &  & {p_+(g)=G_+g,}  \\
G_+\backslash G                                                            &  &                                                 &  & G/G_-                                                           &  & {p_-(g)=gG_-;}
\end{tikzcd}\end{equation*} 
		the actions $((G_+\backslash G) \rtimes G_- ) \times_{G_+\backslash G} G \rightarrow G$ and $G \times_{G/G_-}( G_+ \ltimes (G/G_-) )\rightarrow G$ are given, respectively, by the maps
		\begin{align*} &((G_+y,b),g)\mapsto gb^{-1},\quad \forall ((G_+y,b),g) \in ((G_+\backslash G) \rtimes G_- ) \times_{G_+\backslash G} G \\
			& (g,(a,xG_-)) \mapsto a^{-1}g, \quad \forall (g,(a,xG_-))\in G \times_{G/G_-}( G_+ \ltimes G/G_- ). 
		\end{align*} 
		By considering an analogous diagram to the one in Fig.~\ref{fig:rho}, we get an identification 
		\begin{equation}
	\begin{aligned}
		&\mathfrak{M}_{L_{Z}}(G) \cong  Z \times_{(G/G_-)^2} \mathcal{G}_{\mathcal{B}_-} \times \mathcal{G}_{\mathcal{B}_+} \times_{(G_+\backslash G)^2} Z,  \\
  &Z \times_{(G/G_-)^2} \mathcal{G}_{\mathcal{B}_-} \times \mathcal{G}_{\mathcal{B}_+} \times_{(G_+\backslash G)^2} Z:=\\
  &=\left\{(z,x,x',z')\in Z\times \mathcal{G}_{\B_-}\times \mathcal{G}_{\B_+}\times Z\left| \, \begin{aligned}
        p_-(z)=\ttt(x),\, p_+(z)=\ttt(x'),\\
        p_-(z')=\sss(x),\, p_+(z')=\sss(x')
    \end{aligned}\right.\right\}
    \label{eq:lu weinstein bimodule}    
	\end{aligned}
		\end{equation}
		that induces a suitable symplectic form on $Z \times_{(G/G_-)^2} \mathcal{G}_{\mathcal{B}_-} \times \mathcal{G}_{\mathcal{B}_+} \times_{(G_+\backslash G)^2} Z$. Then the action of $D_-$ on $\mathcal{Z}$ is given by \eqref{eq:luweiact} and a direct computation using \eqref{eq:luweicomlie} shows that the action is symplectic. We can proceed similarly to describe explicitly all the Morita equivalences in \eqref{eq:doumormodspa}. \end{proof}
  
  \begin{example}[Hopf surface]\label{ex:hopf surf} Consider $K=SU(2)\times \mathbb{R} $ and ${G} =SL_2(\mathbb{C}) \times \mathbb{C} $ as in \autoref{ex:hopf surf 0}. Note that the quotient map $p_-:Z\cong {G} \rightarrow {G}/G_-$ can be identified with the projection ${G} \rightarrow X_-:=\mathbb{C}^2-\{(0,0)\}$ defined by $(A,z)\mapsto A\left[\begin{smallmatrix} e^{-iz} \\ 0\end{smallmatrix}\right]$. The Hopf surface is the quotient by a $\Z$-action $X_-/(x\sim 2x)\cong SU(2)\times U(1) $. 
  
  We have that $G_-$ and $G_+$ as in \autoref{ex:hopf surf 0} are isomorphic to the universal cover of the affine group of the complex line: $G_-\cong G_+\cong \mathbb{C} \ltimes \mathbb{C}$ with multiplication 
\[  (a,b)(c,d)=(a+b,e^cb+d), \quad (a,b),(c,d)\in \mathbb{C} \ltimes \mathbb{C}.  \]
In fact, we can define the respective isomorphisms $G_-\rightarrow  \mathbb{C} \ltimes \mathbb{C}$ and $G_+\rightarrow  \mathbb{C} \ltimes \mathbb{C}$ as follows: 
\[ \left(\begin{bmatrix} e^{iz} & w \\ 0 & e^{-iz} \end{bmatrix},z\right) \mapsto ({-2iz},e^{-iz}w), \quad \left(\begin{bmatrix} e^{-iz} & 0 \\ w & e^{iz} \end{bmatrix},z\right) \mapsto ({-2iz},e^{-iz}w). \]
Using the previous identifications, the action groupoids $\mathcal{G}_{\mathcal{A}_-}=( G_+ \ltimes X_- \rightrightarrows X_-)$ and $\mathcal{G}_{\mathcal{B}_-}=( G_- \ltimes X_- \rightrightarrows X_-)$ admit the following simplified description: they respectively correspond to the actions of $\mathbb{C} \ltimes \mathbb{C}$ defined as below: 
\begin{align}    (a,b)\cdot (z_1,z_2) \mapsto (e^{a}z_1,bz_1+z_2), \quad(a,b)\cdot (z_1,z_2) \mapsto (z_1+bz_2,e^{a}z_2). \label{eq:acthop} \end{align}  
Using the duality pairing between $\mathfrak{g}_+$ and $\mathfrak{g}_-$ determined by $s_{\mathbb{C} }$, we obtain the Hitchin Poisson structure on $X_-$:  
\[ \sigma_-^\sharp=\rho_{A_-}\circ \rho_{B_-}^*:T^*_{1,0}X_- \rightarrow T_{1,0}X_-, \quad \sigma_-=2z_1z_2 \frac{\partial}{\partial z_1} \wedge \frac{\partial}{\partial z_2} .    \]
By applying the same considerations to $X_+=G_+\backslash {G}\cong \mathbb{C}^2-\{(0,0)\}  $, we derive that the quotient $p_+:Z\cong {G} \rightarrow X_+$ is given by $(A,u) \mapsto \left[\begin{smallmatrix} e^{iu} & 0\end{smallmatrix}\right]A $ and the corresponding action groupoids are $\mathcal{G}_{\mathcal{A}_+ }\cong (X_+ \rtimes (\mathbb{C} \ltimes \mathbb{C})^{op}\rightrightarrows X_+ )$ and $\mathcal{G}_{\mathcal{B}_+ }=(X_+ \rtimes (\mathbb{C} \ltimes \mathbb{C})^{op}\rightrightarrows X_+ )$
with corresponding actions defined respectively by the same formulae as in \eqref{eq:acthop} but now using the opposite group structure on $(\mathbb{C} \ltimes \mathbb{C})$ which is denoted by $(\mathbb{C} \ltimes \mathbb{C})^{op}$. Similarly, we deduce that $\sigma_+=-\sigma_-$. \end{example}
	\subsubsection{The real symplectic core and a real trivialization of \eqref{eq:doumormodspa}} Since $G$ is simply-connected, complex conjugation on $\mathfrak{g} $ integrates to an automorphism $\Theta:G \rightarrow \overline{G} $ such that $\Theta(G_+)=G_-$. For instance, in \autoref{ex:hopf surf}, the real structure $\Theta$ on ${G}$ given by $(A,z)\mapsto ((A^\dagger)^{-1},-\overline{z})  $ integrates the complex conjugation corresponding to $\mathfrak{g}=\mathfrak{k}_{\mathbb{C}}$ as in \autoref{ex:hopf surf 0}. 
    
    In this situation, we get a symplectic real structure $\tau:\mathcal{C} \rightarrow \overline{\mathcal{C}}$ on the double bimodule $\mathcal{C}=\mathfrak{M}_{L_{C}}(G)$ of \eqref{eq:doumormodspa} by combining $\Theta$ with the natural reflection symmetry of the marked annulus $(\Sigma,V)$.
	\begin{proposition}\label{pro:real str modspa} The $(1,1)$-morphism in \eqref{eq:doumormodspa} becomes self-adjoint via $\tau:=\Theta\circ\tau_{0,0}:\mathcal{C} \to \overline{\mathcal{C}}^\top$, where $\tau_{0,0}$ is as in \eqref{eq:real triv}. In particular, the map $\tau_{\mathcal{C}}:\mathcal{C}  \rightarrow \overline{\mathcal{C}} $ determined by 
		\[ [g_1,\dots,g_4,k_1,\dots,k_4]\mapsto [\Theta(g_3^{-1}),\Theta(g_4^{-1}),\Theta(g_1^{-1}),\Theta(g_2^{-1}),\Theta(k_3),\Theta(k_4),\Theta(k_1),\Theta(k_2)], \]
		using the generators displayed in Fig.~\eqref{fig:doucomlie}, defines a real structure on $\mathcal{C}$. 
	\end{proposition}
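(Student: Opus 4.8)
The plan is to realize the claimed real structure $\tau_\mathcal{C}$ as the composite of two commuting symmetries of the moduli space $\mathcal{C}=\mathfrak{M}_{L_C}(G)$: the pushforward $\Theta_*$ obtained by post-composing a representation $\rho\in\hom(\Pi_1(\Sigma,V),G)$ with the anti-holomorphic automorphism $\Theta$ (which satisfies $\Theta(G_\pm)=G_\mp$), and the pullback $r^*$ along the orientation-preserving rotation $r\colon\Sigma\to\Sigma$ by $\pi$ about the centre of the marked annulus. The geometric meaning of $\tau_{0,0}$ in \eqref{eq:real triv} is exactly this rotation: since $r$ interchanges the left and right halves as well as the top and bottom halves of $\Sigma$ that are cut out to define the two groupoid structures in \eqref{eq:dou gro modspa}, it reverses both the horizontal and vertical source and target maps, and hence induces the double inversion $\ii_A\circ\ii_B$ on the central bimodule, as in \autoref{prop:real str}.

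First I would check that $\Theta\circ r$ preserves the decoration $L_C$, so that $\tau_\mathcal{C}=\Theta_*\circ r^*$ genuinely maps $\mathfrak{M}_{L_C}(G)$ to $\overline{\mathfrak{M}_{L_C}(G)}=\overline{\mathcal{C}}$. The rotation permutes the eight boundary arcs of Fig.~\ref{fig:doucomlie} by the fixed-point-free involution swapping antipodal arcs, carrying $L_C=(G_-,G_-,G_+,G_+,G_-,G_+,G_+,G_-)$ to $(G_+,G_+,G_-,G_-,G_+,G_-,G_-,G_+)$; applying $\Theta$, which exchanges $G_+\leftrightarrow G_-$, returns $L_C$. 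The same matching identifies the isotropy subgroup $H=\prod_v H_v$ with its image under $\Theta\circ r$, so the construction descends to the quotient.

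The explicit formula then follows by tracking the chosen generators. Because antipodal outer boundary edges carry opposite orientations as arcs of the same boundary loop, $r$ sends $g_i\mapsto g_{i+2}^{-1}$, whereas the four spoke edges are carried to their antipodes with orientation preserved, giving $k_i\mapsto k_{i+2}$; post-composing with $\Theta$ produces precisely the stated map. For the symplectic and involutivity properties I would use that $r$ is orientation-preserving, hence $r^*\Omega_{\Sigma,V}=\Omega_{\Sigma,V}$, while $\Theta$ is anti-holomorphic and preserves the complexified pairing up to conjugation, so that $\Theta_*$ carries $\overline{\Omega}_{\Sigma,V}$ to $\Omega_{\Sigma,V}$ (both forms being assembled from $s_{\mathbb{C}}$ and the Maurer--Cartan forms as in \eqref{eq:luweicomlie}); together these give $\tau_\mathcal{C}^*\Omega_\mathcal{C}=\overline{\Omega}_\mathcal{C}$. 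Since $r^2=\mathrm{id}$ and $\overline{\Theta}\circ\Theta=\mathrm{id}$, the map squares to the identity in the appropriate conjugated sense, so $\tau_\mathcal{C}$ is a symplectic real structure. Finally, because $\Theta$ is a group homomorphism it intertwines the gluing operations defining all the groupoid actions and Morita bimodules, and $r$ reverses both groupoid directions; assembling $\tau_\mathcal{C}$ with the identity maps on the corner double groupoids and the induced maps on $\mathcal{Z},\mathcal{W}$ and their conjugates yields an isomorphism of holomorphic symplectic $(1,1)$-morphisms $\tau\colon\mathcal{C}\to\overline{\mathcal{C}}^\top$ of the shape in Fig.~\ref{fig:real structure}, with $\overline{\tau}^\top\tau=\mathrm{id}$, which is exactly self-adjointness.

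The main obstacle I anticipate is the orientation bookkeeping: confirming that the $\pi$-rotation inverts the outer holonomies $g_i$ but not the spokes $k_i$, and---more delicately---verifying that swapping the two halves of the annulus in each direction really implements the abstract double inversion $\tau_{0,0}$ compatibly with the explicit source and target maps recorded in \eqref{eq:dou gro modspa}, so that $\tau$ is a bona fide morphism of $(1,1)$-morphisms and not merely a symplectomorphism of central bimodules.
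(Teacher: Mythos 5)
Your proposal is correct and follows essentially the same route as the paper: the key identity $\tau_{\mathcal{C}}=\Theta_*\circ R^!$ (post-composition with $\Theta$ composed with pullback along the orientation-preserving $\pi$-rotation of the marked annulus), from which $\tau_{\mathcal{C}}^*\Omega=\overline{\Omega}$ follows since $(R^!)^*\Omega=\Omega$ and $\Theta$ conjugates the pairing $s_{\mathbb{C}}$. Your additional bookkeeping (the rotation carrying $L_C$ to its antipodal swap which $\Theta$ returns to $L_C$, the generator tracking $g_i\mapsto g_{i+2}^{-1}$, $k_i\mapsto k_{i+2}$, and the involutivity check) simply fills in details the paper's terse proof leaves implicit.
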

	\begin{proof} To make sense of $\tau_{0,0}$, note that \eqref{eq:doumormodspa} is obtained by quasi-Hamiltonian reduction from the trivial $(1,1)$-morphism determined by the double Lie groupoid \eqref{eq:dou gro modspa}. We only verify the required property for $\mathcal{C}$ as the other cases are similar. The key fact needed in this argument is the following: $\tau_{\mathcal{C}}$ may be viewed as $\tau_{\mathcal{C}}=\Theta_*\circ R^!$, where $\Theta_*$ denotes post composition with $\Theta$ and $R^!$ is the pullback map of a representation along the rotation $R$ of the diagram in Figure \eqref{fig:doucomlie} by an angle of $\pi$ with respect to its center of symmetry. Since $R$ preserves the orientation of the surface, we have that $(R^!)^*\Omega =\Omega $ and hence $\tau_{\mathcal{C}}^*\Omega =\overline{\Omega  } $. \end{proof}  

 
	\begin{proposition} The fixed point locus $\mathcal{S}$ of $\tau_{\mathcal{C}}$ is nonempty and contains a distinguished Lagrangian bisection given by 
    \[ \lambda =\{(1,\dots,1,k,\dots,k) |\ k\in K\}\subset \mathcal{S}, \]
    in terms of the generators displayed in Fig.~\eqref{fig:doucomlie}.  
	\end{proposition}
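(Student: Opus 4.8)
The plan is to verify the three assertions directly in the moduli-space model of Theorem~\ref{pro:doumorequ}: that $\lambda$ lies in $\mathcal{C}=\mathfrak{M}_{L_C}(G)$, that it is pointwise fixed by the real structure $\tau_{\mathcal{C}}$ of Proposition~\ref{pro:real str modspa}, and that it is an $\omega_{\mathcal{S}}$-Lagrangian bisection of the real symplectic core.

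First I would show that the representation $\rho_k$ with $\rho_k(g_i)=1$ and $\rho_k(k_i)=k$ satisfies the boundary conditions $L_C=(G_-,G_-,G_+,G_+,G_-,G_+,G_+,G_-)$, so that $[\rho_k]\in\mathcal{C}$. The outer arcs $g_1,\dots,g_4$ take the value $1$, which lies in every Lagrangian subgroup. For each inner arc one observes that it bounds, together with two connecting arcs $k_i,k_l$ and one outer arc $g_j$, a contractible ``corner'' quadrilateral of Fig.~\ref{fig:doucomlie}; flatness then gives its holonomy as $k\cdot 1\cdot k^{-1}=1$, again lying in the prescribed subgroup. Thus $\mu(\rho_k)\in L_C$ for every $k$. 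The same relations show that the only $h\in H$ fixing all these arc-values is $h=1$, so $k\mapsto[\rho_k]$ is injective; hence $\lambda$ is an embedded copy of $K$.

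Fixedness under $\tau_{\mathcal{C}}$ is then immediate from the explicit formula in Proposition~\ref{pro:real str modspa}: since $g_i=1$ we have $\Theta(g_i^{-1})=1$, and since $k\in K$ is fixed by $\Theta$ (being its fixed-point subgroup) we have $\Theta(k_i)=\Theta(k)=k$, so $\tau_{\mathcal{C}}([\rho_k])=[\rho_k]$. This proves $\lambda\subset\mathcal{S}$ and, in particular, that $\mathcal{S}$ is nonempty. For the Lagrangian property I would pull the quasi-symplectic form \eqref{eq:luweicomlie} back to the lift $\{\rho_k\}\subset\mu^{-1}(L_C)$. Each of its four $P_4$-blocks is evaluated on a triple $(k_i,g_j,k_l)$ whose middle entry $g_j\equiv 1$ is constant and whose top edge $k_ig_jk_l^{-1}=k\cdot 1\cdot k^{-1}\equiv 1$ is also constant; by the formula for $\Omega_{P_4}$ in Example~\ref{ex:2forsqu}, both of its bilinear terms $s_{\mathbb{C}}(\cdot^*\theta^l,\cdot^*\theta^r)$ then contain a Maurer--Cartan form pulled back along a constant map, and so vanish. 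Hence $\Omega_{\Sigma,V}$, and therefore the reduced form $\Omega_{\mathcal{C}}$ together with its real part $\omega_{\mathcal{S}}$, restricts to $0$ on $\lambda$; since $\dim_{\mathbb{R}}\lambda=\dim K=\tfrac12\dim_{\mathbb{R}}\mathcal{S}$, $\lambda$ is Lagrangian.

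It remains to check that $\lambda$ is a bisection, i.e.\ that the projections $p_\pm:\mathcal{S}\to X_\pm$ of Proposition~\ref{prop: morita core groupoids} restrict to diffeomorphisms. Using the identifications in the proof of Theorem~\ref{pro:doumorequ}, where $X_-=G/G_-$, $X_+=G_+\backslash G$ and the bimodule $Z\cong G$ has projections $p_-(g)=gG_-$, $p_+(g)=G_+g$, the projections of $[\rho_k]$ read off the cosets $kG_-$ and $G_+k$; under $G/G_-\cong K\cong G_+\backslash G$ coming from $G=K\cdot G_\pm$ these become the identity $k\mapsto k$, so $p_\pm|_\lambda$ are diffeomorphisms onto $X_\pm$. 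I expect this last step to be the main obstacle: the projections $p_\pm$ must be made explicit on $\mathcal{C}$ and tracked through the successive quasi-Hamiltonian reductions and the Lu--Weinstein fibre-product description \eqref{eq:lu weinstein bimodule}, equivalently by identifying $\lambda$ with the canonical unit bisection $M\subset\mathcal{C}$ furnished by Theorem~\ref{thm: the (1,1) morphism of a GK}. By contrast, the containment, the $\tau$-invariance, and the vanishing of the pulled-back form are all short once the generators and orientations of Fig.~\ref{fig:doucomlie} are fixed.
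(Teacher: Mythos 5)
Your proof is correct and follows essentially the same route as the paper: the paper's own proof is a one-line appeal to the formula for $\tau_{\mathcal{C}}$ in Proposition~\ref{pro:real str modspa} (for fixedness) and to the explicit 2-form \eqref{eq:luweicomlie} (for the vanishing of the pulled-back form), which are precisely the two computations you carry out. Your additional verifications --- membership of $\lambda$ in $\mathfrak{M}_{L_C}(G)$ via the boundary conditions, and the bisection property via the projections $p_\pm(g)=gG_-$, $G_+g$ from the proof of Theorem~\ref{pro:doumorequ} together with the identifications $K\cong G/G_-\cong G_+\backslash G$ --- are details the paper leaves implicit, and your treatment of them is sound.
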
  
	\begin{proof} This follows immediately from the formula for $\tau_{\mathcal{C}}$ in \autoref{pro:real str modspa} and \eqref{eq:luweicomlie}. \end{proof}
Graphically, we may view the fixed point locus $\mathcal{S} $ of $\tau_{\mathcal{C}}:\mathcal{C} \rightarrow \overline{\mathcal{C}}$ as the set of representations which satisfy the constraints indicated in Fig.~\ref{fig:canlag}. 
	\begin{figure}[H]
	    \centering\begin{tikzpicture}[line cap=round,line join=round,>=stealth,x=1cm,y=1cm, scale=0.45]
			\clip(-5.171814835019457,6.22534829668545) rectangle (11.18595630529845,15.23302026597934);
    \coordinate (a) at (0,7);
 \coordinate (b) at (7,14);
 \coordinate (c) at (2,9);
 \coordinate (d) at (5,12);
   \draw[fill=black!10] (a) rectangle (b);
   \draw[fill=white] (c) rectangle (d);
			\draw [line width=1pt, <-] (0,7)-- (7,7);
			\draw [line width=1pt, ->] (7,7)-- (7,14);
			\draw [line width=1pt,->] (7,14)-- (0,14);
			\draw [line width=1pt, <-] (0,14)-- (0,7);
			\draw [line width=1pt,<-] (2,9)-- (5,9);
			\draw [line width=1pt,->] (5,9)-- (5,12);
			\draw [line width=1pt,->] (5,12)-- (2,12);
			\draw [line width=1pt,<-] (2,12)-- (2,9);
			\draw [dashed,line width=1pt,<-] (5,12)-- (7,14);
			\draw [dashed,line width=1pt,->] (7,7)-- (5,9);
			\draw [dashed,line width=1pt,->] (0,7)-- (2,9);
			\draw [dashed,line width=1pt,->] (0,14)-- (2,12);
			\begin{scriptsize}
				\draw [fill=black] (0,7) circle (1.7pt);
				\draw [fill=black] (7,7) circle (1.7pt);
				\draw[color=black] (3.548843996618713,6.554390523370536) node {$g_2$};
				\draw [fill=black] (7,14) circle (1.7pt);
				\draw[color=black] (8.339096664172763,10.64464319092457) node {$\Theta(g_1)^{-1}$};
				\draw [fill=black] (0,14) circle (1.7pt);
				\draw[color=black] (3.548843996618713,14.547115238172494) node {$\Theta(g_2)^{-1}$};
				\draw[color=black] (-0.59,10.64464319092457) node {$g_1$};
				\draw [fill=black] (2,9) circle (1.7pt);
				\draw [fill=black] (5,9) circle (1.7pt);
				\draw [fill=black] (5,12) circle (1.7pt);
				\draw [fill=black] (2,12) circle (1.7pt);
				\draw[color=black] (5.1,13.1) node {$\Theta(k_1)$};
				\draw[color=black] (6.2760106308045435,8.4) node {$k_2$};
				\draw[color=black] (0.8,8.4) node {$k_1$};
				\draw[color=black] (1.9,13.1) node {$\Theta(k_2)$};
			\end{scriptsize}
		\end{tikzpicture} \begin{tikzpicture}[line cap=round,line join=round,>=stealth,x=1cm,y=1cm, scale=0.45]
			\clip(-3.171814835019457,6.22534829668545) rectangle (11.18595630529845,15.23302026597934);
    \coordinate (a) at (0,7);
 \coordinate (b) at (7,14);
 \coordinate (c) at (2,9);
 \coordinate (d) at (5,12);
   \draw[fill=black!10] (a) rectangle (b);
   \draw[fill=white] (c) rectangle (d);
			\draw [line width=1pt, <-] (0,7)-- (7,7);
			\draw [line width=1pt, ->] (7,7)-- (7,14);
			\draw [line width=1pt,->] (7,14)-- (0,14);
			\draw [line width=1pt, <-] (0,14)-- (0,7);
			\draw [line width=1pt,<-] (2,9)-- (5,9);
			\draw [line width=1pt,->] (5,9)-- (5,12);
			\draw [line width=1pt,->] (5,12)-- (2,12);
			\draw [line width=1pt,<-] (2,12)-- (2,9);
			\draw [dashed,line width=1pt,<-] (5,12)-- (7,14);
			\draw [dashed,line width=1pt,->] (7,7)-- (5,9);
			\draw [dashed,line width=1pt,->] (0,7)-- (2,9);
			\draw [dashed,line width=1pt,->] (0,14)-- (2,12);
			\begin{scriptsize}
				\draw [fill=black] (0,7) circle (1.7pt);
				\draw [fill=black] (7,7) circle (1.7pt);
				\draw[color=black] (3.548843996618713,6.554390523370536) node {$1$};
				\draw [fill=black] (7,14) circle (1.7pt);
				\draw[color=black] (7.5,10.64464319092457) node {$1$};
				\draw [fill=black] (0,14) circle (1.7pt);
				\draw[color=black] (3.548843996618713,14.547115238172494) node {$1$};
				\draw[color=black] (-0.59,10.64464319092457) node {$1$};
				\draw [fill=black] (2,9) circle (1.7pt);
				\draw [fill=black] (5,9) circle (1.7pt);
				\draw [fill=black] (5,12) circle (1.7pt);
				\draw [fill=black] (2,12) circle (1.7pt);
				\draw[color=black] (5.4,13.1) node {$k$};
				\draw[color=black] (6.2760106308045435,8.4) node {$k$};
				\draw[color=black] (0.8,8.4) node {$k$};
				\draw[color=black] (1.4,13.1) node {$k$};
			\end{scriptsize}
		\end{tikzpicture}
		\caption{The real symplectic core of \eqref{eq:doumormodspa} and a distinguished Lagrangian inside it}\label{fig:canlag}
	\end{figure}   
	\begin{theorem}\label{thm:poisson str on bisections K example} The Lagrangian bisection $\lambda\subset\mathcal{S}$ determines a real trivialization of \eqref{eq:doumormodspa}, which includes the following multiplicative Lagrangian bisections, given by the action groupoids corresponding to the dressing actions \eqref{eq:dres1}:
         \begin{enumerate}
             \item inside the multiplicative bimodule $\mathcal{Z}:= \mathfrak{M}_{L_{{Z} }}(G)$ we have $\Lambda_Z$, which is isomorphic to $G_- \ltimes K \rightrightarrows K$ equipped with the following symplectic form and Poisson structure on its base \begin{align*} &\Omega_Z|_{(b,k)}:=\left(s_I(({}^bk)^*\theta^l,(b^k)^*\theta^r)- s_I(b^*\theta^l,k^*\theta^r)\right), \qquad \pi_Z=\frac{(I^l-I^r)g^{-1}}{2},
			\end{align*}
            \item inside the multiplicative bimodule $\mathcal{W}:= \mathfrak{M}_{L_{{W} }}(G)$ we have $\Lambda_W$, which is isomorphic to $G_+ \ltimes K \rightrightarrows K$ equipped with the following symplectic form and Poisson structure on its base
            \begin{align*} 
&\Omega_{\overline{W}}|_{(a,k)}:=-\left(s_I(({}^ak)^*\theta^l,(a^k)^*\theta^r)- s_I(a^*\theta^l,k^*\theta^r)\right), \qquad \pi_{\overline{W}} =-\frac{(I^r+I^l)g^{-1}}{2}.
			\end{align*}
         \end{enumerate}
			 
	\end{theorem}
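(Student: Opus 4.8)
The plan is to establish \autoref{thm:poisson str on bisections K example} by making explicit the identifications developed in the proof of \autoref{pro:doumorequ}, and then applying the general machinery of \autoref{thm: key Lagrangian in the central brane} and \autoref{prop: change of the sympletcic forms} to the Lu-Weinstein bimodules $\mathcal{Z}=\mathfrak{M}_{L_Z}(G)$ and $\mathcal{W}=\mathfrak{M}_{L_W}(G)$. First I would recall, from the identification $Z\cong G$ given in the proof of \autoref{pro:doumorequ}, that the Morita bimodule $\mathcal{Z}$ is described via \eqref{eq:lu weinstein bimodule}, and that the distinguished Lagrangian bisection $\lambda=\{(1,\dots,1,k,\dots,k)\ :\ k\in K\}\subset\mathcal{S}$ of \autoref{thm:poisson str on bisections K example} projects (by $p_-$, $p_+$) to $\Re$-Lagrangian bisections $\lambda_Z\subset Z$, $\lambda_W\subset W$. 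I would then invoke \autoref{thm: key Lagrangian in the central brane}, which guarantees that this single bisection determines a real trivialization whose constituent multiplicative bisections $\Lambda_Z\subset\mathcal{Z}$ and $\Lambda_W\subset\mathcal{W}$ are constructed as in the formulas for $\Lambda_Z,\Lambda_W$ in that proof.

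The key computational step is to identify $\Lambda_Z$ as the action groupoid $G_-\ltimes K\rightrightarrows K$ coming from the dressing action \eqref{eq:dres1}. Here I would use the global decomposition $G\cong G_-\times K$ (valid under the standing assumption that $G_\pm\times K\to G$ are diffeomorphisms), which writes any element of $G$ uniquely in the form determined by the dressing data $({}^bk,b^k)$. Restricting the representation-theoretic description of $\mathcal{Z}$ to the bisection $\Lambda_Z$ and tracking the source and target maps through the cut-and-glue multiplication of Fig.~\ref{fig:mul via cut}, I expect the groupoid structure to collapse precisely to the action groupoid for the left dressing action of $G_-$ on $K$. The symplectic form is then obtained by pulling back $\Omega_{\Sigma,V}$ (equation \eqref{eq:luweicomlie}) to $\Lambda_Z$; after imposing the constraints defining the bisection (trivial $G_\pm$-edges, $K$-valued edges of the form appearing in $\lambda$), the four $\Omega_{P_4}$ terms in \eqref{eq:luweicomlie} should reduce to the single difference $s_I(({}^bk)^*\theta^l,(b^k)^*\theta^r)-s_I(b^*\theta^l,k^*\theta^r)$ claimed in the statement, with the imaginary part $s_I$ appearing because the real symplectic core carries $\Im(\Omega)$ as its relevant (Im-Lagrangian) structure while the bisection is $\Re$-Lagrangian. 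The case of $\mathcal{W}$ is entirely parallel, exchanging $G_-\leftrightarrow G_+$ and the roles of left/right dressing, with the overall sign flip reflecting that $\mathcal{W}$ relates $\overline{D_-}$ to $D_+$ (so $\overline{W}$ enters), accounting for the minus sign in $\Omega_{\overline{W}}$ and in $\pi_{\overline{W}}$.

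To compute the base Poisson structures $\pi_Z,\pi_{\overline{W}}$, I would differentiate these multiplicative symplectic forms using the IM-form prescription \eqref{eq:IM 2 form}, as in Step 2 of the proof of \autoref{thm: reconstruction}, identifying the Lie algebroid of $G_-\ltimes K$ with the graph of $\pi_Z$. By \autoref{cor: the underlying re and im}, the imaginary parts of the Manin triples yield exactly the real Poisson structures $-\pi_A=\tfrac12(I_+-I_-)g^{-1}$ and $-\pi_B$ of \eqref{poisab}; specializing $I_+=I^r$, $I_-=I^l$ for the generalized K\"ahler structure \eqref{eq:gkcomlie} gives $\pi_Z=\tfrac12(I^l-I^r)g^{-1}$ and $\pi_{\overline{W}}=-\tfrac12(I^r+I^l)g^{-1}$, matching the statement. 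Finally, I would confirm that $\Lambda_Z,\Lambda_W$ together with their conjugates and the induced central bisections $\Lambda_{\mathrm{Hor}},\Lambda_{\mathrm{Ver}}$ assemble into a genuine real trivialization in the sense of \autoref{def:real triv}, which is automatic once $\lambda\subset\mathcal{S}$ is Lagrangian by \autoref{thm: key Lagrangian in the central brane}. The main obstacle I anticipate is the bookkeeping in the symplectic-form reduction: correctly matching the eight boundary decorations of Fig.~\ref{fig:doucomlie} against the dressing decomposition, and verifying that the quasi-Hamiltonian reduction by the gauge group $H$ does not introduce extra terms, so that the four-term sum in \eqref{eq:luweicomlie} genuinely telescopes to the compact two-term expression asserted for $\Omega_Z$.
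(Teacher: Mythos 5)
Your outline agrees with the paper's proof for its first two stages: like the paper, you observe that the bimodules admit the Lu--Weinstein description \eqref{eq:lu weinstein bimodule} (so \autoref{rem: lu-wei bimodules} applies), you invoke \autoref{thm: key Lagrangian in the central brane} to get the real trivialization out of $\lambda$, and you obtain $\Omega_Z$, $\Omega_{\overline{W}}$ by pulling back \eqref{eq:luweicomlie} to the bisections written in dressing coordinates. The genuine gap is in your final step, the identification of the base Poisson structures. You propose to read $\pi_Z$ and $\pi_{\overline{W}}$ off from \autoref{cor: the underlying re and im} together with \eqref{poisab}, specializing $I_+=I^r$, $I_-=I^l$. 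This is circular: \autoref{cor: the underlying re and im} is a statement about the Manin triples of a given generalized K\"ahler structure, and to apply it here you must already know that the symplectic groupoids $(\Lambda_Z,\Omega_Z)$ and $(\Lambda_{\overline{W}},\Omega_{\overline{W}})$ carved out by the moduli-theoretic bisection $\lambda$ integrate the imaginary parts of the Dirac structures of \eqref{eq:gkcomlie} --- equivalently, that $\lambda$ induces via \autoref{thm: reconstruction} exactly the bi-invariant metric $g$ and the bihermitian pair of \eqref{eq:gkcomlie}. But that is the content of the theorem being proved: \autoref{pro:doumorequ} identifies the moduli spaces with Lu--Weinstein doubles of the dressing-action groupoids, and it is the present computation that verifies which real Poisson bivectors the distinguished bisection actually produces. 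The paper closes this step by direct calculation: it writes $\Lambda_Z$ and $\Lambda_{\overline{W}}$ explicitly in terms of the dressing data $({}^bk,b^k)$, $({}^ak,a^k)$, pulls back \eqref{eq:luweicomlie}, and then differentiates the resulting multiplicative forms by combining the infinitesimal dressing action \eqref{eq:infdresact} with the IM-form correspondence \eqref{eq:IM 2 form}. That computation cannot be replaced by a citation of the general theory.

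There is also a concrete bookkeeping inconsistency in your shortcut. The general theory assigns to $\Lambda_Z\cong G_{\B_-}$ the form $F_B=\tfrac{1}{2i}\Lambda_Z^*\Omega_{\mathcal{Z}}$, which by \autoref{prop: deg symplectic GK} integrates $-\pi_B$, and to $\Lambda_W$ the form $F_A$ integrating $-\pi_A$; your assignment is the opposite one ($\Lambda_Z\leftrightarrow -\pi_A$). With your specialization $I_+=I^r$, $I_-=I^l$, the general-theory assignment would yield $\pi_Z=-\tfrac12(I^r+I^l)g^{-1}$, contradicting the stated $\pi_Z=\tfrac12(I^l-I^r)g^{-1}$; your formulas only come out right because of a compensating slip (your displayed identity $-\pi_A=\tfrac12(I_+-I_-)g^{-1}$ is the negative of \eqref{poisab}). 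Resolving this requires pinning down precisely which dressing groupoid is the $\A$-side versus the $\B$-side, which of $\pm I^r,\pm I^l$ plays the role of $I_\pm$ in \eqref{eq:gkcomlie}, and the sign conventions of \autoref{rem:sign symplectic form doubles} --- exactly the conventions that the paper's pullback-and-differentiate argument handles automatically and that an appeal to \autoref{cor: the underlying re and im} leaves unverified.
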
 
	\begin{proof} Since $\mathcal{Z}$ admits the description given by \eqref{eq:lu weinstein bimodule} and a similar description holds for $\overline{\mathcal{W}}$ and $\mathcal{C}$, these Morita bimodules are of Lu-Weinstein type as in \autoref{rem: lu-wei bimodules} and hence \autoref{thm: key Lagrangian in the central brane} implies that there is a real trivialization of \eqref{eq:doumormodspa} determined by $\lambda$. In particular, we obtain the following descriptions:
    \begin{align*}
        &\Lambda_Z=\{[g_1,\dots,g_4,k_1,\dots,k_4]=[1,b^{k},1,\Theta(b^k),{}^bk,k,k,{}^bk]|\ b\in G_-, k\in K\}, \\
        &\Lambda_{\overline{W}}=\{[g_1,\dots,g_4,k_1,\dots,k_4]=[\Theta(a^k),1,,a^k,1,k,k,{}^ak,{}^ak]|\ a\in G_+,k\in K\},
    \end{align*}
    which are given in terms of the generators of Fig.~\ref{fig:doucomlie} and illustrated in Fig.~\ref{fig:bra}. So the result follows from pulling back \eqref{eq:luweicomlie} to the corresponding Lagrangian bisections and observing that the inclusions $K \hookrightarrow Z$, $K \hookrightarrow \overline{W}$ are the infinitesimal counterparts of $\Lambda_Z$ and $\Lambda_{\overline{W}}$. In fact, the infinitesimal dressing action $\mathfrak{g}_-  \rightarrow \mathfrak{X}( K)$ corresponding to \eqref{eq:dres1} is given by 
	\begin{align}  u\mapsto \rho(u), \quad \rho(u)|_k=Tl_k \text{pr}_{\mathfrak{k} }(\text{Ad}_{k^{-1}}(u)), \quad \forall (u,k)\in \mathfrak{g}_+ \times K; \label{eq:infdresact}\end{align}  
	where $l_k:K \rightarrow K$ is the left translation by $k$ and $\text{pr}_{\mathfrak{k} }:\mathfrak{g} = \mathfrak{g}_+ \oplus \mathfrak{k} \rightarrow \mathfrak{k} $ is the projection. A similar formula holds for the infinitesimal dressing action $\mathfrak{g}_+  \rightarrow \mathfrak{X} (K)$. Therefore, we get the corresponding Poisson tensors by differentiating the symplectic forms above, combining \eqref{eq:infdresact} with \eqref{eq:IM 2 form}.  \end{proof}
    \begin{figure}[H] 
 \centering
 \begin{tikzpicture}[line cap=round,line join=round,>=stealth,x=1cm,y=1cm, scale=0.45]
			\clip(-5.171814835019457,6.22534829668545) rectangle (11.18595630529845,14.9);
    \coordinate (a) at (0,7);
 \coordinate (b) at (7,14);
 \coordinate (c) at (2,9);
 \coordinate (d) at (5,12);
   \draw[fill=black!10] (a) rectangle (b);
   \draw[fill=white] (c) rectangle (d);
			\draw [line width=1pt, <-] (0,7)-- (7,7);
			\draw [line width=1pt, ->] (7,7)-- (7,14);
			\draw [line width=1pt,->] (7,14)-- (0,14);
			\draw [line width=1pt, <-] (0,14)-- (0,7);
			\draw [line width=1pt,<-] (2,9)-- (5,9);
			\draw [line width=1pt,->] (5,9)-- (5,12);
			\draw [line width=1pt,->] (5,12)-- (2,12);
			\draw [line width=1pt,<-] (2,12)-- (2,9);
			\draw [dashed,line width=1pt,<-] (5,12)-- (7,14);
			\draw [dashed,line width=1pt,->] (7,7)-- (5,9);
			\draw [dashed,line width=1pt,->] (0,7)-- (2,9);
			\draw [dashed,line width=1pt,->] (0,14)-- (2,12);
			\begin{scriptsize}
				\draw [fill=black] (0,7) circle (1.7pt);
				\draw [fill=black] (7,7) circle (1.7pt);
				\draw[color=black] (3.548843996618713,6.654390523370536) node {$b^k$};
				\draw [fill=black] (7,14) circle (1.7pt);
				\draw[color=black] (7.239096664172763,10.64464319092457) node {$1$};
				\draw [fill=black] (0,14) circle (1.7pt);
				\draw[color=black] (3.548843996618713,14.447115238172494) node {$\Theta(b^k)$};
				\draw[color=black] (-0.39,10.64464319092457) node {$1$};
				\draw [fill=black] (2,9) circle (1.7pt);
				\draw [fill=black] (5,9) circle (1.7pt);
				\draw[color=black] (3.548843996618713,8.7) node {$b$};
				\draw [fill=black] (5,12) circle (1.7pt);
				\draw[color=black] (4.6,10.64464319092457) node {$1$};
				\draw [fill=black] (2,12) circle (1.7pt);
				\draw[color=black] (3.548843996618713,12.343136968374939) node {$\Theta(b)$};
				\draw[color=black] (2.5,10.64464319092457) node {$1$};
				\draw[color=black] (6.1760106308045435,12.7) node {$k$};
				\draw[color=black] (6.1760106308045435,8.286302910004277) node {$k$};
				\draw[color=black] (1.6,8.005257177044864) node {${}^bk$};
				\draw[color=black] (0.9094579827389956,12.7) node {${}^bk$};
			\end{scriptsize}
		\end{tikzpicture}
		\begin{tikzpicture}[line cap=round,line join=round,>=stealth,x=1cm,y=1cm, scale=0.45]
			\clip(-3.171814835019457,6.22534829668545) rectangle (11.18595630529845,14.9);
    \coordinate (a) at (0,7);
 \coordinate (b) at (7,14);
 \coordinate (c) at (2,9);
 \coordinate (d) at (5,12);
   \draw[fill=black!10] (a) rectangle (b);
   \draw[fill=white] (c) rectangle (d);
			\draw [line width=1pt, <-] (0,7)-- (7,7);
			\draw [line width=1pt, ->] (7,7)-- (7,14);
			\draw [line width=1pt,->] (7,14)-- (0,14);
			\draw [line width=1pt, <-] (0,14)-- (0,7);
			\draw [line width=1pt,<-] (2,9)-- (5,9);
			\draw [line width=1pt,->] (5,9)-- (5,12);
			\draw [line width=1pt,->] (5,12)-- (2,12);
			\draw [line width=1pt,<-] (2,12)-- (2,9);
			\draw [dashed,line width=1pt,<-] (5,12)-- (7,14);
			\draw [dashed,line width=1pt,->] (7,7)-- (5,9);
			\draw [dashed,line width=1pt,->] (0,7)-- (2,9);
			\draw [dashed,line width=1pt,->] (0,14)-- (2,12);
			\begin{scriptsize}
				\draw [fill=black] (0,7) circle (1.7pt);
				\draw [fill=black] (7,7) circle (1.7pt);
				\draw[color=black] (3.548843996618713,6.654390523370536) node {$1$};
				\draw [fill=black] (7,14) circle (1.7pt);
				\draw[color=black] (7.539096664172763,10.64464319092457) node {$a^k$};
				\draw [fill=black] (0,14) circle (1.7pt);
				\draw[color=black] (3.548843996618713,14.447115238172494) node {$1$};
				\draw[color=black] (-1.19,10.64464319092457) node {$\Theta(a^k)$};
				\draw [fill=black] (2,9) circle (1.7pt);
				\draw [fill=black] (5,9) circle (1.7pt);
				\draw[color=black] (3.548843996618713,8.7) node {$1$};
				\draw [fill=black] (5,12) circle (1.7pt);
				\draw[color=black] (4.6,10.64464319092457) node {$a$};
				\draw [fill=black] (2,12) circle (1.7pt);
				\draw[color=black] (3.548843996618713,12.343136968374939) node {$1$};
				\draw[color=black] (2.8,10.64464319092457) node {$\Theta(a)$};
				\draw[color=black] (6.2760106308045435,12.6) node {${}^ak$};
				\draw[color=black] (6.1760106308045435,8.286302910004277) node {$k$};
				\draw[color=black] (1.5,8.005257177044864) node {$k$};
				\draw[color=black] (0.8094579827389956,12.6) node {${}^ak$};
			\end{scriptsize}
		\end{tikzpicture} \caption{Elements of the multiplicative bisections $\Lambda_Z$ and $\Lambda_{\overline{W}}$, respectively.} \label{fig:bra}
	\end{figure}
It thus follows that $\pi_Z$ is a multiplicative Poisson structure (corresponding to the Manin triple $(\mathfrak{g} ,\mathfrak{k} , \mathfrak{g}_- )$), whereas $\pi_{\overline{W}}$ is an affine Poisson structure with respect to $\pi_Z$, see \cite[Ch. 5]{luphd}. Note how, as expected by our general results, the choice of the Lagrangian bisection $\lambda:K \hookrightarrow \mathcal{S}$ completely determines a pair of real Poisson structures on $K$ which together determine the metric $g$ as in the proof of \autoref{thm: reconstruction}.   
 \subsubsection{A generalized K\"ahler potential for the Hopf surface} Implementing the trivialization of \eqref{eq:doumormodspa} determined by the Lagrangian bisection $\lambda:K=SU(2)\times \mathbb{R}\hookrightarrow \mathcal{S}$ as above in the case of \autoref{ex:hopf surf}, we get the following explicit formulae. If we identify $(z_1,z_2)\in X_-=X_+ =\mathbb{C}^2-\{(0,0)\}$ with the matrix $A=\left[\begin{smallmatrix} z_1 & -\overline{z_2} \\ z_2 & \overline{z_2}\end{smallmatrix}\right]$, then the diffeomorphisms determined by $\lambda$ and the corresponding bisections of the bimodules $Z$ and $W$ are indicated below 
\[ \begin{tikzcd}[scale cd=.85]
\overline{X_-} \arrow[d, "\lambda_Z", bend left=49] & W \arrow[r] \arrow[l]                     & X_+ \arrow[l, "\lambda_W", bend left=49] \\
\overline{Z} \arrow[u] \arrow[d]                    & \mathcal{C} \arrow[r] \arrow[l] \arrow[d] \arrow[u] & Z \arrow[d] \arrow[u]                    \\
\overline{X_+} \arrow[r, "\lambda_W", bend left=49] & \overline{W} \arrow[r] \arrow[l]          & X_- \arrow[u, "\lambda_Z", bend left=49]
\end{tikzcd} \qquad \begin{tikzcd}[scale cd=.85]
\overline{X_-}                                                     &  & X_+ \arrow[ll, "{A^{-1}} \mapsfrom A"']                                                    \\
\overline{X_+} \arrow[u, "A\mapsto \overline{A^{-1}}" description] &  & X_-. \arrow[ll, "{A^{-1}} \mapsfrom A"] \arrow[u, "A\mapsto \overline{A^{-1}}" description]
\end{tikzcd}\] 
Moreover, $X_-$ is covered by two coordinate patches where the restricted generalized K\"ahler  structure is of symplectic type: $\mathcal{O}_A=\{(z_1,z_2)|\ z_1\neq0\}$ and $\mathcal{O}_B=\{(z_1,z_2)|\ z_2\neq0\}$. In fact, $\mathcal{O}_A$ is the $\A$-leaf of $(1,1)$, whereas $\mathcal{O}_B$ is the $\B$-leaf of $(1,1)$, see \cite[Remark 3.10]{MR3232003}. 
So, for the purpose of studying the generalized K\"ahler  structure induced by $\lambda$ on $X_-= X_+$, it suffices to use the two restricted Morita equivalences:
\[ \begin{tikzcd}[scale cd=.8]
\mathcal{G}_{\mathcal{A}_+}|_{\mathcal{O}_B} \arrow[d, shift right] \arrow[d, shift left]  &  & Z|_{\mathcal{O}_B} \arrow[lld, "p_+"'] \arrow[rrd, "p_-"] &  & \mathcal{G}_{\mathcal{A}_-}|_{\mathcal{O}_B} \arrow[d, shift right] \arrow[d, shift left]  \\
\mathcal{O}_B                                                                                                           &  &                                                 &  & \mathcal{O}_B \arrow[llu, "\lambda_Z|", bend left]                                                                       
\end{tikzcd}\begin{tikzcd}[scale cd=.8]
\overline{\mathcal{G}_{\mathcal{B}_+}|_{\mathcal{O}_A}} \arrow[d, shift right] \arrow[d, shift left]  &  & \overline{W}|_{\mathcal{O}_A} \arrow[lld, "q_+"'] \arrow[rrd, "q_-"] &  & \mathcal{G}_{\mathcal{B}_-}|_{\mathcal{O}_A} \arrow[d, shift right] \arrow[d, shift left]  \\
\mathcal{O}_A \arrow[rru, "\lambda_W|"', bend right]                                                                                &  &                                                            &  & \mathcal{O}_A                                                                                                           
\end{tikzcd} \]
which are now holomorphic symplectic Morita equivalences according to \autoref{rem:symplectic type A and B leaves}. As we have seen in \S\ref{subsec:examples}, the real parts of the symplectic structures on $Z|_{\mathcal{O}_B}$ and $\overline{W}|_{\mathcal{O}_A}$ agree with the ones on the respective real symplectic cores $ (Z|_{\mathcal{O}_B})_\Delta\hookrightarrow Z|_{\mathcal{O}_B} \times \overline{Z|_{\mathcal{O}_B}}$ and $(\overline{W}|_{\mathcal{O}_A})_\Delta\hookrightarrow  \overline{W|_{\mathcal{O}_A}}\times W|_{\mathcal{O}_A}$ up to a factor of 2. To obtain simple formulae, we will focus on $\mathcal{O}_A\cap \mathcal{O}_B$. Since the map
\[ \begin{tikzcd}
Z|_{\mathcal{O}_B} \arrow[rr, "{(p_+,p_-)}"] &  & {(\mathcal{O}_B\times \mathcal{O}_B,\sigma_+\times(-\sigma_-))}
\end{tikzcd}\] 
is Poisson and a local diffeomorphism over $(\mathcal{O}_A\cap \mathcal{O}_B)\times( \mathcal{O}_A\cap \mathcal{O}_B)$, we can use it to endow $Z|_{\mathcal{O}_B}$ with Darboux coordinates ($\sigma_\pm$ is nondegenerate on $\mathcal{O}_B\cap \mathcal{O}_B$). If $z_1,z_2$ and $w_1,w_2$ are the natural respective coordinates on $X_-,X_+$, we can obtain Darboux coordinates $v_i=\log z_i$, $u_i=\log w_i$ for $i=1,2$: in fact,  $\omega:=\left(\sigma_+\times(-\sigma_-)\right)^{-1}=dv_2\wedge dv_1 +du_2\wedge du_1$ (we drop a factor of 2 for simplicity). Since $(p_+,p_-)$ takes $\lambda_Z$ to the graph of the diffeomorphism $\psi:X_-\rightarrow X_+$ it determines, we just have to describe such a diffeomorphism
\[ (v_1,v_2) \mapsto \left( \log (R^{-2}e^{v_1}),\log (-R^{-2}e^{\overline{v_2}})\right), \quad R^2=e^{v_1}\overline{e^{v_1}}+e^{v_2}\overline{e^{v_2}}\] 
in terms of a generating function. Note that $\lambda_Z(\mathcal{O}_B)\hookrightarrow Z$ is a real coisotropic submanifold but then, by dimensional reasons, it is Re-Lagrangian in $Z|_{\mathcal{O}_B}$ which is symplectic. Implementing the further change of coordinates $(x_1,x_2)=(u_2-u_1-\log(-1),u_2-\log(-1))$, we get that $\omega=dv_2\wedge dv_1-dx_2\wedge dx_1$ and
\[ \Gr(\psi)=\left\{(v_1,v_2,x_1,x_2)=\left(v_1,\overline{x_1}+\overline{v_1},x_1,x_1-\overline{v_1}-\log(1+e^{x_1+\overline{x_1}}) \right)\right\}. \]
Now one can consider the 1-form potential $\alpha=v_2dv_1 -x_2dx_1$ for $\omega$ and integrate $\Re (\alpha) $ over $\Gr (\psi)$, obtaining the generating function 
\begin{align*}
    f(v_1,\overline{v_1}, x_1 ,\overline{x_1})=\frac{1}{2}\left(v_1\overline{v_1}+\overline{v_1}x_1+\overline{x_1}v_1-\frac{1}{2}(x_1^2+\overline{x_1}^2)+\int^{x_1+\overline{x_1}} \log (1+e^t)dt\right).
\end{align*}
Similar expressions for such generalized K\"ahler potentials appear in the physics literature, see \cite{Ang2018} for example.


\begin{remark} One may also obtain Darboux coordinates for the real part of the bimodules $Z|_{\mathcal{O}_B}$ and $\overline{W}|_{\mathcal{O}_A}$ using the respective symplectomorphisms to (the real parts of) $\mathcal{G}_{\mathcal{A}_-}|_{\mathcal{O}_B}$ and $\mathcal{G}_{\mathcal{B}_-}|_{\mathcal{O}_A}$ determined by the bisections $\lambda_Z$ and $\lambda_W$. In fact, note that $\mp \sigma_\pm|_{\mathcal{O}_B}$ is isomorphic to $\pi:=x_1\frac{\partial}{\partial x_1} \wedge \frac{\partial}{\partial x_2}$ via the change of coordinates $(x_1,x_2)=(z_1,\log z_2)$ and a similar change of coordinates holds for $\mp \sigma_\pm|_{\mathcal{O}_A}$. As discussed in \cite[Example 3.8]{MR4466669}, $\mathcal{G}_{\mathcal{A}_-}|_{\mathcal{O}_B}$ integrates $\pi$ (in fact, this holds over $X_-$). The corresponding symplectic structure on $\mathcal{G}_{\mathcal{A}_-}|_{\mathcal{O}_B}$ is
\[ \omega:= da\wedge d(z_2+z_1b)-db\wedge dz_1 \]
and one can put it Darboux form using $(p_1,p_2,q_1,q_2)=(a,-b,z_2+bz_1,z_1)$.  
\end{remark} 

\printbibliography

\end{document}